\colorlet{DarkGreen}{green!50!black}
\colorlet{DarkGray}{gray!60!black}
\DeclareMathAlphabet{\mathcal}{OMS}{cmsy}{m}{n}
\newtheorem{theorem}{Theorem}[section]
\newtheorem*{theorem*}{Theorem}
\newtheorem{lemma}[theorem]{Lemma}
\newtheorem*{lemma*}{Lemma}
\newtheorem{claim}[theorem]{Claim}
\newtheorem{proposition}[theorem]{Proposition}
\newtheorem{corollary}[theorem]{Corollary}
\theoremstyle{definition}{

\newtheorem{definition}[theorem]{Definition}
\newtheorem*{definition*}{Definition}

}
\numberwithin{equation}{section}
\newcommand{\cC}{\mathcal{C}}
\newcommand{\cN}{\mathcal{N}}
\newcommand{\cE}{\mathcal{E}}
\newcommand{\fh}{\mathfrak{h}}
\newcommand{\fE}{\mathfrak{E}}
\newcommand{\fR}{\mathfrak{R}}
\newcommand{\sI}{\mathscr{I}}
\newcommand{\bmB}{{\sf{Tile}}}
\newcommand{\e}{\varepsilon}
\newcommand{\vse}{{v^*_{\south\east}}}
\newcommand{\vsw}{{v^*_{\south\west}}}
\newcommand{\ve}{{v^*_\east}}
\newcommand{\vw}{{v^*_\west}}
\newcommand{\we}{{w^*_\east}}
\newcommand{\ww}{{w^*_\west}}
\newcommand{\window}{{\sf W}}
\newcommand{\nbt}{{\mathsf{NoBT}}}
\newcommand{\tshape}{{\mathsf T}}
\newcommand{\senlarge}{{\mathsf S}}
\newcommand{\lshape}{{\mathsf L}}
\newcommand{\enlarge}{{\mathsf E}}
\newcommand{\corridor}{{\mathsf {Cor}}}
\newcommand{\llb }{\llbracket}
\newcommand{\rrb }{\rrbracket}
 \newcommand{\north}{{\textsc{n}}}
\newcommand{\south}{{\textsc{s}}}
\newcommand{\east}{{\textsc{e}}}
\newcommand{\west}{{\textsc{w}}}
\newcommand{\interior}{\mathsf{int}}
\DeclareMathOperator{\hgt}{ht}
\newcommand{\str}{{\mathsf{Strip}}}
\newcommand{\spiky}{{\mathsf{Spiky}}}
\newcommand{\s}{{\mathsf{stop}}}
\newcommand{\co}{{\mathsf{Cross}}}
\newcommand{\circuit}{{\mathsf{Circ}}}
\renewcommand{\liminf}{\varliminf}
\begin{document}

\title[Geometry of the 2D Ising interface in critical pre-wetting]
{Local and global geometry of the 2D Ising interface in critical pre-wetting}

\author{Shirshendu Ganguly}
\author{Reza Gheissari}

\address[Shirshendu Ganguly]{Department of Statistics, UC Berkeley}\email{sganguly@berkeley.edu}
\address[Reza Gheissari]{Departments of Statistics and EECS, UC Berkeley}\email{gheissari@berkeley.edu}

\maketitle
\vspace{-.5cm}
\begin{abstract}
Consider the Ising model at low-temperatures and positive external field $\lambda$ on an $N\times N$ box with \emph{Dobrushin boundary conditions} that are plus on the north, east, and west boundaries and minus on the south boundary. If $\lambda = 0$, the interface separating the plus and minus phases is diffusive, having $O(\sqrt N)$ height fluctuations, and the model is \emph{fully wetted}. Under an order one field, the interface fluctuations are $O(1)$ and the interface is only partially wetted, being pinned to its southern boundary. We study the \emph{critical pre-wetting regime} of $\lambda_N \downarrow 0$, where the height fluctuations are expected to scale as $\lambda^{ -1/3}$ and the rescaled interface is predicted to converge to the Ferrari--Spohn diffusion. Velenik (2004)~\cite{Velenik} identified the order of the area under the interface up to logarithmic corrections. Since then, more refined features of such interfaces have only been identified in simpler models of random walks under area tilts. 

In this paper, we resolve several conjectures of Velenik regarding the refined features of the Ising interface in the critical pre-wetting regime. Our main result is a sharp bound on the one-point height fluctuation, proving $e^{ - \Theta(x^{3/2})}$ upper tails reminiscent of the Tracy--Widom distribution, capturing a tradeoff between the locally Brownian oscillations and the global field effect. We further prove a concentration estimate for the number of points above which the interface attains a large height.  
These are used to deduce various geometric properties of the interface, including the order and tails of the area it confines, and the poly-logarithmic pre-factor governing its maximum height fluctuation. Our arguments combine classical inputs from the random-line representation of the Ising interface, with novel local resampling and coupling schemes.  
\end{abstract}

{\hypersetup{linkcolor=black}
\tableofcontents}
\vspace{-.5cm}

\vspace{-.4cm}
\section{Introduction, main results, and key ideas}
Many models of stochastic interfaces and random growth in two dimensional statistical mechanics exhibit a competition between  local roughening forces and global smoothening constraints. These are expected to exhibit characteristic spatial and temporal fluctuation exponents as predicted in the seminal work \cite{KPZ}. The last few decades have witnessed a mathematical revolution with rigorous verification of such predictions in a handful of cases, mostly relying on remarkable connections to algebraic objects such as random matrices and determinantal point processes: see e.g.,~\cite{corwin1,corwin2}.  Perhaps the most well-known such example is Dyson Brownian motion (DBM), which considers $N$ independent Brownian curves starting from zero and conditioned to not intersect; DBM describes the evolution of eigenvalues of a random Hermitian matrix under a Brownian flow on its entries. The non-intersection constraint causes the top curve
to follow a parabolic trajectory with locally Brownian fluctuations~{\cite{prahofer1}; notably, its renormalized one-point distribution converges to the GUE Tracy-Widom distribution 
with the characteristic non-Gaussian $e^{-\Theta(x^{3/2})}$ upper tail ($x>0$).

An interesting model for the top curve in a DBM, which was analyzed by Ferrari-Spohn~\cite{FerrariSpohn} relying on exact expressions, is a Brownian motion 
constrained to stay above a circular or parabolic barrier; the resulting process was shown to converge to an Airy type process which has subsequently become known as the Ferrari--Spohn (FS) diffusion. 
The hard floor effect in a Brownian excursion 
constrained to confine a semi-circle or parabola   
makes it a paradigm of a wide array of random curves exhibiting entropic repulsion arising from global constraints on the area they confine. 

Such phenomena are believed to arise naturally in various contexts in statistical physics: these include two-dimensional FK percolation (the relevant quantity being a sub-critical cluster conditioned to confine large area~\cite{alexander1, hammond1, hammond2}), and perhaps most canonically the low-temperature two and three-dimensional Ising models:  namely in the phase separation lines in two dimensions in the critical pre-wetting regime, and in the ensemble of level lines of separation surfaces in three dimensions in the presence of a hard floor, and the associated wetting transition~\cite{FisherWalksWetting,IoVe16,VelenikSurvey}. Prototypes for these, including a class of random walks (e.g., the (1+1)D SOS model), and families of such random walks conditioned on non-intersection, under linear area tilts have been found to indeed have FS and DBM limits, respectively~\cite{Abraham86,BEF86,HrVe04,ISV15,IVW18,CLMST2}.  
In this article, we focus on the fundamental example of the phase separation line (interface) of the Ising model in its critical pre-wetting regime, addressing several long standing questions and conjectures from \cite{Velenik,IoVe16} about its global, and more delicate local, properties.  

\begin{figure}
\begin{tikzpicture}[scale = .75]

\tikzstyle{dual-site}=[fill={rgb,255: red,255; green,155; blue,155}, draw={rgb,255: red,191; green,0; blue,64}, shape=circle]
\tikzstyle{plus-site}=[fill=red, opacity =.3, shape=circle, font = \tiny]
\tikzstyle{minus-site}=[fill=blue, opacity =.3, shape=circle, font = \tiny]

\foreach \i in {-1,...,13}{\node[style = minus-site] (\i,-1) at (\i,-1) {$-$};};
\foreach \i in {-1,13}{\foreach \j in {0,...,6}{\node[style = plus-site] (\i,\j) at (\i,\j) {$+$};}};
\foreach \i in {0,...,12}{\foreach \j in {6}{\node[style = plus-site] (\i,\j) at (\i,\j) {$+$};}};


%
		\node [style=plus-site] (1,5) at (1, 5) {$+$};
		\node [style=plus-site] (1,4) at (1, 4) {$+$};
		\node [style=minus-site] (1,3) at (1, 3) {$-$};
		\node [style=plus-site] (1,2) at (1, 2) {$+$};
		\node [style=minus-site] (1,1) at (1, 1) {$-$};
		\node [style=minus-site] (2, 2) at (2, 2) {$-$};
		\node [style=plus-site] (2, 3) at (2, 3) {$+$};
		\node [style=minus-site] (2, 4) at (2, 4) {$-$};
		\node [style=minus-site] (2, 5) at (2, 5) {$-$};
		\node [style=plus-site] (3,5) at (3, 5) {$+$};
		\node [style=plus-site] (3,4) at (3, 4) {$+$};
		\node [style=minus-site] (3,3) at (3, 3) {$-$};
		\node [style=plus-site] (3,2) at (3, 2) {$+$};
		\node [style=plus-site] (2,0) at (2, 0) {$+$};
		\node [style=plus-site] (2,1) at (2, 1) {$+$};
		\node [style=minus-site] (3,1) at (3, 1) {$-$};
		\node [style=plus-site] (3,0) at (3, 0) {$+$};
		\node [style=plus-site] (4,1) at (4, 1) {$+$};
		\node [style=plus-site] (4,2) at (4, 2) {$+$};
		\node [style=minus-site] (4,3) at (4, 3) {$-$};
		\node [style=plus-site] (4,4) at (4, 4) {$+$};
		\node [style=minus-site] (4,5) at (4, 5) {$-$};
		\node [style=plus-site] (5,5) at (5, 5) {$+$};
		\node [style=minus-site] (5,4) at (5, 4) {$-$};
		\node [style=minus-site] (5,3) at (5, 3) {$-$};
		\node [style=minus-site] (5,2) at (5, 2) {$-$};
		\node [style=minus-site] (5,1) at (5, 1) {$-$};
		\node [style=minus-site] (6,2) at (6, 2) {$-$};
		\node [style=plus-site] (6,3) at (6, 3) {$+$};
		\node [style=minus-site] (6,4) at (6, 4) {$-$};
		\node [style=plus-site] (6,5) at (6, 5) {$+$};
		\node [style=plus-site] (7,5) at (7, 5) {$+$};
		\node [style=plus-site] (7,4) at (7, 4) {$+$};
		\node [style=plus-site] (7,3) at (7, 3) {$+$};
		\node [style=minus-site] (66) at (7, 2) {$-$};
		\node [style=plus-site] (6,1) at (6, 1) {$+$};
		\node [style=plus-site] (7,1) at (7, 1) {$+$};
		\node [style=minus-site] (8,5) at (8, 5) {$-$};
		\node [style=plus-site] (8,4) at (8, 4) {$+$};
		\node [style=plus-site] (8,3) at (8, 3) {$+$};
		\node [style=minus-site] (8,2) at (8, 2) {$-$};
		\node [style=minus-site] (8,1) at (8, 1) {$-$};
		\node [style=minus-site] (9,5) at (9, 5) {$-$};
		\node [style=minus-site] (9,4) at (9, 4) {$-$};
		\node [style=plus-site] (9,3) at (9, 3) {$+$};
		\node [style=plus-site] (9,2) at (9, 2) {$+$};
		\node [style=plus-site] (10,5) at (10, 5) {$+$};
		\node [style=plus-site] (10,4) at (10, 4) {$+$};
		\node [style=plus-site] (10,3) at (10, 3) {$+$};
		\node [style=minus-site] (10,2) at (10, 2) {$-$};
		\node [style=plus-site] (10,1) at (10, 1) {$+$};
		\node [style=minus-site] (9,1) at (9, 1) {$-$};
		\node [style=plus-site] (11,1) at (11, 1) {$+$};
		\node [style=plus-site] (11,5) at (11, 5) {$+$};
		\node [style=plus-site] (11,4) at (11, 4) {$+$};
		\node [style=plus-site] (11,3) at (11, 3) {$+$};
		\node [style=plus-site] (11,2) at (11, 2) {$+$};
			\node [style=plus-site] (11,0) at (11, 0) {$+$};

		\node [style=minus-site] (12,0) at (12, 0) {$-$};
		\node [style=minus-site] (12,1) at (12, 1) {$-$};
		\node [style=plus-site] (12,5) at (12, 5) {$+$};
		\node [style=minus-site] (12,4) at (12, 4) {$-$};
		\node [style=plus-site] (12,3) at (12, 3) {$+$};
		\node [style=plus-site] (12,2) at (12, 2) {$+$};
		\node [style=plus-site] (0,5) at (0, 5) {$+$};
		\node [style=plus-site] (0,4) at (0, 4) {$+$};
		\node [style=plus-site] (0,3) at (0, 3) {$+$};
		\node [style=plus-site] (0,2) at (0, 2) {$+$};
		\node [style=plus-site] (0,1) at (0, 1) {$+$};

		\node [style=minus-site] (0,0) at (0, 0) {$-$};
		\node [style=minus-site] (1,0) at (1, 0) {$-$};
		\node [style=minus-site] (4,0) at (4, 0) {$-$};
		\node [style=minus-site] (5,0) at (5, 0) {$-$};
		\node [style=minus-site] (6,0) at (6, 0) {$-$};
		\node [style=minus-site] (7,0) at (7, 0) {$-$};
		\node [style=minus-site] (8,0) at (8, 0) {$-$};
		\node [style=minus-site] (9,0) at (9, 0) {$-$};
		\node [style=minus-site] (10,0) at (10, 0) {$-$};

\draw[rounded corners, line width = 2.2pt, black] (-1.5,-.5)--(-.5,-.5)--(-.5,.5)--(.5,.5)--(.5,1.5)--(1.5,1.5)--(1.5,2.5)--(2.5,2.5)--(2.5,3.5)--(3.5,3.5)--(4.5,3.5)--(4.5,4.5)--(5.5,4.5)--(6.5,4.5)--(6.5,3.5)--(5.5,3.5)--(5.5,2.5)--(6.5,2.5)--(7.5,2.5)--(8.5,2.5)--(8.5,1.5)--(9.5,1.5)--(9.5,2.5)--(10.5,2.5)--(10.5,1.5)--(9.5,1.5)--(9.5,.5)--(10.5,.5)--(10.5,-.5)--(11.5,-.5)--(11.5,1.5)--(12.5,1.5)--(12.5,-.5)--(13.5,-.5); 

\draw[rounded corners, line width = 1.3pt, blue] (1,2.5)--(1.5,2.5)--(1.5,3.5)--(2.5,3.5)--(2.5,4.5)--(2.5,5.5)--(1.5,5.5)--(1.5,4.5)--(1.5,3.5)--(.5,3.5)--(.5,2.5)--(1,2.5);

\draw[rounded corners,  line width = 1.3pt, blue] (4,4.5)--(4.5,4.5)--(4.5,5.5)--(3.5,5.5)--(3.5,4.5)--(4,4.5);

\draw[rounded corners,  line width = 1.3pt, blue] (2.5,1)--(2.5,1.5)--(3.5,1.5)--(3.5,.5)--(2.5,.5)--(2.5,1);

\draw[rounded corners,  line width = 1.3pt, blue] (11.5,4)--(11.5,4.5)--(12.5,4.5)--(12.5,3.5)--(11.5,3.5)--(11.5,4);

\draw[rounded corners,  line width = 1.3pt, red] (2,1.5)--(2.5,1.5)--(2.5,2.5)--(3.5,2.5)--(4.5,2.5)--(4.5,1.5)--(4.5,.5)--(3.5,.5)--(3.5, - .5) -- (2.5,-.5)--(1.5,-.5)--(1.5,.5)--(1.5,1.5)--(2,1.5);

\draw[rounded corners,  line width = 1.3pt, red] (6.5,.5)--(7.5,.5)--(7.5,1.5)--(6.5,1.5)--(5.5,1.5)--(5.5,.5)--(6.5,.5);

\draw[rounded corners,  line width = 1.3pt, blue] (7.5,5)--(7.5,5.5)--(8.5,5.5)--(9.5,5.5)--(9.5,4.5)--(9.5,3.5)--(8.5,3.5)--(8.5,4.5)--(7.5,4.5)--(7.5,5);

\end{tikzpicture}
\vspace{-.1cm}
\caption{An Ising configuration with $\pm$-boundary conditions. The dual-edges separating differing spins are depicted; in black is the \emph{interface}, $\mathscr I$.}
\label{fig.interface}\vspace{-.2cm}
\end{figure}
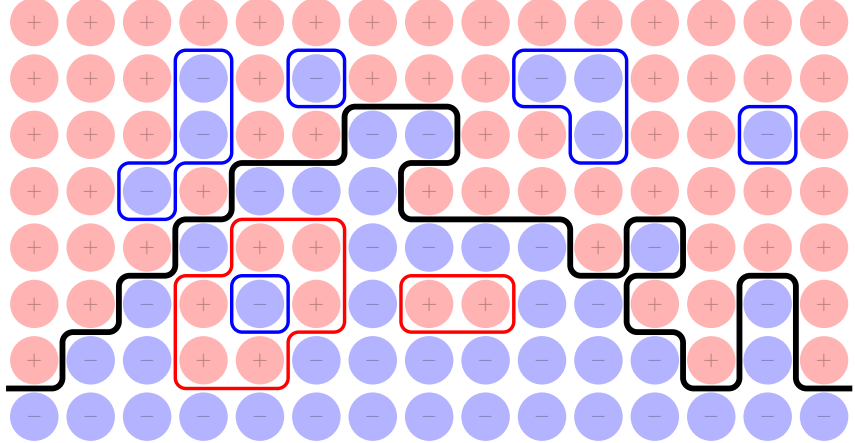

Before proceeding let us define the two-dimensional Ising model and formalize the setting we are interested in. Let $\mathbb Z^2$ denote the integer lattice graph with vertices at $\mathbb Z^2$ and with nearest neighbor edges $E_{\mathbb Z^2}$. Consider the box $\Lambda_N$ of side-length $N$ with vertices 
\begin{align*}
\Lambda_N : = \llb 0 ,N\rrb \times \llb 0 ,N \rrb  = \{0,...,N\} \times \{0,...N\}\,,
\end{align*}
and with nearest-neighbor edges $E_N$. A boundary condition $\eta$ on $\Lambda_N$ is an assignment of $\{\pm 1\}$ \emph{spins} to the vertices of $\mathbb Z^2 \setminus \Lambda_N$.  
The Ising model on $\Lambda_N$ with boundary conditions $\eta$, at inverse-temperature $\beta$ with external field $\lambda$, is the probability measure on configurations $\sigma \in \{\pm 1\}^\Lambda$, 
\begin{align*}
    \mu_{\beta,\lambda,N}^\eta(\sigma)= \frac 1{Z_{\eta,\beta,\lambda,N}} \exp\Bigg(- 2\beta\Big( \sum_{(v,w)\in E_N} \mathbf 1\{\sigma_v\neq \sigma_w\} + \sum_{\substack{(v,w)\in E_{\mathbb Z^2} \\  v\in \Lambda_N , w\notin \Lambda_N}} \mathbf 1\{ \sigma_v\neq \eta_w\} \Big) + \lambda \sum_{v\in \Lambda_N} \sigma_v\Big)\Bigg)\,,
\end{align*}
where $Z_{\eta,\beta,\lambda,\Lambda}$ is a normalizing constant called the \emph{partition function}. The 2D Ising model at $\lambda = 0$ has a sharp phase transition at a critical point $\beta_c$ roughly described as follows. At high temperatures, when $\beta<\beta_c$, under $\eta = +$ boundary conditions, the effect of the boundary decays exponentially in the distance and e.g., the center site $(N/2,N/2)$ has probability $\frac{1}{2} \pm e^{ - \Omega(N)}$ of being plus or minus. At low temperatures,  $\beta>\beta_c$ under $\eta = +$ (resp., $\eta = -$), the effect of the boundary does not decay in $N$, and uniformly in $N$, the center site $(N/2,N/2)$ is more likely to be plus (resp., minus) than not; the plus and minus \emph{phases} are both stable. 

In this paper, we are interested in the low-temperature regime under \emph{Dobrushin boundary conditions}, $\eta = \pm$, which are $+$ on the upper half plane $(\mathbb Z\times\mathbb Z_{\geq 0}) \setminus \Lambda$ and $-$ elsewhere on $\mathbb Z^2 \setminus \Lambda$. 
The boundary condition induces an \emph{interface} separating the plus and minus phases (see Figure~\ref{fig.interface}), i.e., for a configuration $\sigma$, define the \emph{interface}, denoted by ${\mathscr I}={\mathscr I}(\sigma)$, as the simple path ({using the usual south-east splitting rule---see Section~\ref{subsec:Ising-interface} for a formal definition}), amongst the set of dual-edges separating differing spins in $\sigma$, that splits the plus boundary from the minus boundary.

When $\beta>\beta_c$ and $\lambda = 0$ the behavior of $\mathscr I$ is diffusive and scales to a Brownian bridge~\cite{Hryniv98,DH97,GrIo05,InvarianceFloor2020}: in particular, on $\Lambda_N$, it has $O(\sqrt N)$ height fluctuations, and its maximum height scales like $O(\sqrt N)$ as well. In fact, $\sI$ can be coupled to an approximate random walk, as encoded by the powerful \emph{Ornstein--Zernike theory} of~\cite{CIV03}. Here, the interface is \emph{fully wetted}, as the predominant plus phase is separated from the minus phase by a mesoscopic layer of minuses induced by the entropic repulsion of the minus floor.   In the presence of an order one field ($\lambda>0$ uniformly in $N$), the interface fluctuations become $O(1)$ and the interface is only \emph{partially wetted}. The focus of this paper is the transition between these two regimes as $\lambda = \lambda_N$ decays to zero with $N$. In this regime, there is a delicate tradeoff between the entropic repulsion from the minus floor, and the positive external field; balancing these two effects, it has been predicted that the interface height diverges continuously as $\Theta(\lambda^{-1/3})$: this is called the \emph{critical pre-wetting regime} and is believed to have the Ferrari--Spohn diffusion as its scaling limit in local $\lambda^{-2/3} \times \lambda^{-1/3}$ windows~\cite[Conjecture 3.2]{IoVe16}.

In a breakthrough paper \cite{Velenik}, Velenik made significant progress in the understanding of Ising interface in this critical pre-wetting regime. Namely, he showed that the interface has $\lambda^{-1/3}$ height oscillations in the following averaged sense: the area confined by $\mathscr I$ has size at least $N \lambda^{-1/3} (\text{polylog}(1/\lambda))^{-1}$ and the number of minus sites in the component of the southern boundary (presumably a proxy for the area confined by $\mathscr I$), has size at most $N\lambda^{-1/3} \text{polylog}(1/\lambda)$.
The work also introduced a number of new techniques that were the starting point of our paper and have proven to be useful in several follow up works. Since~\cite{Velenik}, very refined results on random walk approximations to the critical pre-wetting Ising interface have been obtained; these include the local behavior and sharp Tracy--Widom type tails, and subsequently the  convergence to Ferrari--Spohn diffusion for a universal family of random-walks under area tilts~\cite{HrVe04,ISV15}.
These random walk models admit various important structural properties that do not hold for the Ising interface: 
\begin{enumerate}
 \item Their interfaces are graphs of functions whereas $\mathscr I$ is a simple curve (and may backtrack).
  \item They are distributions only over the interface whereas the Ising measure also depends on the configuration above and below the interface.
  \item Most importantly their interfaces are Markovian, whereas due to (1)--(2), the law of $\mathscr I$ does not satisfy any domain Markov property.
\end{enumerate}
In this paper, following the program initiated in~\cite{Velenik}, we develop a technology to overcome these obstacles and extend the universality results of~\cite{HrVe04,ISV15} to the Ising interface. Our paper 
resolves a number of open questions including establishing sharp Tracy--Widom type tails for the normalized one-point height oscillation---strong evidence for the conjectured convergence to the FS diffusion in local windows---and the global order of the area under and maximum height oscillation of $\mathscr I$.
Our proofs merge several approaches, combining inputs from the, by now traditional, \emph{random-line representation} of 2D Ising interfaces via duality with high temperature two-point functions, with local resampling and coupling methods; in particular, we do not use the full strength of the Ornstein--Zernike machinery of~\cite{CIV03}, instead using percolation-theoretic arguments to decouple long-range dependencies in the interface. We elaborate on our approach in Section~\ref{sec:ideas-of-proofs}.

\subsection{Statements of main results}  In this section,  we describe our main results on the 2D Ising interface in its critical pre-wetting regime on $\Lambda_N$. For concreteness and ease of presentation, we take $\lambda$ going to zero as $\Theta(N^{-1})$, which is also the critical scale for the bulk magnetization in the presence of an external field~\cite{Martirosyan,ScShJSP,ScShCMP}. Namely, we take $\lambda = \lambda_N$ such that $N\lambda$ is constant and for distinctness, reserve the notation $c_\lambda>0$ for that constant. In the regime where $\lambda \downarrow 0$ more generally, the critical pre-wetting behavior should depend continuously on $\lambda$ and many of our arguments should generalize to a wider range of $\lambda \downarrow 0$ straightforwardly, though we expect them to  break down at $\lambda$ decreasing sufficiently slowly.

\medskip
\noindent
\textbf{Height of the interface.}  Recall that $\mathscr I$ is the (edge-disjoint) path, amongst the dual-edges separating differing spins, connecting $(-\frac 12,- \frac 12)$ to $(N+\frac 12, - \frac 12)$ (see Definition~\ref{def:Ising-interface}). We wish to assign a \emph{height} to the interface above each index point $x \in \llb 0,N\rrb$. Since $\mathscr I$ may backtrack, such a height is a priori ill-defined. As such, we consider the lowest and highest intersection points of $\mathscr I$ with the column $\{x\} \times \mathbb R$ and denote them by $\hgt_x^- = \hgt_x^-(\mathscr I)$ and $\hgt_x^+ = \hgt_x^+(\mathscr I)$ respectively. 

\subsection{Sharp upper tail behavior}
Our first result is the sharp upper tail for the height of $\mathscr I$ above any given point rescaled by $N^{-1/3}$, analogous to the well known Tracy--Widom distribution. Many of the new ideas and work in this paper go into establishing these one-point tail estimates.   

\begin{theorem}\label{thm:one-point-tail-bounds}
Let $\beta>\beta_c$ and $\lambda = \frac{c_\lambda}{N}$ for $c_\lambda>0$ and fix any $\varepsilon>0$. There exist $c,C>0$ such that for every sequence $x= x_N \in \llb \varepsilon N, (1-\varepsilon)N \rrb$ and every $R= R_N = o(N^{2/9})$,  
 \begin{align*}
     ce^{-CR^{3/2}} & \leq \mu_{\beta,\lambda,N}^{\pm} (\hgt_x^- >RN^{1/3})  \leq  \mu_{\beta,\lambda,N}^\pm (\hgt_x^+> RN^{1/3}) \leq Ce^{-cR^{3/2}} 
\end{align*}
\end{theorem}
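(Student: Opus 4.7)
The $e^{-\Theta(R^{3/2})}$ rate arises from balancing two costs for an excursion of $\mathscr{I}$ of height $h=RN^{1/3}$ and horizontal length $\ell$: a Brownian entropy cost of order $h^2/\ell$ to rise to height $h$ and return, and an area/field cost of order $\lambda h \ell = c_\lambda R \ell N^{-2/3}$ from the field acting on the minus region additionally trapped under $\mathscr{I}$. Minimizing in $\ell$ gives the optimal scale $\ell^\ast \asymp R^{1/2} N^{2/3}$ (which is $o(N)$ exactly because $R=o(N^{2/9})$, leaving buffer at the two ends) and total cost $\Theta(R^{3/2})$. Both bounds in the theorem should follow this picture.

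\textbf{Upper bound.} The first step is a dyadic decomposition: if $\hgt_x^+>RN^{1/3}$, then by splitting the excursion of $\mathscr{I}$ straddling $x$ at halving levels, one finds a dyadic scale $\ell$ on which $\mathscr{I}$ stays above height $RN^{1/3}/2$ throughout a window of length $\ell$ centered near $x$. For each $\ell$, the plan is to bound the probability of this plateau event by writing the ratio of constrained to unconstrained partition functions via the random-line representation of $\mathscr{I}$, in which $\mathscr{I}$ is weighted by a product of high-temperature two-point function factors and a field term $e^{-\lambda|A|}$ for the area $|A|$ enclosed below $\mathscr{I}$. Percolation-type decoupling at a mesoscopic scale---in place of the full Ornstein--Zernike machinery---would let me treat $\mathscr{I}$'s successive cone-points as an approximate random walk with exponentially decaying step increments. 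Standard moderate-deviation estimates then yield an $\exp(-ch^2/\ell)$ factor for maintaining height $h$ over length $\ell$, while the field term contributes $\exp(-c\lambda h \ell)$; summing over dyadic $\ell\in[1,N]$ and optimizing in $\ell$ gives $\exp(-cR^{3/2})$.

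\textbf{Lower bound.} The strategy is to exhibit an explicit event of probability at least $ce^{-CR^{3/2}}$ on which $\hgt_x^->RN^{1/3}$. Take the window $W$ of width $\ell^\ast$ centered horizontally at $x$. On the $\Omega(1)$-probability event that $\mathscr{I}$ enters and leaves $W$ through its vertical sides at heights of order $N^{1/3}$, I would construct, via a local resampling argument, a competing Ising configuration in which $\mathscr{I}$ instead rises to height $2RN^{1/3}$ and remains there through the center of $W$. Comparing the random-line weights of the two local configurations, the entropic cost is $\exp(-c(RN^{1/3})^2/\ell^\ast)=\exp(-cR^{3/2})$ and the additional area cost is $\exp(-c\lambda RN^{1/3}\ell^\ast)=\exp(-cR^{3/2})$, both matching the desired bound. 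An a-priori polynomial upper bound on the conditional probability of any fixed interface geometry, proven separately, converts this comparison into a lower bound on the probability.

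\textbf{Main obstacle.} The central difficulty is the failure of the domain Markov property: the law of $\mathscr{I}$ depends on the Ising configuration in the bulk above and below, so one cannot simply resample $\mathscr{I}$ inside $W$ after conditioning on its complement. Following the program outlined in Section~\ref{sec:ideas-of-proofs}, I would bypass this by resampling the Ising spins (and reconstructing $\mathscr{I}$ from them) inside a mesoscopic buffer around $W$, using percolation estimates to show that boundary-influence on the resampled region is exponentially small. The most delicate point is uniformly tracking the area penalty across all dyadic $\ell$ in the upper bound and sustaining the random-walk approximation up to the optimal scale $\ell^\ast$; this refinement from Velenik's global area estimate to a sharp one-point statement is what drives the appearance of the $3/2$ exponent.
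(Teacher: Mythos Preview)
Your heuristic correctly identifies the optimal scale $\ell^\ast\asymp\sqrt{R}N^{2/3}$ and the $R^{3/2}$ exponent, but there is a genuine gap in the upper bound: you assume the field acts on the interface as a simple area tilt $e^{-\lambda|A|}$. This is precisely the property that distinguishes the Ising interface from the SOS/random-walk models already handled in the literature, and it is \emph{not} available to you. The random-line representation is only defined at $\lambda=0$ (Section~\ref{subsec:random-line-representation}); with a field, the tilt on the law of $\mathscr I$ is exponential in the difference of expected magnetizations under the constrained and unconstrained measures (see~\eqref{eq:part-function-ratio-magnetization-integral}), not in $\lambda|\Lambda^-|$. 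Turning this into an effective area cost is a substantial part of the paper's work (Lemma~\ref{lem:tshape-enclosing-large-area}): one must first localize the interface to $\Psi_\varepsilon^c$ so that $|\tshape^-|$ is controlled, then tile $\Lambda_{ij}^H$ and on each tile couple the magnetization to the infinite-volume minus measure via minus circuits in annuli, controlling the Radon--Nikodym distortion from the field on the relevant domain and invoking surface-order large deviations. Your proposal treats this as a given, which is the step that would fail.

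A secondary issue: your dyadic/union-bound approach over $\ell$ would a priori cost a polynomial prefactor (there are order $N^{2/3}$ possible stopping intervals at the critical scale). The paper avoids this by rounding the random stopping domain $I_\star$ to a deterministic mesh of $N^{2/3}$-spaced columns, and then using the a priori regularity (spikiness) estimates of Section~\ref{subsec:spikiness} with the carefully graded heights $\mathcal H_{i+j}$ of~\eqref{eq:H-i} to ensure the union bound over the mesh contributes only a constant. Your proposal does not address how the dyadic sum avoids this prefactor.

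Finally, your lower bound is over-engineered. The paper's argument (Section~\ref{sec:right-tail-lb}) is much simpler: by monotonicity, $\mu_{\lambda,N}^\pm$ restricted to the box $I\times\llb 0,3RN^{1/3}\rrb$ (with $|I|=2\sqrt{R}N^{2/3}$) is stochastically below $\mu_{\lambda,\Lambda_{I,3H}}^\pm$; remove the field on this box via~\eqref{eq:radon-nikodym-bound} at cost $e^{-O(R^{3/2})}$; then apply a direct Gaussian lower bound (Lemma~\ref{lem:Gaussian-lower-bound}) for the no-field interface in a strip. No conditioning on entry/exit data and no resampling are needed.
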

While the above is written for all $x$ in the bulk, we in fact prove stronger versions, holding for all $x$ in the upper bound and closer to the boundary in the lower bound: see Propositions~\ref{prop:right-tail-ub}--\ref{prop:right-tail-lb}. 

This tail behavior follows from a picture of the interface wherein it has heights that are of order $N^{1/3}$ and its correlations decay on the $O(N^{2/3})$ scale. The typical behavior of $\mathscr I$, when it attains a height of $RN^{1/3}$ above $x$ is to be lifted on a horizontal window of $\sqrt R N^{2/3}$, whereby the locally Brownian oscillations of the interface dictate the above tail behavior. This behavior is characteristic of the upper tail of locally Brownian processes above either a hard or soft floor including the top curve of the DBM, FS diffusions, and the random walk and SOS models under linear area tilts.

\subsubsection{Area confined under ${\mathscr I}$:} 
Our next result harnesses the above one-point upper tail to deduce sharp global behavior of the interface $\sI$. Viewing $\mathscr I$ as a simple curve in $\mathbb R^2$ connecting the corners $(-\frac 12,- \frac 12)$ to $(N+\frac 12, - \frac 12)$, we denote by $\Lambda^+ = \Lambda^+({\mathscr I})$ the subset of $\Lambda_N$ ``above" the interface ${\mathscr I}$, and by $\Lambda^- = \Lambda^-({\mathscr I})$ the subset ``below" the interface (see Definition~\ref{def:Lambda-minus}). For a configuration~$\sigma$, let  $\mathcal C^-= \mathcal C^-(\sigma)$ be the connected component of $-$ spins containing the bottom boundary of~$\Lambda_N$.

As mentioned, it was shown in~\cite{Velenik} that the area under $\mathscr I$, i.e., $|\Lambda^-(\mathscr I)|$, is at least order $N^{4/3} (\log N)^{-C}$, and the size of the minus component of the bottom boundary, i.e., $|\mathcal C^-(\sigma)|$ is at most order $N^{4/3} (\log N)^C$. Using Theorem~\ref{thm:one-point-tail-bounds} one already deduces tightness of the rescaled area without the extraneous polylogarithmic prefactors. We further prove a multi-point version of the upper bound of Theorem~\ref{thm:one-point-tail-bounds}, and show (almost) sharp upper and lower tail bounds on $|\Lambda^-|$ at the $N^{4/3}$ scale.   
The following proves Conjecture 3.1 of the survey~\cite{IoVe16}.

\begin{theorem}\label{thm:area-under-interface}
Let $\beta>\beta_c$ and $\lambda = \frac{c_\lambda}{N}$ for $c_\lambda>0$. The sequence of areas confined under $\mathscr I$, i.e.,
\begin{align*}
\big(N^{-4/3} |\Lambda^-(\mathscr I)|\big)_{N\geq 1}\qquad \mbox{is uniformly tight under $\mu_{\beta,\lambda,N}^{\pm}$}\,.
\end{align*}
Moreover, there exists $K(\beta, c_\lambda)>0$ such that 
\begin{align}\label{eq:area-tail-bounds}
\mu_{\beta, \lambda, N}^{\pm} \big(  N^{-4/3} | \Lambda^-(\sI)|\notin [K^{-1},K]\big) \leq \exp ( - N^{\frac 13 -o(1)})\,.
\end{align}
\end{theorem}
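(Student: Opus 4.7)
The plan is to lift the one-point estimates of Theorem~\ref{thm:one-point-tail-bounds} to a global area bound by a block decomposition on the horizontal correlation scale. Partition the bulk of $\llb 0,N\rrb$ into $n = \Theta(N^{1/3})$ disjoint blocks $I_1,\ldots,I_n$ of width $\ell \asymp N^{2/3}$, handling separately a negligible margin near the vertical walls. For each block, set the envelope maximum $\overline H_j := \max_{x \in I_j} \hgt_x^+$ and the block-lifting indicator $\xi_j := \mathbf 1\{\hgt_x^- \geq \kappa N^{1/3}$ for all $x$ in the middle third of $I_j\}$, for a small constant $\kappa > 0$. These sandwich the area as
\[
  \tfrac{\ell \kappa}{3}\, N^{1/3} \sum_{j=1}^n \xi_j \;\leq\; |\Lambda^-(\sI)| \;\leq\; \ell \sum_{j=1}^n \overline H_j + o(N^{4/3}),
\]
reducing \eqref{eq:area-tail-bounds} to concentration of $\sum_j \overline H_j$ and $\sum_j \xi_j$ around constant multiples of $n$; the tightness half of the theorem follows automatically once these tail bounds are in place (combined with a first-moment computation).

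For the upper side of \eqref{eq:area-tail-bounds}, I would first promote the one-point upper tail to a block-maximum tail $\mu_{\beta,\lambda,N}^\pm(\overline H_j > RN^{1/3}) \leq Ce^{-cR^{3/2}}$ uniformly in $j$, by chaining the one-point bound across a $\log N$-sized mesh of points in $I_j$ together with the near-Brownian local fluctuations on the horizontal scale $N^{2/3}$. Integrating already yields $\mathbb E[|\Lambda^-|] \leq CN^{4/3}$, so Markov gives upper-side tightness. To upgrade to the sharp rate $\exp(-N^{1/3-o(1)})$, one needs approximate independence across blocks: via the random-line representation and the percolation-theoretic decorrelation announced in the introduction, two blocks separated by $\geq 2\ell$ can be decoupled up to an exceptional event of probability $\exp(-N^{\Omega(1)})$, absorbed into the $o(1)$ of the exponent. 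The rescaled $\overline H_j/N^{1/3}$ are then weakly dependent sub-Weibull$(3/2)$ variables, and a Chernoff/Bernstein bound on $\sum_j \overline H_j$ delivers $\mu^{\pm}_{\beta,\lambda,N}(\sum_j \overline H_j > KnN^{1/3}) \leq e^{-c(K)n}$, as required.

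For the lower side, the one-point lower bound of Theorem~\ref{thm:one-point-tail-bounds} at constant $R$ provides $\mu_{\beta,\lambda,N}^\pm(\hgt_{x_j}^- \geq \kappa N^{1/3}) \geq c$ at the center $x_j$ of each bulk block. This point-wise lifting is boosted to the window-wise statement $\mu_{\beta,\lambda,N}^\pm(\xi_j = 1) \geq c'$ using a local Brownian-type regularity argument at scale $\sqrt\ell$ (shrinking $\kappa$ by a constant factor if necessary). Applying the same random-line decoupling to the Bernoullis $\xi_j$ and a Chernoff bound for weakly dependent Bernoullis yields $\mu_{\beta,\lambda,N}^\pm(\sum_j \xi_j < c'n/2) \leq e^{-c''n}$, and on the complement the lower sandwich above directly gives $|\Lambda^-| \gtrsim N^{4/3}$.

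The main obstacle throughout is this block-decoupling step: since $\sI$ is \emph{non-Markovian} (as emphasized in the introduction), approximate independence across horizontally separated blocks is not automatic and must be engineered by hand using the random-line representation together with the local resampling/coupling schemes announced in the paper. Without such decoupling, only Chebyshev-type polynomial rates are available, falling well short of the target $\exp(-N^{1/3-o(1)})$. A secondary but more routine difficulty is the point-to-window boost of Step~3, which should be handled via local Brownian-type coupling of the interface on horizontal scale $\sqrt\ell$.
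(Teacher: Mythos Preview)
Your overall architecture is reasonable, but the two places you flag as obstacles are exactly where your proposal diverges from the paper and where it does not actually go through as written.

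\textbf{Upper tail.} You reduce to concentration of $\sum_j \overline H_j$ and invoke an unspecified ``random-line decoupling'' to treat the blocks as approximately independent sub-Weibull variables. The paper does \emph{not} establish any such direct block-independence lemma, and it is unclear how one would: the multi-strip enlargement couplings (Propositions~\ref{prop:multi-strip-minus-enlargement}--\ref{prop:multi-strip-plus-enlargement}) only compare conditional measures to product measures \emph{after} conditioning on entry/exit data, which is not the same as making the unconditional block maxima nearly independent. What the paper actually does is prove a separate large-deviation estimate for the \emph{number} of mesh points at which $\hgt_x^+>RN^{1/3}$ (Theorem~\ref{thm:multipoint-height-bounds}), via a stopping-domain decomposition that partitions the high points according to which common excursion above $RN^{1/3}/2$ they belong to; points in the same excursion force large enclosed area (Lemma~\ref{lem:tshape-enclosing-large-area}), while points in different excursions are decoupled via the multi-strip enlargements. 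The area bound then follows by layering over $R$ with $\alpha_R=R^{-1-\delta}$. This is a genuinely different mechanism from ``blocks are approximately independent,'' and your Chernoff step as stated has no input to feed on.

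\textbf{Lower tail.} Here you again appeal to decoupling of the $\xi_j$, but the paper avoids this entirely by a monotonicity trick you have overlooked: forcing plus spins along the mesh columns $\{x_i\}\times\llb 0,N\rrb$ only \emph{raises} the configuration, hence only lowers $|\Lambda^-|$, and renders the strips genuinely independent. One then works under the no-field measure on each strip (using Brownian-bridge convergence to get a uniform positive probability of confining area $\gtrsim N$), applies binomial concentration, and finally transfers back to the field measure via the global tilt estimate (Theorem~\ref{thm:effect-of-global-tilt}); crucially the latter costs only $e^{C_1 N^{1/3}}$, which is beaten by taking the strip width $\varepsilon N^{2/3}$ with $\varepsilon$ small. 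Your route instead needs a point-to-window boost for $\xi_j$ under the \emph{field} measure and then a decorrelation step, neither of which is available.
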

The $o(1)$ in the exponent is an artifact of the proof of the upper tail, and one expects that the above holds without it. As mentioned, an important ingredient of the proof of the upper bound is a concentration bound for the number of high points of the interface (see Thorem~\ref{thm:multipoint-height-bounds}). It is worth emphasizing that this is obtained by developing a machinery to prove exponential correlation decay in the interface heights at the critical order $N^{2/3}$ widths. 
We discuss this further in Section \ref{sec:ideas-of-proofs}.

\subsubsection{Minus component of the boundary} Using Theorem~\ref{thm:area-under-interface}, we resolve the distinction drawn in~\cite{Velenik} between the area confined under $\mathscr I$, denoted $|\Lambda^-|$, and the size of the connected minus component of $\partial \Lambda^-$, i.e.,~$|\mathcal C^-|$. That $|\mathcal C^-|\leq |\Lambda^-|$ is immediate from set inclusion, and hence the following lower bound  shows that with high probability $|\mathcal C^-|$ and $|\Lambda^-|$ are comparable,, thereby resolving~\cite[Conjecture 1]{Velenik}.  Our proof of comparability goes by  proving separately, that $|\mathcal C^-|$ and $|\Lambda^-|$ are individually of order $N^{4/3}$. However, as emphasized in~\cite{Velenik}, the potentially irregular geometry of $\Lambda^-$ complicates the understanding of the competition between the positive external field and the stability of the minus phase at $\beta>\beta_c$ inside $\Lambda^-$ causing it to be delicate to prove a comparability result directly. Nonetheless, such an approach indeed can be made to work, even conditionally on the interface (this was implemented in version 1 of the paper on arXiv).

\begin{theorem}\label{thm:comparability-whp}
Let $\beta>\beta_c$ and $\lambda = \frac{c_\lambda}{N}$ for $c_\lambda>0$. There exist $C(\beta, c_\lambda), K(\beta)>0$  such that, 
\begin{align}\label{componentlowerbound}
\mu_{\beta,\lambda, N}^\pm \big(|\cC^-|<K^{-1} N^{4/3}\big)\leq C \exp\big( -  N^{1/3}/ C\big).\end{align}
Hence, combined with~\eqref{eq:area-tail-bounds}, there exists $\eta (\beta,c_\lambda)>0$ such that  
\begin{align}\label{comparability1}
\mu_{\beta,\lambda, N}^\pm \big(|\cC^-| \notin [\eta |\Lambda^-|, |\Lambda^-|] \big)\leq \exp\big( -  N^{\frac 13-o(1)}\big).\end{align}
\end{theorem}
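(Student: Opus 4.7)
The plan is to prove the lower bound \eqref{componentlowerbound} directly via stability of the minus phase inside $\Lambda^-$, and then to deduce the comparability \eqref{comparability1} by combining with the upper tail on $|\Lambda^-|$ provided by \eqref{eq:area-tail-bounds}. First I restrict to the event $\cG_1 := \{|\Lambda^-| \geq K^{-1} N^{4/3}\}$, which has probability at least $1 - \exp(-N^{1/3-o(1)})$. On $\cG_1$, $\Lambda^-$ is a large (but potentially irregular) subregion of $\Lambda_N$, bounded on the south by the southern boundary of $\Lambda_N$ (carrying $-$ spins) and on the north by the interface $\mathscr I$, whose inside-$\Lambda^-$ side carries forced $-$ spins by the definition of $\mathscr I$.

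Next I condition on $\mathscr I$, and hence on the shape of $\Lambda^-$. Modulo a mild FKG adjustment for the south-east splitting rule (addressed below), the domain-Markov property identifies the conditional law of the spins in $\Lambda^-$ with the Ising measure on $\Lambda^-$ with $-$ boundary and external field $\lambda = c_\lambda/N$. Since $\beta > \beta_c$, this measure has a stable minus phase with spontaneous magnetization $m^*(\beta) > 0$; moreover the field cost $\lambda|\Lambda^-| = O(N^{1/3})$ is dwarfed by the $\Omega(N)$ surface-tension cost associated with $\partial\Lambda^-$. A standard Peierls/contour argument then bounds the expected number of sites of $\Lambda^-$ trapped in finite $+$-clusters --- hence not in $\cC^-$ --- by $\bigl(\tfrac{1-m^*(\beta)}{2} + o(1)\bigr)|\Lambda^-|$, and exponential mixing of the FK representation at $\beta>\beta_c$ upgrades this mean bound to a concentration estimate: $|\cC^-| \geq \tfrac{m^*(\beta)}{2}K^{-1} N^{4/3}$ except on a conditional event of probability at most $\exp(-c|\Lambda^-|) \leq \exp(-cN^{4/3})$, far beyond the required tail.

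Integrating this conditional bound over $\mathscr I$ on $\cG_1$ and adding $\prob(\cG_1^c)$ yields \eqref{componentlowerbound}. For \eqref{comparability1}, further intersect with $\{|\Lambda^-|\leq K N^{4/3}\}$ from \eqref{eq:area-tail-bounds}: on this event $|\cC^-| \geq K^{-1} N^{4/3} \geq K^{-2}|\Lambda^-|$, so one can take $\eta = K^{-2}$, with the tail $\exp(-N^{1/3-o(1)})$ inherited from Theorem~\ref{thm:area-under-interface}.

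The main obstacle I anticipate is making the Peierls/FKG step work uniformly over the highly irregular shapes that $\Lambda^-$ can assume: $\mathscr I$ may backtrack, creating narrow pinches and multiply-connected pieces of $\Lambda^-$, and the south-east splitting rule at degree-4 dual vertices along $\mathscr I$ ties a few adjacent $\Lambda^+$-side spin values to those in $\Lambda^-$, so that the conditional measure is not literally Ising with $-$ boundary. Both complications can be handled using the detailed geometric information about $\mathscr I$ developed elsewhere in the paper --- the one-point tails of Theorem~\ref{thm:one-point-tail-bounds} and the multipoint height concentration of Theorem~\ref{thm:multipoint-height-bounds} --- which imply that the boundary complexity of $\Lambda^-$ is of order $N$ up to polylogarithmic corrections, and that the pinches and degree-4 vertices together contribute only a $o(|\Lambda^-|)$ defect to the Peierls estimate.
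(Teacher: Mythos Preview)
Your deduction of \eqref{comparability1} from \eqref{componentlowerbound} and \eqref{eq:area-tail-bounds} matches the paper exactly. For \eqref{componentlowerbound}, however, your route is genuinely different from the paper's published proof. You condition on $\mathscr I$ and argue phase stability of the minus phase inside the (possibly irregular) region $\Lambda^-$; the paper instead uses monotonicity to insert plus columns at spacing $N^{2/3}$ and a plus ceiling at height $TN^{1/3}$, thereby lower bounding $|\mathcal C^-|$ by a sum of i.i.d.\ contributions from $N^{1/3}$ independent $N^{2/3}\times TN^{1/3}$ boxes. In each box the field is removed by a Radon--Nikodym factor $e^{O(T)}$, the interface is shown via Brownian-bridge input to confine a deterministic sub-box $\Delta_{2\delta}$ with uniformly positive probability, and inside $\Delta_{2\delta}$ the minus cluster is controlled by the no-field $\mu_{0,\mathbb Z^2}^-$ surface-order large deviations. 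The paper's approach never conditions on $\mathscr I$ and so completely sidesteps the irregular-geometry issues you flag; your approach is precisely the ``direct'' one the paper says can be made to work and was in fact the argument in v1 of the arXiv posting.

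Two points in your sketch would need repair. First, the claim $\lambda|\Lambda^-|=O(N^{1/3})$ presupposes $|\Lambda^-|=O(N^{4/3})$, which you have not imposed on $\mathcal G_1$; you would need to additionally restrict to an event like $\{\max_x \hgt_x^+\le \varepsilon N^{2/3}\}$ (which by~\eqref{eq:max-height-fluctuation-with-field} has the required $e^{-N^{1/3}/C}$ tail) to keep the field from destabilizing the minus phase. Second, the concentration rate $\exp(-c|\Lambda^-|)$ you assert for $|\mathcal C^-|$ is volume-order, whereas low-temperature cluster sizes have only surface-order large deviations; the correct rate (in a box of side $n$) is $e^{-cn}$, which after the geometric inputs you invoke would still suffice but is a substantially weaker statement than what you wrote.
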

Again, the tail in the second display is not quite optimal and inherits the $o(1)$ factor from~\eqref{eq:area-tail-bounds}. 

\subsubsection{Maximum height fluctuations of the interface} 
Finally, we turn to analyzing the asymptotics of the maximum height fluctuations of the interface.  In the fully wetted regime where $\lambda =0$ and the interface is diffusive with order $\sqrt N$ fluctuations, the maximum fluctuation is typically of order~$\sqrt N$, (e.g., combining~\cite[Theorem 5.3]{LMST13} with a simple coupling as explained in the discussion preceding Corollary~\ref{cor:max-height-fluctuation-floor}). 
Since the correlations in the interface decay on scales that are $o(N)$, in the critical pre-wetting regime there is an additional poly-logarithmic pre-factor in the maximum height of the interface relative to the typical height of $N^{1/3}$. Using the sharp tail bounds of Theorem~\ref{thm:one-point-tail-bounds}, we identify the asymptotics of this maximum height to be of order $N^{1/3} (\log N)^{2/3}$.   Sharp poly-logarithmic pre-factors for maximum fluctuations in the context of large subcritical clusters in FK percolation were obtained in \cite{hammond1, hammond2}. 

\begin{theorem}\label{thm:max-tightness}
Let $\beta>\beta_c$ and $\lambda = \frac{c_\lambda}{N}$ for $c_\lambda>0$. There exists $K(\beta, c_\lambda)>0$ such that 
\begin{align*}
    \mu_{\beta,\lambda,N}^\pm \Bigg(\frac{1}{N^{1/3}(\log N)^{2/3}} \max_{x\in \partial_\south \Lambda_N} \hgt_x^+\notin [K^{-1},K]\Bigg) \to 0\qquad \mbox{as $N\to\infty$}\,.
\end{align*}
In particular, $(N^{-1/3}(\log N)^{-2/3} \max_{x\in \partial_\south \Lambda_N} \hgt_x^+)_{N\geq 1}$ is uniformly tight.
\end{theorem}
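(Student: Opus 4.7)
\emph{Upper tail.} Set $R = K(\log N)^{2/3}$ and note that $R = o(N^{2/9})$ for any fixed $K$ once $N$ is large. Invoking the upper bound in Theorem~\ref{thm:one-point-tail-bounds} (in the uniform-in-$x$ form mentioned after its statement, cf.~Proposition~\ref{prop:right-tail-ub}),
\begin{align*}
\mu_{\beta,\lambda,N}^\pm\bigl(\hgt_x^+ > KN^{1/3}(\log N)^{2/3}\bigr) \leq C\exp(-cR^{3/2}) = CN^{-cK^{3/2}}
\end{align*}
for every $x \in \partial_\south \Lambda_N$. A union bound over the $N+1$ columns yields a total mass of order $N^{1 - cK^{3/2}}$, which tends to zero whenever $K$ is chosen large enough that $cK^{3/2} > 1$.

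\emph{Lower tail.} With $R' = K^{-1}(\log N)^{2/3}$, select $M \asymp N^{1/3}(\log N)^{-1/3}$ columns $x_1,\ldots,x_M \in \llb \varepsilon N, (1-\varepsilon)N \rrb$ pairwise separated by $L = C_0(\log N)^{1/3} N^{2/3}$, chosen larger than the horizontal scale $O(\sqrt{R'}N^{2/3})$ of the interface bump that realizes the height at $x_i$. Theorem~\ref{thm:one-point-tail-bounds} (cf.~Proposition~\ref{prop:right-tail-lb}) gives
\begin{align*}
\mu_{\beta,\lambda,N}^\pm(A_i) \geq c\exp(-CR'^{3/2}) = cN^{-CK^{-3/2}}, \qquad A_i := \bigl\{\hgt_{x_i}^- > K^{-1}N^{1/3}(\log N)^{2/3}\bigr\}.
\end{align*}
Choose $K$ so that $\alpha := CK^{-3/2} < 1/3$. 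Were the $A_i$ independent, $\mu(\bigcap_i A_i^c) \leq (1 - cN^{-\alpha})^M \leq \exp(-cN^{1/3 - \alpha}(\log N)^{-1/3})$, which combined with the upper bound would finish the proof.

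\emph{Decorrelation---the main obstacle.} The hard part is establishing this effective independence in the absence of a domain Markov property for $\mathscr I$. The plan is to use the percolation-theoretic decoupling techniques developed elsewhere in the paper (notably in the proof of Theorem~\ref{thm:multipoint-height-bounds}): reveal the Ising spin configuration outside the $M$ disjoint slabs $S_i = [x_i - L/2, x_i + L/2] \times [0,2K^{-1}N^{1/3}(\log N)^{2/3}]$, so that by the spin-level domain Markov property the restrictions of $\sigma$ to the $S_i$ become conditionally independent Ising measures. On the event that the revealed configuration is typical---meaning in particular that $\mathscr I$ enters and exits each $S_i$ through a short segment of its vertical sides, which holds with probability $1 - \exp(-N^{\Omega(1)})$ thanks to the $O(N^{2/3})$ correlation-length estimates---the conditional probability of $A_i$ inside $S_i$ can still be lower bounded by $cN^{-\alpha}$ by running the local resampling construction underlying Proposition~\ref{prop:right-tail-lb} with a suitable FKG monotone comparison absorbing the dependence on the (typical) boundary data. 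A telescoping product over $i$ then delivers
\begin{align*}
\mu_{\beta,\lambda,N}^\pm\Bigl(\bigcap_{i=1}^M A_i^c\Bigr) \leq \exp\bigl(-cN^{1/3 - \alpha}(\log N)^{-1/3}\bigr) + \exp(-N^{\Omega(1)}),
\end{align*}
which tends to zero, completing the proof of Theorem~\ref{thm:max-tightness}. I expect the most delicate step to be the uniform control of the conditional one-point lower tail over all typical exterior spin configurations, as this requires porting the (already intricate) unconditional construction of Proposition~\ref{prop:right-tail-lb} into a setting with arbitrary microscopic boundary data on $\partial S_i$.
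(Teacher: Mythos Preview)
Your upper tail argument is correct and is exactly what the paper does: apply the uniform one-point upper bound of Proposition~\ref{prop:right-tail-ub} with $R = K(\log N)^{2/3}$ and union bound over the $N$ columns.

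For the lower tail, however, you have created a difficulty that does not exist. The decorrelation step you flag as ``the main obstacle'' is in fact a one-line monotonicity argument, and the paper's proof uses no multi-point machinery at all. The events $A_i=\{\hgt_{x_i}^- > RN^{1/3}\}$ are \emph{decreasing} in the spin configuration. Hence, if one inserts plus spins on all sites outside a collection of disjoint boxes $\Lambda_i^R = \llb x_i - \sqrt R N^{2/3}, x_i + \sqrt R N^{2/3}\rrb \times \llb 0, 3RN^{1/3}\rrb$ (with $R=(\eta\log N)^{2/3}$ and mesh spacing $RN^{2/3}$), the resulting product measure $\bigotimes_i \mu_{\lambda,\Lambda_i^R}^{\pm}$ stochastically dominates the restriction of $\mu_{\lambda,N}^{\pm}$, and consequently $\max_i \hgt_{x_i}^-$ under $\mu_{\lambda,N}^{\pm}$ stochastically dominates the maximum of genuinely i.i.d.\ copies $M_i$. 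This is precisely the trick already used in the proof of Proposition~\ref{prop:right-tail-lb} (which you cite), now applied simultaneously on many disjoint boxes---see also the discussion around Figure~\ref{fig:lower-bound-fig}. On each box one removes the field at cost $e^{-O(R^{3/2})}$ via~\eqref{eq:radon-nikodym-bound} and applies Lemma~\ref{lem:Gaussian-lower-bound}, giving $\mu(M_i > RN^{1/3})\ge e^{-CR^{3/2}}=N^{-C\eta}$; for $\eta$ small this exceeds $N^{-1/3+c}$, and a binomial tail over the $N^{1/3}/R$ independent trials finishes the proof.

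So your proposal for the lower tail is not wrong in spirit, but the ``delicate step'' you leave open---controlling the conditional one-point lower tail uniformly over typical exterior spin data---is both genuinely hard as stated and entirely avoidable. The monotonicity shortcut is what makes all the lower bounds in this paper essentially immediate.
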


\subsection{Main ideas in the proofs}\label{sec:ideas-of-proofs}
 In this section we outline the key ideas going into our arguments emphasizing the interplay between inputs from the random-line representation, and more geometric and percolation-theoretic arguments relying on delicate local coupling and resampling arguments.  

\medskip
\noindent
\textbf{Measuring the effect of the tilt at global and local scales.}
Many of the sharp results on the low-temperature 2D Ising model up to $\beta_c$ rely on the random-line representation, which leverages the duality of the model in 2D to relate correlation functions at high temperatures to interfaces at low-temperatures. 
This powerful representation is only valid at $\lambda = 0$ and therefore, following~\cite{Velenik}, the first step to understand the measure $\mu_{\beta,\lambda,N}^\pm$ is to compare it to the no-field measure $\mu_{\beta,0,N}^\pm$. 

Although our focus is ultimately to understand the tilt induced by the external field on the law of the interface at local scales, we begin by proving the following useful improvement of \cite[Lemmas 3--4]{Velenik}, removing polylogarithmic factors in the exponent. 
We give an a priori bound on the distortion any set $\Gamma$ of interfaces face between the with and without-field measures. Whereas the distortion over the measure on Ising configurations may be of order $e^{ \lambda N^2} = e^{ c_\lambda N}$, the distortion over the marginal on the interface is much smaller. 

\medskip
\noindent
\textit{Global estimate (Theorem \ref{thm:effect-of-global-tilt}) :} 
Fix $\beta>\beta_{c}$ and let $\lambda= \frac{c_\lambda}{N}$ for $c_\lambda >0$. There exist
$C_{1}(\beta,c_\lambda)>0$ and $C_{2}(\beta,c_\lambda)>0$ such that for any set of interfaces~$\Gamma$,
\begin{align}\label{eq:sets-interface-tilt-intro}
\mu_{\beta,\lambda,N}^{\pm}(\Gamma)\leq & e^{C_{1}N^{1/3}}\mu_{\beta, 0,N}^{\pm}(\Gamma)+e^{-C_{2}N}\,.
\end{align}
This is proved by the following comparison of the finite-volume surface tensions (formally defined later in Section~\ref{subsec:surface-tension}) which is a refinement of~\cite[Lemma 3]{Velenik}:  
\begin{align}\label{eq:global-tilt-estimate-intro}
\frac{Z_{\pm,\beta,\lambda,N}}{Z_{+,\beta,\lambda,N}}\geq & \frac{Z_{\pm,\beta,0,N}}{Z_{+,\beta, 0,N}}e^{-CN^{1/3}}\,,
\end{align}
Eq.~\eqref{eq:global-tilt-estimate-intro} is proved by considering the contribution to the partition functions from interfaces that concatenate $N^{1/3}$ random walk bridges of length $N^{2/3}$, at their typical $N^{1/3}$ height and away from the bottom boundary, $\partial_\south \Lambda_N$. The re-weighting from the external field, of such interfaces can be shown to be at worst exponential in $\lambda$ times the area they confine, and therefore greater than  $\exp ( - C\lambda  N^{4/3})$}; moreover, the weight of such interfaces under the no-field measure is effectively the probability that a random walk excursion maintains height $O(N^{1/3})$, which is $\exp( - CN^{1/3})$. The fact that these two balance out when $\lambda = \Theta(1/N)$ suggests that the tilt in~\eqref{eq:sets-interface-tilt-intro} is optimal, and $N^{1/3}$ is the typical height of the interface under $\mu_{\beta,\lambda,N}^{\pm}$ as we explore next. 

\medskip
\noindent
\textit{Local estimate:} While the above controls coarse global quantities like the area under the interface, local behavior like the \emph{typical} height, and its upper tail behavior requires a more delicate \emph{local} version of~\eqref{eq:global-tilt-estimate-intro}. Consider a strip of width $L$ in $\Lambda_N$ for $L$ ranging between $N^{2/3}$ and the full $N$; unlike the global situation, the interface enters and exits this strip at some random height, say $H$. For technical reasons related to the lack of the Markov property for $\mathscr I$, and the existence of subcritical droplets all along the boundary of this strip, we move from this strip to a T-shaped enlargement $\tshape$ with its $(\pm,H)$ boundary conditions denoting plus above $H$ and minus below.      

\begin{figure}
\centering
\begin{tikzpicture}
\node at (0,0){
\includegraphics[width = .85\textwidth]{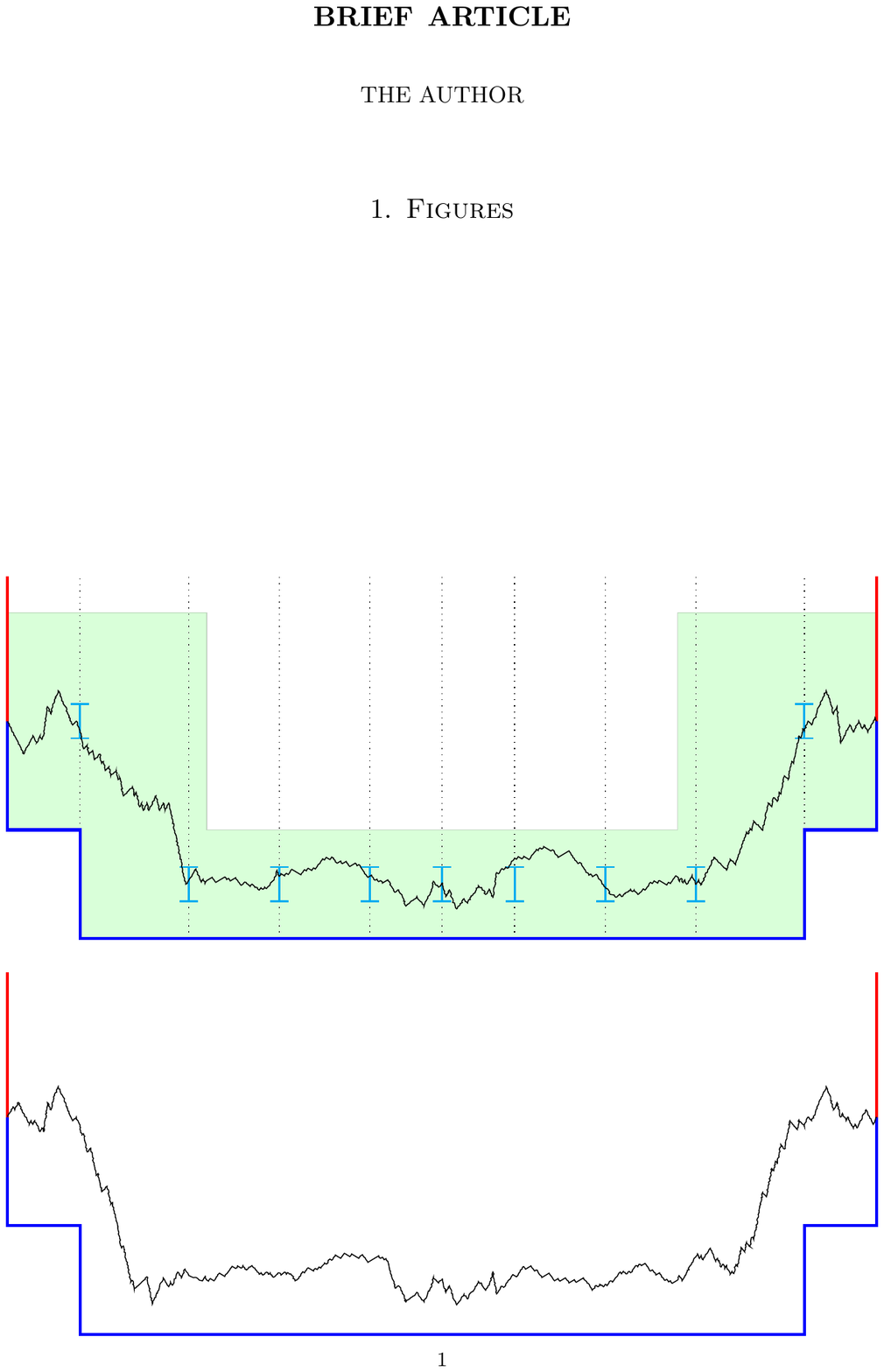}
};
\draw[<->, thick] (-7.1, -2.4)--(-7.1,1);
\draw [<->, thick] (-5.8,-2.55)--(5.8,-2.55); 
\node at (-7.4, -.7) {$H$};

\node at (0, -2.78) {$L$};

\draw[<->, thick] (6, -2.4)--(6,-1.4);

\node at (6.8, -1.9) {$O(N^{1/3})$};

\end{tikzpicture}
\caption{The T-shaped enlargement with its $(\pm,H)$ boundary conditions is the building block of our local estimates. When $H\gg N^{1/3}$, the local tilt on the surface tension from the external field is dominated by interfaces that quickly drop down to $O(N^{1/3})$ heights and do excursions at that height of width $\Theta(N^{2/3})$.}\label{fig:local-tilt-intro}
\end{figure}

We prove the local analogue of~\eqref{eq:global-tilt-estimate-intro} by considering the contribution to the partition functions from the class of interfaces depicted in Figure~\ref{fig:local-tilt-intro}. This indicates that the predominant contribution to the surface tension under a field comes from interfaces that quickly drop from height $H$ to height $O(N^{1/3})$ then do a series of independent excursions at the critical length scale of $N^{2/3}$. In particular, we deduce the following local analogue of the global tilt estimate, when the boundary conditions enter and exit $\tshape$ at height $H = \Omega(N^{1/3})$:  
 \begin{align}\label{eq:local-tilt-estimate-intro}
\frac{Z_{(\pm,H),\beta,\lambda,\tshape}}{Z_{+,\beta,\lambda,\tshape}} \geq \frac {Z_{(\pm,H),\beta,0,\tshape}}{Z_{+,\beta,0,\tshape}}\exp\Big(-C_{\beta,\lambda}\Big(LN^{-2/3} + \Big({H N^{-1/3}}\Big)^{3/2}\Big)\,.
\end{align}
A similar contribution from such interfaces (without the need for a T-shaped enlargement) was also found in the local behavior of simpler random walks and 1+1 SOS models under area tilts~\cite{ISV15,HrVe04}. 

\medskip
\noindent
\textit{From tilt estimates to area upper bounds.} While~\eqref{eq:sets-interface-tilt-intro} (and its local analogue) give a priori bounds on the tilt induced by $\lambda$ on \emph{any} set of interfaces, they do not capture the fact that the external field further penalizes interfaces that force a large number of minuses. Unlike the approximations to the critical pre-wetting interface, this cost is not exactly an area tilt: instead the tilt is exponential in $\lambda$ times the difference in expected magnetizations under $\mu_{\beta,\lambda,N}^{\pm}(\cdot \mid \Gamma)$ and $\mu_{\beta,0,N}^{\pm}$ (see Eq.~\eqref{eq:part-function-ratio-magnetization-integral}). 

Controlling the difference in expected magnetizations conditionally on a set $\Gamma$ of interfaces that confine area at least $A$ can be delicate (c.f.~\cite[Conjecture 1]{Velenik} and our Theorem~\ref{thm:comparability-whp}). One could in principle handle this using the reasoning developed in~\cite[Section 2.3]{IoffeSchonmann98} for concentration estimates on the magnetization;  for self-containedness we instead appeal to cruder arguments involving  coupling the magnetization to its contribution from  independent tiles on each of which the field effect is negligible. That difficulty notwithstanding, suppose we are in a situation where we can show that if the interface confines area at least $A$, it also has an excess of order $A$ many minus sites.  Then for large $K$, if $A$ is greater than $K N^{4/3}$ in the global case, or $ K (LN^{1/3} + (HN^{-1/3})^{3/2} N)$ in the local case, this cost beats the tilts of~\eqref{eq:global-tilt-estimate-intro}--\eqref{eq:local-tilt-estimate-intro}, and confining area $A$ is exponentially unlikely in $C \lambda A$ as one might expect.

\medskip
\noindent
\textbf{Local fluctuation bounds and consequences.} Bounding the one-point oscillations using the local estimate above is the heart of our paper; we start with a short heuristic for the $e^{ - \Theta(R^{3/2})}$ upper tail. Consider $x\in \llb 0,N\rrb$ and the event $\{\hgt^+_x\ge RN^{1/3}\}$. Let $\Lambda_I = I \times \llb 0,N\rrb$ be the stopping domain, defined by the inner-most interval $I \subset \partial_\south \Lambda_N$ containing $x$ for which the interface enters and exits $\Lambda_I$ below $H= R N^{1/3}/2.$ Let $|I|= K N^{2/3}$ for some $K.$  If we could somehow condition on the location of this interval and resample the configuration inside, the Brownian cost of reaching height $2H  = RN^{1/3}$ above $x$ decays as $e^{-\Theta({R^2}/{K})}$. The fact that $I$ was such a stopping domain, and therefore $\min_{y\in I} \hgt_y^- \geq H$ forces the interface in $\Lambda_I$ to confine area $RKN/2$. As explained in the above discussion, this induces a cost of $e^{ -c_\lambda RK/2}$ times the tilt factor from~\eqref{eq:local-tilt-estimate-intro} of $e^{ C_{\beta,\lambda} R^{3/2}}$. Putting these together, we would find that the probability of $\{\hgt_x^+\geq RN^{1/3}\}$ is at most
\begin{align*}
e^{-\Theta({R^2}/{K})} + e^{ - \Theta(RK)} e^{ \Theta(R^{3/2})}\,;
\end{align*}
an optimal choice of $K= T  \sqrt{R}$ for a sufficiently large $T$ yields the tail behavior of $e^{ - \Theta(R^{3/2})}$.  
 
Making this intuition precise entails handling several difficulties that are inherent to Ising interfaces. We refer to Section~\ref{sec:right-tail-ideas-of-proof} for a sketch of the proof of Theorem~\ref{thm:one-point-tail-bounds}, but mention some of these complications. The key step in formalizing the above heuristic is bounding the probability that $\Lambda_I$ has width larger than $T\sqrt R N^{2/3}$ for a large $T$. There are two difficulties in showing this: 
\begin{enumerate}
\item We would like to condition on the configuration on $\Lambda_N \setminus \Lambda_I$, revealing the entry and exit points of the interface of $\Lambda_I$, and subsequently apply the local tilt estimate of~\eqref{eq:local-tilt-estimate-intro}, and bound the cost of locally confining large area. Conditionally on the configuration of $\Lambda_N \setminus \Lambda_I$, however, the boundary conditions induced on $\Lambda_I$ are not of $\pm$-type, and are random with  many subcritical plus bubbles below and minus bubbles above the interface's entry and exit. Under such complicated boundary conditions, we do not have tools to understand the typical behavior of the interface. 
\item The exponential cost of a fixed interval $I$ of length $L$ being the stopping domain is essentially constant across length-scales of $N^{2/3}$. Therefore, a union bound over all possible stopping domains would induce a polynomial pre-factor in the upper tail. To circumvent this issue, we round the stopping domain to a nearby interval of length that is an integer multiple of $N^{2/3}$, using a priori control of the local regularity of the interface. 
\end{enumerate}

\begin{figure}
\centering
\begin{tikzpicture}
\node at (0,0){\includegraphics[width=.85\textwidth]{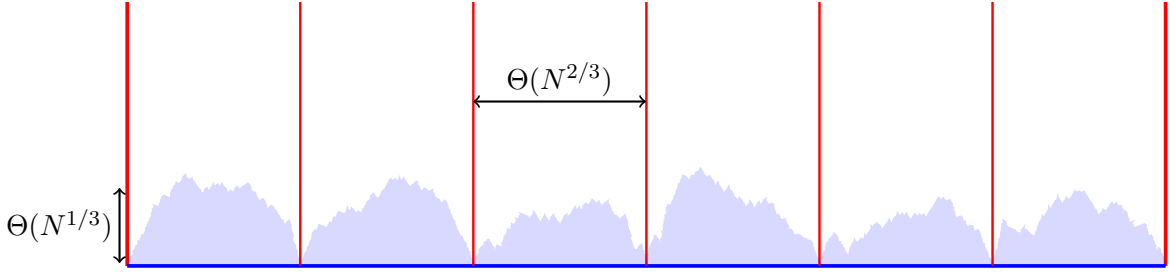}};
\draw[<->, thick]  (-2.3,.5)--(0,.5);
\draw[<->, thick]  (-7,-1.65)--(-7,-.65);

\node at (-1.15,.8) {$\Theta(N^{2/3})$};
\node at (-7.8,-1.15) {$\Theta(N^{1/3})$};
\end{tikzpicture}
\caption{Our lower bounds follow by pushing down the interface via addition of columns of pluses at $N^{2/3}$ spacing; this lower bounds $\mathscr I$ by $N^{1/3}$ many independent copies of interfaces at some smaller scale.}\label{fig:lower-bound-fig}
\end{figure}

\medskip
\noindent
\textbf{Domain enlargements and smoothness estimates.} As is clear from the above discussion, it is crucial to understand the behavior of the interface in a domain $\Lambda_I = I\times \llb 0,N\rrb$, conditionally on its entry and exit data. This conditioning induces a \emph{random} distribution on the boundary conditions of $\Lambda_I$, along with a conditioning on the configuration inside that it does not alter the entry/exit data (by connecting to the sub-critical bubbles above and below the entry/exit). To handle this complicated induced distribution, we construct monotone couplings in both increasing and decreasing directions between the configuration on $\Lambda_I$ conditionally on entry/exit below or above a height $H$, and the configuration induced on $\Lambda_I$ by a small enlargement with $(\pm,H)$ boundary conditions.

For domains with $(\pm,H)$ boundary conditions, we then have access to known bounds on the interface using the random-line representation, using which we can, e.g., prove a priori smoothness estimates controlling the local regularity of $\mathscr I$.  
These couplings and the smoothness estimates are delicate, and the key technical tool for translating arguments that have had success in the context of (Markovian) random walk/height function models to the context of Ising interfaces. We therefore expect that they may be useful in various other applications.

\medskip
\noindent
\textbf{Sharp upper bound on the area under the interface.}
The starting point is the one-point estimate of 
Theorem \ref{thm:one-point-tail-bounds}. 
As elaborated in the discussion on local fluctuation bounds, the height at a point $x$ exceeding $RN^{1/3}$ typically causes its effect to be felt in an interval of width 
$\sqrt{R}N^{2/3}$ around that point. Thus for every $R$, to estimate the size of the set of points  $x\in \llb 0,N\rrb$ such that $\{\hgt^+_x\ge RN^{1/3}\}$,  we decompose $\llb 0,N\rrb$ into $\sqrt{R} N^{2/3}$ translates of a mesh of $R^{-1/2} N^{1/3}$ points $(x_i)$ separated by $\sqrt{R}N^{2/3}$. The idea is that, given such a translate, $(x_i)_i$, the correlation between $\{\hgt^+_{x_i}\ge RN^{1/3}\}$ and $\{\hgt^+_{x_j}\ge RN^{1/3}\}$ decays exponentially in $|j-i|R^{3/2}$; this yields a large deviations type upper bound on the number of points among $(x_i)$ that attain height $RN^{1/3}$; call the set of such points $\mathscr H_R$. As far as we are aware, this is the first such decorrelation estimate reliance on a strong coupling of the interface to an effective random walk.

This concentration bound for $|\mathscr H_R|$ is proved by developing a multi-stopping domain extension of the approach used to prove the one point fluctuation bound.
Partition the mesh points in $\mathscr{H}_R$ according to their stopping domains $I$, on which the interface remains above height $RN^{1/3}/2$. Roughly, if mesh points $x_i,x_j$ belong to the same stopping domain, their heights are correlated, but such a stopping domain is exponentially unlikely in the number of such points it contains; on the other hand, if two points are in different stopping domains, their heights are effectively decorrelated and we can apply multi-height analogues of the domain enlargement machinery (see  Propositions~\ref{prop:multi-strip-minus-enlargement}--\ref{prop:multi-strip-plus-enlargement}). This dichotomy shows that large deviations for the quantity $|\mathscr H_R|$ are exponentially decaying in $R^{3/2} |\mathscr{H}_R|$: i.e., there exists $C(\beta,c_\lambda)>0$ such that for large $R$, 
\begin{align}\label{eq:multi-height-intro}
 \mu_{\beta,\lambda,N}^{\pm} \big( |\mathscr H_R| >  M \big)  \leq C\exp ( - R^{3/2} M /C)
\end{align}
(see Theorem~\ref{thm:multipoint-height-bounds} for a precise statement and Section~\ref{sec:multi-point-oscillations} for an extended sketch of proof.) The upper tail on $|\Lambda^-|$ follows from summing~\eqref{eq:multi-height-intro} over the translates of the mesh, and then summing over $R$.

\medskip
\noindent
\textbf{Lower bounds:}
Lower bounds in this article (i.e., those in Theorems  \ref{thm:one-point-tail-bounds}, \ref{thm:area-under-interface}, and ~\ref{thm:max-tightness}), all follow from a variant of the following observation. By monotonicity the interface in $\Lambda_N$ lies above a collection of $N^{1/3}$ many independent interfaces pinned at the boundaries of strips of width $N^{2/3}$ as shown in Figure~\ref{fig:lower-bound-fig}. Since the effect of the tilt on these critical windows is $O(1)$ (as in~\eqref{eq:local-tilt-estimate-intro} with $L = N^{2/3}$) and in the absence of an external field these independent interfaces behave as independent Brownian excursions, we can translate positive probability events of the Brownian excursion, e.g., lower bounds on their area or height, to the interface in the presence of a field.

\subsection{Organization of the article} We collect all the notations, preliminaries about the Ising model and in particular, a review of the inputs from the random-line representation that we will rely on in Section \ref{sec:preliminaries}. Section \ref{sec:enlargements} is devoted to developing the crucial enlargement couplings; as applications, we obtain local smoothness estimates on $\mathscr I$. In Section~\ref{sec:global-tilt}, we pin down the effect of the tilt at both global and local scales. The proof of our main result Theorem \ref{thm:one-point-tail-bounds} appears in Section \ref{sec:upper-tail}. In Section~\ref{sec:multi-point-oscillations}, the upper tail on multi-height large deviations is proven by generalizing the argument used to prove the one-point upper tail. These are then used in Section~\ref{sec:averaged-statistics} to deduce several global features of the interface, namely, Theorems \ref{thm:area-under-interface}--\ref{thm:max-tightness}. We conclude in Section \ref{sec:proofs-domain-enlargements} by providing deferred proofs from Section \ref{sec:enlargements}.

\subsection{Acknowledgements} The authors thank Yvan Velenik for useful discussions.
S.G.\ is partially supported by NSF grant DMS-1855688, NSF CAREER Award DMS-1945172, and a Sloan Research Fellowship. R.G.\ thanks the Miller Institute for Basic Research in Science for its support. 

\section{Ising interfaces, the random line representation, and other preliminaries}\label{sec:preliminaries}
Before proceeding to the proofs we develop all the necessary notation as well as include all the inputs we will be relying on, to assist the reader not familiar with the many properties of the two-dimensional Ising model that repeatedly get used throughout the rest of the paper.
\subsection{The Ising model}\label{subsec:Ising-model}
Fix an underlying graph $G= (V,E)$. For a finite subgraph $\Lambda = (\Lambda, E_\Lambda)$ with $\Lambda \subset V$, the Ising model on $\Lambda$ at \emph{inverse temperature} $\beta$, with external field $\lambda$, and \emph{boundary conditions} $\eta \in \{\pm 1\}^{V\setminus \Lambda}$ is the probability measure on configurations $\sigma \in \{\pm 1\}^\Lambda$ defined as 
\begin{align}\label{eq:Ising-measure} 
    \mu_{\beta,\lambda,\Lambda}^\eta(\sigma)= \frac 1{Z_{\eta,\beta,\lambda,\Lambda}} \exp\Bigg(\!\!-\!2\beta\Big( \sum_{\substack{v,w\in \Lambda: \\ (v,w)\in E_\Lambda}} \mathbf 1\{\sigma_v\neq \sigma_w\} + \sum_{\substack{v\in \Lambda, w\in V\setminus \Lambda: \\ (v,w)\in E}} \mathbf 1\{\sigma_v\neq \eta_w\}\Big)+ \lambda \sum_{v\in \Lambda} \sigma_v\Bigg)
\end{align}
where $Z_{\eta,\beta,\lambda,\Lambda}$ is the \emph{partition function}, i.e., the normalizing constant such that $\mu$ is a probability distribution. For an event $A$, define the restricted partition function $Z_{\eta,\beta,\lambda,\Lambda} (A) =  Z_{\eta,\beta,\lambda,\Lambda}. \mu_{\beta,\lambda,\Lambda}^\eta(A)$. We have hidden the implicit dependence on the underlying geometry $G$,  and in the context of this paper, we think fix $G$ to be the integer lattice $\mathbb Z^2$, as defined below.

\subsubsection{Geometry of $\mathbb Z^2$}
Throughout, we restrict attention to the Ising model in two-dimensions, i.e., on subgraphs $\Lambda$ of the integer lattice  $\mathbb Z^2 = (\mathbb Z^2, E_{\mathbb Z^2})$ whose vertex set is $\mathbb Z^2$, and edge set $E_{\mathbb Z^2}$ is the set of nearest-neighbor pairs of vertices. In $\mathbb Z^2$, we thus say that two vertices are adjacent if they are the end-points of an edge, and we say that two edges are adjacent if they share a vertex. 

 For two distinct vertices $v\neq w\in \mathbb Z^2$, a \emph{path} between $v,w$ is for some $k$, there exists a sequence of distinct edges $e_0,...,e_k$ of $E_{\mathbb Z^2}$ such that $v\in e_0$, $w\in e_k$, and $e_{i}$ is adjacent to $e_{i+1}$ for every $i=0,...,k-1$. A \emph{loop} is defined as in a path, but from $v$ to itself so that $v\in e_0$ and $v\in e_k$. For a set of edges $F\subset E_{\mathbb Z^2}$, a \emph{path in $F$} (resp., \emph{loop in $F$}) is a path (loop) all of whose constituent edges are in $F$. We say a set of edges $F\subset E_{\mathbb Z^2}$ is connected if every for every two vertices incident an edge in $F$, there exists path in $F$ between them.  
 
A set of vertices $V\subset \mathbb Z^2$ is said to be \emph{connected} if its induced edge-set $E_V \subset E_{\mathbb Z^2}$ (edges having both endpoints in $V$) is connected. For a set of vertices $V$, the (maximal) connected components of $V$ are the maximal (under the inclusion relation) subsets of $V$ that are connected. For a subgraph $\Lambda \subset \mathbb Z^2$ and a $\sigma\in \{\pm 1\}^\Lambda$, for each vertex $v\in \Lambda$, we define its \emph{plus cluster in $\sigma$}, denoted $\mathcal C_v^+$, to be the (possibly empty) connected component of $\{v\in \Lambda: \sigma_v = +1\}$ that contains $v$. We define the \emph{minus cluster} of $v$, denoted $\mathcal C_v^-$, analogously. For a subset $A\subset \mathbb Z^2$, define its \emph{plus (resp.\ minus) cluster in $\sigma$}, denoted $\mathcal C_A^+$ (resp.\ $\mathcal C_A^-$) to be the union of plus (resp.\ minus) clusters of $v\in A$.

\subsubsection{Subgraphs of $\mathbb Z^2$}
We will consider the Ising model on subgraphs $\Lambda$ of $\mathbb Z^2$ with edge-set $E_\Lambda$ consisting of all edges of $E_{\mathbb Z^2}$ both of whose endpoints are in $\Lambda$. We are primarily interested in rectangular subgraphs of $\mathbb Z^2$, for which we use the unified notation 
\begin{align*}
\Lambda_{n,m} = \llb 0,n\rrb \times \llb 0,m\rrb = \{0,...,n\}\times \{0,...,m\}\,.
\end{align*} 
We use the shorthand $\Lambda_n = \Lambda_{n,n}$. 
For a subgraph $\Lambda\subset \mathbb Z^2$, its (outer) boundary, denoted $\partial \Lambda$ is the set of all vertices in $\mathbb Z^2 \setminus \Lambda$ that are adjacent to some site in $\Lambda$. For a rectangular subset $R= \llb x_1, x_2 \rrb \times \llb y_1,y_2\rrb$, we partition its (outer) boundary into its west boundary $\partial_\west R = \{x_1-1\}\times \llb y_1,y_2\rrb$, and its north, east, and south boundaries defined analogously and denoted $\partial_\north R, \partial_\east R, \partial_\south R$. We occasionally use multiple subscripts, e.g., $\partial_{\north,\east,\west} R$ to denote the union of $\partial_\north R,\partial_\east R, \partial_\west R$. Likewise, for a rectangular subset $R$, define its inner boundary $\partial_\interior R$ as the set of vertices in $R$ adjacent to vertices of $\mathbb Z^2 \setminus R$, and partition that into $\partial_{\interior,\west} R, \partial_{\interior,\north} R, \partial_{\interior,\east} R, \partial_{\interior,\south} R$. 

\subsubsection{Boundary conditions on $\Lambda$} The simplest boundary conditions on subgraphs $\Lambda$ are those that are all-plus or all-minus on $\mathbb Z^2 \setminus \Lambda$, which we denote by $\eta = +$ or $\eta = -$. In this paper, we are primarily interested in the behavior of the low-temperature Ising model under \emph{Dobrushin} boundary conditions, which are $+$ on all sites with non-negative $y$-coordinate (i.e., on the rectangle $\Lambda_{n,m}$, on $\partial_{\north,\east,\west} \Lambda_{n,m}$) and $-$ on all vertices with negative $y$-coordinate (i.e., on $\partial_\south \Lambda_{n,m}$). We denote these boundary conditions by $\eta = \pm$. More generally, we consider boundary conditions that we say are of ``$\pm$-type" if for some line in $\mathbb R^2$, the boundary conditions are $+$ above that line, and $-$ below; if that line is the horizontal line at height $h$, we denote it by $\eta = (\pm,h)$. Note from~\eqref{eq:Ising-measure} that it suffices to prescribe the restriction of $\eta$ to  $\partial \Lambda$, as the rest of $\eta$ does not affect the measure. 

\subsubsection{Domain Markov property} 
A fundamental property of the Ising model is a Markovian property known as the \emph{domain Markov property}. The domain Markov property says that for every two graphs $\Lambda \subset \Lambda'$,   for every configuration $\eta_{\Lambda'\setminus \Lambda} \in\{\pm 1\}^{\Lambda'\setminus \Lambda}$, every boundary condition 
\begin{align*}
\mu_{\beta,\lambda,\Lambda'} (\sigma_{\Lambda}\in \cdot \mid \sigma_{\Lambda'\setminus \Lambda} = \eta_{\Lambda'\setminus \Lambda}) = \mu_{\beta,\lambda,\Lambda'} (\sigma_{\Lambda}\in \cdot \mid \sigma_{\partial \Lambda} (\Lambda'\setminus \Lambda) = \eta_{\partial \Lambda})  = \mu_{\beta,\lambda,\Lambda}^{\eta_{\partial \Lambda}} (\sigma \in \cdot)\,,
\end{align*}
where for a set $A$, $\sigma_A$ denotes the restriction of the configuration $\sigma$ to $A$. 
This allows us to express Ising probabilities conditionally on some spin configuration as just a different Ising model with boundary conditions prescribed by the configuration we conditioned on; moreover, any boundary conditions on $\Lambda$ only influence the measure through their restriction to~$\partial \Lambda$.

\subsubsection{Infinite volume Gibbs measures} One can extend the Ising measure~\eqref{eq:Ising-measure} from finite subgraphs of $\mathbb Z^2$ to infinite subgraphs of $\mathbb Z^2$. There, the partition functions above are of course infinite, but infinite-volume measures can be formalized by a consistency criterion known as the DLR condition: for an infinite subgraph $\Lambda_\infty \subset \mathbb Z^2$, a measure $\mu_{\Lambda_\infty}$ on $\{\pm 1\}^{\Lambda_\infty}$ is called a DLR measure, if for every finite subset $\Lambda \subset \Lambda_{\infty}$, we have 
\begin{align*}
\mu_{\beta, \lambda, \Lambda_\infty} ( \sigma_\Lambda \in \cdot )  = \mathbb E_{\mu_{\beta, \lambda, \Lambda_\infty}(\eta_{\Lambda_\infty\setminus \Lambda} \in \cdot)} \big[\mu_{\beta, \lambda, \Lambda}^{\eta_{\Lambda_\infty \setminus \Lambda}} (\sigma_\Lambda \in \cdot)\big] = \mathbb E_{\mu_{\beta, \lambda, \Lambda_\infty}(\eta_{\partial \Lambda} \in \cdot)} \big[\mu_{\beta, \lambda, \Lambda}^{\eta_{\partial \Lambda}} (\sigma_\Lambda \in \cdot)\big]
\end{align*}
where the expectation is with respect to the measure $\mu_{\beta,\lambda,\Lambda_{\infty}}$ restricted to $\Lambda_\infty\setminus \Lambda$.  

\subsubsection{Phase transition on $\mathbb Z^2$:} The famous phase transition of the 2D Ising model can be expressed both in terms of the multiplicity of infinite-volume Gibbs measures on $\mathbb Z^2$ for a given $(\beta,\lambda)$ pair. 

More precisely, it is a famous result of Onsager that when $\lambda = 0$, the Ising model undergoes the following sharp phase transition at inverse temperature $\beta_c := \log (1+\sqrt 2)$, between the regimes (a) $\beta \leq \beta_c$ (high and critical temperatures): there exists a \emph{unique} infinite-volume Gibbs measure $\mu_{\beta, \mathbb Z^2}$ on $\mathbb Z^2$ and (b) $\beta>\beta_c$ (low temperature): there exist multiple \emph{distinct} infinite-volume Gibbs measures---the extremal ones $\mu_{\beta,\mathbb Z^2}^+$ and $\mu_{\beta,\mathbb Z^2}^-$ are obtained as the limits of Ising measures on finite boxes with all-plus, resp., all-minus boundary conditions (see e.g.,~\cite{McCoyWu,GrimmettRC}).  

This infinite-volume phase transition also exhibits itself in finite connectivities/correlations. This is best captured by the presence/absence of correlation decay. At high-temperatures (when $\beta<\beta_c$, and $\lambda = 0$) the Ising model undergoes \emph{exponential decay of correlations}: there exists $C(\beta)>0$ such that for every $n$, every two $v,w\in \mathbb Z^2$, we have   
\begin{align}\label{eq:exp-decay-correlations}
\langle \sigma_v \sigma_w \rangle_{\beta,0,\mathbb Z^2} := \mathbb E_{\mu_{\beta,0,\mathbb Z^2}} [\sigma_v \sigma_w ] \leq C \exp (  -  |v-w|/C)
\end{align}
We pause here to mention that by the GKS inequality (see e.g.,~\cite{McCoyWu}), for every $\beta$ and every pair of edge-sets $F \supset E$ and every pair of vertices $v,w$, we have 
\begin{align}\label{eq:two-point-function-monotonicity}
\langle\sigma_v \sigma_w\rangle_{\beta, 0, F}\geq \langle \sigma_v\sigma_w\rangle_{\beta,0,E}
\end{align} 
from which it follows that~\eqref{eq:exp-decay-correlations} also holds on finite subsets of $\mathbb Z^2$ containing $v,w$. 

At low-temperatures $\beta>\beta_c$, on the other hand, the two-point functions above are uniformly bounded away from zero under, i.e., $\langle \sigma_v, \sigma_w\rangle>0$ under each of $\mu_{\beta,\mathbb Z^2}^+$ and $\mu_{\beta,\mathbb Z^2}^-$. 
All the same, a high-temperature structure emerges by duality at low-temperatures, inducing an exponential decay of \emph{truncated correlations}: if we denote the ball $B_r(v) = \{w: d(v,w) \leq r\}$, then for every $\beta>\beta_c$,  there exists $C(\beta)>0$ such that for every $v\in \mathbb Z^2$,   
\begin{align}\label{eq:exp-decay-truncated-correlations}
\mu_{\beta, 0,\mathbb Z^2}^+(  \mathcal C_v^- \cap \partial B_r(v) \neq \emptyset ) \leq Ce^{ - r/C} \quad \mbox{and}\quad \mu_{\beta, 0, \mathbb Z^2}^-(  \mathcal C_v^+\cap \partial B_r(v)\neq \emptyset ) \leq Ce^{ - r/C}\,.
\end{align}

By~\eqref{eq:exp-decay-correlations}, when $\beta<\beta_c$, the block magnetization on the square $\Lambda_n$ (i.e., $\frac{1}{|\Lambda_{n}|} \sum_{v\in \Lambda_{n}} \sigma_v$) concentrates around $0$ with fluctuations that are or order $n^{-1}=|\Lambda_{n}|^{-1/2}$ and Gaussian tail bounds. In the low-temperature regime, from~\eqref{eq:exp-decay-truncated-correlations}, the block magnetization under, say, $\mu_{\beta,\mathbb Z^2}^-$ concentrates about $-m_{\star,\beta}$ for some $m_{\star,\beta} >0$. While the concentration of the left-tail is Gaussian, the probability that the block magnetization is greater than some $-m > -m_\star$, is only exponentially concentrated. These \emph{surface order large deviations} were proven in the work of~\cite{Schonmann87, CCS}: for every $\varepsilon>0$, there exists $C(\beta,\varepsilon)>0$ such that 
\begin{align}\label{eq:surface-order-ld-magnetization}
\mu_{\beta,0,\mathbb Z^2}^- \Big(\frac{1}{|\Lambda_{n}|} \sum_{v\in \Lambda_{n}} \sigma_v  > - m_\star + \varepsilon \Big) \leq C  \exp ( - n/C)\,.
\end{align}
We will also use an alternate form of these surface-order large deviations, expressed in terms of the number of sites in $\Lambda$ which are connected to $\infty$ by minuses. This follows closely from~\cite{Schonmann87,CCS} or immediately by applying the related surface order large deviations of~\cite{Pisztora1996,Couronne}. Let $\mathcal C_\infty^-$ be the minus cluster of $\infty$ (i.e., the set of all infinite connected components of minus sites).  There exists $\hat m_{\star,\beta}>0$ such that for every $\varepsilon>0$, there exists $C(\varepsilon, \beta)>0$ such that 
\begin{align}\label{eq:surface-order-ld-component-of-infinity}
\mu_{\beta,0,\mathbb Z^2}^- \Big( \frac{1}{|\Lambda_{n}|} \sum_{v\in \Lambda_{n,n}} \mathbf 1\{v\in \mathcal C_\infty^-\} <   \hat m_\star -  \varepsilon \Big) \leq C\exp ( - n/C)\,.
\end{align}

\subsubsection{FKG inequality}
 One of the most important properties of the Ising model is that the model is positively correlated. Namely, if $\sigma \geq \sigma'$ denotes the natural partial order on configurations, then for every graph $G= (V,E)$, and every $\beta, \lambda$, we have that for every two increasing functions (in the p.o.\ $\sigma \geq \sigma'$) $f,g:\{\pm1\}^V \to \mathbb R$, 
\begin{align*}
\mathbb E_{\mu_{\beta, \lambda,G}} [f(\sigma) g(\sigma)] \geq  \mathbb E_{\mu_{\beta,\lambda, G}} [ f(\sigma)] \mathbb E_{\mu_{\beta,\lambda, G}} [ g(\sigma)]\,.
\end{align*}  
Of course taking $f,g$ to be indicators of increasing events, we see that for every two increasing events $A,B$, we have $\mu_{\beta,\lambda,G}(A\cap B)\geq\mu_{\beta,\lambda,G}(A)\mu_{\beta,\lambda,G}(B)$. 

From the FKG inequality,  we deduce the following monotonicity in boundary conditions: consider two boundary conditions $\eta$ and $\eta'$ on $G$ such that $\eta' \geq \eta$: then, for every increasing function $f$,  
\begin{align}\label{eq:monotonicity-in-bc}
\mathbb E_{\mu_{\beta,\lambda,G}^{\eta'}} [ f(\sigma)] \geq \mathbb E_{\mu_{\beta,\lambda,G}^{\eta}}[f(\sigma)]\,.
\end{align}

\subsubsection{Crossing and circuit events}
Some examples of monotone events that we consider in this paper are connectivity events by pluses and minuses. Recall the definitions of the plus cluster and minus cluster of a set $A$ through a configuration $\sigma$, denoted $\mathcal C_A^+$ and $\mathcal C_A^-$.  Observe, first of all, that the map $\sigma \mapsto \mathcal C_A^+$ is increasing (as sets) in $\sigma$, and $\sigma \mapsto \mathcal C_A^-$ is decreasing in $\sigma$. Then, for a general rectangle $R = \llb x_1, x_2 \rrb \times \llb y_1, y_2 \rrb$, we define the crossing events on $\{\pm 1\}^R$,
\begin{equation}\label{eq:crossing-events}
\begin{aligned}
\co_v^+(R) & :=  \{ \sigma: \mathcal C^+_{\partial_{\interior,\south} R} \cap \partial_{\interior,\north} R \neq \emptyset\}\quad \qquad & \co_v^-(R) :=  \{\sigma: \mathcal C^-_{\partial_{\interior,\south} R} \cap \partial_{\interior,\north} R \neq \emptyset\} \\
\co_h^+(R) & :=  \{ \sigma: \mathcal C^+_{\partial_{\interior,\west} R} \cap \partial_{\interior,\east} R \neq \emptyset\}\quad \qquad & \co_h^-(R) :=  \{\sigma: \mathcal C^-_{\partial_{\interior,\west} R} \cap \partial_{\interior,\east} R \neq \emptyset\} 
\end{aligned}
\end{equation}
i.e., that there is a vertical or horizontal, $+$ or $-$ crossing of $R$. These are easily seen to be increasing events in the case where of $+$ superscript, and decreasing events in the case of $-$ superscript. It is an easy consequence of~\eqref{eq:exp-decay-truncated-correlations} and a union bound that for there exists $C(\beta)>0$ such that 
\begin{equation}\label{eq:crossing-probability-bound}
\begin{aligned}
 \max_{\eta \in \{+,-\}} \mu_{\beta,0,\mathbb Z^2}^\eta \big((\co_v^\eta(R))^c\big)& \leq C |x_2 - x_1| \exp(- |y_2 - y_1|/C)\qquad \mbox{and similarly}  \\
 \max_{\eta \in \{+,-\}} \mu_{\beta,0,\mathbb Z^2}^\eta \big((\co_h^\eta(R))^c\big) & \leq C |y_2 - y_1| \exp(- |x_2 - x_1|/C).
 \end{aligned}
\end{equation}
For two concentric rectangles $R \subset \Lambda$, we can define a circuit event, $\circuit^+(\Lambda \setminus R)$, as the set of $\sigma$ such that there is a plus loop of non-trivial homology in the annulus $\Lambda \setminus R$. Viewing the existence of such a loop as a subset of the intersection of four crossing events, and applying the FKG inequality, 
\begin{align}\label{eq:circuit-probability-bound}
\max_{\eta\in \{+,-\}} \mu_{\beta,0, \mathbb Z^2}^\eta \big((\circuit^\eta(\Lambda\setminus R))^c \big) \leq C |\partial R| \exp ( - d(R,\Lambda)/C)\,.
\end{align}

\subsubsection{Role of the external field}
A large portion of the present paper deals with controlling the tilt induced on the Ising measure by the presence of a global external field.  
When $\lambda>0$, the phase transition in $\mathbb Z^2$ is destroyed and for every $\beta>0$, one has $\mu_{\beta,\lambda,\mathbb Z^2}(\sigma_{(0,0)} = +1) >\frac 12$. Therefore, in order to get interesting intermediate competition between a low-temperature $-$ phase and a $+$ external field, we study the Ising model on $\Lambda_N$ with $\lambda$ going to zero as a function of $N$, and in particular at the critical speed of $\lambda = \frac{c_\lambda}{N}$, where interesting transitions occur both in the bulk magnetization, and along an interface~\cite{Martirosyan,ScShCMP,ScShJSP,Velenik}: see also the surveys~\cite{VelenikSurvey,IoVe16}.  

Bounding the effect of the field in the regime where $\lambda = \frac{c_\lambda}{N}$ is one of the key steps in our analysis. It relies on carefully bounding the tilt induced on the partition function from the external field, using the random-line representation defined in Section~\ref{subsec:random-line-representation}. Before getting to that, however, we mention two elementary bounds on the effect of the field that will prove useful. 
A simple consequence of the monotonicity of the Ising model is that increasing the external field, in turn increases the probability of increasing events: for $\lambda >\lambda'$, for every increasing function $f$, 
\begin{align}\label{eq:monotonicity-in-external-field}
\mathbb E_{\mu_{\beta,\lambda,\Lambda}^{\eta}} [f(\sigma)] \geq \mathbb E_{\mu_{\beta,\lambda',\Lambda}^{\eta}}[f(\sigma)]\,.
\end{align}
At other times, when the direction of the bounds we desire is opposite the monotonicity above, it will be useful to have a priori control on how much the presence of the external fields can change probabilities: by computing the Radon--Nikodym derivative, it is evident that for every event $A$, 
\begin{align}\label{eq:radon-nikodym-bound}
e^{-2 \lambda |\Lambda|} \mu_{\beta ,0,\Lambda}^\eta(A) \leq \mu_{\beta,\lambda,\Lambda}^\eta (A) \leq e^{2 \lambda |\Lambda|} \mu_{\beta,0,\Lambda}^\eta(A)\,.
\end{align}

\subsection{The surface tension}\label{subsec:surface-tension}
A characteristic feature of the low-temperature Ising model is the existence of a \emph{surface tension} between the two phases (predominantly plus, and predominantly minus). 
One way to investigate the coexistence of phases is to force it via boundary conditions of $\pm$-type, in terms of which we can define the surface tension. 
Roughly speaking, the surface tension captures the influence of the presence of an interface between the plus and minus phases, on the free energy, and is an important tool for understanding the behavior of the interface. 

Let $\Lambda_{N,\infty}$ be the infinite strip $\llb 0,N\rrb \times \llb -\infty, \infty \rrb$. For $\theta \in [0,\frac{\pi}2)$, define $\tau_{\beta}(\theta),$ the \emph{surface tension at angle $\theta$} as follows: let $y_{\theta,N} = \lfloor N\tan \theta\rfloor$ and let the $(\pm,\theta)$ boundary conditions be those that are plus above the line segment connecting $(0,0)$ to $(N,y_{\theta,N})$ and are minus on and below. Then 
\begin{align}\label{eq:surface-tension-def}
\tau_\beta(\theta): = - \lim_{N\to\infty} \frac 1{N\sqrt{1 +(\tan \theta)^2}} \log \frac{Z_{(\pm,\theta),\beta,\Lambda_{N,\infty}}}{Z_{+,\beta, \Lambda_{N,\infty}}}\,.
\end{align}
(While the partition functions on the right-hand side are infinite sums, viewing the ratio as the $M\to\infty$ limit of ratios of $Z_{\eta,\beta, \Lambda_{N,M}}$, one straightforwardly sees that the ratio is well-defined for all fixed~$N$: see e.g., \cite{DKS,PV99}.) For every $\beta>\beta_c$ and every $\theta \in [0,\frac{\pi}2)$, the above defined surface tension is strictly positive. By homogeneity, we can extend the definition of the surface tension to a (not-normalized) vector $(y-x)$ at angle $\varphi$,  by letting $\tau_\beta (y-x)=|y-x|\tau_\beta (\varphi)= |y-x| \tau_\beta (\tfrac{y-x}{|y-x|})$. 

\subsubsection{Duality, surface tension, and two-point functions}
In $\mathbb Z^2$, the Ising model has an important duality property.  Let $(\mathbb Z^2)^*$ be the planar dual of $\mathbb Z^2$; observe that $(\mathbb Z^2)^* = \mathbb Z^2 + (\frac 12, \frac 12)$, showing that $\mathbb Z^2$ is isomorphic to its dual. An edge $e^*$ of $E_{(\mathbb Z^2)^*}$ intersects exactly one edge $e$ of $E_{\mathbb Z^2}$: if the primal edge $e^*$ intersects is $e= (v,w)$, we say $e^*$ \emph{separates} $v,w$.

Similarly, for a subgraph $\Lambda = (\Lambda, E_\Lambda)$ of $\mathbb Z^2$, we let $\Lambda^* = (\Lambda^*, E_{\Lambda^*})$ be its dual graph, i.e., the subgraph of $(\mathbb Z^2)^*$ whose vertex-set $\Lambda^*$ consists of sites in $(\mathbb Z^2)^*$ at $\ell^1$-distance one from $\Lambda$, and whose edge-set is induced by $E_{(\mathbb Z^2)^*}$ (i.e., all edges in $E_{(\mathbb Z^2)^*}$ both of whose endpoints are in $\Lambda^*$). 

For every $\beta$, define its dual inverse temperature $\beta^*$ by 
    \begin{align*}
    \tanh (\beta^*) = e^{ - 2\beta}\,.
    \end{align*}
    Call the fix-point of this duality relation $\beta_{\textsc{sd}}$ and observe that the critical point is the self-dual point $\beta_c = \beta_{\textsc {sd}}$. In particular, for every $\beta>\beta_c$, we have $\beta^*<\beta_c$. 
    
    There is a well-known relationship between the surface tension at low temperature $\beta>\beta_c$ and the exponential decay rate of the two-point function at the dual temperature $\beta^*$. This is captured by the identity (see e.g.,~\cite[Proposition 2.2]{PV99})
    \begin{align}\label{eq:duality-surface-tension}
     \tau_\beta(\theta) = \hat \tau_{\beta^*}(\theta)\,, \qquad \mbox{where}\qquad \hat \tau_{\beta^*}(\theta) : = - \lim_{N\to\infty}  \frac{1}{N} \langle \sigma_{(-\frac 12 ,-\frac 12 )} \sigma_{(N + \frac 12 ,\lfloor N\tan \theta \rfloor - \frac 12)}\rangle_{\beta^*,\Lambda_N^*}\,.
    \end{align} 
The nature of this duality relation is perhaps best elucidated through a combinatorial framework connecting the \emph{contour representation} of the low-temperature Ising model to the \emph{random-line representation} of the high-temperature model; this contour representation is also needed to formally define the interface induced by $\pm$ boundary conditions.

\subsection{The Ising interface}\label{subsec:Ising-interface}In this section, we describe the contour representation of Ising configurations, and then formally define the interface $\mathscr I$ under $\pm$-type boundary conditions on~$\Lambda$.

\subsubsection{Separating edges of an Ising configuration} 
We describe a polymer representation of the Ising model, which allows us to view the \emph{low-temperature} Ising model as a ``high-temperature" model over \emph{contours}. For an Ising configuration $\sigma$ on $\mathbb Z^2$ (or a subset $\Lambda$ of $\mathbb Z^2$), we can define a set of \emph{separating edges} $E(\sigma)$ to be the set of all edges $e\in E_{(\mathbb Z^2)^*}$ separating vertices of $\mathbb Z^2$ assigned different spin values under $\sigma$. Observe that for every $\sigma\in \{\pm 1\}^{\mathbb Z^2}$, the set $E(\sigma)$ is an edge subset of $E_{(\mathbb Z^2)^*}$ with the constraint that every vertex of $(\mathbb Z^2)^*$ is incident an even number of edges in $E(\sigma)$. 

For a subgraph $\Lambda$ of $\mathbb Z^2$ with some prescribed boundary conditions $\eta\in \{\pm 1\}^{\mathbb Z^2 \setminus \Lambda}$, given a configuration $\sigma \in \{\pm 1\}^{\Lambda}$, construct the set $E^{\eta}_\Lambda(\sigma)$ to be the restriction of $E(\sigma \amalg \eta)$ to $E_{\Lambda^*}$ (where $\sigma \amalg \eta$ is the configuration on $\mathbb Z^2$ given by $\sigma$ on $\Lambda$ and $\eta$ elsewhere). For every boundary condition $\eta$, for every $\sigma \in \{\pm 1\}^{\Lambda}$, the separating set $E^{\eta}_{\Lambda}(\sigma)$ is such that every vertex of $\Lambda^*$ is incident to an even number of edges in $E^\eta_{\Lambda}(\sigma)$ except those sites in $\partial_\interior \Lambda^*$ incident to an edge of $E(\eta)$---these are incident to an odd number of edges. It is clear that this set of boundary vertices are determined by $\eta$ (do not depend on $\sigma$) and we call them \emph{source points of $\eta$}, denoted $\partial \eta$.  In the case of $\eta \in \{+,-\}$ it has no source points ($\partial \eta = \emptyset$); if $\eta$ is of $\pm$-type, $\partial \eta$ is exactly the two sites in $\Lambda^*$ incident to the dual edges separating the plus boundary spins from the minus boundary spins. 

\subsubsection{Contour decomposition}
For a subset $F\subset E_{(\mathbb Z^2)^*}$, we wish to decompose it into a sequence of paths between the vertices incident to an odd number of edges of $F$, and a collection of loops. A priori, such a decomposition is ill-defined due to vertices incident to three or four edges of $F$. To remedy this ambiguity, we introduce a \emph{splitting convention} according to which we partition the edges of $F$. For concreteness we use the south-east ($\south\east$) splitting rule and partition $F$ into loops and paths as follows: for every vertex $v\in (\mathbb Z^2)^*$ incident to three or four edges, if two of those edges are the edges directly south and east of $v$, \emph{split} that pair apart from the remaining edge(s), and analogously if two of the edges are those directly north and west of $v$, split that pair apart from the remaining edge(s). Partition $F$ into $\{\gamma_1 ,\gamma_2 ,...,\gamma_k\}$ by taking the (maximal) connected components of $F$, where edges are connected to one another if they share a vertex and they have not been split at that vertex. The resulting partition $\underline \gamma := \{\gamma_1,\gamma_2,...,\gamma_k\}$ consists only of edge-disjoint (simple) paths and loops, and is called the \emph{contour decomposition}. This procedure is depicted in Figure~\ref{fig:contour-decomposition}. 

\begin{figure}
\begin{tikzpicture}[scale = .75]

\tikzstyle{plus-site}=[fill=red, opacity =.3, shape=circle, font = \tiny]
\tikzstyle{minus-site}=[fill=blue, opacity =.3, shape=circle, font = \tiny]

\node[style=plus-site] (0,0) at (0,0) {$+$};
\node[style=plus-site] (0,1) at (0,1){$+$};
\node[style=minus-site] (0,2) at (0,2) {$-$}; 
\node[style = minus-site] (0,3) at (0,3) {$-$};
\node[style=minus-site] (1,0) at (1,0) {$-$};
\node[style=plus-site] (1,1) at (1,1){$+$};
\node[style=minus-site] (1,2) at (1,2) {$-$}; 
\node[style = plus-site] (1,3) at (1,3) {$+$}; 

\node[style=minus-site] (2,0) at (2,0) {$-$};
\node[style=minus-site] (2,1) at (2,1){$-$};
\node[style=plus-site] (2,2) at (2,2) {$+$}; 
\node[style = plus-site] (2,3) at (2,3) {$+$};
\node[style=minus-site] (3,0) at (3,0) {$-$};
\node[style=plus-site] (3,1) at (3,1){$+$};
\node[style=plus-site] (3,2) at (3,2) {$+$}; 
\node[style = plus-site] (3,3) at (3,3) {$+$};

\node[style=plus-site] (5,0) at (5,0) {$+$};
\node[style=plus-site] (5,1) at (5,1){$+$};
\node[style=minus-site] (5,2) at (5,2) {$-$}; 
\node[style = minus-site] (5,3) at (5,3) {$-$};
\node[style=minus-site] (6,0) at (6,0) {$-$};
\node[style=plus-site] (6,1) at (6,1){$+$};
\node[style=minus-site] (6,2) at (6,2) {$-$}; 
\node[style = plus-site] (6,3) at (6,3) {$+$}; 

\node[style=minus-site] (7,0) at (7,0) {$-$};
\node[style=minus-site] (7,1) at (7,1){$-$};
\node[style=plus-site] (7,2) at (7,2) {$+$}; 
\node[style = plus-site] (7,3) at (7,3) {$+$};
\node[style=minus-site] (8,0) at (8,0) {$-$};
\node[style=plus-site] (8,1) at (8,1){$+$};
\node[style=plus-site] (8,2) at (8,2) {$+$}; 
\node[style = plus-site] (8,3) at (8,3) {$+$}; 

\draw[line width = 1pt, black] (5+-.5,1.5)--(5+1.5,1.5)--(5+1.5,2.5)--(5+.5,2.5)--(5+.5,3.5);
\draw[line width = 1pt, black] (5+.5,-.5)--(5+.5,.5)--(5+1.5,.5)--(5+1.5,1.5)--(5+2.5,1.5)--(5+2.5,.5)--(5+3.5,.5);

\node[style=plus-site] (10,0) at (10,0) {$+$};
\node[style=plus-site] (10,1) at (10,1){$+$};
\node[style=minus-site] (5+5,2) at (5+5,2) {$-$}; 
\node[style = minus-site] (5+5,3) at (5+5,3) {$-$};
\node[style=minus-site] (5+6,0) at (5+6,0) {$-$};
\node[style=plus-site] (5+6,1) at (5+6,1){$+$};
\node[style=minus-site] (5+6,2) at (5+6,2) {$-$}; 
\node[style = plus-site] (5+6,3) at (5+6,3) {$+$}; 

\node[style=minus-site] (5+7,0) at (5+7,0) {$-$};
\node[style=minus-site] (5+7,1) at (5+7,1){$-$};
\node[style=plus-site] (5+7,2) at (5+7,2) {$+$}; 
\node[style = plus-site] (5+7,3) at (5+7,3) {$+$};
\node[style=minus-site] (5+8,0) at (5+8,0) {$-$};
\node[style=plus-site] (5+8,1) at (5+8,1){$+$};
\node[style=plus-site] (5+8,2) at (5+8,2) {$+$}; 
\node[style = plus-site] (5+8,3) at (5+8,3) {$+$}; 

\draw[line width = 1.2pt,  rounded corners, green!50!black] (10+-.5,1.5)--(10+1.5,1.5)--(10+1.5,2.5)--(10+.5,2.5)--(10+.5,3.5);
\draw[line width = 1.2pt, rounded corners,  magenta] (10+.5,-.5)--(10+.5,.5)--(10+1.5,.5)--(10+1.5,1.5)--(10+2.5,1.5)--(10+2.5,.5)--(10+3.5,.5);
\draw[color = black, dashed] (10.5,.5)--(12.5,2.5);

\end{tikzpicture}
\caption{An Ising configuration (left), its separating edge set (middle), and the contour decomposition of that edge set according to the south-east splitting convention (right); the green and purple separating edge-sets are distinct contours.}\label{fig:contour-decomposition}\vspace{-.2cm}
\end{figure}
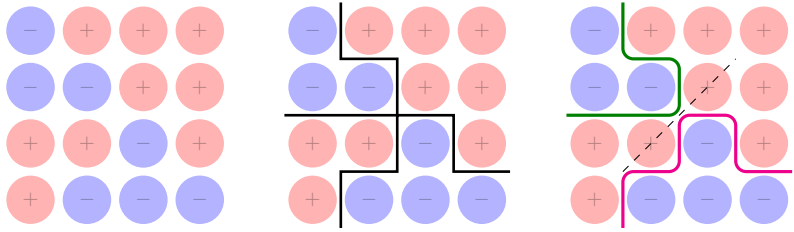

For a path or loop $\gamma$, let $|\gamma|$ denote its \emph{length}, i.e., the number of edges in $\gamma$, and let $|\underline \gamma| = \sum_k |\gamma_i|$. We say a set of loops and paths $\underline \gamma$ is \emph{$F$-admissible} if every edge of $\underline \gamma$ is contained in $F$, and $\underline \gamma$ is the contour decomposition of its edge-set $\bigcup_i \gamma_i$. If $F$ is the edge-set of $\Lambda^*$, we also say $\Lambda^*$-admissible. The boundary of a contour decomposition, $\partial \underline \gamma$, is the set of vertices that are an incident an odd number of vertices of $\bigcup_i \gamma_i$; notice that the paths of $\underline \gamma$ must each begin/end at an element of $\partial \underline \gamma$. 

    Given a subgraph $\Lambda$ of $\mathbb Z^2$, with boundary conditions $\eta$, every configuration $\sigma$ on $\Lambda$ is in bijection with a contour collection $\underline \gamma (\sigma) := \{\gamma_1,...,\gamma_k\}$ having $\partial \underline \gamma = \partial \eta$. In particular, by~\eqref{eq:Ising-measure}, for $\lambda = 0$, a measure on $\Lambda^*$-admissible contour collections $\underline \gamma$ with $\partial \underline \gamma = \partial \eta$, with weights proportional to $e^{ - 2\beta |\underline \gamma|}$ is in direct correspondence to the Ising measure $\mu_{\beta,0,\Lambda}^\eta$.

\subsubsection{The Ising interface}When $\eta$ is of $\pm$-type, the corresponding contour decomposition has exactly two boundary vertices, at the points $\partial \eta$ in  $\partial_\interior \Lambda^*$ incident the dual edges where the boundary conditions flip sign. We then use the contour representation to define the resulting interface.

    \begin{definition}\label{def:Ising-interface}
    For a subgraph $\Lambda$ of $\mathbb Z^2$, and a boundary condition $\eta$ of $\pm$-type  ($|\partial \eta | =2$), for every configuration $\sigma\in \{\pm 1\}^\Lambda$, we define its \emph{interface} $\mathscr I = \mathscr I(\sigma)$ as follows: if $\underline \gamma$ is the contour decomposition of the separating edge-set $E^{\eta}_{\Lambda}(\sigma)$; then $\mathscr I$ is the unique path in  $\underline \gamma$, i.e., the unique contour $\gamma_i$ having $\partial \gamma_i = \partial \eta$.    
      \end{definition}
      
      The interface is not a height function, in the sense that for $x\in \mathbb Z$ a column $\{x\}\times \mathbb R$ can be intersected by $\mathscr I$ at multiple heights; however, it is standard that these overhangs have an exponential tail when $\beta>\beta_c$ and thus the minimal and maximal heights above $x$ are both good proxies for ascribing a height above $x$ to the interface (see Proposition~\ref{prop:overhang-height-tail}).  Towards this, define $\hgt_x^+(\mathscr I)$ (resp., $\hgt^-_x(\mathscr I)$) to be the highest (resp., lowest) intersection point of $\mathscr I$ with $\{x\}\times \mathbb R$. 

\subsubsection{Monotonicity and interfaces} Finally, we relate the monotonicity of the Ising model to its interface. Let $\eta$ be a boundary condition of $\pm$-type on a subgraph $\Lambda\subset \mathbb Z^2$. Then for every $x$, the maps $\sigma \to \hgt^+_x(\mathscr I)$ and $\sigma \to \hgt^-_x(\mathscr I)$ are non-decreasing in $\sigma$. We will use this property extensively. Observe further that if $\Lambda_1 \subset \Lambda_2$ are two subgraphs of $\mathbb Z^2$ with boundary conditions $\eta$ which agree on $\Lambda_2^c$, then a configuration $\sigma_1$ on $\Lambda_1$ can be extended to a configuration $\sigma_1 \amalg \eta_{\Lambda_2 \setminus \Lambda_1}$ on $\Lambda_2$; in this manner, a configuration on $\Lambda_1$ gives rise to an interface on all of $\Lambda_2$ for which we can use the above monotonicity properties with respect to only the configuration on $\Lambda_1$.       As a consequence, monotonicity relations of the configuration in $\lambda$~\eqref{eq:monotonicity-in-external-field} and in the boundary conditions~\eqref{eq:monotonicity-in-bc} translate to monotonicity relations on $(\hgt_x^+)_x$ and $(\hgt_x^-)_x$.

    \subsubsection{Weights on interfaces} 
    Our goal is to understand the law of the interface $\mathscr I$ as defined above. Towards this, we study the effect of having a fixed $\mathscr I$ on the law of the Ising configuration. 
    \begin{definition}\label{def:Lambda-minus}
    Every interface induces sets $\Lambda^+(\mathscr I)$ ``above" $\mathscr I$ and $\Lambda^-(\mathscr I)$ ``below", which are, informally speaking, in the plus-phase and minus-phase respectively, given $\mathscr I$. To define these sets precisely, consider the contour collection $\{\mathscr I\}$ (clearly $\Lambda^*$-admissible), and define the spin configuration $\sigma(\mathscr I)$ to be the configuration whose separating edge-set $E_{\Lambda}^\eta(\sigma(\mathscr I))$ equals $\mathscr I$. Set $\Lambda^+(\mathscr I)$ to be the set of plus sites of $\sigma(\mathscr I)$ and set $\Lambda^-(\mathscr I)$ to be the set of minus sites $\sigma(\mathscr I)$. 
        \end{definition}
        
           Let $\partial_\interior \Lambda^+(\mathscr I)$ be the set of sites (necessarily plus) that are \emph{frozen} by the interface $\mathscr I$, i.e., every configuration with interface $\mathscr I$ has those spin assignments (these will be a subset of the sites in $\Lambda^+(\mathscr I)$ $*$-adjacent to (at $\ell^1$-distance at most $1$ away from) an edge of $\mathscr I$, but may differ at some corners of $\mathscr I$ due to the south-east splitting rule). Define $\partial_\interior \Lambda^-(\mathscr I)$ analogously. Even though $\partial_\interior \Lambda^+\cup \partial_\interior \Lambda^-$ is only a subset of sites adjacent to $\mathscr I$, it is enough to uniquely identify $\mathscr I$, and to split the measures above and below $\mathscr I$, making the configurations on $\Lambda^+\setminus \partial_\interior \Lambda^+$ and $\Lambda^- \setminus \partial_\interior \Lambda^-$ conditionally independent. Abusing notation slightly, we view the Ising measures on $\Lambda^+$ with $+$ boundary conditions as that on $\Lambda^+ \setminus \partial_\interior \Lambda^+$ with $+$ boundary conditions, and similarly for $\Lambda^-$, so that e.g., $Z_{+,\beta,\lambda,\Lambda^+(\mathscr I)}$ is the partition function on $\Lambda^+\setminus \partial_\interior \Lambda^+$ with all-plus boundary conditions. 
           
           In particular, if $\eta$ is a boundary condition on $\Lambda$ of $\pm$-type so that $|\partial \eta|= 2$, then for any external field $\lambda$, and $\mathscr I$, we can rewrite the partition function as
\begin{align}\label{eq:low-temp-interface-probability}
\mu_{\beta,\lambda,\Lambda}^{\eta}(\mathscr I)= & \omega(\mathscr I)\frac{Z_{+,\beta,\lambda,\Lambda^+(\mathscr I)} Z_{-,\beta,\lambda,\Lambda^{-}(\mathscr I)}}{Z_{\eta,\beta, \lambda,\Lambda}} \qquad \mbox{where} \qquad \omega(\gamma) = e^{ - 2\beta |\gamma|}\,.
\end{align}
In this manner, we have prescribed a set of weights on interfaces that are a tilt of the weight $e^{ - 2\beta |\mathscr I|}$ from forcing disagreements along $\mathscr I$. Using the contour representation of the Ising model, we can prescribe a dual set of weights on interfaces, which we develop in what follows. 

\subsection{The random-line representation}\label{subsec:random-line-representation} A tool that has had tremendous success in analyzing high-temperature correlation functions (and by duality on $\mathbb Z^2$, e.g.,~\eqref{eq:duality-surface-tension}) low-temperature interfaces, is the \emph{random-line representation} of the model. Since our interest is in the low-temperature ($\beta>\beta_c$) regime on (primal) subgraphs of $\mathbb Z^2$, we introduce the representation on the dual graph at a dual temperature $\beta^*<\beta_c$. The discussion in this subsection is in the absence of an external field and therefore we drop the dependencies on $\lambda$ from the notation. This random-line representation was extensively developed in~\cite{PV97,PV99}.

First of all, recalling the contour representation of the Ising model,  we can express the Ising measure on $\Lambda$ with all-$+$ boundary conditions by the probability distribution over $\Lambda^*$-admissible contour collections $\underline \gamma$ with $\partial \underline \gamma = \emptyset$,  each getting weight proportional to $e^{ - 2\beta |\underline \gamma|}$. In this manner, we have the equality in partition functions 
\begin{align}\label{eq:partition-function-random-line}
Z_{+,\beta,\Lambda} = Z_{\beta^*,\Lambda^*}^{\textsc{rl}} \qquad \mbox{where}\qquad Z^{\textsc{rl}}_{\beta^*,\Lambda^*} : = \sum_{\underline \gamma \text{ $\Lambda^*$-admissible} \,;\, \partial \mathbf \gamma = \emptyset } e^{-2\beta |\underline \gamma|}\,.
\end{align}
where the parametrization of the latter by the dual $\beta^*$ will become clear going forward.

For a (finite) subgraph $\Lambda^*$ of $(\mathbb Z^2)^*$, and a $\Lambda^*$-admissible collection of contours, $\underline \zeta$, we can define the partition function restricted to configurations having the collection of contours $\underline \zeta$ by 
\begin{align*}
Z_{\beta^*,\Lambda^*}^{\textsc{rl}} (\underline \zeta) := \sum_{\underline \gamma : \underline \zeta \cup \underline \gamma \text{ $\Lambda^*$-admissible}\,;\, \partial \underline \gamma = \emptyset } e^{ - 2\beta |\underline \zeta \cup \underline \gamma|}\,.
\end{align*}
We can then assign a \emph{weight} to collections of contours $\underline \zeta$, denoted by $q_{\beta^*, \Lambda^*}( \underline \zeta)$ as 
\begin{align}\label{eq:q-def}
q_{\beta^*,  \Lambda^*}(\underline \zeta) : = \begin{cases} \frac{Z_{\beta^*,\Lambda^*}^{\textsc{rl}}(\underline \zeta)}{Z_{\beta^*,\Lambda^*}^{\textsc{rl}}} & \mbox{ if $\underline \zeta$ is $\Lambda^*$-admissible} \\ 0 & \mbox{ else} \end{cases}\,.
\end{align}
As in~\eqref{eq:partition-function-random-line}, for a boundary condition $\eta$ of $\pm$-type on $\Lambda$, we have by~\eqref{eq:partition-function-random-line},
\begin{align}\label{eq:random-line-representation-partition-functions}
Z_{\eta,\beta,\Lambda} = \sum_{\substack{ \underline \gamma: \partial \underline\gamma = \partial \eta \\  \underline \gamma \text{ is $\Lambda^*$-admissible}}} e^{-2\beta |\underline \gamma|}  \qquad \mbox{so that}\qquad\frac{Z_{\eta,\beta,\Lambda}}{Z_{+,\beta,\Lambda}} = \sum_{\substack{ \zeta: \partial \zeta = \partial \eta \\  \zeta \text{ is $\Lambda^*$-admissible}}} q_{\beta^*,\Lambda^*} (\zeta)
\end{align}
where we observe that the latter ratio of partition functions is precisely such that its normalized logarithm is a finite-volume surface tension-like quantity. 

At the same time, the central relation of the random-line representation is the following identity expressing multi-point correlations of the Ising model in terms of sums of the weights~\eqref{eq:q-def}: for any even number of vertices $A^*$ in $\Lambda^*$, we have the identity
\begin{align}\label{eq:random-line-representation-identity}
\sum_{ \underline \zeta \text{ $\Lambda^*$-admissible}\, :\, \partial \underline \zeta = A^* } q_{\beta^*,\Lambda^*} (\underline \zeta) =  \langle  \prod_{v^*\in A^*} \sigma_{v^*} \rangle_{\beta^*,\Lambda^*}\,.
\end{align}
Together,~\eqref{eq:random-line-representation-partition-functions}--\eqref{eq:random-line-representation-identity} give us a finite volume analogue of~\eqref{eq:duality-surface-tension}. This allows us to combine analytic tools and correlation inequalities with combinatorial arguments in analyzing $\mathscr I$. 

As a result of~\eqref{eq:random-line-representation-partition-functions}--\eqref{eq:random-line-representation-identity}, we see that if $\eta$ is of $\pm$-type, with $\partial \eta = \{\vw, \ve\}$ for $\vw, \ve \in \partial_\interior \Lambda^*$, for every set of \emph{admissible interfaces} $\Gamma$, ($\Lambda^*$-admissible contours with $\partial \gamma = \{\vw,\ve\}$)
\begin{align}\label{eq:random-line-two-point-correlations}
\mu_{\beta,\Lambda}^{\eta} (\Gamma) = \frac{1}{Z_{\eta, \beta,\Lambda}} \sum_{\gamma \in \Gamma} Z_{\eta, \beta, \Lambda}(\gamma)  = \frac{1}{\langle \sigma_{\vw} \sigma_{\ve}\rangle_{\beta^*,\Lambda^*}} \sum_{\gamma \in \Gamma} q_{\beta^*,\Lambda^*} (\gamma)\,.
\end{align}

Finally, we can extend the contour weights of~\eqref{eq:q-def} to infinite subsets of $(\mathbb Z^2)^*$ by taking limits: given any $\Lambda^*\subset (\mathbb Z^2)^*$, and family of $\Lambda^*$-admissible contours $\underline \gamma$, 
\begin{align*}
q_{\beta^*,\Lambda^*} (\underline \gamma) =  \lim_{\Lambda_n^* \uparrow \Lambda^*} q_{\beta^*,\Lambda_n^*} (\underline \gamma)\,.
\end{align*}
(The existence of the above is guaranteed by the monotonicity~\eqref{eq:q-monotonicity-domain}.) Two such choices of infinite domain we will work with are $(\mathbb Z^2)^*$ and the half-infinite plane $(\mathbb Z\times \mathbb Z_{\geq 0})^*$ which we sometimes use the subscript $s.i.$ to denote. See~\cite[Section 6]{PV99} for more details on all the constructions above.

\subsubsection{Properties of the random-line representation}
We now discuss some important  properties of the weights~\eqref{eq:q-def} and this random-line representation. We refer to~\cite{PV97,PV99} as well as~\cite[Appendix~A]{Velenik} for these properties. As before, we think of $\beta>\beta_c$ (so that $\beta^* <\beta_c$) fixed, and our contour collections as living on the dual edges of $\Lambda$, in this section and in everything that follows in this paper.  Thus, for $v^*,w^*\in (\mathbb Z^2)^*$, we henceforth abbreviate 
 $$\langle \sigma_{v^*} \sigma_{w^*} \rangle^*_{\Lambda} = \langle \sigma_{v^*} \sigma_{w^*} \rangle_{\beta^*,\Lambda^*} \qquad \mbox{and}\qquad q_{\Lambda}(\underline \gamma) = q_{\beta^*,\Lambda^*} (\underline \gamma)\,.$$
 
The most basic inequalities are the following exponential lower and upper bounds on the weight functions. On the one hand, we have the following finite-energy type property: there exists $C(\beta)>0$ such that for every graph $\Lambda$,  and every $\Lambda^*$-admissible contour collection $\underline \gamma$, 
\begin{align}\label{eq:weight-function-lb}
q_\Lambda(\underline\gamma) \geq e^{- C|\underline \gamma|}\,.
\end{align}  
On the other hand, we have the following uniform upper bound on the weight function: there exists $c(\beta)>0$ and $l_0<\infty$ such that for every $\Lambda^*$, and every two points $v^*,w^*\in \Lambda^*$,
\begin{align}\label{eq:weight-function-ub}
\sum_{\substack{\gamma: \partial \gamma = \{v^*,w^*\}\,;\, |\gamma|\geq l \\ \gamma \text{ $\Lambda^*$-admissible}}} q_E(\gamma)\leq e^{- c(l-l_0)}\,.
\end{align} 
Recall that by the GKS inequality~\eqref{eq:two-point-function-monotonicity}, the two-point functions of the Ising model only increase when we increase the domain. 
At the same time, contour weights have the opposite monotonicity, i.e., if $\Lambda_1^*\supset \Lambda_2^*$ and $\underline \gamma$ is any $\Lambda_2^*$-admissible contour collection, we have 
\begin{align}\label{eq:q-monotonicity-domain}
q_{\Lambda_1}(\underline \gamma) \leq q_{\Lambda_2}(\underline \gamma)\,.
\end{align}
Denote by $q_{s.i.}$ the weight function on the upper-half plane $(\mathbb Z \times \mathbb Z_{\geq 0})^*$. For every $\Lambda\subset (\mathbb Z\times \mathbb Z_{\geq 0})$, every $\Lambda^*$-admissible $\underline \gamma$, we have
\begin{align}\label{eq:q-monotonicity-infinite-domain}
q_\Lambda(\underline \gamma) \geq q_{s.i.}(\underline \gamma) \geq q_{\mathbb Z^2} (\underline \gamma)\,.
\end{align}

Finally, we have the following inequalities describing how $q$ behaves for $\gamma$ that breaks up into disjoint pieces. For two admissible collections of contours $\underline \zeta, \underline \gamma$, we define their \emph{concatenation} $\underline \zeta \amalg \underline \gamma$ as the contour representation of the edge set $(\bigcup_i \zeta_i) \cup (\bigcup_i\gamma_i)$. The concatenation is admissible if their edge-sets are disjoint. 
On the one hand, if $\underline \gamma$ and $\underline \zeta$ are two $\Lambda^*$-compatible contour collections with disjoint edge-sets, we have  (see~\cite[Lemma 6.4]{PV99})
\begin{align}\label{eq:concatenation}
q_\Lambda(\underline \gamma \amalg \underline \zeta) \geq q_\Lambda(\underline \gamma)q_\Lambda(\underline \zeta)\,.
\end{align}
In the opposite direction, for a sequence of sites $v_1^*, v_2^* ,..., v_r^* \in \Lambda^*$, we have the BK-type inequality,
\begin{align}\label{eq:gamma-decomposition}
\sum_{\substack{ \gamma:v_1^*\to v_2^* \to \cdots \to v_r^* \\ \gamma \text{ $\Lambda^*$-admissible}}} q_\Lambda(\gamma) \leq \prod_{i} \sum_{\substack{\gamma_i : v_i^* \to v_{i+1}^* \\ \gamma_i \text{ $\Lambda^*$-admissible}}} q_\Lambda(\gamma_i)\,.
\end{align}
where here and throughout the paper, we will use the notation $\gamma: v^*\to w^*$ to denote $\partial \gamma = \{v^*,w^*\}$ and $\gamma: v^*_1 \to v^*_2 \to \cdots \to v^*_r$ to denote $\partial \gamma  = \{v^*_1, v^*_r\}$ and $v^*_2,...,v^*_{r-1} \in \gamma$.

\subsection{Sharp estimates on two-point functions}
Due to the combination of~\eqref{eq:random-line-representation-partition-functions}--\eqref{eq:random-line-representation-identity}, bounds on the low-temperature surface tension and the hight-temperature two-point function will be important tools for understanding the Ising interface. 

Long range connections at high-temperature, and 2D interfaces at low-temperature are known to have random walk like behavior. 
Using the cluster expansion framework, this was first known for $\beta$ sufficiently large~\cite{DKS}, and subsequently developed in a non-perturbative manner, holding for every $\beta>\beta_c$~\cite{CIV03,CIV08}. Though a lot of work has gone in these directions, we only quote the specific estimates which we will need to use in this paper as quoted in~\cite[Appendix A]{Velenik}. 
 We begin with the following sharp asymptotics of the two-point function. A version of these was found in~\cite{McCoyWu} for large $\beta$, and extended to all $\beta>\beta_c$ in~\cite{CIV03,CIV08}. Let $\beta>\beta_c$ so that $\beta^*<\beta_c$: there exist constants $K_1(\beta), K_2(\beta)>0$ such that for every $v^*,w^*\in (\mathbb Z^2)^*$, 
\begin{align}\label{eq:point-to-point-bounds}
\frac{K_1}{|v^*-w^*|^{\frac 12}} \exp\big( -\tau_\beta (\tfrac{v^*-w^*}{|v^*-w^*|}) |v^*-w^*|\big) \leq \langle \sigma_{v^*} \sigma_{w^*} \rangle_{\mathbb Z^2}^* \leq \frac{K_2}{|v^*-w^*|^{\frac 12}} \exp \big(- \tau_\beta (\tfrac{v^*-w^*}{|v^*-w^*|}) |v^*-w^*|\big)\,.
\end{align}
We also recall from e.g.,~\cite[Proposition 2.1]{PV97} that for every $\beta>\beta_c$, the surface tension $\tau_\beta(x)$ is strictly positive for all $x\ne 0$, and is uniformly Lipschitz as a function on $\mathbb R^2$, say with Lipschitz constant $\kappa_\beta$. 
We observe here that by a simple calculation, if we consider the two point correlation from a vertex $v^*\in (\mathbb Z^2)^*$ to $w^* = v^*+ (\ell,r)$ for $|r|\leq \ell$, we see that $|(\ell,r) -(\ell,0)| \leq r^2/(2\ell)$ and 
\begin{align}\label{eq:point-to-point-l-r}
\langle \sigma_{v^*} \sigma_{w^*}\rangle_{\mathbb Z^2}^* \geq \frac{K_1}{\sqrt{2\ell}} e^{ - \kappa_\beta r^2/(2\ell)}  e^{ -  \tau_\beta(0) \ell}\,.
\end{align}
In the presence of a floor, we feel an additional ballot theorem effect, yielding the following asymptotics of the half-plane correlation function: see e.g.,~\cite[A.23]{Velenik} and \cite[Corollary 5.2]{LMST13}. For every $\beta^*<\beta_c$, there exist constants $K_1(\beta), K_2(\beta)>0$ such that 
\begin{align}\label{eq:floor-point-to-point-bounds}
\frac{K_1}{N^{3/2}} e^{ - \tau_\beta(0) N} \leq \langle \sigma_{(-\frac 12,-\frac 12)} \sigma_{(N+\frac 12,-\frac 12)} \rangle_{s.i.}^* \leq \frac{K_2}{N^{3/2}} e^{-\tau_\beta(0) N}\,.
\end{align} 

With these, the following \emph{sharp triangle inequality} on the surface tension bounds the transversal fluctuations of the interface between $v^*,w^*\in \Lambda^*$: for every $\beta>\beta_c$ there exists $\xi_\beta>0$ such that 
\begin{align}\label{eq:sharp-triangle-inequality}
\tau_\beta(v^*)+\tau_\beta(w^*)\geq & \tau_\beta(v^*+w^*)+\xi_{\beta}(|v^*|+|w^*|-|v^*+w^*|)\,.
\end{align}

\subsection{Preliminary estimates on the Ising interface}
Given the inputs from the random-line representation and bounds on the high-temperature two-point function, we can use~\eqref{eq:random-line-two-point-correlations} to deduce properties of the Ising interface at low temperatures. We recap some of these consequences, including diffusive bounds on the maximal transversal fluctuation of the interface. These can be expressed in various forms, either in terms of the random-line weights, interface probabilities at low-temperature: we present them in the form(s) they will be applied in our paper. Recall that $\Lambda_N = \llb 0,N\rrb \times \llb 0,N\rrb$ and $\pm$-.b.c.\ are  $-$ below the $x$-axis, and $+$ on and above the $x$-axis.

\subsubsection{Overhangs and backtracking}
We begin with recalling simple bounds on the \emph{overhangs} of the interface, wherein $\mathscr I$ intersects a column $\{x\}\times \mathbb R$ at multiple heights. Using e.g., the tools in the previous section, it is classical to see the following bound on the maximum overhang size. These are standard and we e.g., refer to Eq.~(6) of the survey~\cite{IoVe16} . 

\begin{proposition}\label{prop:overhang-height-tail}
Let $\beta>\beta_c$ and consider the Ising interface on $\Lambda_N$ with $\pm$ boundary conditions. There exists $C(\beta)>0$  such that for every $r$, 
\begin{align*}
    \max_{x\in \partial_\south \Lambda_{N}} \mu_{\beta,\Lambda_N}^\pm (\hgt_x^+ -\hgt_x^-> r) \leq Ce^{ - r/C}\,.
\end{align*}
The same bound holds in other geometries, including the infinite strip $\Lambda_{N,\infty} = \llb 0,N\rrb \times \llb -\infty,\infty\rrb$. 
\end{proposition}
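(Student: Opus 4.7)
The plan is to use the random-line representation~\eqref{eq:random-line-two-point-correlations}, the BK-type inequality~\eqref{eq:gamma-decomposition}, and the sharp triangle inequality~\eqref{eq:sharp-triangle-inequality}. If $\hgt_x^+(\mathscr I)-\hgt_x^-(\mathscr I)>r$, then the interface $\mathscr I$---a simple dual path from $\vw=(-\tfrac12,-\tfrac12)$ to $\ve=(N+\tfrac12,-\tfrac12)$---must pass through two dual vertices $v_1^*,v_2^*$ lying in one of the adjacent dual columns $\{x\pm\tfrac12\}\times\mathbb R$ with vertical separation at least $r$. A union bound reduces the task to controlling, for each fixed admissible pair,
\[
\mu_{\beta,\Lambda_N}^{\pm}(v_1^*,v_2^*\in \mathscr I)\;=\;\frac{\sum_{\gamma:\vw\to v_1^*\to v_2^*\to \ve} q_{\Lambda_N^*}(\gamma)}{\langle \sigma_{\vw}\sigma_{\ve}\rangle^*_{\Lambda_N^*}},
\]
and then summing over admissible pairs.

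For the numerator, I would apply~\eqref{eq:gamma-decomposition} to split $\gamma$ into three sub-paths, then enlarge the domain to $\mathbb Z^2$ via GKS monotonicity~\eqref{eq:two-point-function-monotonicity} to obtain the upper bound $\langle\sigma_{\vw}\sigma_{v_1^*}\rangle^*_{\mathbb Z^2}\langle\sigma_{v_1^*}\sigma_{v_2^*}\rangle^*_{\mathbb Z^2}\langle\sigma_{v_2^*}\sigma_{\ve}\rangle^*_{\mathbb Z^2}$. Combining the sharp asymptotics~\eqref{eq:point-to-point-bounds} with two applications of the sharp triangle inequality~\eqref{eq:sharp-triangle-inequality} shows that the combined surface-tension exponent exceeds $\tau_\beta(\ve-\vw)$ by at least $\xi_\beta$ times the excess length
\[
|v_1^*-\vw|+(h_2-h_1)+|\ve-v_2^*|-|\ve-\vw|\;\geq\;h_2-h_1\;\geq\;r,
\]
where $h_1,h_2$ denote the heights of $v_1^*,v_2^*$. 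A matching lower bound on the denominator, of the form $cN^{-3/2}e^{-\tau_\beta(0)(N+1)}$, follows from~\eqref{eq:q-monotonicity-infinite-domain} combined with~\eqref{eq:floor-point-to-point-bounds} (or by exhibiting a single admissible interface and applying~\eqref{eq:weight-function-lb}). Thus each fixed pair contributes at most $Ce^{-\xi_\beta r}$ times polynomial prefactors.

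The main obstacle is the summation over admissible pairs $(v_1^*,v_2^*)$, which range over all heights with $h_2-h_1\geq r$ in the chosen dual column. I expect to handle this using the same sharp-triangle bound: each additional unit of vertical stretch beyond $r$, or of transversal displacement of $v_1^*,v_2^*$ away from the geodesic $\vw\ve$, brings an additional exponentially small factor, so that the polynomial prefactors in~\eqref{eq:point-to-point-bounds} are easily absorbed into a fraction of the exponential gain and the double sum converges to $C'e^{-r/C''}$ as desired. The proof for the infinite strip $\Lambda_{N,\infty}$ is identical, with the denominator lower bound provided by the $\mathbb Z^2$ two-point asymptotics~\eqref{eq:point-to-point-bounds} in place of the half-plane version~\eqref{eq:floor-point-to-point-bounds}.
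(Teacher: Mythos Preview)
The paper does not prove this proposition; it is recorded as a standard fact with a reference to~\cite{IoVe16}, so there is no in-paper argument to compare against. Your route via the random-line representation~\eqref{eq:random-line-two-point-correlations}, the BK-type split~\eqref{eq:gamma-decomposition}, and the sharp triangle inequality~\eqref{eq:sharp-triangle-inequality} is indeed the standard one, and for the infinite strip $\Lambda_{N,\infty}$ it goes through cleanly: there the denominator is comparable to $\langle\sigma_{\vw}\sigma_{\ve}\rangle^*_{\mathbb Z^2}\asymp N^{-1/2}e^{-\tau_\beta(0)N}$ (combine~\eqref{eq:q-monotonicity-infinite-domain} with Proposition~\ref{prop:no-backtracking-domain} to recover a constant fraction of the $\mathbb Z^2$-weight), and one checks that $\sum_{h}\tfrac{1}{\sqrt{d_1(h)}}e^{-\xi_\beta(d_1(h)-x)}\le C$ uniformly in $x,N$, so the $N^{1/2}$ prefactor is absorbed and the remaining sum over $h_2-h_1\ge r$ gives $Ce^{-\xi_\beta r}$.

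For the box $\Lambda_N$ there is a gap your outline does not close. Because $\vw,\ve$ sit on the floor of $\Lambda_N^*$, the denominator carries the ballot-problem factor and is only of order $N^{-3/2}e^{-\tau_\beta(0)N}$, as you correctly note via~\eqref{eq:floor-point-to-point-bounds}. But when you then pass to the $\mathbb Z^2$-asymptotics~\eqref{eq:point-to-point-bounds} for the three numerator legs, you discard the matching floor-repulsion on the legs $\vw\to v_1^*$ and $v_2^*\to\ve$. A careful bookkeeping shows that after absorbing the $|v-w|^{-1/2}$ prefactors and summing over $(h_1,h_2)$ one is left with a bound of the form $C N\,r^{-1/2}e^{-\xi_\beta r}$ rather than $Ce^{-r/C}$ uniformly in $N$; the ``additional exponential gain'' you invoke controls the double sum but does not cancel this residual $N$. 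The standard remedy is to keep the numerator in the half-plane and use the finer $s.i.$-asymptotics carrying an explicit height-of-$v_i^*$ prefactor (available from the Ornstein--Zernike theory, though not recorded among~\eqref{eq:point-to-point-bounds}--\eqref{eq:floor-point-to-point-bounds}), which restores the missing $N^{-1}$.
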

This demonstrates that the probability of overhangs of a large height have exponential tails. However, to leverage the BK type inequalities of ~\eqref{eq:gamma-decomposition}, it will be important for us to also bound the probability that the interface has no overhang through a particular column above $x$, i.e., that $\hgt_x^+ \neq \hgt_x^-$. Specifically, we will be interested in the probability that a contour $\gamma: \vw \to \ve$ has no overhangs in its two boundary columns, i.e., on the vertical columns to which $\vw$ and $\ve$ belong respectively: we call this property a no-backtracking property, denoted $\gamma \in \nbt$.

\begin{proposition}\label{prop:no-backtracking-domain}
Fix any $\beta>\beta_c$. There exists a constant $\varepsilon_\beta>0$ (going to $1$ as $\beta\to\infty$) such that, for every $\vw \in (\mathbb Z^2)^*$, $\ve = \vw+(\ell,r)$ for $\ell\geq 1$ and  $|r|\leq \ell$, 
\begin{align}\label{eq:prob-no-leaving-strip}
    \frac{1}{\langle \sigma_{\vw} \sigma_{\ve}\rangle_{\mathbb Z^2}^*} \sum_{\gamma: \vw \to \ve\,;\,\gamma \notin \nbt} q_{\mathbb Z^2} (\gamma) <1-\varepsilon  \,.
\end{align}
\end{proposition}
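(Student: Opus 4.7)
The plan is to bound the backtracking mass via a BK-type surgery combined with the sharp two-point asymptotics and the sharp triangle inequality. By symmetry between the two endpoint columns, it suffices to bound the contribution to $q_{\mathbb Z^2}$ from the event $B_w$ that $\gamma$ intersects $\{x_w\}\times\mathbb R$ at some point $u^*_k:=\vw+(0,k)$ with $k\neq 0$, since an analogous bound will apply to backtracking at the east column.

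First I would decompose $B_w$ by the \emph{last} visit of $\gamma$ to the column $\{x_w\}\times\mathbb R$: for each $k\neq 0$, the contour splits as $\gamma=\gamma_1\amalg\gamma_2$ with $\gamma_1:\vw\to u^*_k$ arbitrary and $\gamma_2:u^*_k\to\ve$ edge-disjoint and avoiding the column after $u^*_k$. Summing over $k$, applying the BK-type inequality~\eqref{eq:gamma-decomposition} and then the random-line identity~\eqref{eq:random-line-representation-identity} yields
\begin{equation*}
\sum_{\gamma\in B_w}q_{\mathbb Z^2}(\gamma)\leq \sum_{k\neq 0}\langle\sigma_{\vw}\sigma_{u^*_k}\rangle^*_{\mathbb Z^2}\,\langle\sigma_{u^*_k}\sigma_{\ve}\rangle^*_{\mathbb Z^2}.
\end{equation*}

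Next I would apply the sharp two-point asymptotics~\eqref{eq:point-to-point-bounds} to each factor and invoke the sharp triangle inequality~\eqref{eq:sharp-triangle-inequality} to extract a detour penalty $\exp(-\xi_\beta D(k))$ with $D(k):=|k|+|\ve-u^*_k|-|\ve-\vw|$. An elementary geometric computation, exploiting the angular constraint $|r|\leq \ell$ which forces $|r|/|\ve-\vw|\leq 1/\sqrt{2}$, shows $D(k)\geq (1-1/\sqrt 2)|k|$ uniformly for $k\neq 0$ (the small-$|k|$ regime is handled by Taylor expanding $|\ve-u^*_k|$ around $|\ve-\vw|$, while for $|k|$ much larger than $|\ve-\vw|$ the inequality $D(k)\geq |k|$ follows from the ordinary triangle inequality). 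Dividing by the lower bound on $\langle\sigma_{\vw}\sigma_{\ve}\rangle^*_{\mathbb Z^2}$ in~\eqref{eq:point-to-point-bounds} and bounding the prefactor $\sqrt{|\ve-\vw|}/\sqrt{|\ve-u^*_k|}$ by a constant (absorbing the regime $|k|\gg |\ve-\vw|$ into the exponential decay), I would obtain
\begin{equation*}
\frac{\sum_{\gamma\in B_w}q_{\mathbb Z^2}(\gamma)}{\langle\sigma_{\vw}\sigma_{\ve}\rangle^*_{\mathbb Z^2}}\leq C_\beta\sum_{k\neq 0}\frac{e^{-(1-1/\sqrt 2)\xi_\beta|k|}}{\sqrt{|k|}}.
\end{equation*}

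The main obstacle is converting this finite bound into one strictly below $1/2$ (so that together with the symmetric estimate at the east column, the NBT probability is at least some $\varepsilon_\beta>0$). For large $\beta$ the right-hand side is $O(\xi_\beta^{-1/2})\to 0$, which already gives the advertised $\varepsilon_\beta\to 1$ as $\beta\to\infty$. For general $\beta>\beta_c$, where $\xi_\beta$ and the $\beta$-dependent prefactors may not cooperate, the BK--triangle bound alone is not quantitatively sharp enough, and I would supplement it with a matching \emph{lower} bound on $\sum_{\gamma\in\nbt}q_{\mathbb Z^2}(\gamma)$ obtained by applying the concatenation inequality~\eqref{eq:concatenation} to the class of contours of the form ``horizontal edge out of $\vw$, interior piece staying in the open strip, horizontal edge into $\ve$'', using the finite-energy bound~\eqref{eq:weight-function-lb}; alternatively one may appeal to the Ornstein--Zernike decomposition of~\cite{CIV03,PV99}, which assigns uniformly positive mass to trivial irreducible pieces at each endpoint and yields $\varepsilon_\beta>0$ for every $\beta>\beta_c$.
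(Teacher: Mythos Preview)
Your BK + sharp-triangle estimate is correct --- the geometric claim $D(k)\ge(1-1/\sqrt2)|k|$ indeed follows from convexity in $k$ together with $|r|\le\ell$ --- and it delivers both a finite bound and the large-$\beta$ behavior $\varepsilon_\beta\to1$. The paper omits the proof as standard and gives only a one-line sketch, which is precisely your finite-energy fallback: force horizontal steps at the endpoints via~\eqref{eq:weight-function-lb}, then control the remainder.

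The gap is in how you execute that fallback for general $\beta>\beta_c$. Forcing a \emph{single} horizontal edge and asking the interior piece to stay in the open strip does not close the argument: lower-bounding the strip-confined mass of the interior piece is the same problem at length $\ell-2$, and subtracting the ``leaves-the-strip'' contribution via BK reproduces your original constant with no improvement --- the reasoning is circular. What the paper's sketch actually prescribes is to force a \emph{large constant} number $K_\beta$ of horizontal steps. The middle piece then runs from $\vw+(K_\beta,0)$ to $\ve-(K_\beta,0)$, and for it to touch the column through $\vw$ it must incur a Euclidean detour of order $K_\beta$ (for instance, reflect $\vw+(K_\beta,0)$ across that column and combine the ordinary triangle inequality with $|r|\le\ell$). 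Feeding this back into your own BK + sharp-triangle bound now gives a backtracking probability for the middle piece of order $C_\beta\, e^{-c\,\xi_\beta K_\beta}$, which is below $\tfrac12$ once $K_\beta$ is chosen large depending only on $\beta$; the finite-energy cost $e^{-CK_\beta}$ together with~\eqref{eq:point-to-point-bounds} (to compare the two-point functions at the shifted and original endpoints) then yields $\varepsilon_\beta>0$. The finitely many small-$\ell$ cases are handled directly. Your Ornstein--Zernike alternative would also work but is heavier machinery than what the paper invokes.
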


We do not include a proof as it is standard (see e.g.,~\cite[Proof of Lemma 4.4.7]{VelenikThesis},~\cite[Proof of Theorem 5.3]{LMST13}): by~\eqref{eq:weight-function-lb}, with a probability bounded away from zero, we can force the interface to traverse horizontally away from the columns of $\vw,\ve$ to a large constant distance $K_\beta$ so that the probability, from there, of backtracking to the columns of $\vw,\ve$ sums to $1-\varepsilon$ for $\varepsilon>0$.

\subsubsection{Transversal fluctuations}

An important aspect of the random walk behavior of Ising interfaces is that their transversal fluctuations satisfy Gaussian tail bounds. 
Consider the random-line probability of going through a point $(0,r)$  on the transit from $(-\ell,0)$ to $(\ell,0)$. A simple geometric calculation that for the vectors $v=(\ell,r), w= (\ell,-r)$ for $\ell, r$ such that $r\leq \ell$, shows the following ``quadratic" remainder in the sharp triangle inequality~\eqref{eq:sharp-triangle-inequality}: 
\begin{align}
|v|+|w| - |v+w|  = 2 \sqrt{\ell^2 + r^2}  - 2\ell   \geq  r^2/\ell\,.
\end{align}
Thus, using the fact that $\tau_\beta (\theta) = \tau_\beta(-\theta)$, we deduce from~\eqref{eq:sharp-triangle-inequality} that $\tau_{\beta} ( (\ell,r)) \geq \tau_{\beta}((\ell,0)) + \xi_\beta \frac{r^2}{2\ell}$; by~\eqref{eq:point-to-point-bounds} and~\eqref{eq:gamma-decomposition}, this bounds the weight of random-lines $\gamma: (-\ell,0) \to (0,r)\to (\ell,0)$. For any $\rho\leq \ell$, summing this over heights $\rho \vee K\sqrt \ell \leq r \leq \ell$ gives the bound 
\begin{align}\label{eq:midpoint-height-fluctuation-bound}
\mu_{\beta,\Lambda_{\ell,\infty}}^{\pm}(\hgt_{(\ell/2,0)}^+\geq\rho) & \leq Ce^{-\xi_\beta \rho^{2}/(2\ell)}\,.
\end{align}

This is in fact a simple case of the following Gaussian upper tail on the \emph{maximum} height deviation of the random-line. While a union bound using the sharp triangle inequality as in~\eqref{eq:midpoint-height-fluctuation-bound} induces a polynomial prefactor, removing this prefactor requires a refined multi-scale analysis~\cite{LMST13}. 

\begin{proposition}[{\cite[Theorem 5.3]{LMST13}}]\label{prop:max-height-fluctuation-no-field}
Let $\beta>\beta_c$ and consider the Ising model on the strip $\Lambda_{\ell,\infty}$ with $\pm$ boundary conditions.  There exists $C(\beta)>0$ such that for every $\rho \leq \ell$, 
\begin{align}\label{eq:max-height-fluctuation-no-field}
    \mu_{\beta,\Lambda_{\ell,\infty}}^\pm \big(\max_{x \in \partial_\south \Lambda_{\ell,\infty}} \hgt_x^+ >\rho \big) \leq Ce^{ - \rho^2 /(C\ell) }\,,
\end{align}
and the same bound holds for $\min_{x\in \partial_\south \Lambda_{\ell,\infty}} \hgt_x^-<-\rho$. 
\end{proposition}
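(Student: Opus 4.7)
The proposition upgrades the pointwise bound~\eqref{eq:midpoint-height-fluctuation-bound} on $\hgt_x^+$ at a single column to a uniform bound over the strip, without losing the (a priori linear) combinatorial factor in $\ell$ that a naive union bound would produce. The plan is to use the random-line representation, decompose any offending interface at its first and last visit to height $\rho$, and then sum these contributions in a multi-scale fashion that exploits the strictly positive remainder in the sharp triangle inequality.

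Step 1 (random-line set-up). Let $\vw=(-\tfrac12,-\tfrac12)$ and $\ve=(\ell+\tfrac12,-\tfrac12)$. By~\eqref{eq:random-line-two-point-correlations} and~\eqref{eq:floor-point-to-point-bounds},
\begin{align*}
\mu_{\beta,\Lambda_{\ell,\infty}}^\pm\bigl(\max_x \hgt_x^+>\rho\bigr)
\;\le\; \frac{K\, \ell^{3/2}\, e^{\tau_\beta(0)\ell}}{}
 \sum_{\gamma:\vw\to\ve,\ \gamma\ \text{reaches height}\ >\rho} q_{\Lambda_{\ell,\infty}^*}(\gamma).
\end{align*}
Using the domain monotonicity~\eqref{eq:q-monotonicity-infinite-domain}, replace $q_{\Lambda_{\ell,\infty}^*}$ by the full-plane weight $q_{\mathbb Z^2}$ to remove the floor from the combinatorial side.

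Step 2 (BK decomposition through first/last crossings). Every $\gamma$ that exceeds height $\rho$ admits a unique leftmost crossing $(x_1,\rho)$ and a unique rightmost crossing $(x_2,\rho)$ with $x_1\le x_2$. By the BK-type inequality~\eqref{eq:gamma-decomposition} followed by~\eqref{eq:random-line-representation-identity},
\begin{align*}
\sum_{\gamma:\vw\to(x_1,\rho)\to(x_2,\rho)\to\ve} q_{\mathbb Z^2}(\gamma)
\;\le\; \langle\sigma_\vw\sigma_{(x_1,\rho)}\rangle^*_{\mathbb Z^2}\cdot
\langle\sigma_{(x_1,\rho)}\sigma_{(x_2,\rho)}\rangle^*_{\mathbb Z^2}\cdot
\langle\sigma_{(x_2,\rho)}\sigma_{\ve}\rangle^*_{\mathbb Z^2}.
\end{align*}
Apply the sharp asymptotics~\eqref{eq:point-to-point-bounds} together with the sharp triangle inequality~\eqref{eq:sharp-triangle-inequality}: the excess length relative to the direct segment from $\vw$ to $\ve$ is, for $\rho\le\min(x_1,\ell-x_2)$, at least $\xi_\beta\bigl(\tfrac{\rho^2}{2x_1}+\tfrac{\rho^2}{2(\ell-x_2)}\bigr)$. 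Combining with Step 1 gives the pointwise bound
\begin{align*}
\mu^\pm\bigl(\text{first visit at }x_1,\ \text{last at }x_2\bigr)
\;\le\; C\,\frac{\ell^{3/2}}{\bigl(x_1(\ell{-}x_2)(x_2{-}x_1{+}1)\bigr)^{1/2}}\,
\exp\!\Bigl(-\tfrac{\xi_\beta \rho^2}{2x_1}-\tfrac{\xi_\beta \rho^2}{2(\ell-x_2)}\Bigr).
\end{align*}

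Step 3 (summing over $(x_1,x_2)$ via a dyadic chaining). A direct sum loses a polynomial factor in $\ell$ and only yields a Gaussian-times-polynomial bound. To kill the prefactor, organize $(x_1,x_2)$ by the dyadic scale of $\min(x_1,\ell-x_2)$: set $I_j=\{(x_1,x_2):\ 2^{j-1}<\min(x_1,\ell-x_2)\le 2^j\}$ for $j=0,1,\dots,\lceil\log_2\ell\rceil$. On $I_j$, the Gaussian factor is bounded by $\exp(-\xi_\beta \rho^2/2^{j+1})$; the number of pairs in $I_j$ is at most $C\,2^j\ell$, and the $\ell^{3/2}/(x_1(\ell-x_2))^{1/2}$ prefactor is controlled by $C\,2^{-j}\ell^{1/2}$. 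Iterating over the range of $x_2-x_1$ with the no-backtracking input (Proposition~\ref{prop:no-backtracking-domain}) and a chaining over scales allows the polynomial factors at each scale to be absorbed into the Gaussian, because $\xi_\beta>0$ strictly; summing the geometric series over $j$ produces the clean bound $C e^{-\rho^2/(C\ell)}$. The lower-tail bound on $\min \hgt_x^-$ follows by reflection symmetry.

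Main obstacle. The heart of the argument is Step 3: the polynomial prefactor $\ell^{3/2}/(x_1(\ell{-}x_2))^{1/2}$ inherited from the two-point asymptotics, combined with the $O(\ell)$ range of $x_2-x_1$, does not obviously beat the Gaussian factor unless one exploits the fact that the sharp-triangle gain~\eqref{eq:sharp-triangle-inequality} is strictly super-linear in the detour. This is precisely the delicate step carried out in~\cite[Theorem~5.3]{LMST13}; an alternative is to invoke the Ornstein--Zernike coupling of~\cite{CIV03} to represent $\mathscr I$ as an effective random walk with Gaussian increments conditioned on its endpoints, after which a Brownian-bridge maximal inequality yields the estimate immediately.
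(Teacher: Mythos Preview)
The paper does not prove this proposition; it is imported verbatim from~\cite[Theorem~5.3]{LMST13}, with the surrounding text remarking that a union bound leaves a polynomial prefactor and that ``removing this prefactor requires a refined multi-scale analysis.'' Your plan is broadly consistent with that description, correctly identifies the chaining in Step~3 as the crux, and then also defers to~\cite{LMST13} for that step, so there is no independent argument to compare against.

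Two concrete issues with the sketch as written. In Step~1 you invoke~\eqref{eq:floor-point-to-point-bounds}, the half-plane bound with prefactor $\ell^{-3/2}$, but $\Lambda_{\ell,\infty}=\llb 0,\ell\rrb\times\llb-\infty,\infty\rrb$ is the \emph{doubly} infinite strip, where the two-point function carries the full-plane prefactor $\ell^{-1/2}$ (there is no ballot effect); this error actually works in your favor, as the polynomial factor to kill is smaller than you claim. More seriously, the dyadic accounting in Step~3 does not close: with your numbers, at the dominant scale $2^j\sim\ell$ one is left with a bound of order $\ell^{1/2}\exp(-c\rho^2/\ell)$ (or $\ell^{3/2}\exp(-c\rho^2/\ell)$ with your prefactor), which fails to give $Ce^{-\rho^2/(C\ell)}$ precisely in the non-trivial range $\sqrt\ell\lesssim\rho\lesssim\sqrt{\ell\log\ell}$. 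A single dyadic decomposition of the pair $(x_1,x_2)$ cannot absorb the polynomial; the argument in~\cite{LMST13} is a genuine multi-scale induction in which one gains a geometric factor at each level, and your appeal to Proposition~\ref{prop:no-backtracking-domain} does not supply this mechanism.
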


By monotonicity in boundary conditions, we can translate this to the following bound, for the maximal height fluctuations of interfaces under $(\pm,\langle \vw,\ve\rangle)$ boundary conditions which are $+$ above the line between $\vw = (-\frac 12,-\frac 12)$ and $\ve = (\ell+ \frac 12,r-\frac 12)$ and minus below. We present this for random-line representation, using ~\eqref{eq:two-point-function-monotonicity} with~\eqref{eq:q-monotonicity-domain} to lower bound random-lines weights in $\Lambda_{\ell,\infty}^*$ by those in $(\mathbb Z^2)^*$, and upper bound the two-point function in $\Lambda_{\ell,\infty}^*$ by that in $(\mathbb Z^2)^*$. 

\begin{corollary}\label{cor:max-height-fluctuation-not-straight}
Let $\beta>\beta_{c}$. For every $r$ such that $0\leq r \leq \ell$ and every  $\rho \leq \ell$, 
\begin{align}\label{eq:max-height-fluctuation-not-straight}
\mu_{\beta, \Lambda_{\ell,\infty}}^{\pm,\langle \vw,\ve\rangle } \big(\max_{x\in \partial_\south \Lambda_{\ell,\infty}} \hgt_x^+ \geq r+\rho\big)\leq & Ce^{-\rho^{2}/(C\ell)}\,.
\end{align}
and the same bound holds for $\min_{x\in \partial_\south \Lambda_{\ell,\infty}} \hgt_x^- <-\rho$. 
In particular, we have 
\begin{align}\label{eq:max-height-fluctuation-not-straight-random-line}
\frac{1}{\langle \sigma_{\vw} \sigma_{\ve} \rangle_{\mathbb Z^2}^*} \sum_{\substack{\gamma: \vw\to \ve \\ \gamma \subset \Lambda_{\ell,\infty}^*\,,\,\gamma \not \subset \mathbb R \times [- \rho,r+ \rho]}} q_{\mathbb Z^2}(\gamma) \leq Ce^{ -  \rho^2/(C\ell)}\,.
\end{align}
\end{corollary}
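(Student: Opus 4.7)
The plan is to reduce both Ising tails to the straight-$\pm$ case of Proposition~\ref{prop:max-height-fluctuation-no-field} via monotonicity in boundary conditions, and then upgrade the resulting Ising bound to the random-line statement~\eqref{eq:max-height-fluctuation-not-straight-random-line} using the domain monotonicities of the two-point function and of the contour weights.

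I will introduce two auxiliary $\pm$-type boundary conditions on the strip $\Lambda_{\ell,\infty}$: let $\tilde\eta^{(h)}$ be the flat Dobrushin condition that is $+$ on $\mathbb Z^2\setminus \Lambda_{\ell,\infty}$ at heights $\geq h$ and $-$ below, for $h\in\{0,r\}$. A pointwise check on the east and west boundaries shows
\[
\tilde\eta^{(r)}\;\leq\;(\pm,\langle \vw,\ve\rangle)\;\leq\;\tilde\eta^{(0)}\,,
\]
since on the west boundary the extended line from $\vw$ to $\ve$ sits below height $0$ (so $(\pm,\langle\vw,\ve\rangle)$ and $\tilde\eta^{(0)}$ agree there), while on the east boundary it flips at height $r$ (so $(\pm,\langle\vw,\ve\rangle)$ and $\tilde\eta^{(r)}$ agree there). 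Since the Section~\ref{subsec:Ising-interface} discussion gives that $\max_x \hgt_x^+$ is non-decreasing and $\min_x \hgt_x^-$ is non-decreasing in $\sigma$, the events $\{\max_x \hgt_x^+\geq r+\rho\}$ and $\{\min_x \hgt_x^-\leq -\rho\}$ are respectively increasing and decreasing, and the monotonicity~\eqref{eq:monotonicity-in-bc} gives
\begin{align*}
\mu^{\pm,\langle\vw,\ve\rangle}_{\beta,\Lambda_{\ell,\infty}}(\max_x \hgt_x^+\geq r+\rho) &\leq \mu^{\tilde\eta^{(0)}}_{\beta,\Lambda_{\ell,\infty}}(\max_x \hgt_x^+\geq r+\rho)\,,\\
\mu^{\pm,\langle\vw,\ve\rangle}_{\beta,\Lambda_{\ell,\infty}}(\min_x \hgt_x^-\leq -\rho) &\leq \mu^{\tilde\eta^{(r)}}_{\beta,\Lambda_{\ell,\infty}}(\min_x \hgt_x^-\leq -\rho)\,.
\end{align*}
The first right-hand side is bounded by Proposition~\ref{prop:max-height-fluctuation-no-field} by $Ce^{-(r+\rho)^2/(C\ell)}\leq Ce^{-\rho^2/(C\ell)}$. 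For the second, I exploit the $y$-translation invariance of the infinite strip: the law under $\tilde\eta^{(r)}$ is the law under $\tilde\eta^{(0)}$ shifted up by $r$, so the right-hand side equals $\mu^{\tilde\eta^{(0)}}(\min_x \hgt_x^-\leq -r-\rho)$, which Proposition~\ref{prop:max-height-fluctuation-no-field} again bounds by $Ce^{-(r+\rho)^2/(C\ell)}\leq Ce^{-\rho^2/(C\ell)}$. (For the small range where $r+\rho$ exceeds the window in which Proposition~\ref{prop:max-height-fluctuation-no-field} is stated, one absorbs into constants using Proposition~\ref{prop:overhang-height-tail}.) This proves~\eqref{eq:max-height-fluctuation-not-straight}.

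For the random-line version~\eqref{eq:max-height-fluctuation-not-straight-random-line}, I rewrite $\{\mathscr I \not\subset \mathbb R\times[-\rho, r+\rho]\}=\{\max_x \hgt_x^+>r+\rho\}\cup\{\min_x \hgt_x^-<-\rho\}$ and invoke~\eqref{eq:random-line-two-point-correlations} to express this as a ratio in $\Lambda_{\ell,\infty}$:
\[
\mu^{\pm,\langle\vw,\ve\rangle}_{\beta,\Lambda_{\ell,\infty}}\big(\mathscr I\not\subset \mathbb R\times[-\rho,r+\rho]\big)\;=\;\frac{1}{\langle \sigma_{\vw}\sigma_{\ve}\rangle^*_{\Lambda_{\ell,\infty}}}\sum_{\gamma}q_{\Lambda_{\ell,\infty}}(\gamma),
\]
where the sum ranges over $\Lambda_{\ell,\infty}^*$-admissible $\gamma:\vw\to\ve$ not contained in the slab. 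Applying~\eqref{eq:q-monotonicity-domain} to the numerator (giving $q_{\mathbb Z^2}(\gamma)\leq q_{\Lambda_{\ell,\infty}}(\gamma)$) and~\eqref{eq:two-point-function-monotonicity} to the denominator (giving $\langle\sigma_{\vw}\sigma_{\ve}\rangle^*_{\Lambda_{\ell,\infty}}\leq \langle\sigma_{\vw}\sigma_{\ve}\rangle^*_{\mathbb Z^2}$), one obtains $\frac{1}{\langle\sigma_{\vw}\sigma_{\ve}\rangle^*_{\mathbb Z^2}}\sum_\gamma q_{\mathbb Z^2}(\gamma)\leq Ce^{-\rho^2/(C\ell)}$ as claimed. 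I do not expect any serious obstacle here; the only point requiring attention is tracking the direction of each monotonicity, since the domain monotonicities for $q$ and for $\langle\sigma\sigma\rangle^*$ run opposite to each other and conspire to give the correct inequality.
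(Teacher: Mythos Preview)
Your approach is the same as the paper's: sandwich the slanted $(\pm,\langle\vw,\ve\rangle)$ boundary between two flat Dobrushin boundaries and invoke Proposition~\ref{prop:max-height-fluctuation-no-field}, then pass from $\Lambda_{\ell,\infty}$ to $\mathbb Z^2$ via the opposite domain monotonicities~\eqref{eq:q-monotonicity-domain} and~\eqref{eq:two-point-function-monotonicity} exactly as you do.

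There is, however, a direction slip in the first step. Under a $\pm$-type boundary (plus above, minus below), increasing $\sigma$ pushes the interface \emph{down}: if every interior spin is $+$ the interface hugs the minus floor, while if every interior spin is $-$ it hugs the plus ceiling. Thus $\hgt_x^\pm$ are non-\emph{increasing} in $\sigma$ (the wording in Section~\ref{subsec:Ising-interface} notwithstanding), so $\{\max_x\hgt_x^+\geq r+\rho\}$ is a \emph{decreasing} event and $\{\min_x\hgt_x^-\leq -\rho\}$ is \emph{increasing}. Consequently your two comparisons go the wrong way. The fix is trivial: swap the roles of $\tilde\eta^{(0)}$ and $\tilde\eta^{(r)}$. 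For the max-height event, compare to the \emph{smaller} boundary $\tilde\eta^{(r)}$ and then use the vertical shift to obtain $\mu^{\tilde\eta^{(0)}}(\max_x\hgt_x^+\geq \rho)\leq Ce^{-\rho^2/(C\ell)}$ directly; for the min-height event, compare to the \emph{larger} boundary $\tilde\eta^{(0)}$ and apply Proposition~\ref{prop:max-height-fluctuation-no-field} without any shift. With this swap in place your argument, including the random-line part, is complete and identical to the paper's.
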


Another useful corollary of Proposition~\ref{prop:max-height-fluctuation-no-field} will be the analogous inequality on the maximal height oscillation, in the presence of a floor, up to constant multiple. The proof is by a standard coupling argument (coupling the interface on $\Lambda_{\ell,\infty}^{s.i.}$ to be below the interface of $\Lambda_{\ell,\infty}$ endowed with $(\pm,\rho/2)$ boundary conditions): we refer to e.g.,~\cite[Proposition 3.6]{GL16c} for this kind of coupling argument. 

\begin{corollary}\label{cor:max-height-fluctuation-floor}
Let $\beta>\beta_c$ and consider the Ising model on the semi-infinite strip $\Lambda_{\ell, \infty}^{s.i.} = \llb 0,\ell\rrb \times \llb 0,\infty\rrb$ with $\pm$ boundary conditions. There exists $C(\beta)>0$ such that for every $\rho \leq \ell$, 
\begin{align}\label{eq:max-height-fluctuation-floor}
\mu_{\beta,\Lambda_{\ell,\infty}^{s.i.}}^{\pm} \big(\max_{x\in \partial_\south \Lambda_{\ell,\infty}^{s.i.}} \hgt_x^+ >\rho \big) \leq Ce^{ - \rho^2 /(C\ell)}\,. 
\end{align}
\end{corollary}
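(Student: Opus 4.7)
The plan is to construct a monotone coupling of $\mu_{1}:=\mu_{\beta,\Lambda_{\ell,\infty}^{s.i.}}^{\pm}$ with $\mu_{2}:=\mu_{\beta,\Lambda_{\ell,\infty}}^{(\pm,\rho/2)}$ under which the interface of the floor model lies pointwise below that of the elevated no-floor model, i.e.\ $\hgt_{x}^{+}(\mathscr I_{1})\leq \hgt_{x}^{+}(\mathscr I_{2})$ for every $x\in\llb 0,\ell\rrb$. Given such a coupling, the event $\{\max_{x}\hgt_{x}^{+}(\mathscr I_{1})>\rho\}$ is contained in $\{\max_{x}\hgt_{x}^{+}(\mathscr I_{2})>\rho\}$, and by vertical translation invariance on the doubly-infinite strip the law of $\mathscr I_{2}$ is that of the $\pm$-interface on $\Lambda_{\ell,\infty}$ shifted upward by $\rho/2$, so Proposition~\ref{prop:max-height-fluctuation-no-field} yields
\begin{align*}
\mu_{2}\big(\max_{x}\hgt_{x}^{+}(\mathscr I_{2})>\rho\big)=\mu_{\beta,\Lambda_{\ell,\infty}}^{\pm}\big(\max_{x}\hgt_{x}^{+}(\mathscr I)>\rho/2\big)\leq Ce^{-(\rho/2)^{2}/(C\ell)},
\end{align*}
and absorbing the factor of $4$ into the constant gives the claim.

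To build the coupling, I would invoke the domain Markov property: conditional on the configuration of $\mu_{2}$ on $\llb 0,\ell\rrb\times\llb -\infty,-1\rrb$, the restriction of $\mu_{2}$ to $\Lambda_{\ell,\infty}^{s.i.}$ is an Ising measure on that strip with $(\pm,\rho/2)$ boundary conditions on the east and west sides and random south boundary equal to $\sigma_{2}|_{y=-1}$. On the east and west sides, the $+$ boundary condition of $\mu_{1}$ pointwise dominates the $(\pm,\rho/2)$ condition at every height, so the FKG monotonicity in boundary conditions~\eqref{eq:monotonicity-in-bc} favours the pointwise spin domination $\sigma_{1}\geq \sigma_{2}$ there. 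The obstruction is the south boundary, where $\mu_{1}$ enforces $-$ while $\sigma_{2}|_{y=-1}$ may carry sporadic $+$ spins produced by thermal fluctuations of the bulk $-$ phase sitting below. This is exactly the situation handled by the standard sweeping coupling of~\cite[Proposition~3.6]{GL16c}: the exponential decay of truncated correlations~\eqref{eq:exp-decay-truncated-correlations} in the bulk $-$ phase shows the spins of $\mu_{2}$ on the row $y=-1$ are overwhelmingly $-$, and one can couple through an intermediate measure (with that row forced to $-$ in $\mu_{2}$) to upgrade the sides-only domination to the full pointwise domination $\sigma_{1}\geq\sigma_{2}$ on $\Lambda_{\ell,\infty}^{s.i.}$.

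Since adding plus spins to a configuration under $\pm$-type boundary conditions shrinks the minus phase and pushes $\mathscr I$ downward, the spin domination $\sigma_{1}\geq \sigma_{2}$ translates into the desired interface bound $\hgt_{x}^{+}(\mathscr I_{1})\leq \hgt_{x}^{+}(\mathscr I_{2})$ column by column, and the plan is complete. The main obstacle is the coupling of the previous paragraph, specifically how to absorb the mismatch at the south boundary between $\mu_{1}$ and $\mu_{2}$; everything else is a direct consequence of translation invariance on the doubly-infinite strip and Proposition~\ref{prop:max-height-fluctuation-no-field}.
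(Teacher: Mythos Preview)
Your overall plan matches the paper's one-line sketch exactly: dominate the floor-model interface by that of the doubly-infinite strip with $(\pm,\rho/2)$ boundary, then invoke Proposition~\ref{prop:max-height-fluctuation-no-field} via vertical translation. The paper likewise defers the coupling step to~\cite[Proposition~3.6]{GL16c} without further detail.

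Where your sketch goes astray is in the mechanism you propose for that coupling. The intermediate you describe --- $\mu_{2}$ with the row $y=-1$ forced to $-$, call it $\hat\mu_{2}$ --- has, on every piece of $\partial\Lambda_{\ell,\infty}^{s.i.}$, boundary data pointwise \emph{below} that of both $\mu_{1}$ and $\mu_{2}$ (it shares the all-$-$ south with $\mu_{1}$ and the $(\pm,\rho/2)$ sides with $\mu_{2}$, each of which is the smaller of the two choices on that part of the boundary). Hence $\hat\mu_{2}$ is stochastically dominated by both, so chaining through it yields only $\hat\sigma_{2}\leq\sigma_{1}$ and $\hat\sigma_{2}\leq\sigma_{2}$, which says nothing about the ordering of $\sigma_{1}$ versus $\sigma_{2}$. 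More fundamentally, no direct FKG argument can deliver $\sigma_{1}\geq\sigma_{2}$: the side boundaries favor that direction while the south boundary favors the reverse, and the ``exponential decay of truncated correlations'' is not the relevant input, since $\sigma_{2}|_{y=-1}$ carries a \emph{positive density} of $+$ spins in the bulk $-$ phase (each site is $+$ with probability bounded away from zero), not a vanishing one. The interface comparison $\hgt_{x}^{+}(\mathscr I_{1})\leq\hgt_{x}^{+}(\mathscr I_{2})$ that the paper cites from~\cite{GL16c} therefore cannot come from a pointwise spin domination and requires a different device --- for instance, a coupling that succeeds only on the high-probability event $\{\min_{x}\hgt_{x}^{-}(\mathscr I_{2})\geq 0\}$, whose failure already costs at most $Ce^{-(\rho/2)^{2}/(C\ell)}$ by Proposition~\ref{prop:max-height-fluctuation-no-field} and can be absorbed into the final bound.
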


\subsection{Notational disclaimers}
Let us mention some notational choices we take for readability, which will apply to the remainder of the paper. Throughout the paper, we will take $\beta>\beta_c$ fixed, and therefore drop $\beta$ from the notation (e.g., in partition functions and Ising distributions). Moreover, since the principal domain we consider is $\Lambda = \Lambda_N$, we use an $N$ subscript to denote that choice of graph, e.g., $\mu_{\lambda,N}^{\eta} = \mu_{\beta, \lambda, \Lambda_N}^{\eta}$. 

Our results concern the asymptotic regime of large $N$: thus, we will not quantify over $N$ and statements should be understood to hold uniformly over all $N$ sufficiently large. Also, for ease of presentation, when we divide regions into subregions of some fixed size and we do not differentiate between the remainder sub-regions which may have different size if there are issues of divisibility. All such rounding issues and integer effects can be handled with the obvious modifications.    

Finally, since $\beta, c_\lambda$ will be fixed throughout all proofs, throughout this paper, we let $C$ denote some constant $C(\beta,c_\lambda)>0$, that may differ between lines in a proof, unless specified otherwise.

\section{Domain enlargements and a priori regularity estimates}\label{sec:enlargements}
Our arguments at local scales rely on conditioning on the entry and exit points of the interface through a smaller-scale domain, then examining the behavior inside that smaller domain conditionally on this entry and exit data. As emphasized in Section~\ref{sec:ideas-of-proofs}, a key difficulty posed by the overhangs of the interface and by the sub-critical bubbles of the Ising model, is that unlike the spin configuration, the law of the Ising interface is not Markovian. 

In Section~\ref{subsec:enlargements}, we develop an enlargement framework to couple the law of the interface, conditionally on the entry and exit data, to the law of an interface of a slightly enlarged strip with $(\pm,h)$ boundary conditions. In Section~\ref{sec:multi-strip-domains}, we generalize this to a collection of $k$ well-separated strips with prescribed entry/exit data. The proofs of these are important technical ingredients involving subtle couplings under conditional measures, but to make the flow of the arguments towards our main theorems transparent, we defer their proofs to Section~\ref{sec:proofs-domain-enlargements}. 
Still, as a crucial application, in Section~\ref{subsec:spikiness}, we use these couplings to get a priori regularity bounds on the local oscillations of $\mathscr I$.

\subsection{Coupling to domain enlargements}\label{subsec:enlargements}
Consider some rectangular strip 
\begin{align}\label{eq:R-def}
R= \llb x_\west,x_\east \rrb \times \llb 0,N\rrb \subset \Lambda_N\,.
\end{align}
 Suppose we know that the entry and exit data of the interface into $R$ satisfies 
 \begin{equation}\label{eq:entry-exit-conditioning-event}
\begin{aligned}
\mathcal E_{\leq h} &: = \{{\mathscr I}: \hgt_{x_\west}^+\leq h , \hgt_{x_\east}^+\leq h\}\,,\qquad \mbox{or alternatively} \\
\mathcal E_{\geq h} & := \{ {\mathscr I}: \hgt_{x_\west}^- \geq h, \hgt_{x_\east}^- \geq h\}\,.
\end{aligned}
\end{equation}
 We would like to say, e.g., that the law of the Ising measure induced on $R$ given $\mathcal E_{\leq h}$ stochastically dominates the one with $(\pm,h)$ boundary conditions on $R$  (plus above $h$ and minus below $h$). While this is not true, one can, with high probability, couple the configuration induced on $R$ under $\mathcal E_{\leq h}$ to stochastically dominate the one sampled from an \emph{enlargement} of $R$ with its own induced $(\pm,h)$ boundary conditions.  
The strategy of coupling induced \emph{random} boundary conditions to an enlargement was used in simpler settings without any conditioning on the external behavior of the interface in~\cite{MaTo10} then in similar veins,~\cite{LMST13,GL16a}. The conditioning on the entry and exit data of the interface, make such a coupling quite delicate. (Indeed one would not have a similar coupling if the conditioning were on non-monotone events in the entry/exit data.)  

\begin{figure}
\centering
\begin{tikzpicture}
\node at (0,0) {
\includegraphics[width=.9\textwidth]{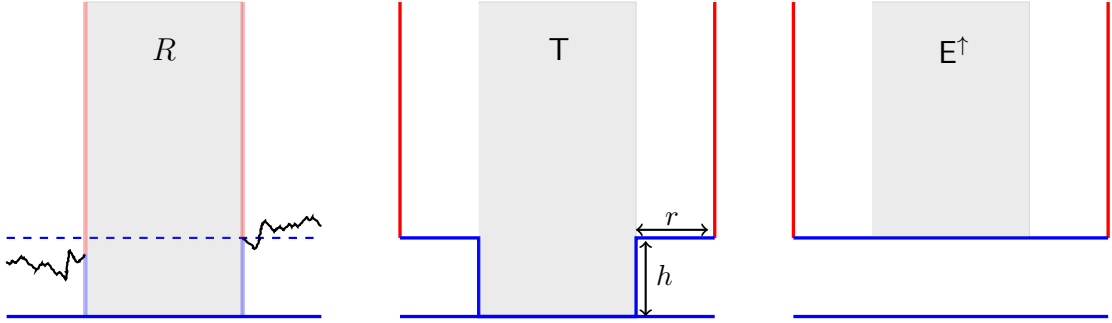}
};
\node[font = \large] at (-5.25,1.5) {$R$}; 
\node[font = \large] at (0,1.5) {$\tshape$}; 
\node[font = \large] at (5.25,1.5) {$\enlarge^\uparrow$};

\draw[<->, thick]  (1.15,-2.05)-- (1.15,-1.05);
\draw[<->, thick]  (1.02,-.9)-- (1.98,-.9);
\node at (1.4, -1.5) {$h$};
\node at (1.5, -.75) {$r$};

\end{tikzpicture}
\vspace{-.2cm}
\caption{Conditionally on its entry and exit through $R$, the induced Ising measure on $\partial_{\east,\west} R$ is a random b.c.\ that is mostly plus above $\sI$, and mostly minus below (left). This can be coupled to two enlargements $\tshape$ (middle) and $\enlarge^\uparrow$ (right) in such a way that on the gray regions, it is stochastically below the law induced by $\tshape$ and $\enlarge^\uparrow$ with $(\pm,h)$ boundary conditions, except with exponentially small probability.}\label{fig:enlargement-sets}\vspace{-.2cm}
\end{figure}

\subsubsection{The decreasing enlargement coupling}
We begin with the case where we have conditioned on the interface entering and exiting $R$ below some height $h$---here we couple it to a ``more minus" measure coming from an enlargement with truly plus-minus boundary conditions. For every rectangular subset 
$R$ as in~\eqref{eq:R-def}, define its \emph{enlargement} by a distance $r$ as 
\begin{align}\label{eq:enlargement}
\enlarge = \enlarge_{r}(R) := \llb x_\west -r, x_\east +r\rrb \times \llb 0, N\rrb\,.
\end{align} 

There are two kinds of enlarged domains of the strip $R$ which we will consider (see Figure~\ref{fig:enlargement-sets}): the most basic is the decreasing strip-enlargement, similar to $\enlarge$ but beginning at height $h$: 
\begin{align}\label{eq:strip-enlargement}
\enlarge^\uparrow:= \enlarge^\uparrow_{r,h}(R) := \enlarge_{r}(R) \cap (\mathbb R \times \llb h,\infty\rrb) \,.
\end{align}
A somewhat different domain that we also must consider is the \emph{T-shape-enlargement} described by 
\begin{align}\label{eq:tshape-enlargement}
\tshape:=\tshape_{r,h}(R) := R \cup \enlarge_{r,h}^\uparrow(R)\,.
\end{align}
Both of the above will be viewed as decreasing enlargements, i.e., when endowed with $(\pm,h)$ boundary conditions that are minus  on and below height $h$, and plus above, the distributions these enlargements induce on $R\cap \enlarge^\uparrow$ and $R$ are respectively (approximately) stochastically lower than that induced on $R\cap \enlarge^\uparrow$ and $R$ under $\mu_{\lambda,N}^{\pm}(\cdot \mid \mathcal E_{\leq h})$. Note that by monotonicity of the boundary conditions, $\mu_{\lambda, \enlarge_{r,h}^\uparrow(R)}^{(\pm,h)}$ is stochastically below $\mu_{\lambda, \tshape_{r,h}(R)}^{(\pm,h)}$ on $\enlarge^\uparrow$, and in particular $R\cap \enlarge^\uparrow$.

\begin{proposition}\label{prop:minus-enlargement}
Fix $\beta>\beta_c$ and any $\lambda \geq 0$. Let $R$ be as in~\eqref{eq:R-def} for any $x_\west\leq x_\east$. There exists $C(\beta)>0$ such that for every $h$, every $r$ and every decreasing $A\subset \{\pm 1\}^{R}$, 
\begin{align*}
\mu_{\lambda, N}^{\pm}(\sigma_R \in A \mid \mathcal E_{\leq h}) \leq \mu_{\lambda, \tshape}^{(\pm,h)} (\sigma_R \in A) + CNe^{ - r/C}\,,
\end{align*}
and for every decreasing $A\subset \{\pm1 \}^{R\cap \enlarge^\uparrow}$\,,
\begin{align*}
\mu_{\lambda, N}^{\pm}(\sigma_{R\cap \enlarge^\uparrow} \in A \mid \mathcal E_{\leq h}) \leq \mu_{\lambda, \enlarge^\uparrow}^{(\pm,h)} (\sigma_{R\cap \enlarge^\uparrow} \in A) + C N e^{ - r/C}
\end{align*}
where the $(\pm, h)$ boundary conditions are plus above height $h$ and minus at and below height $h$. 
\end{proposition}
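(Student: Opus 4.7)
The plan is to couple via a \emph{plus shield} argument in the enlargement. Introduce an intermediate T-shape $\tshape_{r/2, h}(R)\subset\tshape$; the annulus $\tshape\setminus\tshape_{r/2,h}(R)$ splits into two disjoint rectangular wings $W_\west,W_\east$ of width $r/2$ and vertical extent $[h,N]$, each wedged above a shoulder of $\tshape$. Let $F$ denote the event that the spin configuration on $\Lambda_N\setminus\tshape_{r/2,h}(R)$ contains, within each of $W_\west$ and $W_\east$, a plus $*$-connected path from the (plus) north boundary of $\Lambda_N$ down to height $h$. Together with the plus north/east/west boundary of $\Lambda_N$, the two paths provided by $F$ form a plus $*$-barrier enclosing the T-top of $\tshape_{r/2,h}(R)$.

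\medskip

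When $F$ holds, reveal the outermost such plus $*$-barrier $P$ in $\tshape\setminus\tshape_{r/2,h}(R)$. By the domain Markov property, the conditional law inside the region bounded by $P$ (which contains $R$) is Ising on $\tshape_{r/2,h}(R)$ with $+$ boundary condition on the portion of $\partial\tshape_{r/2,h}(R)$ adjacent to the shield and some induced boundary condition on the stem's vertical sides below $h$. Both pointwise dominate the $(\pm,h)$ boundary condition on $\tshape$: $+$ above $h$ via the shield, and $\geq-1$ below $h$ trivially. Monotonicity in boundary conditions~\eqref{eq:monotonicity-in-bc} then yields stochastic domination of $\mu^{(\pm,h)}_{\lambda,\tshape}$ on $R$, proving the first inequality on the event $F$; off $F$ the contribution absorbs into the error. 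The second inequality (on $R\cap\enlarge^\uparrow$) follows from the first via the remark preceding the proposition, since $\mu^{(\pm,h)}_{\lambda,\enlarge^\uparrow}$ dominates $\mu^{(\pm,h)}_{\lambda,\tshape}$ for decreasing events on $R\cap\enlarge^\uparrow$. To bound $\mu^\pm_{\lambda,N}(F^c\mid\mathcal E_{\leq h})$, note that $\mathcal E_{\leq h}$ and $F$ are both monotone in $\sigma$ in the same direction (interface heights being monotone in $\sigma$), so FKG gives $\mu^\pm(F^c\mid\mathcal E_{\leq h})\leq\mu^\pm(F^c)$; failure of $F$ in, say, $W_\west$ forces a minus $*$-crossing of $W_\west$ between its two vertical sides, which in the region $\{y\geq h\}\cap W_\west$---lying above the typical location of the interface and surrounded by plus-favoring structure---has probability at most $CNe^{-r/C}$ by the exponential decay of truncated correlations~\eqref{eq:exp-decay-truncated-correlations} together with the crossing estimate~\eqref{eq:crossing-probability-bound}, summed over the $O(N)$ possible crossing heights.

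\medskip

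The principal obstacle is that $\mathcal E_{\leq h}$ controls the interface only at the two columns $x_\west,x_\east$ inside $R$, whereas the wings extend out to columns $x_\west-r$ and $x_\east+r$; the interface could a priori rise above height $h$ within the intervening $r/2$ columns on each side, invading the wings and degrading the plus-phase argument underlying the bound on $\mu^\pm(F^c)$. The resolution exploits the robustness of $*$-connectivity: $F$ requires only a thin percolating plus cluster in each wing (rather than plus phase throughout), which tolerates localized minus excursions provided they do not themselves span the wing of width $r/2$. Formalizing this robustness calls for a careful revealing of the interface on the columns immediately flanking $\tshape_{r/2,h}(R)$, using the overhang bound in Proposition~\ref{prop:overhang-height-tail} to ensure that the revealed portion does not destroy the plus connectivity in $W_\west,W_\east$, along with verifying joint measurability of the revealed shield $P$ and the conditioning $\mathcal E_{\leq h}$. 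These constructions are executed in Section~\ref{sec:proofs-domain-enlargements}.
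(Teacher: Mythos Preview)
Your shield argument has a genuine gap at the step $\mu_{\lambda,N}^{\pm}(F^c)\le CNe^{-r/C}$. Under the unconditioned Dobrushin measure the interface is typically at height $\Theta(\sqrt N)$ (or $\Theta(N^{1/3})$ with field), so for small $h$ it lies well above $h$ throughout the wing columns. Since $\mathscr I$ traverses $\Lambda_N$ from left to right it crosses each wing horizontally, and just below it the configuration is in the minus phase; this produces a minus horizontal crossing of $W_\west$ (width $r/2$) with probability close to $1$, not $e^{-r/C}$. Your FKG step $\mu^\pm(F^c\mid\mathcal E_{\leq h})\le\mu^\pm(F^c)$ is valid but throws away the only information pinning the interface down, and even the conditional probability $\mu^\pm(F^c\mid\mathcal E_{\leq h})$ is not small: $\mathcal E_{\leq h}$ constrains $\mathscr I$ only at the two columns $x_\west,x_\east$, and over the $r/2$ columns to the wings the interface can (and for small $h$, typically does) climb back above $h$. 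The overhang bound of Proposition~\ref{prop:overhang-height-tail} does not help here: it controls $\hgt_x^+-\hgt_x^-$ at a fixed column, not the absolute height of $\mathscr I$ at columns $r/2$ away from $x_\west$. So the ``resolution'' you sketch does not close the gap.

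The paper's argument sidesteps this by conditioning on the full interface $\mathscr I$ first. Given any $\mathscr I\in\mathcal E_{\leq h}$, the boundary segments $\partial^\uparrow R=\partial_{\east,\west}R\cap\{y>h\}$ lie in $\Lambda^+(\mathscr I)$, so their conditional law is $\mu^+_{\lambda,\Lambda^+(\mathscr I)}\succeq\mu^+_{\lambda,\Lambda_N}$. The residual conditioning on $\mathcal E$ inside $R$ is then removed by FKG applied to the \emph{inner} measure $\mu_{\lambda,R}^{\eta}$ (for each fixed boundary $\eta$, the event $\mathcal E_\eta$ is increasing, hence negatively correlated with the decreasing $A$). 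Below height $h$ the boundary is replaced by all-minus, which only increases the probability of decreasing $A$. Only now is the plus-crossing coupling invoked, comparing $\mu^+_{\lambda,\Lambda_N}$ with $\mu^+_{\lambda,\enlarge}$ on $\partial^\uparrow R$: both are all-plus measures with no interface present, so~\eqref{eq:crossing-probability-bound} applies directly and yields the $CNe^{-r/C}$ error. The key conceptual point you are missing is that the crossing estimate must be run in a plus-phase measure, and reaching such a measure requires conditioning on $\mathscr I$ before attempting to locate a shield.
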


\subsubsection{The increasing enlargement coupling}\label{subsec:increasing-enlargement}
As indicated above we also need to consider enlargements in the other direction, when we know that the interface \emph{exceeds} some height $h$. 
This enlargement is complicated by the external field, making it difficult to couple the induced measure on the box $R$ to a nearby all-minus boundary. Here, various monotone events  go in opposite directions and we need to control them in different ways. Recall $\mathcal E_{\geq h}$ from~\eqref{eq:entry-exit-conditioning-event}, 
and recall $\enlarge_{r}(R)$ from~\eqref{eq:enlargement}. 
Define the 
 (increasing) strip-enlargement of $R$ up to $h$ as  
\begin{align*}
 \enlarge^\downarrow:= \enlarge^\downarrow_{r,h}(R) := \enlarge_r(R) \cap (\mathbb R \times \llb 0,h\rrb)\,.
\end{align*}

\begin{proposition}\label{prop:plus-enlargement}
Fix $\beta>\beta_c$ and $\lambda  = \frac{c_\lambda}{N}$. There exists $C(\beta)>0$ such that we have the following. For every $h,\mathcal M$ and $r$, and every increasing event $A\in \{\pm 1\}^{R\cap \enlarge^\downarrow}$, 
\begin{align*} 
\mu_{\lambda, N}^{\pm} \big(\sigma_{R\cap \enlarge^\downarrow} \in A\mid \mathcal E_{\geq h} \big) & \leq \mu_{\lambda, \enlarge^\downarrow}^{(\pm,h)} (\sigma_{R\cap \enlarge^\downarrow} \in A) + C N e^{ 8c_\lambda (\mathcal M+\frac{r|x_\west - x_\east|+ r^2}N)} e^{- r/C}  \\ 
& \quad + \frac{\mu_{\lambda,N}^{\pm}(\max_{x}\hgt_x^+>\mathcal M)}{\mu_{\lambda,N}^{\pm}(\mathcal E_{\geq h})}\,.
\end{align*}
\end{proposition}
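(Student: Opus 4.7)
The plan is to construct, on a high-probability event, a stochastic comparison of the conditional marginal of $\mu_{\lambda, N}^{\pm}(\cdot \mid \mathcal E_{\geq h})$ on $R \cap \enlarge^\downarrow$ with the marginal of $\mu_{\lambda, \enlarge^\downarrow}^{(\pm, h)}$ on $R \cap \enlarge^\downarrow$, in the direction required for increasing events. The three error terms reflect three steps of the construction: a truncation of the interface maximum at $\mathcal M$, a local coupling via a $-$-barrier in the wings of the enlargement, and a Radon--Nikodym conversion that transfers field-free crossing bounds to the field-driven setting.

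First I would carry out the height truncation: writing
\begin{align*}
\mu_{\lambda, N}^{\pm}(A \mid \mathcal E_{\geq h}) \leq \mu_{\lambda, N}^{\pm}\big(A,\, \max_x \hgt_x^+ \leq \mathcal M \,\big|\, \mathcal E_{\geq h}\big) + \frac{\mu_{\lambda, N}^{\pm}(\max_x \hgt_x^+ > \mathcal M)}{\mu_{\lambda, N}^{\pm}(\mathcal E_{\geq h})},
\end{align*}
the second summand matches the third error term exactly. On the event $\{\max_x \hgt_x^+ \leq \mathcal M\}$, all interface activity is confined to heights below $\mathcal M$, which lets us localize the Radon--Nikodym factor to a region of the claimed area. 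Next, I would reveal the spin configuration in the wings $\enlarge_r(R)\setminus R$ at heights up to $h \wedge \mathcal M$ and condition on a good event $\mathcal G$ consisting of a vertical $-$-connected path in each wing joining the $-$-boundary of $\Lambda_N$ to height $h$. On $\mathcal G$ these barriers screen $R \cap \enlarge^\downarrow$ from the configuration outside, so that by the domain Markov property and monotonicity in boundary conditions \eqref{eq:monotonicity-in-bc}, the conditional law of $\sigma_{R \cap \enlarge^\downarrow}$ under the conditioned measure is stochastically dominated by the marginal of $\mu_{\lambda, \enlarge^\downarrow}^{(\pm, h)}$ on $R \cap \enlarge^\downarrow$.

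Third, I would bound $\mu_{\lambda, N}^{\pm}(\mathcal G^c \mid \mathcal E_{\geq h}, \max \hgt_x^+ \leq \mathcal M)$. Since $\mathcal G^c$ is a $-$-crossing-failure event in windows of width $r$, under the field-free measure $\mu_{0, N}^{\pm}$ its probability is $\lesssim N e^{-r/C}$ by \eqref{eq:crossing-probability-bound}. To transfer this to the field-driven setting I would apply the local Radon--Nikodym bound \eqref{eq:radon-nikodym-bound} on a region of area at most $\mathcal M N + r(|x_\west - x_\east| + r)$ containing both the vertical strip of height $\mathcal M$ where the interface lives and the wings themselves; this produces the factor $\exp\!\big(8 c_\lambda(\mathcal M + (r|x_\west - x_\east|+r^2)/N)\big)$ once one also accounts for the Radon--Nikodym adjustment incurred by the denominator $\mu_{\lambda, N}^{\pm}(\mathcal E_{\geq h})^{-1}$ in passing between the field and field-free settings, doubling the exponent and explaining the constant $8$ rather than $2$. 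Combining the three steps gives the bound in the proposition.

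The principal obstacle, and the reason this statement is substantially more delicate than Proposition~\ref{prop:minus-enlargement}, is executing the monotonicity step on $\mathcal G$: conditioning on $\mathcal E_{\geq h}$ twists the measure \emph{against} the direction of the field, so the induced boundary on $R \cap \enlarge^\downarrow$ after revealing the barriers is not of clean $\pm$-type. One must argue separately that the spins along the top of $R \cap \enlarge^\downarrow$ are stochastically dominated by the $+$ b.c.\ inherited from $(\pm, h)$ on the enlargement --- an argument that must respect the exploration order defining $\mathcal G$ and that makes essential use of the truncation $\{\max \hgt_x^+ \leq \mathcal M\}$ to avoid anomalously high interface configurations creating downward plus bubbles that evade the barrier. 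This is the technical crux flagged in Section~\ref{subsec:increasing-enlargement}, and is the reason the full proof is deferred to Section~\ref{sec:proofs-domain-enlargements}.
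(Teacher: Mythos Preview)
Your three-step outline (truncate at $\mathcal M$; barrier coupling; Radon--Nikodym) matches the paper's overall structure, but the execution of the middle step has a genuine gap, and you misidentify what the actual obstacle is.

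The gap is in your step 3. You assert that $\mu_{0,N}^{\pm}(\mathcal G^c)\lesssim N e^{-r/C}$ via \eqref{eq:crossing-probability-bound}, but that inequality bounds the failure of $-$-crossings under the \emph{minus} measure $\mu_{0,\mathbb Z^2}^{-}$, not under $\mu_{0,N}^{\pm}$. Under $\mu_{0,N}^{\pm}$ (even after conditioning on $\mathcal E_{\geq h}\cap\{\max_x\hgt_x^+\le\mathcal M\}$) the wings $\enlarge_r(R)\setminus R$ need not be in the minus phase: $\mathcal E_{\geq h}$ constrains the interface only at the columns $x_\west,x_\east$ of $R$, and $\mathscr I$ may dip arbitrarily low just outside $R$, placing the wings in the plus phase. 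So vertical $-$-crossings in the wings are not a priori likely, and your Radon--Nikodym transfer has nothing useful to transfer. Your proposed ``local'' Radon--Nikodym on an area $\mathcal M N + r(|x_\west-x_\east|+r)$ also has no meaning when applied to the full-volume measure $\mu_{\lambda,N}^{\pm}$; one needs a measure living on a domain of that area.

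The paper closes this gap by a different route. After the height truncation, it first applies FKG to drop the \emph{inside} conditioning $\mathcal E_{\eta_{\Lambda\setminus R}}$ (a decreasing event against your increasing $A$)---a step your sketch omits. It then conditions on the interface $\mathscr I\in\mathcal E_{\geq h}\cap\Psi_{\mathcal M}^c$ and uses that, for any such $\mathscr I$, the boundary columns of $R$ at heights $\le h$ lie in $\Lambda^{-}(\mathscr I)$, so the induced law there is the \emph{minus} measure $\mu_{\lambda,\Lambda^-}^{-}$. The truncation gives the containment $\Lambda^{-}\subset \enlarge'\cup\Lambda_{N,\mathcal M}$ (with $\enlarge'=\llb x_\west-r,x_\east+r\rrb\times\llb 0,h+r\rrb$), so $\mu_{\lambda,\Lambda^-}^{-}\preceq\mu_{\lambda,\enlarge'\cup\Lambda_{N,\mathcal M}}^{-}$. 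Only now does a barrier coupling appear: one couples $\mu_{\lambda,\enlarge'\cup\Lambda_{N,\mathcal M}}^{-}$ to $\mu_{\lambda,\enlarge'}^{-}$ via a $\sqcap$-shaped minus half-circuit (two vertical crossings in the wings plus a horizontal crossing in $\llb x_\west-r,x_\east+r\rrb\times\llb h,h+r\rrb$). The Radon--Nikodym bound \eqref{eq:radon-nikodym-bound} is applied to this minus measure on the explicit domain of area $N\mathcal M + r|x_\west-x_\east|+O(r^2)$, after which \eqref{eq:crossing-probability-bound} is legitimately invoked under $\mu_{0,\mathbb Z^2}^{-}$.

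Finally, your closing paragraph inverts the difficulty. For increasing $A$, the top boundary of $R\cap\enlarge^\downarrow$ is \emph{trivially} dominated by the $+$ b.c.\ (everything is $\le +$), so there is nothing to argue there. The real work is on the side boundary below height $h$, and it is handled not by barriers found in the conditional $\pm$ measure but by passing through the interface-conditional minus measure as above; this is also the actual role of $\{\max_x\hgt_x^+\le\mathcal M\}$, namely to bound $|\Lambda^-|$ so that the minus-measure domain has controlled area.
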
 

\subsection{Multi-strip domain enlargements}\label{sec:multi-strip-domains}
Propositions~\ref{prop:minus-enlargement} and~\ref{prop:plus-enlargement} were special cases of the following more general enlargement machinery which can be used to obtain the exponential decay of correlations for monotone events; in particular, we apply this to deduce our large deviations estimate for multi-height oscillations, as in Theorem~\ref{thm:multipoint-height-bounds}.

Consider a collection of disjoint rectangular strips $\mathfrak R = (R_1 ,..., R_k)$ where $$ R_{i}  = \llb x_\west^{(i)}, x_\east^{(i)}\rrb \times \llb 0 ,N\rrb \subset \Lambda_N$$ 
and the endpoints are such that $x_\west^{(i)} < x_\east^{(i)} < x_{\west}^{(i+1)} < x_\east^{(i+1)}$ for all $i\leq k-1$. Let $\mathfrak h = (h_1 ,...,h_k)$ be the entry/exit heights for these interfaces, so that 
\begin{align}\label{eq:entry-exit-conditioning-event-multi-height}
    \mathcal E_{\leq \mathfrak h}:= \bigcap_{i\leq k} \big\{\sI : \hgt_{x_\west^{(i)}}^+ \leq h_i , \hgt_{x_\east^{(i)}}^+ \leq h_i \big\}\,, \quad \mbox{and} \quad \mathcal E_{\geq \mathfrak h} = \bigcap_{i\leq k}  \big\{ \sI : \hgt_{x_\west^{(i)}}^- \geq h_i, \hgt_{x_\east^{(i)}}^- \geq h_i \big \}\,.
\end{align}
We give a version of the decreasing and increasing enlargement couplings that couple the measures on the strips $(R_i)$ above to their decreasing/increasing enlargements up to an error that is exponentially decaying in the distances between the strips. For every collection of $k$ disjoint rectangular subsets $\mathfrak R$ and every collection of heights $\mathfrak h$, for every $r$, recalling the notation \eqref{eq:strip-enlargement}, define   
\begin{align}\label{eq:multistrip-enlarge-minus}
    \mathfrak E^{\uparrow} : = \mathfrak E^{\uparrow}_{r,\mathfrak h} (\mathfrak R) = \big(\enlarge^\uparrow_{r,h_i}(R_i)\big)_{i\leq k}\,.
\end{align}
Similarly, recalling \eqref{eq:tshape-enlargement}, we can define the ensemble of T-shape enlargements, 
\begin{align}\label{eq:multistrip-Tshape}
    \mathfrak T := \mathfrak T_{r,\mathfrak h}(\mathfrak R) : = \big(\tshape_{r,h_i}(R_i)\big)_{i\leq k} = \mathfrak R \cup \mathfrak E^{\uparrow}_{r,\mathfrak h}(\mathfrak R)\,,
\end{align}
where this intersection is naturally taken element by element. We will view $\mathfrak R, \mathfrak E^{\uparrow}, \mathfrak T$ both as collections of subsets of $\Lambda_N$, and as a single subset i.e., as the union of their elements. Notice that if the distances between $x_\east^{(i)}$ and $x_\west^{(i+1)},$ and vice versa are at least $2r+2$, there exists a configuration on $\Lambda_N \setminus \mathfrak E^\uparrow$ (resp., $\Lambda_N\setminus \mathfrak T$) under which the measure is a product measure over $\enlarge_{r,h_i}^\uparrow(R_i)$ with induced $(\pm,h_i)$ boundary conditions (resp., $\tshape_{r,h_i}(R_i)$ with $(\pm,h_i)$ boundary conditions): we denote these boundary conditions on $\Lambda_N \setminus \mathfrak E^\uparrow$ or $\Lambda_N \setminus \mathfrak T$ as $(\pm, \mathfrak h)$ boundary conditions.  

\begin{proposition}\label{prop:multi-strip-minus-enlargement}
Fix $\beta>\beta_c$ and any $\lambda \geq 0$; there exists $C(\beta)>0$ such that the following holds. For every $k$, every $\mathfrak h = (h_i)_{i\leq k}$, every $r$ and every  $\mathfrak R$ as above such that $ x_\west^{(i+1)}- x_\east^{(i)}> 2r+2$ for all $i\leq k-1$, we have for every decreasing $A\subset \{\pm 1\}^{\mathfrak R}$, 
\begin{align*}
    \mu_{\lambda,N}^{\pm}(\sigma_{\mathfrak R} \in A \mid \mathcal E_{\leq \mathfrak h}) \leq \mu_{\lambda,\mathfrak T}^{(\pm,\mathfrak h)} (\sigma_\mathfrak R \in A) + CN k e^{-r/C}\,,
\end{align*}
and for every decreasing $A\subset \{\pm 1\}^{\mathfrak R \cap \mathfrak E^\uparrow}$, 
\begin{align*}
    \mu^{\pm}_{\lambda,N} (\sigma_{\mathfrak R \cap \mathfrak E^\uparrow} \in A \mid \mathcal E_{\leq \mathfrak h}) \leq \mu_{\lambda,\mathfrak E^{\uparrow}}^{(\pm,\mathfrak h)} (\sigma_{\mathfrak R \cap \mathfrak E^\uparrow} \in A ) + CN ke^{ - r/C}\,.
\end{align*}
\end{proposition}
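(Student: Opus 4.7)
The plan is to reduce to the single-strip case (Proposition~\ref{prop:minus-enlargement}) via the spatial decoupling guaranteed by the gap $x_\west^{(i+1)}-x_\east^{(i)}>2r+2$. I describe the argument for the first inequality; the second is deduced from the first at the end.

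First, reveal the exterior $\sigma_{\Lambda_N\setminus \mathfrak T}$. Since the enlarged strips $\tshape_i:=\tshape_{r,h_i}(R_i)$ are pairwise non-adjacent by hypothesis, the domain Markov property gives that under $\mu_{\lambda,N}^{\pm}(\,\cdot\mid \sigma_{\Lambda_N\setminus\mathfrak T})$, the spins on $\mathfrak T$ factor as a product of Ising measures on each $\tshape_i$ with induced boundary condition $\eta_i$. Once the exterior is revealed, each event $\mathcal E^{(i)}_{\leq h_i}:=\{\hgt^+_{x_\west^{(i)}}\leq h_i,\ \hgt^+_{x_\east^{(i)}}\leq h_i\}$ becomes measurable with respect to $\sigma_{\tshape_i}$: the overhang along each of the two boundary columns of $R_i$ is determined by the spins on the two columns adjacent to it, and each such spin lies either inside $\tshape_i$ or in the revealed $\Lambda_N\setminus \mathfrak T$. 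Consequently, writing $\mu_\star := \mu_{\lambda,N}^{\pm}(\,\cdot \mid \mathcal E_{\leq \mathfrak h})$,
\begin{align*}
\mu_\star\bigl(\sigma_{\mathfrak T}\in\cdot \,\big|\, \sigma_{\Lambda_N\setminus\mathfrak T}\bigr) \;=\; \bigotimes_{i=1}^k \mu_{\lambda,\tshape_i}^{\eta_i}\bigl(\,\cdot\mid \mathcal E^{(i)}_{\leq h_i}\bigr).
\end{align*}

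Next I would invoke the single-strip estimate. The proof of Proposition~\ref{prop:minus-enlargement} (Section~\ref{sec:proofs-domain-enlargements}) produces, for each $i$, an increasing event $G_i$ measurable with respect to $\sigma_{\Lambda_N\setminus \tshape_i}$---morally, the existence of a plus circuit in the flank of $\tshape_i$ at height near $h_i$ isolating $\tshape_i$ from its environment---such that $\mu^{\pm}_{\lambda,N}(G_i^c)\leq CNe^{-r/C}$ and, on $G_i$, for every decreasing $A_i\subset\{\pm 1\}^{R_i}$,
\begin{align*}
\mu_{\lambda,\tshape_i}^{\eta_i}\bigl(\sigma_{R_i}\in A_i \,\big|\, \mathcal E^{(i)}_{\leq h_i}\bigr) \;\leq\; \mu_{\lambda,\tshape_i}^{(\pm,h_i)}\bigl(\sigma_{R_i}\in A_i\bigr).
\end{align*}
Because the flanks of the $\tshape_i$ are pairwise disjoint, each $G_i$ may be chosen to depend only on spins in $\Lambda_N\setminus \mathfrak T$. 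Setting $G=\bigcap_i G_i$, a union bound yields $\mu^{\pm}_{\lambda,N}(G^c)\leq CNke^{-r/C}$, and since both $G$ and $\mathcal E_{\leq \mathfrak h}$ are increasing in $\sigma$, FKG gives the same bound under $\mu_\star$. On $G$, the factorization above combined with stability of stochastic domination under tensor products implies that for every decreasing $A\subset\{\pm 1\}^{\mathfrak R}$,
\begin{align*}
\mu_\star\bigl(\sigma_{\mathfrak R}\in A\,\big|\,\sigma_{\Lambda_N\setminus \mathfrak T}\bigr) \;\leq\; \mu_{\lambda,\mathfrak T}^{(\pm,\mathfrak h)}\bigl(\sigma_{\mathfrak R}\in A\bigr),
\end{align*}
and integrating out $\sigma_{\Lambda_N\setminus \mathfrak T}$ after splitting on $\{G\}\cup\{G^c\}$ yields the first claim. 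The second inequality follows from the first together with the monotonicity $\mu_{\lambda,\mathfrak T}^{(\pm,\mathfrak h)}(A)\leq \mu_{\lambda,\mathfrak E^{\uparrow}}^{(\pm,\mathfrak h)}(A)$ for decreasing $A\subset \{\pm 1\}^{\mathfrak R\cap \mathfrak E^{\uparrow}}$, which holds because the random spins on $\mathfrak T\setminus \mathfrak E^{\uparrow}$ under $(\pm,\mathfrak h)$ b.c.\ stochastically dominate the all-minus configuration prescribed at the row $\{y=h_i-1\}\cap R_i$ under $(\pm,\mathfrak h)$ b.c.\ on $\mathfrak E^{\uparrow}$.

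The main conceptual difficulty, and the reason for routing through spin configurations rather than the interface, is the absence of a domain Markov property for $\mathscr I$: the events $\mathcal E^{(i)}_{\leq h_i}$ are global properties of the interface, but rewritten as monotone events on $\sigma$ they become measurable with respect to $\sigma_{\tshape_i}$ once the exterior is revealed, so the spatial separation of the strips yields the clean product factorization above. All the analytic work---the construction of the good events $G_i$ via exponential decay of truncated correlations~\eqref{eq:exp-decay-truncated-correlations}---is concentrated in the one-strip argument of Section~\ref{sec:proofs-domain-enlargements}, while the multi-strip extension merely inflates the error by the union-bound factor $k$.
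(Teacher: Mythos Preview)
Your reduction rests on two claims that do not hold, so the argument has a genuine gap.

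\textbf{The factorization fails.} After revealing $\sigma_{\Lambda_N\setminus\mathfrak T}$, the event $\mathcal E^{(i)}_{\leq h_i}=\{\hgt^+_{x_\west^{(i)}}\leq h_i,\ \hgt^+_{x_\east^{(i)}}\leq h_i\}$ is \emph{not} measurable with respect to $\sigma_{\tshape_i}$ alone. The quantity $\hgt^+_x$ records the highest crossing of the \emph{global} interface $\mathscr I$ at column $x$; which separating edges on column $x$ belong to $\mathscr I$ (as opposed to loops) is determined by following the path from $v^*_{\south\west}$, and this path can enter some $\tshape_j$ with $j\neq i$, be rerouted there by $\sigma_{\tshape_j}$, and return to recross column $x_\west^{(i)}$ at a different height. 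Your sentence ``the overhang along each of the two boundary columns of $R_i$ is determined by the spins on the two columns adjacent to it'' confuses the local separating-edge set with the global interface; only the former is local. Consequently the displayed product formula $\mu_\star(\,\cdot\mid\sigma_{\Lambda_N\setminus\mathfrak T})=\bigotimes_i \mu_{\lambda,\tshape_i}^{\eta_i}(\,\cdot\mid\mathcal E^{(i)}_{\leq h_i})$ is not valid, and the rest of the argument collapses.

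\textbf{The single-strip input is mischaracterized.} The proof of Proposition~\ref{prop:minus-enlargement} (Section~\ref{sec:proofs-domain-enlargements}) does not furnish a good event $G_i$ that is measurable with respect to $\sigma_{\Lambda_N\setminus\tshape_i}$ under $\mu^{\pm}_{\lambda,N}$ on which a pointwise stochastic domination holds. Rather, it first uses FKG to drop the conditioning on $\mathcal E_{\leq\mathfrak h}$ inside $\mathfrak R$ (this is the key step: $\mathcal E_{\leq\mathfrak h}$ is increasing while $A$ is decreasing), then fixes an interface $\mathscr I\in\mathcal E_{\leq\mathfrak h}$ to identify that $\partial^\uparrow\mathfrak R\subset\Lambda^+(\mathscr I)$, and finally couples the induced law on $\partial^\uparrow\mathfrak R$ to an \emph{auxiliary} measure $\mu_{\lambda,\mathfrak E}^{+}$ via vertical plus crossings in $\mathsf E_i\setminus R_i$. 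The crossing event lives in a coupling of $\mu_{\lambda,N}^{+}$ with $\mu_{\lambda,\mathfrak E}^{+}$, not in the exterior under $\mu_{\lambda,N}^{\pm}$; moreover those flanks intersect $\tshape_i$ above height $h_i$, so they are not contained in $\Lambda_N\setminus\mathfrak T$ anyway. Thus even the one-strip statement you invoke is not available in the form you need. The multi-strip proof in the paper is not a black-box reduction to $k=1$ but repeats the FKG\,/\,interface-conditioning\,/\,crossing-coupling steps simultaneously across all strips; the factor $k$ arises from a union bound over the $2k$ flank crossings, not from tensorizing one-strip errors.
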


The analogous increasing multi-strip enlargement coupling entails coupling to the collection 
\begin{align}\label{eq:multistrip-enlarge-plus}
    \mathfrak E^{\downarrow}  = \mathfrak E_{r,\mathfrak h}^{\downarrow} (\mathfrak R) := \big(\enlarge^\downarrow_{r,h_i} (R_i)\big)_{i\leq k}\,.
\end{align}
\begin{proposition}\label{prop:multi-strip-plus-enlargement}
Fix $\beta>\beta_c$ and $\lambda = \frac{c_\lambda}{N}$. There exists $C(\beta)>0$ such that we have the following. For every $k$, $\mathfrak h = (h_i)_{i\leq k}$, $\mathcal M$, and every  $\mathfrak R$ such that $d(x_\east^{(i)},x_\west^{(i+1)})> 2r+2$ for all $i\leq k-1$, for every increasing $A\subset \{\pm 1\}^{\mathfrak R\cap \mathfrak E^\downarrow}$, 
\begin{align*}
    \mu_{\lambda,N}^{\pm}(\sigma_{\mathfrak R \cap \mathfrak E^{\downarrow}} \in A \mid \mathcal E_{\leq \mathfrak h}) & \leq \mu_{\lambda,\mathfrak E^{\downarrow}}^{(\pm,\mathfrak h)} (\sigma_{\mathfrak R \cap \mathfrak E^{\downarrow}} \in A) + CN k e^{ 8 c_\lambda (\mathcal M + \frac{r  \sum_{i\leq k} |x_\east^{(i)} -x_\west^{(i)}| + k r^2}{N})} e^{ - r/C} \\ 
    & \qquad + \frac{\mu_{\lambda,N}^{\pm}(\max_x \hgt_x^+ >\mathcal M)}{\mu_{\lambda,N}^{\pm}(\mathcal E_{\geq \mathfrak h})}.
\end{align*}
\end{proposition}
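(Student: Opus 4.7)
The plan is to derive Proposition~\ref{prop:multi-strip-plus-enlargement} as a multi-strip generalization of Proposition~\ref{prop:plus-enlargement}, exploiting the hypothesis that the strips $(R_i)$ are pairwise separated by more than $2r+2$, so that the enlargements $\enlarge^\downarrow_{r,h_i}(R_i)$ are pairwise disjoint. Once the configuration on $\Lambda_N\setminus \mathfrak E^\downarrow$ is revealed via the domain Markov property, the induced measure on $\mathfrak E^\downarrow$ factorizes as a product across the $k$ enlargements; the proof then reduces to running the single-strip coupling of Proposition~\ref{prop:plus-enlargement} in parallel on each strip and collecting errors via a union bound.

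The plan is to proceed in three steps. First, I would split off the contribution from tall interfaces by writing
\begin{align*}
\mu_{\lambda,N}^{\pm}\big(\sigma_{\mathfrak R\cap \mathfrak E^\downarrow}\in A \,\big|\, \mathcal E_{\geq \mathfrak h}\big) \leq \mu_{\lambda,N}^{\pm}\big(\{\sigma_{\mathfrak R\cap \mathfrak E^\downarrow}\in A\}\cap\{\textstyle\max_x \hgt_x^+\leq \mathcal M\} \,\big|\, \mathcal E_{\geq \mathfrak h}\big) + \frac{\mu_{\lambda,N}^{\pm}(\max_x\hgt_x^+>\mathcal M)}{\mu_{\lambda,N}^{\pm}(\mathcal E_{\geq \mathfrak h})},
\end{align*}
isolating the final additive error term. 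Under the height cutoff the relevant region for the coupling lies in $\mathbb R\times\llb 0,\mathcal M\rrb$ enlarged by a width-$r$ buffer around each strip, whose volume is controlled by a constant times $\mathcal M\cdot N + r\sum_i|x_\east^{(i)}-x_\west^{(i)}| + kr^2$; the Radon--Nikodym bound~\eqref{eq:radon-nikodym-bound} applied to this region (rather than to all of $\Lambda_N$) produces the prefactor $e^{8c_\lambda(\mathcal M + (r\sum_i|x_\east^{(i)}-x_\west^{(i)}|+kr^2)/N)}$ when passing from $\mu_{\lambda,N}^\pm$ to $\mu_{0,N}^\pm$. Second, in the no-field setting I would reveal the configuration on $\Lambda_N\setminus \mathfrak E^\downarrow$; the disjointness of the enlargements makes the conditional measure a product across strips. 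For each strip $i$ the exponential decay of truncated correlations~\eqref{eq:exp-decay-truncated-correlations} applied to the width-$r$ annular buffer separating $R_i$ from $\partial \enlarge^\downarrow_{r,h_i}(R_i)$ guarantees, except with probability at most $CNe^{-r/C}$, the existence of a $-$-connected dual shield running along the sides of the enlargement at heights below $h_i$. Third, on this good event the induced boundary conditions on each $\partial \enlarge^\downarrow_{r,h_i}(R_i)$ are dominated pointwise by $(\pm,h_i)$, so monotonicity in boundary conditions~\eqref{eq:monotonicity-in-bc} yields the desired domination of the conditional probability of the increasing event $A$ by $\mu_{\lambda,\mathfrak E^\downarrow}^{(\pm,\mathfrak h)}(\sigma\in A)$; a union bound over the $k$ strips contributes the factor $CNk$.

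The main obstacle lies in the coupling step, because the conditioning event $\mathcal E_{\geq \mathfrak h}$ is itself \emph{decreasing} in the spin configuration (forcing $\sI$ high at a column is equivalent to forcing a tall stack of $-$ spins beneath it), so FKG does not permit one to simply drop the conditioning before invoking the truncated correlation estimate. I would handle this as in the single-strip case: first reveal $\sI$'s entry and exit points through the boundary of each enlargement (a local operation consistent with $\mathcal E_{\geq \mathfrak h}$), and then apply the truncated correlation decay inside each annular buffer conditional on this local data, which is favorable rather than adversarial---the sites in $R_i$ at heights below $h_i$ are, under $\mathcal E_{\geq \mathfrak h}$, typically deep in the minus phase. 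The separation hypothesis $x_\west^{(i+1)}-x_\east^{(i)}>2r+2$ is precisely what ensures the shields constructed on different strips are disjoint and that their induced good events are genuinely conditionally independent, so the union bound loses only a factor of $k$.
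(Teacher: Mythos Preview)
Your overall architecture (split off $\Psi_{\mathcal M}$, build minus shields in width-$r$ buffers, union-bound over strips) matches the paper, but the step where you invoke the Radon--Nikodym bound is a genuine gap. You write that~\eqref{eq:radon-nikodym-bound} ``applied to this region (rather than to all of $\Lambda_N$)'' lets you pass from $\mu_{\lambda,N}^\pm$ to $\mu_{0,N}^\pm$ at cost $e^{8c_\lambda(\mathcal M+\cdots)}$. That is not valid: the Radon--Nikodym derivative between $\mu_{\lambda,N}^\pm$ and $\mu_{0,N}^\pm$ is a function of the full configuration on $\Lambda_N$, and its sup-norm is $e^{2\lambda|\Lambda_N|}=e^{2c_\lambda N}$, not the quantity you wrote. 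One cannot restrict~\eqref{eq:radon-nikodym-bound} to a subregion without first conditioning on the configuration outside it---and then you would be comparing \emph{conditional} measures with fixed exterior data, not the original measures on $\Lambda_N$. So as stated, the passage to no field destroys the error bound you need.

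The paper never passes to $\mu_{0,N}^\pm$. Instead it keeps $\lambda$ throughout and uses the height cutoff differently: after dropping the conditioning on $\mathcal E_{\geq\mathfrak h}$ via FKG (the residual conditioning on $\sigma_{\mathfrak R}$ is decreasing, hence is negatively correlated with the increasing event $A$), it conditions on the \emph{interface} $\mathscr I\in\Psi_{\mathcal M}^c$. Given $\mathscr I$, the induced measure on $\partial^\downarrow\mathfrak R\subset\Lambda^-(\mathscr I)$ is exactly $\mu_{\lambda,\Lambda^-}^-$, and the whole point of $\Psi_{\mathcal M}^c$ is that $\Lambda^-\subset\mathfrak E'\cup\Lambda_{N,\mathcal M}$, a domain of area $O(N\mathcal M+r\sum_i|x_\east^{(i)}-x_\west^{(i)}|+kr^2)$. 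Monotonicity gives $\mu_{\lambda,\Lambda^-}^-\preceq\mu_{\lambda,\mathfrak E'\cup\Lambda_{N,\mathcal M}}^-$, and \emph{now} the Radon--Nikodym bound on this small domain legitimately controls the probability that the minus shields $\circuit_\sqcap^-$ fail. The conditioning on $\mathscr I$ is the missing mechanism that localizes the measure to a region where the field tilt is affordable; your proposal to ``reveal $\mathscr I$'s entry and exit points through the boundary of each enlargement'' gestures toward this but does not supply it.
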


\subsection{A priori regularity estimates of ${\sI}$}\label{subsec:spikiness}
In this section, we use the enlargements above, together with the locally Brownian nature of no-field interfaces to show that with high probability, the interface is not ``spiky", i.e., does not locally oscillate in ways that would be atypically irregular for a no-field interface. Consider 
$$I  = \llb x_\west ,x_\east\rrb \subset \partial_\south \Lambda_N \qquad \mbox{and let} \qquad R:= I \times \llb 0,N\rrb\,.$$
 We have the following bound on the local oscillations of the interface in the strip $\Lambda_I$.

\begin{proposition}\label{prop:spikiness-bound}
Fix $\beta>\beta_c$ and $\lambda = \frac{c_\lambda}{N}$ for $c_\lambda>0$; there exists $C= C(\beta, c_\lambda)>0$ such that the following holds. For every $I: |I|\geq N^{2/3}$ and every $h$, 
\begin{align*}
\mu_{\lambda,N}^{\pm} \big(\hgt_{x_\west}^+ \leq h, \hgt_{x_\east}^+  \leq h,\max_{x\in I} \hgt_{x}^+ \geq 3h/2\big) &  \leq C \exp ( - h^2 /C |I|) + C e^{ - N^{2/3}/C}\,. 
\end{align*}
For every fixed $A>0$ and every $I: N^{2/3}\leq |I|\leq AN^{2/3}$, for every $h\geq K\sqrt{|I|}$ for $K$ large (depending on $A, \beta, c_\lambda)$, ,
\begin{align*}
\mu_{\lambda, N}^{\pm} \big(\hgt_{x_\west}^- \geq 2h, \hgt_{x_\east}^- \geq 2h, \min_{x\in I} \hgt_{x}^- \leq 3h/2\big)  & \leq  C\exp(- h^2 /C|I|) + C e^{ - N^{1/3}/C}\,.
\end{align*}
\end{proposition}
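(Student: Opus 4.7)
The plan is to apply the enlargement couplings of Propositions~\ref{prop:minus-enlargement} and~\ref{prop:plus-enlargement} to reduce both spike estimates to Gaussian-tail oscillation probabilities on enlarged Ising domains with $\pm$-type boundary conditions at an explicit height, where Corollary~\ref{cor:max-height-fluctuation-not-straight} is directly applicable. The event in the first inequality is monotone in $\sigma$ in the direction matching the decreasing enlargement of Proposition~\ref{prop:minus-enlargement} (as increasing the minus cluster raises the interface), while the event in the second inequality matches the increasing enlargement of Proposition~\ref{prop:plus-enlargement}. The external field $\lambda=c_\lambda/N$ is handled by monotonicity in $\lambda$ whenever applicable, and otherwise via the Radon-Nikodym bound~\eqref{eq:radon-nikodym-bound} applied on a sub-domain of sufficiently small area.

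For the first inequality, take $r=N^{2/3}$, so the coupling error $CNe^{-r/C}$ is $Ce^{-N^{2/3}/C}$. Applying Proposition~\ref{prop:minus-enlargement} to the event $\{\max_{x\in I}\hgt_x^+\geq 3h/2\}$, which is decreasing in $\sigma$, gives
\[
\mu^{\pm}_{\lambda,N}\big(\mathcal E_{\leq h}\cap\{\max_{x\in I}\hgt_x^+\geq 3h/2\}\big)\leq \mu^{(\pm,h)}_{\lambda,\tshape_{r,h}(R)}\big(\max_{x\in I}\hgt_x^+\geq 3h/2\big)+Ce^{-N^{2/3}/C}.
\]
Monotonicity in $\lambda$ passes the enlargement probability to the $\lambda=0$ measure. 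In the no-field T-shape with $(\pm,h)$ boundary conditions, the interface traverses the wider part of width $|I|+2r\leq 3|I|$ (using $|I|\geq N^{2/3}$) entering and exiting at height $h$, so Corollary~\ref{cor:max-height-fluctuation-not-straight} bounds the probability of an upward oscillation of amplitude $h/2$ by $Ce^{-(h/2)^2/(C(|I|+2r))}\leq Ce^{-h^2/C'|I|}$, establishing the first bound.

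For the second inequality, apply Proposition~\ref{prop:plus-enlargement} with height parameter $2h$ and $r=N^{1/3}$; the constraint $|I|\leq AN^{2/3}$ makes $(r|x_\east-x_\west|+r^2)/N=O(A)$. Upper bounding $\mu^\pm_{\lambda,N}(\mathcal E_{\geq 2h})\leq 1$ and noting that $\{\min_{x\in I}\hgt_x^-\leq 3h/2\}$ is increasing in $\sigma$ yields
\[
\mu^{\pm}_{\lambda,N}\big(\mathcal E_{\geq 2h}\cap\{\min_{x\in I}\hgt_x^-\leq 3h/2\}\big)\leq \mu^{(\pm,2h)}_{\lambda,\enlarge^\downarrow}\big(\min_{x\in I}\hgt_x^-\leq 3h/2\big)+CNe^{8c_\lambda(\mathcal M+O(A))}e^{-N^{1/3}/C}+\mu^\pm_{\lambda,N}\big(\max_x\hgt_x^+>\mathcal M\big).
\]
The Radon-Nikodym bound on $\enlarge^\downarrow$ (of area $O((|I|+2r)\cdot 2h)=O(N^{2/3}h)$) introduces a factor $e^{O(c_\lambda h/N^{1/3})}$ relative to $\mu_0$; combined with Corollary~\ref{cor:max-height-fluctuation-not-straight} (downward oscillation of amplitude $h/2$ across a strip of width $|I|+2r$ with interface entering and exiting at $2h$), this gives $Ce^{-h^2/C'|I|}$ for the enlargement probability, provided $h\geq K\sqrt{|I|}$ with $K$ sufficiently large (depending on $A$, $c_\lambda$), so that $h^2/|I|\gg h/N^{1/3}$ absorbs the Radon-Nikodym prefactor.

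The hard part is calibrating $\mathcal M$ in Proposition~\ref{prop:plus-enlargement}: it must be small enough (order $N^{1/3}$, with constant depending on $c_\lambda$) that $CNe^{8c_\lambda\mathcal M}e^{-N^{1/3}/C}\leq Ce^{-N^{1/3}/C''}$, yet large enough that $\mu^\pm_{\lambda,N}(\max\hgt^+>\mathcal M)$ is sufficiently small. Monotonicity in $\lambda$ (the event is decreasing in $\sigma$) and Proposition~\ref{prop:max-height-fluctuation-no-field} (or Corollary~\ref{cor:max-height-fluctuation-floor}) give $\mu^\pm_{\lambda,N}(\max\hgt^+>\mathcal M)\leq Ce^{-\mathcal M^2/CN}$; this is supplemented when necessary by the a priori polylogarithmic control on $\max_x\hgt_x^+$ deducible from Velenik's upper bound on the area under $\mathscr I$~\cite{Velenik}. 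The condition $h\geq K\sqrt{|I|}$ with $K$ large is crucial throughout: it makes the target $Ce^{-h^2/C|I|}\leq Ce^{-K^2/C}$ small enough to absorb any residual errors.
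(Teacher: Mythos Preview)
Your overall strategy matches the paper's, and the first inequality is essentially right. Two minor points: the height event $\{\max_{x\in I}\hgt_x^+\geq 3h/2\}$ is not literally a function of $\sigma_R$ (the interface is a global object), so the paper first passes to the crossing event $\co_v^-(I\times\llb h,3h/2\rrb)$, which is decreasing and measurable on $R\cap\enlarge^\uparrow$, and which under $\mathcal E_{\leq h}$ is implied by the height event in $\Lambda_N$ and implies it in the enlargement. The paper also uses the rectangular $\enlarge^\uparrow$ together with Corollary~\ref{cor:max-height-fluctuation-floor}, rather than the non-rectangular $\tshape$ with Corollary~\ref{cor:max-height-fluctuation-not-straight}, so that the fluctuation bound applies directly to the domain at hand.

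The second inequality has a genuine gap: your calibration $r=N^{1/3}$, $\mathcal M=\Theta(N^{1/3})$ cannot close. With $\mathcal M=cN^{1/3}$ (forced by the requirement $e^{8c_\lambda\mathcal M}e^{-N^{1/3}/C}\to 0$), the Gaussian bound gives $\mu_{\lambda,N}^{\pm}(\max_x\hgt_x^+>\mathcal M)\leq Ce^{-c^2 N^{-1/3}/C}\to C$, which is useless. Your appeal to Velenik is mistaken on two counts: Velenik's result bounds the \emph{area} $|\mathcal C^-|$, not $\max_x\hgt_x^+$; and any sharper control on the maximum (e.g.\ Theorem~\ref{thm:max-tightness}) is proved downstream of this proposition, so invoking it here is circular. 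Since the typical maximum height is $\gg N^{1/3}$, no choice of $\mathcal M$ is compatible with $r=N^{1/3}$.

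The paper's fix is simply to take $r=N^{2/3}$ and $\mathcal M=\varepsilon N^{2/3}$ for $\varepsilon$ small in $\beta,c_\lambda$. Then the coupling error becomes $CNe^{8c_\lambda(\varepsilon N^{2/3}+O_A(N^{1/3}))}e^{-N^{2/3}/C}\leq Ce^{-N^{2/3}/(2C)}$, while now $\mu_{\lambda,N}^{\pm}(\max_x\hgt_x^+>\varepsilon N^{2/3})\leq Ce^{-\varepsilon^2 N^{1/3}/C}$ via~\eqref{eq:max-height-fluctuation-with-field}. (The paper also inserts a case split on whether $\mu_{\lambda,N}^{\pm}(\mathcal E_{\geq 2h})$ is below $e^{-\varepsilon^2 N^{1/3}/(2C)}$; your device of multiplying through by $\mu(\mathcal E_{\geq 2h})\leq 1$ achieves the same effect.) With this calibration your Radon--Nikodym step on $\enlarge^\downarrow$ goes through unchanged, since the area of $\enlarge^\downarrow$ is still $O(h|I|)$ and $c_\lambda h|I|/N\ll h^2/|I|$ for $K$ large relative to $A$, exactly as you argued.
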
 

For upward spikiness of the interface, upon localizing to the appropriate scale of Brownian oscillations with the decreasing enlargement, the external field only decreases the probability of an upwards oscillation. On the other hand, for downward spikiness there is a competition between the tilt on the measure induced by the external field and Gaussian tail bounds on the interface's fluctuations, in addition to the extra difficulties introduced from using $E^\downarrow$ rather than $E^\uparrow$.       

\begin{proof}[\textbf{\emph{Proof of Proposition~\ref{prop:spikiness-bound}}}]
Let us begin with the simpler first bound.

\smallskip
\noindent
\textbf{Upwards oscillation:} We can obviously bound the probability in question as 
\begin{align*}
\mu_{\lambda,N}^{\pm}(\hgt_{x_\west}^+ \leq h, \hgt_{x_\east}^+  \leq h,\max_{x\in I} \hgt_{x}^+ \geq 3h/2) \leq \mu_{\lambda,N}^{\pm}( \max_{x\in I} \hgt_x^+ \geq 3h/2 \mid \mathcal E_{\leq h} )\,,
\end{align*}
where $\mathcal E_{\leq h}$ is as in~\eqref{eq:entry-exit-conditioning-event}. Note that under $\mathcal E_{\leq h}$, in order for $\max_{x\in I} \hgt_x^+ \geq 3h/2$, there must be a minus vertical crossing of $I \times \llb h,3h/2\rrb$, denoted $\co_v^-(I\times \llb h,3h/2\rrb)$. (It is important that we are working with $\hgt^+_x$ here, as the interface is therefore not allowed to use minuses along $\partial_{\east,\west} R$ above height $h$ to attain climb to $3h/2$.) Observe that $\co_v^-(I\times \llb h,3h/2\rrb)$ is measurable with respect to the configuration on $R \cap \enlarge^\uparrow$ and is a decreasing event, where $\enlarge^\uparrow = \enlarge^\uparrow_{r,h}(R)$ is as in~\eqref{eq:strip-enlargement}. As such, by Proposition~\ref{prop:minus-enlargement}, for every $r$, the right-hand side above is at most  
\begin{align*}
\mu_{\lambda,N}^{\pm}\big(\co_v^-(I\times \llb h,3h/2\rrb)\big) \leq \mu_{\lambda,\enlarge^\uparrow}^{(\pm,h)} \big(\co_v^-(I\times \llb h,3h/2\rrb)\big) + C N e^{ - r/C}\,.\,.
\end{align*}
But if under $\enlarge^\uparrow$ the event $\co_v^-(I\times \llb h,3h/2\rrb)$ holds, its interface will reach height $3h/2$; thus,   
\begin{align*}
\mu_{\lambda,N}^{\pm}\big(\co_v^-(I\times \llb h,3h/2\rrb)\big)\leq \mu_{\lambda,\enlarge^\uparrow}^{(\pm,h)}\Big(\max_{x\in I} \hgt_x^+ \geq 3h/2\Big) + CN e^{ -r/C}\,.
\end{align*}
If we choose $r= N^{2/3}$, we see that the strip-enlargement $\enlarge^\uparrow$ is a rectangle of height $N-h$ and width $L = |I|+2N^{2/3}$, with boundary conditions that are minus on $\partial_\south \enlarge^\uparrow$, and plus on $\partial_{\north,\east,\west}\enlarge^\uparrow$.  Then, by monotonicity in $\lambda$ and in boundary conditions~\eqref{eq:monotonicity-in-bc} and \eqref{eq:monotonicity-in-external-field}, we can remove the external field and push the top boundary to infinity, so that by Corollary~\ref{cor:max-height-fluctuation-floor}, for every $h\leq |I|$, 
\begin{align*}
\mu_{\lambda, \enlarge^\uparrow}^{(\pm,h)} \Big(\max_{x\in I} \hgt_x^+ \geq 3h/2\Big) &  \leq \mu_{0,\Lambda_{L,\infty}}^{\pm} \Big(\max_{x\in \partial_\south \Lambda_{L,\infty}} \hgt_x^+ \geq h/2\Big)\leq C\exp( - h^2/C L)\,,
\end{align*}
for some $C(\beta)>0$. Since $|I|\geq N^{2/3}$, we have $L \leq 3 |I|$, concluding the proof. \\

\noindent
\textbf{Downwards oscillation:}
We now turn to the probability of having an irregular downwards oscillation, for which we use the ``increasing" strip-enlargement.  By monotonicity in external field and in boundary conditions, and Corollary~\ref{cor:max-height-fluctuation-floor}, there exists $C(\beta,c_\lambda)>0$ such that for every $\mathcal M \leq N$, 
\begin{align}\label{eq:max-height-fluctuation-with-field}
\mu_{\lambda,N}^{\pm} \big( \max_{x\in \partial_\south \Lambda_N} \hgt_x^+ > \mathcal M \big) \leq  \mu_{0,\Lambda_{N,\infty}}^{\pm}\big(  \max_{x\in \partial_\south \Lambda_N} \hgt_x^+ >\mathcal M\big) \leq C \exp ( -  \mathcal M^2 /CN)\,.
\end{align}
We apply~\eqref{eq:max-height-fluctuation-with-field} with $\mathcal M = \e N^{2/3}$ for an $\e$ to be taken sufficiently small depending on $\beta, c_\lambda$, to get
\begin{align*}
\mu_{\lambda,N}^{\pm}(\max_{x\in \partial_\south \Lambda_N} \hgt_x^+ >\mathcal M ) \leq C\exp( - \e^2 N^{1/3}/C)\,.
\end{align*}
We now split the probability we wish to bound into two cases: either $I,h$ are such that   
\begin{align*}
\mu_{\lambda,N}^{\pm}( \hgt_{x_\west}^- \geq 2h, \hgt_{x_\east}^- \geq 2h) <  C\exp ( - \e^2 N^{1/3}/2C)\,,
\end{align*}
or the reverse inequality holds. In the former case, we obviously would have 
\begin{align*}
\mu_{\lambda,N}^{\pm}(\hgt_{x_\west}^-\geq 2h, \hgt_{x_\east}^-\geq 2h, \min_{x\in I}\hgt_x^+\leq 3h/2) \leq C\exp(- \e^2 N^{1/3}/2C)\,.
\end{align*}
Now consider the latter case, and recall that  $\mathcal E_{\geq h}$ from~\eqref{eq:entry-exit-conditioning-event}. 
As in the upwards spikiness, conditionally on $\mathcal E_{\geq h}$, the event $\min_{x\in I} \hgt_x^+<h$ implies the plus crossing $\co_v^+(I\times \llb 3h/2,2h\rrb)$, which is measurable with respect to the set $R \cap \enlarge^\downarrow$ where $\enlarge^\downarrow  = \enlarge_{r,h}^\downarrow(R)$  (again it was important that  the event $\mathcal E_{\geq h}$  was defined in terms of $\hgt^-$ rather than $\hgt^+$). Thus, by Proposition~\ref{prop:plus-enlargement}, for every $r$, 
\begin{align*}
\mu_{\lambda,N}^{\pm}\big(\hgt_{x_\west}^-,\hgt_{x_\east}^- & \geq 2h, \min_{x\in I}\hgt_x^+\leq 3h/2\big) &   \\
& \leq \mu_{\lambda, \enlarge^\downarrow}^{(\pm,2h)} (\co_v^+(I \times \llb 3h/2,2h\rrb)) + CN e^{8 c_\lambda (\mathcal M + \frac{r|I|+ r^2}{N})} e^{ - r/C} + Ce^{ - \e^2 N^{1/3}/2C}\,.
\end{align*}
Let us first consider the term coming from the cost of coupling to the strip-enlargement. If we take $r= N^{2/3}$, and use the upper bound $|I|\leq AN^{2/3}$, we see that as long as $\e$ was sufficiently small in $\beta,\lambda$, for large $N$, we have that 
\begin{align*}
CN e^{8 c_\lambda (\mathcal M + \frac{r|I|+r^2}{N})} e^{ - r/C} \leq Ce^{ - N^{2/3}/2C}\,.
\end{align*} 
Now consider the crossing probability in the strip-enlargement $\enlarge^\downarrow$. Observe that in the rectangle $\enlarge^\downarrow$ with $(\pm,2h)$ boundary conditions, if the event $\co_v^+(I\times \llb 3h/2,2h\rrb)$ occurs, then for its interface, necessarily $\min_{x}\hgt_x^+ \leq h$. Using~\eqref{eq:radon-nikodym-bound} to remove the tilt from the external field on $\enlarge^\downarrow$, we obtain 
\begin{align*}
\mu_{\lambda,\enlarge^\downarrow}^{(\pm,2h)} (\co_v^+(I\times \llb 3h/2,2h\rrb))\leq e^{ 8c_\lambda \frac{h |I|}{N}} \mu_{0,\enlarge^\downarrow}^{(\pm,2h)} (\min_x \hgt_x^+ <h)\,.
\end{align*}
Sending the bottom minus boundary conditions of $\enlarge^\downarrow$ to $-\infty$ and using (a vertically reflected) Corollary~\ref{cor:max-height-fluctuation-floor}, we obtain 
\begin{align*}
\mu_{\lambda,\enlarge_{r,2h}^\downarrow(\Lambda_I)}^{(\pm,2h)} (\co_v^+(I\times \llb 3h/2,2h\rrb)) & \leq C\exp \Big(  \frac{8 c_\lambda h |I|}{N} - \frac{h^2}{C|I|}\Big)  \leq C'\exp\big(- h^2 /C'|I| \big)
\end{align*} 
for some other $C'$ depending on $\beta,c_\lambda,A$. Here we used the fact that the above inequality holds as long as $\frac{h^2}{|I|}$ is bigger than a sufficiently large constant (depending on $\beta,c_\lambda, A$) times $\frac{h |I|}{N}$, 
which holds since $h\geq K\sqrt{|I|} \geq KN^{1/3}$, so long as $|I| \leq A N^{2/3}$.  
\end{proof}

\section{Local and global control on the tilt induced by the external field}\label{sec:global-tilt}
Recall from Section~\ref{sec:ideas-of-proofs},
that a key ingredient in our proofs is a bound on the difference in surface tensions in the presence and absence of an external field on both local and global scales.  
 As a warm up, to help the reader get accustomed to the nature of such arguments, and towards refining the understanding of the global behavior of $\mathscr I$, in Sections~\ref{subsec:global-tilt-estimates}--\ref{subsec:global-tilt-proof} we establish the a global distortion estimate removing logarithmic corrections in Lemmas 3--4 of \cite{Velenik}. We then adapt the argument to local scales between the critical width of $N^{2/3}$ and the maximal width of $N$ in Section~\ref{subsec:local-tilt}.
\subsection{Global control of the tilt.}\label{subsec:global-tilt-estimates}
Here prove the global distortion estimate~\eqref{eq:global-tilt-estimate-intro}. Recall that an \emph{admissible interface} is a $\Lambda^*$-admissible contour $\gamma$ with $\partial \gamma = \{\vsw,\vse\} := \{(-\frac12, -\frac 12), (N+\frac 12, -\frac 12)\}$. 
\begin{theorem}\label{thm:effect-of-global-tilt}
Fix $\beta>\beta_{c}$ and let $\lambda= \frac{c_\lambda}{N}$ for $c_\lambda >0$. There exist
$C_{1},C_{2}>0$ such that for every set of admissible interfaces~$\Gamma$,
\begin{align*}
\mu_{\lambda,N}^{\pm}(\Gamma)\leq & e^{C_{1}N^{1/3}}\mu_{0,N}^{\pm}(\Gamma)+e^{-C_{2}N}\,.
\end{align*}
\end{theorem}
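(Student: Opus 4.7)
The plan is to pass through the domain-Markov decomposition
\[
\frac{\mu^\pm_{\lambda,N}(\gamma)}{\mu^\pm_{0,N}(\gamma)} = \frac{F(\gamma)}{\mathbb E_{\mu^\pm_{0,N}}[F]}, \qquad F(\gamma):=\mathbb E_{\mu^+_{0,\Lambda^+(\gamma)}}[e^{\lambda M^+}]\cdot\mathbb E_{\mu^-_{0,\Lambda^-(\gamma)}}[e^{\lambda M^-}],
\]
where $M^\pm$ is the magnetization inside $\Lambda^\pm(\gamma)$; the identity $\mathbb E_{\mu^\pm_{0,N}}[F] = Z^\lambda_{\pm,N}/Z^0_{\pm,N}$ holds by the tower property. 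The goal is to show $\mathbb E_{\mu^\pm_{0,N}}[F]\ge e^{\lambda m_{\star,\beta}N^2 - CN^{1/3}}$ and $F(\gamma)\le e^{\lambda m_{\star,\beta}(N^2-2|\Lambda^-(\gamma)|) + O(1)}$ on the bulk event $\{|\mathscr I|\le KN\}$, after which summing over $\Gamma$ and using $e^{-2\lambda m_{\star,\beta} |\Lambda^-|}\le 1$ will yield the claim.

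For the denominator lower bound, I will introduce the good interface set
\[
\Gamma_0 := \{\mathscr I:\ A_0 N^{1/3}\le \min_x\hgt_x^-\le \max_x \hgt_x^+\le 2A_0 N^{1/3}\}
\]
for a large constant $A_0=A_0(\beta,c_\lambda)$, so that $|\Lambda^-(\gamma)|\le 2A_0 N^{4/3}$ for every $\gamma\in\Gamma_0$. Combining Jensen's inequality with the FKG-monotonicity bounds $\langle\sigma_v\rangle_{\mu^+_{0,\Lambda^+}}\ge m_{\star,\beta}$ and $\langle\sigma_v\rangle_{\mu^-_{0,\Lambda^-}}\le -m_{\star,\beta}$ (both obtained by comparing to the infinite-volume extremal Gibbs states) gives $F(\gamma)\ge e^{\lambda m_{\star,\beta}N^2 - 4A_0 c_\lambda m_{\star,\beta}N^{1/3}}$ for $\gamma\in\Gamma_0$. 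Separately, the Brownian-scale bound $\mu^\pm_{0,N}(\Gamma_0)\ge e^{-CN^{1/3}}$ will be obtained via the random-line representation~\eqref{eq:random-line-two-point-correlations}: concatenate $\Theta(N^{1/3})$ contour bridges of horizontal span $N^{2/3}$ through waypoints at height $\tfrac{3}{2}A_0 N^{1/3}$, estimating each bridge's weight relative to the straight-line two-point function to be $e^{-O(1)}$ by the sharp triangle inequality~\eqref{eq:sharp-triangle-inequality} and~\eqref{eq:point-to-point-l-r}; Corollary~\ref{cor:max-height-fluctuation-not-straight} then keeps each bridge in the target slab except with constant loss once $A_0$ is chosen large. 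Multiplying, $\mathbb E_{\mu^\pm_{0,N}}[F] \ge \mu^\pm_{0,N}(\Gamma_0) \inf_{\gamma\in\Gamma_0} F(\gamma) \ge e^{\lambda m_{\star,\beta}N^2 - C'N^{1/3}}$.

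For the pointwise upper bound on $F(\gamma)$, I first restrict to $\gamma$ with $|\gamma|\le KN$, with $K\gg c_\lambda$; the exponential bound~\eqref{eq:weight-function-ub} makes the complementary event have $\mu^\pm_{0,N}$-probability $\le e^{-cKN}$, so its contribution to $\mu^\pm_{\lambda,N}(\Gamma)$ is at most $e^{\lambda N^2}\cdot e^{-cKN}\le e^{-C_2 N}$. On $\{|\gamma|\le KN\}$, the exponential decay of truncated correlations~\eqref{eq:exp-decay-truncated-correlations} produces both $\mathbb E_{\mu^+_{0,\Lambda^+}}[M^+]\le m_{\star,\beta}|\Lambda^+|+C(|\gamma|+N)$ (the boundary correction is summable by the exponential decay of $\langle\sigma_v\rangle_{\mu^+_{0,\Lambda^+}} - m_{\star,\beta}$ in $d(v,\partial\Lambda^+)$) and a uniform variance bound $\operatorname{Var}(M^+)\le C|\Lambda^+|$. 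A standard sub-Gaussian concentration for low-temperature Ising then yields $\mathbb E[e^{\lambda M^+}]\le e^{\lambda m_{\star,\beta}|\Lambda^+|+O(Kc_\lambda)}$, and analogously for $M^-$. Taking the product, taking expectation under $\mu^\pm_{0,N}$ against $\mathbf 1_\Gamma$, using $e^{-2\lambda m_{\star,\beta}|\Lambda^-|}\le 1$, and dividing by the denominator bound produces $\mu^\pm_{\lambda,N}(\Gamma\cap\{|\gamma|\le KN\})\le e^{C_1 N^{1/3}}\mu^\pm_{0,N}(\Gamma)$, and combining with the bad-event tail gives the theorem. The principal obstacle is the combinatorial lower bound $\mu^\pm_{0,N}(\Gamma_0)\ge e^{-CN^{1/3}}$, which morally captures the probability that a $\sqrt N$-diffusive interface is confined to an $N^{1/3}$-slab of length $N$; implementing this for the non-Markovian Ising interface requires careful contour concatenation via the random-line representation together with a-posteriori Gaussian confinement of each of the $\Theta(N^{1/3})$ constituent bridges.
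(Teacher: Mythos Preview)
Your denominator lower bound is correct and is essentially the paper's Proposition~\ref{prop:global-tilt1}: concatenate $\Theta(N^{1/3})$ random-line bridges of horizontal span $N^{2/3}$ through windows at height $\Theta(N^{1/3})$, paying $e^{-O(1)}$ per bridge via~\eqref{eq:concatenation},~\eqref{eq:point-to-point-l-r}, Proposition~\ref{prop:no-backtracking-domain} and Corollary~\ref{cor:max-height-fluctuation-not-straight}. (One quibble: as written your $\Gamma_0$ is essentially empty, since $\mathscr I$ is pinned at height $-\tfrac12$ at its endpoints; the paper lets the two boundary stretches climb from the corners up to the first window at cost $e^{-CA_0N^{1/3}}$ each, and you should do the same.)

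The real gap is in the numerator bound. Your claim that ``standard sub-Gaussian concentration \dots\ analogously for $M^-$'' yields $\mathbb E_{\mu^-_{0,\Lambda^-}}[e^{\lambda M^-}]\le e^{-\lambda m_\star|\Lambda^-|+O(Kc_\lambda)}$ is false: under $\mu^-_{0,\Lambda^-}$ the \emph{upper} tail of $M^-$ has only surface-order decay---this is precisely the content of~\eqref{eq:surface-order-ld-magnetization}---so $M^-$ is not sub-Gaussian in the direction relevant for $\lambda>0$. Concretely, when $|\Lambda^-|\sim N^2$ and $c_\lambda$ exceeds a $\beta$-dependent threshold, a plus Wulff droplet of linear size $\Theta(N)$ costs only $e^{-cN}$ but raises $M^-$ by $\Theta(N^2)$, giving $\log\mathbb E_{\mu^-_0}[e^{\lambda M^-}]\ge -\lambda m_\star|\Lambda^-|+\Theta(N)$ and destroying the $e^{CN^{1/3}}$ target. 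The fix is to drop the intermediate claim $F(\gamma)\le e^{\lambda m_\star(N^2-2|\Lambda^-|)+O(1)}$ altogether: since $e^{\lambda M}$ is increasing and $\mu^-_{0,\Lambda^-}\preceq\mu^+_{0,\Lambda^-}$, FKG gives $\mathbb E_{\mu^-_0}[e^{\lambda M^-}]\le\mathbb E_{\mu^+_0}[e^{\lambda M^-}]$, and then the integral identity $\log(Z_{+,\lambda,D}/Z_{+,0,D})=\int_0^\lambda\langle M\rangle_{+,\lambda',D}\,d\lambda'$ together with exponential decay of covariances in the plus phase yields $F(\gamma)\le e^{\lambda m_\star N^2+O(Kc_\lambda)}$ directly. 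The paper takes essentially this route by invoking Lemma~\ref{lem:effect-of-global-tilt}, whose normalization by $Z_{+,\lambda,N}$ (rather than $Z_{-,0,\Lambda^-}$) is exactly what substitutes the plus measure for the minus one.
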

In~\cite{Velenik}, a weaker version of Theorem~\ref{thm:effect-of-global-tilt} (with logarithmic corrections in the exponent of $e^{C_1 N^{1/3}}$) was proven. 
This followed by bounding the difference in surface tensions between the with and without-field measures, by constructing a set of interfaces carrying an $e^{ - \tilde O(N^{1/3})}$ fraction of the mass under $\mu_{0,N}^{\pm}$, and whose tilt from the field was at most exponential in $\tilde O(N^{1/3})$.  This set of interfaces in~\cite{Velenik} were pinned along a mesh of boundary points in $\partial_\south \Lambda_N$ separated by $N^{2/3}$; however, the pinning sustains polynomial corrections from the entropic repulsion at every mesh-point along $\partial_\south \Lambda_N$. We use a set of interfaces that stay within a window of their typical $N^{1/3}$ distance away from $\partial_\south \Lambda_N$ along the same mesh of columns to only pay constant costs along each mesh site.   

The following refines the surface tension distortion estimate of~\cite[Lemma 3]{Velenik}.  

\begin{proposition}\label{prop:global-tilt1}
Take $\beta>\beta_{c}$ and let $\lambda=\frac{c_{\lambda}}{N}$ for $c_\lambda>0$. There exists $C_1(\beta,c_{\lambda})>0$ 
such that 
\begin{align}\label{eq:sets-interface-tilt-intro1}
\frac{Z_{\pm,0,N}}{Z_{+,0,N}}\geq\frac{Z_{\pm,\lambda,N}}{Z_{+,\lambda,N}}\geq & \frac{Z_{\pm,0,N}}{Z_{+,0,N}}e^{-C_1 N^{1/3}}\,.
\end{align}
\end{proposition}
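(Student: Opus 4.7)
The plan is to establish the two inequalities separately: the left one follows immediately from monotonicity, while the right one requires a careful interface decomposition combined with the random-line representation. For the left inequality, write $Z_{\eta,\lambda,N}/Z_{\eta,0,N} = \mathbb{E}_{\mu_{0,N}^\eta}[e^{\lambda M_N}]$ with $M_N := \sum_{v\in\Lambda_N}\sigma_v$. Since $e^{\lambda M_N}$ is increasing in $\sigma$ and $\mu_{0,N}^+$ stochastically dominates $\mu_{0,N}^\pm$ by~\eqref{eq:monotonicity-in-bc}, one has $\mathbb{E}_{\mu_{0,N}^+}[e^{\lambda M_N}]\geq\mathbb{E}_{\mu_{0,N}^\pm}[e^{\lambda M_N}]$, which rearranges to the left inequality of~\eqref{eq:sets-interface-tilt-intro1}.

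For the right inequality, apply the interface representation~\eqref{eq:low-temp-interface-probability}: for any set $\Gamma^*$ of admissible interfaces, $\frac{Z_{\pm,\lambda,N}}{Z_{+,\lambda,N}} \geq \sum_{\gamma\in\Gamma^*}\omega(\gamma)\frac{Z_{+,\lambda,\Lambda^+(\gamma)}Z_{-,\lambda,\Lambda^-(\gamma)}}{Z_{+,\lambda,N}}$, and similarly at $\lambda=0$. The per-interface comparison rests on two tools. First, conditioning $\mu^+_{\lambda,N}$ on $\sigma_{\Lambda^-(\gamma)}\equiv+$ yields the identity $\frac{Z_{+,\lambda,\Lambda^+(\gamma)}}{Z_{+,\lambda,N}} = e^{-\lambda|\Lambda^-(\gamma)|}\mu^+_{\lambda,N}(\sigma_{\Lambda^-(\gamma)}\equiv +)$; monotonicity in $\lambda$~\eqref{eq:monotonicity-in-external-field} applied to the increasing event $\{\sigma_{\Lambda^-(\gamma)}\equiv +\}$ then gives $\frac{Z_{+,\lambda,\Lambda^+(\gamma)}}{Z_{+,\lambda,N}}\geq e^{-\lambda|\Lambda^-(\gamma)|}\frac{Z_{+,0,\Lambda^+(\gamma)}}{Z_{+,0,N}}$. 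Second, Jensen's inequality applied to $\mathbb{E}_{\mu^-_{0,\Lambda^-}}[e^{\lambda M_{\Lambda^-}}]$ together with $M_{\Lambda^-}\geq-|\Lambda^-|$ gives $Z_{-,\lambda,\Lambda^-(\gamma)}\geq e^{-\lambda|\Lambda^-(\gamma)|}Z_{-,0,\Lambda^-(\gamma)}$. Multiplying yields, for every $\gamma$,
$$\omega(\gamma)\frac{Z_{+,\lambda,\Lambda^+(\gamma)}Z_{-,\lambda,\Lambda^-(\gamma)}}{Z_{+,\lambda,N}} \;\geq\; e^{-2\lambda|\Lambda^-(\gamma)|}\,\omega(\gamma)\,\frac{Z_{+,0,\Lambda^+(\gamma)}Z_{-,0,\Lambda^-(\gamma)}}{Z_{+,0,N}}\,.$$
Taking $\Gamma^* := \{\gamma : |\Lambda^-(\gamma)|\leq C_0 N^{4/3}\}$ for a constant $C_0$, the prefactor is at least $e^{-2C_0 c_\lambda N^{1/3}}$, and summing over $\Gamma^*$ reduces the right inequality to establishing the lower bound $\mu^\pm_{0,N}(\Gamma^*)\geq e^{-CN^{1/3}}$.

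This last bound is the main technical step and invokes the random-line representation. By \eqref{eq:random-line-representation-partition-functions} and \eqref{eq:random-line-two-point-correlations}, $\mu^\pm_{0,N}(\Gamma^*) = \sum_{\gamma\in\Gamma^*} q_{\Lambda_N^*}(\gamma)/\langle\sigma_{\vsw}\sigma_{\vse}\rangle^*_{\Lambda_N}$, with the denominator bounded above by $\tfrac{K_2}{\sqrt N}e^{-\tau_\beta(0)N}$ via~\eqref{eq:two-point-function-monotonicity} and~\eqref{eq:point-to-point-bounds}. To lower bound the numerator, partition $\llb 0,N\rrb$ into $K=N^{1/3}$ blocks of width $N^{2/3}$ with endpoints $x_1<\cdots<x_{K-1}$, and for waypoint heights $h_k\in[MN^{1/3},2MN^{1/3}]$ (with $M$ a large constant) restrict to interfaces passing through $v_k := (x_k,h_k)$ whose intermediate pieces $v_k\to v_{k+1}$ are confined to the boxes $B_k^* := \llb x_k,x_{k+1}\rrb^*\times\llb 0,3MN^{1/3}\rrb^*$. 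Concatenation~\eqref{eq:concatenation} together with~\eqref{eq:q-monotonicity-infinite-domain} factorizes the contribution into a sum over $(h_k)$ of single-block products, each of which, by~\eqref{eq:random-line-representation-identity}, \eqref{eq:point-to-point-l-r}, and~\eqref{eq:max-height-fluctuation-not-straight-random-line} (with $M$ large enough to make the out-of-box mass a factor $\leq 1/2$), is bounded below by $\tfrac{c}{N^{1/3}}e^{-\tau_\beta(0)N^{2/3}-\kappa_\beta(h_{k+1}-h_k)^2/(2N^{2/3})}$.

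The hard part will be the polynomial cancellation hidden in this product-and-sum. Performing the sum $\sum_{(h_k)}\prod_k e^{-\kappa_\beta(h_{k+1}-h_k)^2/(2N^{2/3})}$ produces Gaussian integrals of width $\sqrt{N^{2/3}} = N^{1/3}$, contributing a factor of order $(N^{1/3})^{K-1}$, which \emph{exactly} absorbs the $(1/N^{1/3})^K$ prefactors coming from the two-point asymptotics, leaving only an $e^{O(N^{1/3})}$ correction on top of the main exponential $e^{-\tau_\beta(0)N}$. After accounting for the end pieces $\vsw\to v_1$ and $v_{K-1}\to\vse$ (each contributing an $O(1)$ factor by the same estimate), the corridor-confinement cost $e^{-\Omega(N^{1/3}/M^2)}$ (from iteratively applying~\eqref{eq:max-height-fluctuation-not-straight-random-line}), and the denominator $\tfrac{K_2}{\sqrt N}e^{-\tau_\beta(0)N}$, one obtains $\mu^\pm_{0,N}(\Gamma^*)\geq e^{-C_2N^{1/3}}$. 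This precise matching — summing over a \emph{range} of intermediate heights rather than pinning at single points as in \cite[Lemma~3]{Velenik} — is the refinement that eliminates the logarithmic prefactors present in earlier arguments, and is the principal technical point.
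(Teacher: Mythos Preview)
Your approach is essentially the same as the paper's: monotonicity for the upper inequality, and for the lower one, a per-interface comparison yielding $e^{-2\lambda|\Lambda^-(\gamma)|}$ followed by a random-line lower bound over interfaces threading windows of height $\Theta(N^{1/3})$ along a mesh of $N^{1/3}$ columns at spacing $N^{2/3}$. The paper carries out the same construction (their set $\Gamma_{\mathcal H}$), and obtains the crucial $N^{1/3}$ cancellation by minimizing the per-block weight over the window and multiplying by the $KN^{1/3}$ choices, rather than your Gaussian sum; both routes work.

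One point you gloss over: for the concatenation inequality~\eqref{eq:concatenation} to produce an admissible interface, the pieces $\gamma_k$ must share only the waypoint $v_k$ with their neighbors, i.e., the interface must intersect each column $\{x_k\}\times\mathbb R$ in a singleton. Merely confining $\gamma_k$ to $B_k^*$ does not prevent it from revisiting the boundary column. The paper handles this by explicitly imposing the non-backtracking condition $\gamma_k\in\nbt$ and invoking Proposition~\ref{prop:no-backtracking-domain} to show this costs only a constant factor $\varepsilon_\beta$ per block; you should do the same.
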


Before proving the above proposition we quickly show how to finish the proof of Theorem \ref{thm:effect-of-global-tilt}.

\begin{proof}[\textbf{\emph{Proof of Theorem  \ref{thm:effect-of-global-tilt}}}]
Consider any set of admissible interfaces $\Gamma$. Letting $Z_{\pm,\lambda,N}(\gamma)$ be the restricted partition function to configurations with interface $\gamma$, there exists $C(\beta,c_\lambda)>0$ such that 
\begin{alignat*}{1}
\mu_{\lambda,N}^{\pm}(\Gamma) =  \frac{Z_{+,\lambda,N}}{Z_{\pm,\lambda,N}}\frac{\sum_{\gamma \in\Gamma}Z_{\pm,\lambda,N}(\gamma )}{Z_{+,\lambda,N}}
& \overset{\eqref{eq:low-temp-interface-probability}}{=} \frac{Z_{+,\lambda,N}}{Z_{\pm,\lambda,N}}\sum_{\gamma \in\Gamma}w(\gamma )\frac{Z_{+,\lambda,\Lambda^{+}(\gamma)}Z_{-,\lambda,\Lambda^{-}(\gamma)}}{Z_{+,\lambda,N}}\\
& \overset{\eqref{eq:sets-interface-tilt-intro1}}{\leq}  e^{CN^{1/3}}\frac{Z_{+,0,N}}{Z_{\pm,0,N}}\sum_{\gamma \in\Gamma}w(\gamma)\frac{Z_{+,\lambda,\Lambda^{+}(\gamma)}Z_{-,\lambda, \Lambda^{-}(\gamma)}}{Z_{+,\lambda}}\,,\end{alignat*}
where we recall that $w(\gamma) = e^{ - 2 \beta |\gamma|}$ and where the inequality followed from Proposition ~\ref{prop:global-tilt1}. 
We then use the following inequality from~\cite{Velenik} to move from the summands above to $q_N(\gamma)$ (i.e., in the absence of an external field), from which we will be able to leverage the random-line representation. 
 
\begin{lemma}\cite[Lemma 4]{Velenik}\label{lem:effect-of-global-tilt}
Fix $\beta>\beta_{c}$ and let $\lambda = \frac{c_\lambda}{N}$. There exist $C(\beta,c_\lambda)<\infty,C'(\beta,c_\lambda)>0$ such that for every set of admissible interfaces $\Gamma$, 
\begin{align*}
\frac{Z_{+,0,N}}{Z_{\pm,0,N}} \sum_{\gamma\in\Gamma}w(\gamma)\frac{Z_{+,\lambda,\Lambda^{+}(\gamma)}Z_{-,\lambda,\Lambda^{-}(\gamma)}}{Z_{+,\lambda,N}}\leq & e^{C\lambda N}\frac{Z_{+,0,N}}{Z_{\pm,0,N}} \sum_{\gamma\in\Gamma}q_N(\gamma)+e^{-C' N}\,.
\end{align*}
\end{lemma}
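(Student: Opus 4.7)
The plan is to establish a pointwise (in $\gamma$) bound for ``typical'' interfaces $\gamma$ with $|\gamma|\leq DN$ (for a large constant $D$), while showing that atypical long interfaces contribute at most the $e^{-C'N}$ error. Using $Z_{\eta,\lambda,\Lambda}/Z_{\eta,0,\Lambda}=\langle e^{\lambda M}\rangle_{\eta,0,\Lambda}$ where $M:=\sum_v\sigma_v$, along with $q_N(\gamma)=w(\gamma)Z_{+,0,\Lambda^+(\gamma)}Z_{-,0,\Lambda^-(\gamma)}/Z_{+,0,N}$ (using $Z_{+,0,\Lambda^-}=Z_{-,0,\Lambda^-}$ at $\lambda=0$ by $\pm$-symmetry), the pointwise ratio we must control reduces to
\[
\mathcal R(\gamma):=\frac{\langle e^{\lambda M}\rangle_{+,0,\Lambda^+(\gamma)}\,\langle e^{\lambda M}\rangle_{-,0,\Lambda^-(\gamma)}}{\langle e^{\lambda M}\rangle_{+,0,N}}.
\]
The crucial point is that all three expectations are under $\lambda=0$ measures, enabling the use of the exponential decay of truncated correlations~\eqref{eq:exp-decay-truncated-correlations} to obtain Gaussian-type concentration for $M$.

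For typical $\gamma$, the boundary $\partial\Lambda^\pm(\gamma)$ has size $O(N+|\gamma|)=O(N)$. Comparing $\langle\sigma_v\rangle_{+,0,\Lambda^+(\gamma)}$ to its infinite volume value $m_\star$ via a standard coupling and~\eqref{eq:exp-decay-truncated-correlations} gives $|\langle\sigma_v\rangle-m_\star|\leq C e^{-d(v,\partial\Lambda^+)/C}$, so $\langle M\rangle_{+,0,\Lambda^+}=m_\star|\Lambda^+|+O(|\partial\Lambda^+|)=m_\star|\Lambda^+|+O(N)$, and symmetrically $\langle M\rangle_{-,0,\Lambda^-}=-m_\star|\Lambda^-|+O(N)$. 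The same truncated two-point function bound~\eqref{eq:exp-decay-truncated-correlations} also underpins a sub-Gaussian Laplace transform estimate $\langle e^{\lambda(M-\langle M\rangle)}\rangle_{\eta,0,\Lambda}\leq e^{C\lambda^2|\Lambda|}$ in each region. For $\lambda=c_\lambda/N$ and $|\Lambda^\pm|\leq N^2$, the two corrections are bounded by $\lambda\cdot O(N)+C\lambda^2|\Lambda^\pm|=O(c_\lambda+c_\lambda^2)=O(1)$. Similarly for the denominator, Jensen's inequality combined with $\langle M\rangle_{+,0,N}=m_\star N^2+O(N)$ yields $\langle e^{\lambda M}\rangle_{+,0,N}\geq e^{\lambda m_\star N^2-O(c_\lambda)}$. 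Combining these via $|\Lambda^+|+|\Lambda^-|=N^2$,
\[
\mathcal R(\gamma)\leq e^{-2m_\star\lambda|\Lambda^-(\gamma)|+O(c_\lambda+c_\lambda^2)}\leq e^{C\lambda N}
\]
since $|\Lambda^-|\geq 0$ and $\lambda N=c_\lambda$, which is exactly the pointwise bound giving the first term on the right-hand side after multiplication by $q_N(\gamma)$ and summation over $\gamma$.

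For atypical $\gamma$ with $|\gamma|>DN$, the cruder Radon--Nikodym bound $\mu_{\pm,\lambda,N}(\gamma)\leq e^{2c_\lambda N}\mu_{\pm,0,N}(\gamma)$ from~\eqref{eq:radon-nikodym-bound}, combined with $Z_{\pm,\lambda,N}/Z_{+,\lambda,N}\leq Z_{\pm,0,N}/Z_{+,0,N}$ (from FKG applied to the increasing function $e^{\lambda M}$), yields $Z_{\pm,\lambda,N}(\gamma)/Z_{+,\lambda,N}\leq e^{2c_\lambda N}q_N(\gamma)$; summing and using~\eqref{eq:weight-function-ub} gives $\sum_{|\gamma|>DN}q_N(\gamma)\leq e^{-cDN/2}$. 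A crude lower bound $Z_{\pm,0,N}/Z_{+,0,N}\geq e^{-C_0N}$ (from~\eqref{eq:weight-function-lb} applied to the shortest admissible interface) then shows the atypical contribution to the left-hand side is at most $e^{(2c_\lambda+C_0-cD/2)N}\leq e^{-C'N}$ provided $D$ is chosen large enough. The main technical obstacle is justifying the sub-Gaussian Laplace transform estimate for the general, potentially irregular, shapes $\Lambda^\pm(\gamma)$ that arise in the typical case: while~\eqref{eq:exp-decay-truncated-correlations} provides summable truncated two-point functions that yield the variance bound $\mathrm{Var}(M)=O(|\Lambda^\pm|)$, upgrading this to Gaussian tails uniformly across all admissible $\gamma$ requires careful adaptation through FKG stochastic domination by rectangular subdomains or an appeal to concentration inequalities for the Ising model in the $+$ phase.
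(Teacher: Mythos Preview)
This lemma is not proved in the paper; it is quoted directly from~\cite{Velenik} and used as a black box in the proof of Theorem~\ref{thm:effect-of-global-tilt}. Your outline is essentially the argument of the original reference: rewrite each summand as $q_N(\gamma)\,\mathcal R(\gamma)$ with $\mathcal R(\gamma)$ a ratio of magnetization Laplace transforms under zero-field measures, split according to $|\gamma|\lessgtr DN$, and for short $\gamma$ bound $\mathcal R(\gamma)$ pointwise via $\langle M\rangle_{\pm,0,\Lambda^\pm}=\pm m_\star|\Lambda^\pm|+O(|\partial\Lambda^\pm|)$ together with a fluctuation estimate. The disposal of long interfaces via the crude Radon--Nikodym bound, Proposition~\ref{prop:global-tilt1}, and~\eqref{eq:weight-function-ub} is correct.

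The technical point you flag---uniform sub-Gaussian control of $M$ over the irregular domains $\Lambda^\pm(\gamma)$---is genuine and is indeed where the substance of the original argument lies. Summable truncated correlations give $\mathrm{Var}(M)=O(|\Lambda^\pm|)$, but upgrading this to $\langle e^{\lambda(M-\langle M\rangle)}\rangle\le e^{C\lambda^2|\Lambda^\pm|}$ uniformly in $\gamma$ requires more. A clean route (close to what is done in~\cite{Velenik} and related work such as~\cite{IoffeSchonmann98}) is to coarse-grain $\Lambda^\pm$ into mesoscopic boxes and use circuit events as in~\eqref{eq:circuit-probability-bound} to decouple the block magnetizations; the number of boxes intersecting $\partial\Lambda^\pm(\gamma)$ is $O(|\gamma|+N)=O(N)$ for typical $\gamma$, which is precisely what converts the boundary effect into the claimed $O(\lambda N)$ correction. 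Your sketch is correct at the level of strategy and honestly isolates the one place where nontrivial input beyond the tools collected in Section~\ref{sec:preliminaries} is needed.
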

Now, combining ~\eqref{eq:random-line-representation-partition-functions}--\eqref{eq:random-line-two-point-correlations}, we recall that
\begin{align*}
\mu_{0,N}^{\pm}(\Gamma) = \frac{Z_{+,0,N}}{Z_{\pm,0,N}} \sum_{\gamma \in \Gamma} q_N(\gamma)\,;
\end{align*}
combined with Lemma~\ref{lem:effect-of-global-tilt} and the bound on $\mu_{\lambda,N}^{\pm} (\Gamma)$ above, we deduce Theorem \ref{thm:effect-of-global-tilt}. 
\end{proof}

\subsection{Proof of Proposition \ref{prop:global-tilt1}}\label{subsec:global-tilt-proof}
The first part of the proof follows~\cite{Velenik} exactly. Notice that by rearranging, 
 the first inequality is implied by monotonicity as 
\begin{align*}
\frac{Z_{+,\lambda,N}}{Z_{+,0,N}}=\langle e^{\lambda\sum_{x}\sigma_{x}}\rangle_{+,0,N}\geq\langle e^{\lambda\sum_{x}\sigma_{x}}\rangle_{\pm,0,N}= & \frac{Z_{\pm,\lambda,N}}{Z_{\pm,0,N}}\,,
\end{align*}
where we recall from Section \ref{sec:preliminaries}, that $\langle\cdot\rangle_{\eta,\lambda,N}$ denotes expectation with
respect to $\mu_{\lambda,N}^{\eta}$.

We proceed to the second inequality.
Let $\mathcal{H}=3K N^{1/3}$, for some $K$ to be chosen later to be sufficiently large. 
Consider the strip $\senlarge_{\mathcal{H}}=\llb 0,N\rrb \times \llb 0,\mathcal H \rrb$. For notational brevity, since our interfaces and contour collections more generally are always edge-subsets of the dual lattice $E_{(\mathbb Z^2)^*}$, we write $\gamma \subset R$ to indicate that $\gamma \subset E_{R^*}$, and moreover, that it is $R^*$-admissible. We have
\begin{align}\label{eq:part-function-random-line-global}
Z_{\pm,\lambda,N}\overset{\eqref{eq:low-temp-interface-probability}}{=}\sum_{\substack{\gamma:\vsw \to \vse \\ \gamma \subset \Lambda_N}} e^{-2\beta|\gamma|}Z_{+,\lambda,\Lambda^{+}}Z_{-,\lambda,\Lambda^{-}}\geq & \sum_{\substack{\gamma:\vsw\to\vse \\ \gamma\subset\senlarge_{\mathcal{H}}}}e^{-2 \beta|\gamma|}Z_{+,\lambda,\Lambda^{+}}Z_{-,\lambda,\Lambda^{-}}\,,
\end{align}
where here and throughout,  $\Lambda^{\pm}=\Lambda^{\pm}(\gamma)$
when $\gamma$ is an admissible interface and the dependence on $\gamma$ is contextually understood. Then, for every fixed $\gamma\subset\senlarge_{\mathcal{H}}$, write
\begin{align}\label{eq:keystep-global}
Z_{+,\lambda,\Lambda^{+}}Z_{-,\lambda,\Lambda^{-}}\geq  Z_{+,\lambda,\Lambda^{+}}Z_{-,-\lambda,\Lambda^{-}}e^{-2\lambda|\Lambda^{-}|} 
&\geq  \frac{Z_{+,\lambda,\Lambda^{+}}Z_{+,\lambda,\Lambda^{-}}}{Z_{+,0,\Lambda^{+}}Z_{+,0,\Lambda^{-}}}Z_{+,0,\Lambda^{+}}Z_{+,0,\Lambda^{-}}e^{-2\lambda\mathcal{H}N} \nonumber \\
& \geq  \frac{Z_{+,\lambda,N}}{Z_{+,0,N}}Z_{+,0,\Lambda^{+}}Z_{+,0,\Lambda^{-}}e^{-2\lambda\mathcal{H}N}\,,
\end{align}
where the first inequality follows from a direct computation on replacing $\lambda$ by $-\lambda$, the second inequality uses that by symmetry $Z_{-,-\lambda,\Lambda^{-}}=Z_{+,\lambda,\Lambda^{-}}$, and the third inequality uses the fact that $\frac{Z_{+,\lambda,\Lambda^{+}}Z_{+,\lambda,\Lambda^{-}}}{Z_{+,\lambda,N}}$
is exactly the probability of all vertices of $\partial_\interior \Lambda^+\cup \partial_\interior \Lambda^-$ being
plus (where we recall these are the sites frozen by the interface $\gamma$), which is larger when $\lambda>0$ than under
$\lambda= 0$ by monotonicity. Altogether, dividing both sides of~\eqref{eq:part-function-random-line-global} by $Z_{+,\lambda,N}$ and applying~\eqref{eq:keystep-global}
\begin{align}\label{eq:global-part-function-ratio-random-line}
\frac{Z_{\pm,\lambda,N}}{Z_{+,\lambda,N}} & \geq e^{-6c_\lambda K N^{1/3} }\sum_{\gamma:\vsw\to \vse, \gamma \subset \senlarge_{\mathcal{H}}}e^{-2 \beta|\gamma|}\frac{Z_{+,0,\Lambda^{+}}Z_{+,0,\Lambda^{-}}}{Z_{+,0,N}} \nonumber \\
& \geq e^{-6 c_\lambda K N^{1/3}}\sum_{\gamma:\vsw\to \vse, \gamma \subset \senlarge_{\mathcal{H}}}q_{N}(\gamma)\,.
\end{align}
We now control this last sum, showing that the contribution of $\sum_{\gamma:\vsw\to \vse, \gamma\subset\senlarge_{\mathcal{H}}}q_{N}(\gamma)$
is not too far (at most a multiplicative factor of some $e^{-cN^{1/3}}$
for a $c$ that does not depend on $K$ in $\mathcal{H}$) from $\frac{Z_{\pm,0,N}}{Z_{+,0,N}}=\sum_{\gamma:\vsw\to \vse, \gamma\subset \Lambda_N}q_{N}(\gamma) = \langle \sigma_{\vsw} \sigma_{\vse} \rangle_{\Lambda_N}^*$. 
From here, our proof diverges from that of~\cite{Velenik}. 

To show this, let us split $\llb -\frac 12,N+\frac 12\rrb$ into $M=N^{1/3}$ many
segments of length $N^{2/3}$, delineated by the $x$-coordinates
$-\frac 12=x_{0},x_{1},x_{2},...,x_{M}=N+\frac 12$. For each $i=1,...,M-1$, define
the window  
$${\sf W}_i:= \{x_i\} \times [ KN^{1/3},  2KN^{1/3}]\,.$$
Let $\Gamma_{\mathcal{H}}$ be the set of interfaces $\gamma:\vsw\to\vse:\gamma\subset S_{\mathcal{H}}$
having the additional property that 
\begin{itemize}
\item For every $i=1,...,M$, the
set $\gamma\cap(\{x_{i}\}\times\mathbb{R})$ is a singleton (in $(\mathbb Z^2)^*$) and is in ${\sf W}_i$.
\end{itemize}
(See Figure~\ref{fig:global-tilt-corridor}.) The property that the intersection $\gamma \cap (\{x_i\}\times \mathbb R)$ is a singleton is important for us to apply~\eqref{eq:concatenation} to decompose the weight of $\gamma$ into its stretches between ${\sf W}_i$ and ${\sf W}_{i+1}$.

 By positivity of
$q_{N}(\gamma)$, we now bound $\sum_{\gamma\subset S_{\mathcal{H}}}q_{N}(\gamma)$
from below by $\sum_{\gamma\in\Gamma_{\mathcal{H}}}q_{N}(\gamma)$;
enumerating over the dual-vertices $w_i^*$ at which $\gamma$ intersects
${\sf W}_i$, we express
\begin{align*}
\sum_{\gamma \in \Gamma_{\mathcal H}} q_N (\gamma) = \sum_{w_i^* \in {\sf W}_i} \, \sum_{\substack{\gamma: \vsw\to w_1^*\to \cdots  \to w_{M-1}^* \to \vse \\ \gamma\in \Gamma_{\mathcal H}}}  q_N(\gamma)\,.
\end{align*}
Now by~\eqref{eq:concatenation} (using crucially that $|\gamma \cap (\{x_i\}\times \mathbb R)|=1$ for all $i$), the above expression is at least 
\begin{align}\label{eq:global-tilt-decomposition}
\Big(\sum_{w_1^* \in {\sf W}_1} \omega (\Gamma_{\mathcal H}^\west ; w_1^*)\Big)\Big( \sum_{w_i^*\in {\sf W}_i: 2\leq i \leq M-1}  \prod_{i=1}^{M-2} \omega (\Gamma_{\mathcal H}^{i}; w_i^*,w_{i+1}^*)\Big) \Big( \omega(\Gamma_{\mathcal H}^{\east}; w^*_{M-1})\Big)\,,
\end{align}
where the weights of the various stretches of the interface are given by 
\begin{align*}
\omega(\Gamma_{\mathcal H}^{\west}; w_1^*) &  := \sum_{\substack{\gamma_\west: \vsw \to w_1^* \\ \gamma_\west \subset \llb 0,x_1\rrb \times \llb 0, \mathcal H\rrb \\ \gamma_\west\in  \nbt}} q_N(\gamma_\west)\,, \qquad  \qquad \omega(\Gamma_{\mathcal H}^{\east}; w^*_{M-1}):=  \sum_{\substack{\gamma_\east: w^*_{M-1} \to \vse \\ \gamma_\east \subset \llb x_{M-1},N\rrb \times \llb 0, \mathcal H \rrb \\ \gamma_\east \in \nbt}} q_N(\gamma_\east)\,, \\
\omega(\Gamma_{\mathcal H}^i; w_i^*,w^*_{i+1}) &  := \sum_{\substack{\gamma_i: w_i^* \to w_{i+1}^* \\ \gamma_i \subset \llb x_i , x_{i+1}\rrb \times \llb 0, \mathcal H\rrb \\ \gamma_i \in \nbt}} q_N(\gamma_i)\,.
\end{align*}
(Recall that for $\gamma$ with $|\partial \gamma|=\{v^*,w^*\}$, we use $\gamma\in \nbt$ to denote the set of non-backtracking $\gamma$ i.e., whose intersection with the columns through $v^*,w^*$ are exactly the two vertices $\{v^*,w^*\}$.)

\begin{figure}
\centering
\begin{tikzpicture}
\node at (0,0){
\includegraphics[width=.85\textwidth]{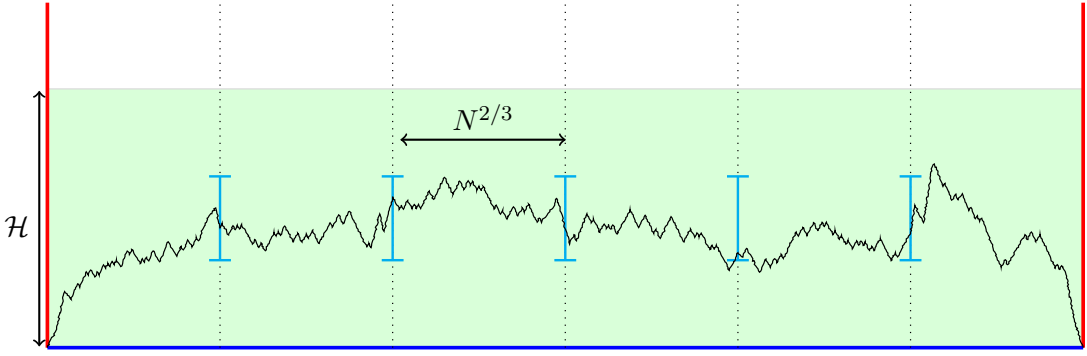}
};
\draw[<->, thick]  (-2.2,.5)--(0,.5);
\draw[<->, thick]  (-7,-2.25)--(-7,1.15);

\node at (-1.1,.8) {$N^{2/3}$};
\node at (-7.3,-.65) {$\mathcal H$};

\end{tikzpicture}
\caption{The interfaces in $\Gamma_{\mathcal H}$ are those that are confined to $\senlarge_{\mathcal H}$ (shaded, green), that intersect each mesh column (black, dashed) at a singleton in ${\sf W}_i$ (cyan).}\label{fig:global-tilt-corridor}
\end{figure}

Let us consider each of these terms individually. By~\eqref{eq:q-monotonicity-infinite-domain}, we can lower bound each of the weights above by their analogues with $q_N$ replaced by $q_{\mathbb Z^2}$. For every $i$, for every $w_i^*\in {\sf W}_i$ and $w^*_{i+1} \in {\sf W}_{i+1}$, (dropping the criterion that $\gamma_i:w_i^*\to w_{i+1}^*$ from the notation), 
\begin{align}\label{eq:global-tilt-interior-weights-1}
\omega(\Gamma_{\mathcal H}^i; w^*_i, w^*_{i+1}) &  \geq \sum_{\gamma_i \subset \llb x_i , x_{i+1}\rrb \times \llb 0, \mathcal H\rrb, \gamma_i \in \nbt} q_{\mathbb Z^2}(\gamma_i) \nonumber \\
& = \langle \sigma_{w_i^*} \sigma_{w_{i+1}^*}\rangle_{\mathbb Z^2}^* -  \sum_{\substack{ \gamma_i \subset [ x_i,x_{i+1} ] \times \mathbb R, \gamma_i \in \nbt \\ \gamma_i \not \subset \mathbb R \times [ 0 , \mathcal H ]}} q_{\mathbb Z^2}(\gamma_i)- \sum_{\substack{\gamma_i \subset [ x_i,x_{i+1} ] \times \mathbb R \\ \gamma_i \notin \nbt}} q_{\mathbb Z^2}(\gamma_i)\,.
\end{align}
By Proposition~\ref{prop:no-backtracking-domain} the ratio of the third term to the two-point function is at most $1-\e_\beta$ for some $\e_\beta>0$; by Corollary~\ref{cor:max-height-fluctuation-not-straight}, the same ratio for the second term is at most $C e^{- \mathcal H^2 / C N^{2/3}}$, which is smaller than $\e_\beta/2$ for $K$ large. Putting these together,  we obtain for some other $\e_\beta>0$, 
\begin{align}\label{eq:global-tilt-interior-weights-2}
\omega(\Gamma_{\mathcal H}^i; w^*_i, w^*_{i+1}) &  \geq \e_\beta \langle \sigma_{w_i^*} \sigma_{w^*_{i+1}} \rangle_{\mathbb Z^2}^*\,.
\end{align}
We next consider the edge terms, $\omega(\Gamma_{\mathcal H}^{\west}; w^*_1)$ and $\omega(\Gamma_{\mathcal H}^\east ; w^*_{M-1})$. By symmetry it suffices to bound the former. 
We can lower bound the weight, by the same quantity, but only summing over interfaces $\gamma_\west: \vsw \to (KN^{1/3},KN^{1/3}) \to w_1^*$ (where $KN^{1/3}$ is understood as the closest half-integer to that number) with $\gamma_\west \subset \llb 0,x_1\rrb \times \llb 0,\mathcal H \rrb, \gamma_\west \in \nbt$; call this set of interfaces $\tilde \Gamma_{\mathcal H}^\west$. In that case, we can again use~\eqref{eq:concatenation}, together with the finite energy property~\eqref{eq:weight-function-lb} to see that this is at least
\begin{align*}
\sum_{\substack{\gamma_\west^{1}: \vsw \to (KN^{1/3},KN^{1/3}) \\ \gamma_\west^{1} \in \tilde \Gamma_{\mathcal H}^\west}}  q_{\mathbb Z^2}(\gamma_\west^{1}) & \!\!\! \sum_{\substack{\gamma_0^{2}:(KN^{1/3},KN^{1/3})\to w^*_1 \\ \gamma_0^{2} \in \tilde \Gamma_{\mathcal H}^\west}} q_{\mathbb Z^2}(\gamma_0^{2}) \geq e^{-  C K N^{1/3}} \!\!\! \sum_{\substack{\gamma_0^{2}:(KN^{1/3},KN^{1/3})\to w^*_1 \\ \gamma_0^{2} \in \tilde \Gamma_{\mathcal H}^\west}} q_{\mathbb Z^2}(\gamma_0^{2})\,,
\end{align*}
for some $C(\beta,c_\lambda)>0$. 
In order to bound the latter sum, we reason as in~\eqref{eq:global-tilt-interior-weights-1}--\eqref{eq:global-tilt-interior-weights-2}, to obtain 
\begin{align*}
\omega(\Gamma_{\mathcal H}^\west; w^*_1) \geq \e_\beta  e^{ - CKN^{1/3}} \langle \sigma_{(KN^{1/3},KN^{1/3})}\sigma_{w_1^*}\rangle_{\mathbb Z^2}^*\,.
\end{align*}
Minimizing the above, and~\eqref{eq:global-tilt-interior-weights-2} over choices of $w^*_i \in {\sf W}_i$, and using~\eqref{eq:point-to-point-l-r}, we find for some $c_\beta>0$, 
\begin{align*}
\min_{w^*_i\in {\sf W}_i,w^*_{i+1}\in {\sf W}_{i+1}} \omega(\Gamma_{\mathcal H}^{i} ; w^*_i, w^*_{i+1}) & \geq \frac{c_{\beta} e^{ - K^2 \kappa_\beta /2}}{N^{1/3}} \exp ( - \tau_{\beta}(0) (x_{i+1}- x_i))\,, \\ 
\min_{w^*_1\in {\sf W}_1} \omega(\Gamma_{\mathcal H}^{\west} ; w^*_1)  & \geq \frac{c_\beta e^{ - CKN^{1/3}}}{N^{1/3}} \exp (- \tau_{\beta}(0) x_1)\,, \\
\min_{w^*_{M-1}\in {\sf W}_{M-1}} \omega(\Gamma_{\mathcal H}^{\east} ; w^*_{M-1}) & \geq \frac{c_\beta e^{ - CKN^{1/3}}}{N^{1/3}} \exp (- \tau_{\beta}(0) (N- x_{M-1}))\,.
\end{align*}
Plugging in these bounds into~\eqref{eq:global-tilt-decomposition}, we obtain  
\begin{align*}
\sum_{\gamma\in \Gamma_{\mathcal{H}}}q_{N}(\gamma) & \geq  (KN^{1/3})^{M-1}\Big(\frac{c_{\beta}e^{-K^{2}\tau_{\beta}(0)/2}}{N^{1/3}}\Big)^{M-2}\Big(\frac{c_{\beta}e^{- CKN^{1/3}}}{N^{1/3}}\Big)^{2}\cdot e^{-\tau_{\beta}(0)N}\\
& \geq  N^{- 1/3} K^{M-1}c_{\beta}^{M+1}e^{- K^{2}\tau_{\beta}(0) N^{1/3} -2 C K N^{1/3}}\cdot e^{-\tau_{\beta}(0)N}\,.
\end{align*}
At this point, we recall from~\eqref{eq:q-monotonicity-infinite-domain} and~\eqref{eq:floor-point-to-point-bounds} that (see also pg.18 of~\cite{Velenik})   
\begin{align*}
\frac{Z_{\pm,0,N}}{Z_{+,0,N}} = \langle \sigma_{\vsw} \sigma_{\vse}\rangle^*_{\Lambda_N} \leq \langle \sigma_{\vsw} \sigma_{\vse}\rangle^*_{s.i.} \leq \frac{K_2}{N^{3/2}}e^{- \tau_\beta(0) N}\,.
\end{align*}
As such, recalling that $M= N^{1/3}$, we obtain, for some other $c(\beta,c_\lambda),C(\beta,c_\lambda)>0$, the inequality  
\begin{align*}
 \sum_{\gamma\subset \senlarge_{\mathcal{H}}}q_{N}(\gamma) & \geq  N^{7/6} \exp\Big(- C N^{1/3} \big(K^{2}\tau_{\beta}(0)+ CK +\log (K c_{\beta})\big)\Big)\cdot\frac{Z_{\pm,0,N}}{Z_{+,0,N}}\,,
\end{align*}
which when plugged in to~\eqref{eq:global-part-function-ratio-random-line} yields 
\begin{align*}
\frac{Z_{\pm,\lambda,N}}{Z_{+,\lambda,N}}\geq & \frac{Z_{\pm,0,N}}{Z_{+,0,N}}e^{-A N^{1/3}}\,,\qquad\mbox{for}\qquad A> C( c_\lambda K + K^{2}\tau_{\beta}(0)+K +\log (K c_{\beta}))\,,
\end{align*}
where we observe that this latter quantity only depends on $\beta,c_\lambda$.  \qed

\subsection{Local control of the tilt induced by an external field}\label{subsec:local-tilt}
We now extend the results of the previous section to local scales necessary for our proofs of Theorems 
\ref{thm:one-point-tail-bounds} 
 and 
\ref{thm:max-tightness}.
Before proceeding, recall that $\Lambda_{L,N}=\llb 0, L\rrb \times \llb 0, N\rrb$ and recall from~\eqref{eq:enlargement}--\eqref{eq:tshape-enlargement} that given $r$ and $H$, the T-shaped and strip enlargements of $\Lambda_{L,N}$ 
 are given by
\begin{align*}
\tshape:=\tshape_{r,H}(\Lambda_{L,N}) := \Lambda_{L,N} \cup \enlarge^\uparrow\qquad \mbox{where}\qquad \enlarge^\uparrow=\enlarge_{r,H}^\uparrow(\Lambda_{L,N}) := \llb -r, L+r\rrb \times \llb H,N\rrb\,.
\end{align*}
(Note that in the case $r=0$, $\tshape = \Lambda_{L,N}$.) The main result of this section establishes the key estimate on the change in surface tension at local scales (on $\tshape$) analogous to Proposition~\ref{prop:global-tilt1}.

\begin{proposition}\label{prop:local-tilt-enlargements}
Let $\beta>\beta_c$ and let $\lambda = \frac{c_\lambda}{N}$ for $c_\lambda>0$. There exist constants $K$ and $\eta$ such that the following holds. For every $H = RN^{1/3}$ with   $K \le R \le K^{-1}N^{2/3}$, every $L\geq K \sqrt{R}N^{1/3}$ and every $r\le K^{-1}H^2$,
\begin{align*}
\frac {Z_{(\pm,2H),0, \tshape}}{Z_{+,0,\tshape}} \geq \frac{Z_{(\pm,2H),\lambda,\tshape}}{Z_{+,\lambda,\tshape}} \geq \frac {Z_{(\pm,2H),0,\tshape}}{Z_{+,0,\tshape}}\exp\Big(-C_{\beta,\lambda}\Big(LN^{-2/3} + \Big({H N^{-1/3}}\Big)^{3/2}\Big)\,.
\end{align*}
\end{proposition}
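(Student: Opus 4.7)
The upper bound $Z_{(\pm,2H),0,\tshape}/Z_{+,0,\tshape}\geq Z_{(\pm,2H),\lambda,\tshape}/Z_{+,\lambda,\tshape}$ is immediate from FKG applied to $\langle e^{\lambda\sum_x\sigma_x}\rangle$ under the two boundary conditions, exactly mirroring the first step of the proof of Proposition~\ref{prop:global-tilt1}. For the lower bound, I would transpose the strategy of Section~\ref{subsec:global-tilt-proof} to the local geometry of $\tshape$. Writing $v^*_\west:=(-r-\tfrac12,2H-\tfrac12)$ and $v^*_\east:=(L+r+\tfrac12,2H-\tfrac12)$ for the two sources of the $(\pm,2H)$ boundary conditions on $\tshape$, the identity~\eqref{eq:low-temp-interface-probability} combined with the manipulation~\eqref{eq:keystep-global} (with $\tshape$ in place of $\Lambda_N$) yields, for every family $\Gamma$ of $\tshape^*$-admissible interfaces from $v^*_\west$ to $v^*_\east$,
\begin{equation*}
\frac{Z_{(\pm,2H),\lambda,\tshape}}{Z_{+,\lambda,\tshape}}\;\geq\;\exp\!\bigl(-2\lambda\sup_{\gamma\in\Gamma}|\Lambda^-(\gamma)|\bigr)\sum_{\gamma\in\Gamma}q_\tshape(\gamma)\,.
\end{equation*}
The task is therefore to construct $\Gamma$ with both the entropic fraction $\sum_\Gamma q_\tshape(\gamma)/(Z_{(\pm,2H),0,\tshape}/Z_{+,0,\tshape})$ and the field factor $\exp(-2\lambda\sup_\Gamma|\Lambda^-|)$ each at least $\exp(-C(L/N^{2/3}+R^{3/2}))$.

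I would construct $\Gamma$ as a localized version of the mesh construction of Section~\ref{subsec:global-tilt-proof}, with descent-width $W_d\asymp\sqrt{HN/c_\lambda}=\sqrt{R/c_\lambda}\,N^{2/3}$ and cruising height $h_\ast=K_0 N^{1/3}$ for a large constant $K_0$. In the large-$L$ regime $L\geq 3W_d$, interfaces in $\Gamma$ descend from $v^*_\west$ to height $h_\ast$ over horizontal width $W_d$, traverse a mesh of singleton-crossing windows ${\sf W}_i=\{x_i\}\times\llbracket h_\ast,2h_\ast\rrbracket$ at $x_i=W_d+iN^{2/3}$ (with inter-window subpaths confined to $[x_i,x_{i+1}]\times[0,2h_\ast]$, exactly as in Figure~\ref{fig:global-tilt-corridor}), and symmetrically ascend to $v^*_\east$. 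How much of the descent lies in the strip enlargement $\enlarge^\uparrow$ rather than in the stem $\Lambda_{L,N}$ depends on $r$: if $r\geq W_d$ the descent is performed entirely within the strip (so that the strip contribution to $|\Lambda^-|$ is $O(HW_d)$ rather than $O(rH)$); if $r<W_d$ the interface crosses the strip flatly at height $2H$ and descends upon entering the stem. In the complementary small-$L$ regime $L<3W_d$, the mesh degenerates and $\Gamma$ is taken instead to be a family of shallow V-shapes from $v^*_\west$ to $v^*_\east$; here the hypothesis $L\geq K\sqrt R\,N^{1/3}$ is used to ensure that even the crude flat-interface field tilt $2\lambda HL\leq 2c_\lambda RL/N^{2/3}$ is already $O(R^{3/2})$.

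The lower bound on $\sum_{\gamma\in\Gamma}q_\tshape(\gamma)$ follows the factorization~\eqref{eq:global-tilt-decomposition} based on~\eqref{eq:concatenation}: each of the $\Theta(L/N^{2/3})$ inter-window factors is bounded below by $\varepsilon_\beta$ times the corresponding two-point function via the reasoning of~\eqref{eq:global-tilt-interior-weights-1}--\eqref{eq:global-tilt-interior-weights-2} (using Proposition~\ref{prop:no-backtracking-domain} and Corollary~\ref{cor:max-height-fluctuation-not-straight}), while the descent and ascent factors are bounded using the Gaussian estimate~\eqref{eq:point-to-point-l-r} with horizontal length $W_d$ and transversal drop $H$, producing a factor $\exp(-\Theta(H^2/W_d))=\exp(-\Theta(\sqrt{c_\lambda}R^{3/2}))$. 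A direct geometric count shows $|\Lambda^-(\gamma)|\leq C_\beta(LN^{1/3}+W_dH+rH)$ for $\gamma\in\Gamma$, so that $2\lambda|\Lambda^-(\gamma)|\leq C(L/N^{2/3}+R^{3/2})$ as well; combining the two bounds yields the proposition. The main obstacle I anticipate is the large-$r$ regime ($r\approx K^{-1}H^2$), where the naive flat-strip contribution to the field tilt $\lambda\cdot rH\asymp R^3/K$ would already exceed the target $R^{3/2}$—this forces the descent to be placed \emph{inside} the strip, with the choice $W_d\asymp\sqrt{HN/c_\lambda}$ being precisely the balance between Brownian entropy of the descent and field tilt from the area it exposes—and, separately, the small-$L$ regime, where the mesh construction degenerates and a V-shape replaces the drop-plateau-climb.
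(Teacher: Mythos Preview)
Your core construction is exactly the paper's: drop from $2H$ to $\Theta(N^{1/3})$ over a descent width $W_d$, cruise through a mesh of singleton windows at that height, ascend symmetrically; factorize via~\eqref{eq:concatenation}; bound each piece using Proposition~\ref{prop:no-backtracking-domain}, Corollary~\ref{cor:max-height-fluctuation-not-straight}, and~\eqref{eq:point-to-point-l-r}; and your $W_d\asymp\sqrt{HN}$ matches the paper's optimized $M=\sqrt{R}\,N^{2/3}$.

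The paper does not case-split on $r$ or $L$. It always crosses each arm essentially flat at height $\sim 2H$ (through boundary windows $\window_\west,\window_\east$ of vertical extent $K\sqrt{r}$ placed at $x\in\{-\tfrac12,L+\tfrac12\}$) and performs the entire descent in the stem over width $M$; this contributes arm area $O(rH)$, which is absorbed by working under $r\leq M$ (the proof records the working assumption ``$r\leq H$'', equivalent once $R\leq K^{-1}N^{2/3}$). You correctly notice this is stronger than the stated $r\leq K^{-1}H^2$, but in every application $r=N^{2/3}$, so the discrepancy never matters. Likewise, the paper's mesh needs $L\gtrsim 2M$, so the stated hypothesis $L\geq K\sqrt{R}\,N^{1/3}$ is almost certainly a typo for $K\sqrt{R}\,N^{2/3}$, rendering your small-$L$ case split unnecessary. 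Two minor corrections to your edge-case handling: the arm of $\tshape$ has its floor at height $H$, so a descent ``entirely within the strip'' can only reach $H$, and a second descent from $H$ to $\Theta(N^{1/3})$ would still be needed in the stem; and your stated use of the lower bound $L\geq K\sqrt{R}\,N^{1/3}$ to control $2\lambda HL$ is backward---that bound follows from the \emph{upper} bound $L<3W_d$ defining the small-$L$ regime, not from the hypothesis.
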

We will obtain such a bound by restricting attention to the following set of interfaces: starting from a height $H$ possibly $\gg N^{1/3}$, the interface quickly (in distance $\sqrt R N^{2/3}$) descends down to height $O(N^{1/3})$ and stays at that height through the bulk of the $\tshape$, before returning to height $H$. 
\begin{proof}
As in the proof of Proposition \ref{prop:global-tilt1}, the first inequality is obtained by monotonicity. We turn to the second inequality. Fix $K$ to be a sufficiently large constant (as compared to constants depending on $\beta,c_\lambda$, but independent of $L,H$) and  $M$ to be chosen later ({it will be useful to think of it as being of order $C_H N^{2/3}$}). With these, to encode the class of interfaces as in Figure \ref{fig:local-tilt-intro}  define the following corridor $\corridor= \corridor_{r,H} \subset \tshape$  to which we will  restrict the interface: 
\begin{align*}
\corridor: = (\underbrace{\llb -r, M\rrb \times \llb 0,3H\rrb}_{\lshape_\west}) \cup (\underbrace{\llb M,L-M\rrb \times \llb 0,3KN^{1/3}\rrb}_{{\sf S}_{\textsc{mid}}}) \cup (\underbrace{\llb L-M, L+r\rrb \times \llb 0,3H\rrb}_{\lshape_\east}) \bigcap \tshape.
\end{align*}
where $\lshape_\west$ and $\lshape_\east$ are (upside-down) L-shaped regions on either side of $\tshape$, (see Figure~\ref{fig:local-tilt-corridor}). 

 An admissible interface $\gamma$ on $\tshape$ has 
$$\partial \gamma = \{v_{\west}^*,v_{\east}^*\} := \{(-r-\tfrac 12, 2H - \tfrac 12), (L+r+\tfrac 12, 2H- \tfrac 12)\}\,.$$
For such an interface $\gamma$, we use $\tshape^+= \tshape^+(\gamma)$ and $\tshape^-= \tshape^-(\gamma)$ to denote the subsets of $\tshape$ above and below the interface $\gamma$, respectively.  For ease of notation, we denote by $Z_{\pm,\lambda,\tshape}$, the partition function $Z_{(\pm,2H),\lambda,\tshape}$ corresponding to the $(\pm,2H)$ boundary conditions.
Now by \eqref{eq:low-temp-interface-probability}
\begin{align}\label{eq:localexp1}
Z_{\pm,\lambda,\tshape}  \geq \sum_{\substack{\gamma: v_\west^* \to\ve \\ \gamma \subset \corridor}}e^{ - 2\beta|\gamma|} Z_{+,\lambda,\tshape^+} Z_{-,\lambda,\tshape^-}
\end{align}
We can then write for every $\gamma \subset \corridor,$ by the same argument in \eqref{eq:keystep-global},  
\begin{align*}
Z_{+,\lambda,\tshape^+} Z_{-,\lambda,\tshape^-} 
& \geq \frac{Z_{+,\lambda,\tshape}}{Z_{+,0,\tshape}} Z_{+,0,\tshape^+} Z_{+,0,\tshape^-} e^{ - 2\lambda (6H(r+M)+3KLN^{1/3})}\,.
\end{align*}
Dividing both sides out by $Z_{+,\lambda,\tshape}$ and arguing as in~\eqref{eq:global-part-function-ratio-random-line} yields for some universal constant $C$, 
\begin{align}\label{eq:lowerbnd123}
\frac{Z_{\pm,\lambda,\tshape}}{Z_{+,\lambda,\tshape}} \geq e^{ - C\lambda (H(r+M)+K LN^{1/3})}\sum_{\gamma:\vw \to \ve, \gamma \subset \corridor} q_{\tshape}(\gamma)\,.
\end{align}
We now turn to this last sum, lower bounding its contribution as a fraction of $Z_{\pm,0,\tshape}/Z_{+,0,\tshape}$. 
Towards this, let us split the strip ${\sf S}_{\textsc{mid}}$ by dividing $\llb M-\frac 12,L-M+\frac 12\rrb$ into $\mathcal N= (L-2M+1)/(N^{2/3})$ many segments, each of length $N^{2/3}$ delineated by $M-\frac 12 = x_1,...,x_{\mathcal N} = L-M+\frac 12$, and for each of these define the window ${\sf W}_i := x_i \times [ KN^{1/3}, 2KN^{1/3}]$. We also define the boundary windows by $${\sf W}_\west := \{-\tfrac 12\}\times [ 2H - K\sqrt r, 2H+K\sqrt r] \qquad \mbox{and}\qquad {\sf W}_\east = \{L+\tfrac 12\} \times [ 2H - K\sqrt r, 2H+K\sqrt r]\,.$$ 
 and let $\Gamma_{\corridor}$ be the set of $\gamma\subset \corridor:\vw \to \ve$ having the additional properties that 
\begin{itemize}
\item For every $i=1,...,\mathcal N,$ the intersection $\gamma\cap (\{x_i\}\times \mathbb R)$ is a singleton in ${\sf W}_i$. 
\item The intersection $\gamma \cap (\{-\frac 12\}\times \mathbb R)$ is a singleton in ${\sf W}_\west$ and the intersection $\gamma\cap (\{L+\frac 12\}\times \mathbb R)$ is a singleton in ${\sf W}_\east$. 
\end{itemize}
\begin{figure}
\centering
\begin{tikzpicture}
\node at (0,0){
\includegraphics[width=.85\textwidth]{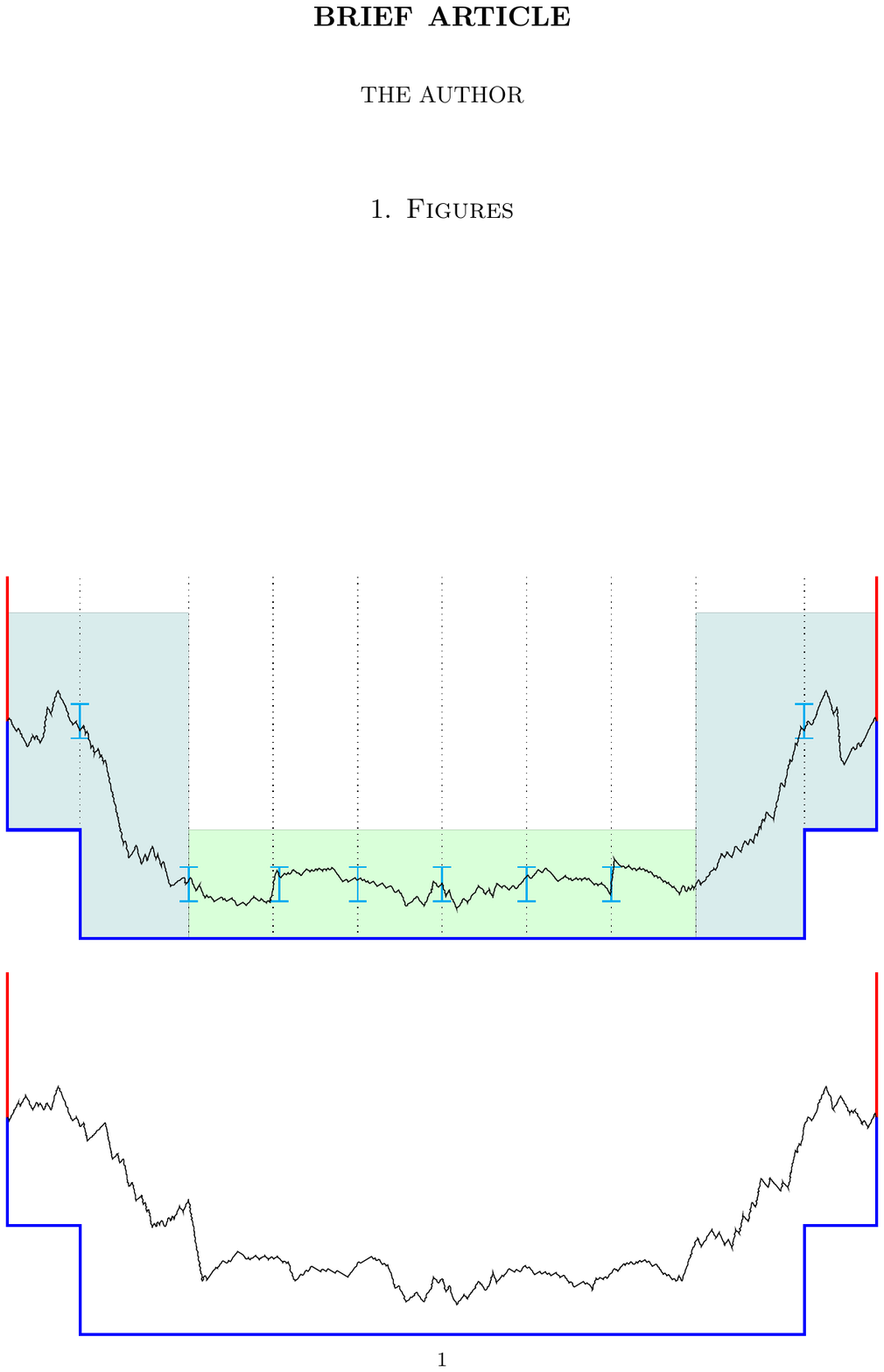}
};

\draw[<->, thick]  (-1.35,.5)--(0,.5);
\draw[<->, thick]  (-7,-2.83)--(-7,.7);
\draw[<->, thick]  (5.9,-2.83)--(5.9,-1.75);

\node at (-.75,.8) {$N^{2/3}$};
\node at (-7.3,-.95) {$2 H$};
\node at (6.7,-2.2) {$O(N^{1/3})$};

\end{tikzpicture}
\caption{The interfaces in $\Gamma_{\corridor}$ are those confined to $\corridor$---the union of ${\sf L}_\west, {\sf L}_\east$ (dark green, shaded) and $\senlarge_{\textsc{mid}}$ (green, shaded)---that intersect the mesh columns (black dashed) at singletons inside the windows $\window_\west, \window_1,...,\window_{\mathcal N},\window_\east$ (cyan).}\label{fig:local-tilt-corridor}
\end{figure}

We can now bound the sum over interfaces $\gamma \subset \corridor$ by a sum over $\gamma \in \Gamma_{\corridor}$. Enumerating over the dual-vertices $w^*$ at which $\gamma$ intersects ${\sf W}_\east, {\sf W}_\west$ and $({\sf W}_i)_{1\le i\le \cN}$,  we obtain 
\begin{align*}
\sum_{\gamma \in \Gamma_\corridor} q_{\tshape}(\gamma) & = \sum_{w_\west^* \in {\sf W}_\west, w^*_\east\in {\sf W}_\east} \sum_{w^*_i \in {\sf W}_i} \sum_{\gamma: \vw \to w_\west^* \to w_1^* \to \cdots \to w_{\mathcal N}^* \to w_\east^*\to \ve\,,\,\gamma\in \Gamma_{\corridor}} q_\tshape (\gamma) 
\end{align*}
Now by \eqref{eq:concatenation},  the above expression has the following lower bound, 
\begin{align*}  
&  \sum_{\ww \in {\sf W}_\west} \omega_\tshape(\Gamma_\corridor^\west; \ww) \times \sum_{w_1^*\in \window_1} \omega_\tshape(\Gamma_\corridor^\downarrow ; \ww,w_1^*) \times \sum_{w_i^*\in {\sf W}_i: 2\leq i\leq \mathcal N-1} \prod_{i=1}^{\mathcal N-1}\omega_{\tshape}(\Gamma^i_\corridor; w_i^*,w^*_{i+1})\\ 
& \quad\qquad\qquad \times \sum_{\we \in {\sf W}_{\mathcal N}} \omega_\tshape(\Gamma_\corridor^\uparrow ; w^*_{\mathcal N},\we) \times  \omega_\tshape(\Gamma_\corridor^\east; \we)\,, 
\end{align*}
where the weights of the various stretches of the interface are given by 
\begin{align*}
\omega_\tshape (\Gamma_\corridor^\west ; \ww) & := \sum_{\substack{\gamma_\west\in \nbt: \vw\to \ww\,, \\ \gamma_\west \subset \llb -r,0\rrb\times \llb H,3H\rrb}} q_{\tshape}(\gamma_\west)\,, \quad  &  \omega_\tshape (\Gamma_\corridor^\east ; \we) := \sum_{\substack{\gamma_\east\in \nbt: \we \to\ve\ \\ \gamma_\east \subset \llb L,L+r\rrb\times \llb H,3H\rrb}} q_{\tshape}(\gamma_\east)\,, \\ 
\omega_\tshape(\Gamma^\downarrow_\corridor ;\ww,w_1*) & := \sum_{\substack{\gamma_0\in \nbt: \ww\to w_1^*\\ \gamma_0 \subset \llb 0,x_1\rrb \times \llb 0,3H\rrb}} q_{\tshape}(\gamma_0)\,, \quad \quad  & \omega_\tshape(\Gamma^\uparrow_\corridor ;w_{\mathcal N}^*,\we)  := \sum_{\substack{\gamma_{\mathcal N} \in \nbt: w^*_{\mathcal N} \to \we \\ \gamma_{\mathcal N} \subset \llb x_{\mathcal N},L\rrb \times \llb 0,3H\rrb}} q_{\tshape} 
(\gamma_{\mathcal N})\,, \\
\omega_\tshape(\Gamma^i_\corridor; w_i^*,w^*_{i+1}) & := \sum_{\substack{\gamma_i\in \nbt: w_i^*\to w_{i+1}^* \\ \gamma_i\subset \llb x_i,x_{i+1}\rrb \times \llb 0,3KN^{1/3}\rrb}} q_\tshape(\gamma_i)\,. 
\end{align*}
The weights of $\Gamma_{\corridor}^i$ are bounded exactly as in~\eqref{eq:global-tilt-interior-weights-1}--\eqref{eq:global-tilt-interior-weights-2} from which we deduce from~\eqref{eq:point-to-point-l-r},
\begin{align*}
\omega_{\tshape}(\Gamma_\corridor^i; w^*_i,w^*_{i+1}) \geq \varepsilon_\beta \langle \sigma_{w_i^*} \sigma_{w_{i+1}^*}\rangle_{\mathbb Z^2}^* \geq \frac{\varepsilon_{\beta} e^{ - K^2 \kappa_\beta}}{N^{1/3}} e^{ - \tau_0 (\beta) N^{2/3}}\,.
\end{align*}
Similarly, as long as $r \leq K^{-1} H^2$, we have that for every $\ww\in {\sf W}_\west$, 
\begin{align*}
\omega_{\tshape}(\Gamma_\corridor^\west; \ww)  \geq \varepsilon_\beta \langle \sigma_{\vw} \sigma_{\ww}\rangle_{\mathbb Z^2}^* \geq \frac{\varepsilon_{\beta} e^{ - K^2 \tau_0(\beta)}}{\sqrt r} e^{ - r \tau_0 (\beta) }\,,
\end{align*}
and, by symmetry, the same inequality holds for $\omega_\tshape (\Gamma_\corridor^\east ; \we)$ for every $y_\east \in \window_\east$. It remains to control the stretches of the interface where it decreases from height $2H$ down to $KN^{1/3}$ and back up to $2H$. For these, we recall that for every $\ww\in \window_\west$ and $w_1^* \in \window_1$, arguing similarly to~\eqref{eq:global-tilt-interior-weights-1}--\eqref{eq:global-tilt-interior-weights-2}, we have by Proposition~\ref{prop:no-backtracking-domain} and Corollary~\ref{cor:max-height-fluctuation-not-straight}, that as long as $K$ is sufficiently large (depending only on $\beta$),  the restrictions $\gamma_0 \in \nbt, \gamma_0 \subset  \llb 0,x_1 \rrb \times \llb 0,3H\rrb$ cost a constant factor $\varepsilon_\beta>0$, and 
\begin{align*}
\omega_{\tshape} (\Gamma_\corridor^\downarrow ; \ww, w_1^*)  \geq \varepsilon_\beta \langle \sigma_{\ww}\sigma_{w_1^*}\rangle_{\mathbb Z^2}^*\,,
\end{align*}
which by~\eqref{eq:point-to-point-bounds} and~\eqref{eq:point-to-point-l-r} implies, uniformly over choices of $\ww,w_1^*$, 
\begin{align*}
\omega_{\tshape} (\Gamma_\corridor^\downarrow ; \ww, w_1^*)   \geq \frac{c}{\sqrt M} e^{ - M \tau_0 (\beta)} e^{ - \kappa_\beta |H- KN^{1/3}|^2/M}\,,
\end{align*}
as long as $H\leq M$. 

Putting the above bounds together, we see that if $H\ge KN^{1/3}$, $H\leq M$, and $r\leq K^{-1} H$, we have for some $c>0$  that 
\begin{align*}
\sum_{\gamma \in \Gamma_\corridor} q_{\tshape}(\gamma) &  \geq (K\sqrt r)^2 (KN^{1/3})^{\mathcal N} \Big(\frac{\varepsilon_{\beta} e^{ - K^2 \tau_0(\beta)- \tau_0 (\beta) N^{2/3}}}{N^{1/3}} \Big)^{\mathcal N-1} \Big( \frac{\varepsilon_{\beta} e^{ - K^2 \tau_0(\beta)- r \tau_0 (\beta) }}{\sqrt r}
\Big)^2 \\
& \quad \quad \times\Big(\frac{c e^{ - M \tau_0 (\beta)- \tau_0(\beta) |H- KN^{1/3}|^2/M}}{\sqrt M}   \Big)^2 \\ 
& \geq \frac{(\delta_\beta)^{\mathcal N} N^{1/3}}{M} e^{ - \tau_0(\beta)|L+2r|} e^{ - \tau_0(\beta)|H- KN^{1/3}|^2/M}\,,
\end{align*}
for some $\delta_\beta>0$ small (depending on $K$ which was taken to be large only in $\beta$). Using the fact that 
\begin{align*}
\frac{Z_{\pm,0,\tshape}}{Z_{+,0,\tshape}} = \langle \sigma_{\vw} \sigma_{\ve} \rangle_{\tshape}^* \leq \langle \sigma_{\vw} \sigma_{\ve}\rangle_{\mathbb Z^2}^* \leq \frac{C}{\sqrt{L+2r}} e^{ - \tau_{0}(\beta) |L+2r|}\,,
\end{align*}
(where the first equality is by~\eqref{eq:random-line-representation-partition-functions} and~\eqref{eq:random-line-representation-identity}, for the domain $\tshape$), and the following inequalities are by \eqref{eq:two-point-function-monotonicity} and
\eqref{eq:point-to-point-bounds} respectively),
we see that 
\begin{align*}
\sum_{\gamma \in \Gamma_\corridor} q_{\tshape}(\gamma)  \geq \frac{(\delta_\beta)^{\mathcal N} N^{1/3} \sqrt{L+2r}}{M} e^{- \tau_0(\beta) |H-KN^{1/3}|^2/M} \frac{Z_{\pm,0,\tshape}}{Z_{+,0,\tshape}}\,.
\end{align*}
Putting the above together with~\eqref{eq:lowerbnd123}, we see for some $C_{\beta}, C_{\beta,\lambda}>0$ that 
\begin{align}\label{eq:part-function-tilt-local}
\frac{Z_{\pm,\lambda,\tshape}}{Z_{+,\lambda,\tshape}} &\geq \frac{Z_{\pm,0,\tshape}}{Z_{+,0,\tshape}} \cdot \frac{N^{1/3}\sqrt L}{M} \exp \Big( - C_{\beta} \frac{L-2M}{N^{2/3}} - \tau_0(\beta)\frac{|H- KN^{1/3}|^2}{M} - C\lambda (HM + KLN^{1/3})\Big) \nonumber \\ 
& \geq \frac{Z_{\pm,0,\tshape}}{Z_{+,0,\tshape}} \cdot \frac{N^{1/3}\sqrt L}{M}\exp \Big( -C_{\beta,\lambda} \frac{L}{N^{2/3}} - \tau_0(\beta) \frac{H^2}{M} - C_{\beta,\lambda} \frac{HM}{N}\Big)
\end{align}
under the assumptions that $r\leq L$, $r\leq H$ and $H\geq KN^{1/3}$. Optimizing over choices of $M$ in the exponent, we make the choice of 
\begin{align*}
M = \sqrt{\frac{R^2 N^{2/3} N}{RN^{1/3}}}= \sqrt R N^{2/3}\,.
\end{align*}
Since $L \geq N^{2/3}$, the pre-factor in~\eqref{eq:part-function-tilt-local} is at least $R^{-1/2}$; using either the $H^2/M$ or $HM/N$ in the exponential, this pre-factor is absorbed by a sufficiently large  $C_{\beta,\lambda}$, giving 
\begin{align*}
\frac{Z_{\pm,\lambda,\tshape}}{Z_{+,\lambda,\tshape}} \geq \frac{Z_{\pm,0,\tshape}}{Z_{+,\lambda,\tshape}} \exp\Big( - C_{\beta,\lambda} \Big(\frac{L}{N^{2/3}} + R^{3/2}\Big)\Big)\,,
\end{align*}
 and concluding the proof of the proposition. 
\end{proof}

\section{Right tail for the one-point height distribution}\label{sec:upper-tail}
In this section, we show the $\exp(- \Theta(R^{3/2}))$ upper tail behavior for the normalized height of the interface, $N^{-1/3} \hgt_x^\pm$, above any point in the bulk of $\partial_\south \Lambda_N$.  Namely, the goal of this section is to prove the following two propositions, which combine to give Theorem~\ref{thm:one-point-tail-bounds}.  

\begin{proposition}\label{prop:right-tail-ub}
For every $\beta>\beta_c$ and $\lambda = \frac {c_\lambda}{N}$, there exists $C(\beta, c_\lambda)>0$ such that for every $x\in \partial_\south \Lambda_N$, and every $R=o(N^{2/9})$, we have 
\[
\mu_{\lambda,N}^\pm (\hgt_x^+ \geq RN^{1/3})\leq Ce^{-R^{3/2}/C}\,.
\]
\end{proposition}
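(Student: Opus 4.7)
Set $H := RN^{1/3}/2$. On the event $\{\hgt_x^+ \geq 2H\}$, define the random interval $I = \llb x_\west, x_\east \rrb \ni x$ to be the maximal interval on which $\hgt_y^+ \geq H$ for every $y \in I$, and write $L := |I|$. Fix a large parameter $T$ (depending only on $\beta, c_\lambda$) to be chosen. The plan is to decompose
\begin{equation*}
\mu^\pm_{\lambda,N}(\hgt_x^+ \geq 2H) \leq \mu^\pm_{\lambda,N}\big(\hgt_x^+ \geq 2H,\; L \leq T\sqrt R\, N^{2/3}\big) + \mu^\pm_{\lambda,N}\big(L \geq T\sqrt R\, N^{2/3}\big),
\end{equation*}
and bound the two pieces separately so that each yields the target decay $e^{-\Theta(R^{3/2})}$.

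For the short stopping-interval term, I would further decompose according to dyadic scales $L \in [2^{k-1} N^{2/3},\, 2^k N^{2/3}]$ with $k \leq \log_2(T\sqrt R)$, and within each scale union-bound over the $O(N^{1/3})$ grid-aligned candidates $\tilde I$ of length $2^k N^{2/3}$ that contain $I$. The smoothness estimate (Proposition~\ref{prop:spikiness-bound}) would upgrade the conditioning $\{\hgt_y^+ \geq H\text{ on }I\}$ into the cleaner event $\{\hgt_{x_\west^{\tilde I}}^- \geq H/2,\; \hgt_{x_\east^{\tilde I}}^- \geq H/2\}$ at only sub-leading cost. Given this latter event, applying the increasing enlargement coupling (Proposition~\ref{prop:plus-enlargement}) with radius $r \sim N^{2/3}$ would dominate the conditional law on $\Lambda_{\tilde I}$ by an Ising measure on $\enlarge^\downarrow_{r, H/2}(\Lambda_{\tilde I})$ with $(\pm, H/2)$ boundary conditions, in which the probability that $\sI$ reaches $2H$ is at most $C\exp(-(3H/2)^2/(C \cdot 2^k N^{2/3})) = C\exp(-\Theta(R^2/2^k))$ via Corollary~\ref{cor:max-height-fluctuation-not-straight}. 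Summing over the $O(N^{1/3})$ grid positions and the $O(\log R)$ dyadic scales, the dominant contribution comes from $2^k \sim T\sqrt R$, giving a bound of order $\exp(-\Theta(R^{3/2}/T))$ that absorbs the polynomial prefactors once $R$ is large and $T$ is fixed.

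For the long-interval term, the target is a super-exponential bound $\exp(-\Omega(T R^{3/2}))$. I would union-bound over $O(N^{1/3})$ grid-aligned candidate intervals $J$ of length $T\sqrt R\, N^{2/3}$ containing $x$ (smoothed again via Proposition~\ref{prop:spikiness-bound}), and for each fixed $J$ apply the local tilt estimate (Proposition~\ref{prop:local-tilt-enlargements}) on $\tshape_{r, 2H}(\Lambda_J)$ to reduce the with-field probability of $\{\sI \geq H \text{ on } J\}$ to a no-field surface-tension computation, at the cost of the tilt factor $\exp(O(R^{3/2} + T\sqrt R))$. The gain is that this event forces the area confined under $\sI$ in $\Lambda_J$ to be at least $HL = \Omega(T R^{3/2} N)$; converting this, as sketched in Section~\ref{sec:ideas-of-proofs}, into an excess of order-$HL$ minus sites and leveraging the magnetization-integral representation of the field-to-no-field partition-function ratio, produces a factor $\exp(-\Omega(\lambda \cdot T R^{3/2} N)) = \exp(-\Omega(T R^{3/2}))$. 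Choosing $T$ larger than the constant in the tilt estimate then yields the desired decay after absorbing the union-bound prefactor.

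The main obstacle will be executing the long-interval step rigorously: conditionally on $\sI$ staying above $H$ on $J$, the induced law on $\Lambda_J$ is not of $\pm$-type and carries a random boundary profile with subcritical bubbles of both signs along $\partial_{\east,\west}\Lambda_J$. The required bootstrap---converting an area lower bound into an excess-minus-sites lower bound strong enough to beat the tilt despite the smallness of $\lambda = c_\lambda/N$---can be carried out either by adapting the concentration machinery of Ioffe--Schonmann or by decomposing $\Lambda_J$ into well-spaced tiles on which the field effect is negligible, the strategy indicated in Section~\ref{sec:ideas-of-proofs}. The remainder of the argument is careful accounting of constants and absorption of the sub-exponential error terms produced by the couplings of Section~\ref{sec:enlargements}.
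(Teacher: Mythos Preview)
Your overall two-case strategy---split according to whether the stopping domain has width $\lesssim T\sqrt R\, N^{2/3}$ or not, use a Brownian-tail bound in the short case and an area-penalty bound in the long case---is exactly the paper's. But your implementation of the short case has a genuine directional error that breaks the argument.

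You propose to use the \emph{increasing} enlargement (Proposition~\ref{prop:plus-enlargement}) after extracting $\{\hgt^-_{\partial\tilde I}\ge H/2\}$, and then bound the probability that $\sI$ reaches $2H$ inside $\enlarge^\downarrow_{r,H/2}(\Lambda_{\tilde I})$. This cannot work: $\enlarge^\downarrow_{r,H/2}$ only contains sites at height $\le H/2$, so the event $\{\hgt_x^+\ge 2H\}$ is not measurable with respect to it, and in any case Proposition~\ref{prop:plus-enlargement} applies only to \emph{increasing} events whereas $\{\hgt_x^+\ge 2H\}$ is decreasing. The useful information carried by the stopping domain is that at its boundary the interface drops \emph{below} $H$; via the downward-spikiness bound this transfers to $\{\hgt^+_{\partial I_{ij}}\le \mathcal H_{i+j}\}$ at the grid-aligned approximation, which is an $\mathcal E_{\le h}$-type conditioning. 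The paper accordingly uses the \emph{decreasing} enlargement (Proposition~\ref{prop:minus-enlargement}) into $\enlarge^\uparrow$, where $\{\hgt_x^+\ge 4RN^{1/3}\}$ is both measurable and decreasing, and the Gaussian bound of Corollary~\ref{cor:max-height-fluctuation-floor} applies directly.

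Two secondary issues: your union bound over ``$O(N^{1/3})$ grid-aligned candidates'' produces a prefactor polynomial in $N$, which cannot be absorbed into $e^{-R^{3/2}/C}$ since $R^{3/2}=o(N^{1/3})$; the paper's mesh is anchored at $x$, so there are only $O(T_0^2 R)$ relevant $(i,j)$ pairs in the short case and the long case is handled by summing $e^{-(i+j)R/C}$ over $i+j>T_0\sqrt R$. In the long case you correctly invoke the T-shape and the local tilt estimate, but you do not explain how to obtain the required entry/exit control $\{\hgt^+_{\partial J}\le 2\mathcal H\}$; this again comes from the observation that $\partial I_\star$ has heights $\le H$, transferred to the grid via spikiness, and for larger stopping domains the paper lets the threshold $\mathcal H_{i+j}$ grow with $i+j$ so that the tilt factor in Proposition~\ref{prop:local-tilt-enlargements} stays dominated by the area cost.
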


\begin{proposition}\label{prop:right-tail-lb}
For every $\beta>\beta_c$ and $\lambda = \frac{c_\lambda}{N}$, there exists $C(\beta,c_\lambda)>0$ such that for every $R= o(N^{2/3})$ and every sequence $x=x_N$ at distance at least $\sqrt R N^{2/3}$ from $\{0,N\}$,  we have
\begin{align*}
\mu_{\lambda,N}^{\pm} \big(\hgt_x^- \geq R N^{1/3}\big) \geq  e^{ - CR^{3/2}}\,.
\end{align*}
\end{proposition}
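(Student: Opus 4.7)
The plan is to implement the strategy sketched in Section~\ref{sec:ideas-of-proofs} (cf.~Figure~\ref{fig:lower-bound-fig}): localize the lower bound to a strip of width $L:=K\sqrt{R}\,N^{2/3}$ centered at $x$, for a small constant $K$ chosen so that $x\pm L/2\in\llb 0,N\rrb$ (permitted by the hypothesis $d(x,\{0,N\})\ge\sqrt{R}N^{2/3}$). First, I would condition $\mu_{\lambda,N}^{\pm}$ on the two columns $\{x\pm L/2\}\times\llb 0,N\rrb$ being entirely $+$: since $+$ spins are stochastically maximal and $\{\hgt_{x}^{-}\ge RN^{1/3}\}$ is a \emph{decreasing} event, this conditioning only reduces its probability. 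By the domain Markov property, the conditioned restriction to the middle sub-rectangle is exactly $\mu_{\lambda,\Lambda_{L,N}}^{\pm}$, and the column $\{x\}$ meets the full interface only inside that sub-rectangle, so
\[
\mu_{\lambda,N}^{\pm}\bigl(\hgt_{x}^{-}\ge RN^{1/3}\bigr)\;\ge\;\mu_{\lambda,\Lambda_{L,N}}^{\pm}\bigl(\hgt_{L/2}^{-}\ge RN^{1/3}\bigr).
\]

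Next I would define $\Gamma$ to be the set of $\Lambda_{L,N}^{*}$-admissible interfaces $\gamma:\vsw\to\vse$ passing through the dual vertex $w^{*}:=(L/2-\tfrac12,RN^{1/3}-\tfrac12)$, non-backtracking at $\vsw,w^{*},\vse$, and confined to the slab $\mathbb R\times\llb 0,3RN^{1/3}\rrb$. Every $\gamma\in\Gamma$ has $\hgt_{L/2}^{-}\ge RN^{1/3}$ and $|\Lambda^{-}(\gamma)|\le 3RLN^{1/3}=3KR^{3/2}N$. Applying the partition-function manipulation~\eqref{eq:keystep-global} verbatim on $\Lambda_{L,N}$ (spin-flip symmetry; the bound $e^{-2\lambda|\Lambda^{-}|}\ge e^{-6c_{\lambda}KR^{3/2}}$; and monotonicity in $\lambda$ of the all-frozen-boundary-plus event), followed by the purely-monotone inequality $Z_{+,\lambda,\Lambda_{L,N}}/Z_{\pm,\lambda,\Lambda_{L,N}}\ge Z_{+,0,\Lambda_{L,N}}/Z_{\pm,0,\Lambda_{L,N}}$ (the first inequality of Proposition~\ref{prop:global-tilt1}, which requires no restriction on the domain), absorbs the field:
\[
\mu_{\lambda,\Lambda_{L,N}}^{\pm}(\Gamma)\;\ge\;e^{-6c_{\lambda}KR^{3/2}}\,\mu_{0,\Lambda_{L,N}}^{\pm}(\Gamma).
\]

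For the no-field probability, by~\eqref{eq:random-line-two-point-correlations}, $\mu_{0,\Lambda_{L,N}}^{\pm}(\Gamma)=\sum_{\gamma\in\Gamma}q_{\Lambda_{L,N}}(\gamma)/\langle\sigma_{\vsw}\sigma_{\vse}\rangle^{*}_{\Lambda_{L,N}}$. I would lower-bound the numerator via concatenation~\eqref{eq:concatenation}, the monotonicity $q_{\Lambda_{L,N}}\ge q_{\mathbb Z^{2}}$ from~\eqref{eq:q-monotonicity-infinite-domain}, and Proposition~\ref{prop:no-backtracking-domain} and Corollary~\ref{cor:max-height-fluctuation-not-straight} (which show that the non-backtracking and slab restrictions cost only a constant $\varepsilon_{\beta}>0$) by $\varepsilon_{\beta}\langle\sigma_{\vsw}\sigma_{w^{*}}\rangle^{*}_{\mathbb Z^{2}}\langle\sigma_{w^{*}}\sigma_{\vse}\rangle^{*}_{\mathbb Z^{2}}$, with each factor bounded below by~\eqref{eq:point-to-point-l-r} applied to $(\ell,r)=(L/2,RN^{1/3})$, producing a Gaussian term $e^{-\kappa_{\beta}R^{3/2}/K}$. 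The denominator is bounded above by $CL^{-3/2}e^{-\tau_{\beta}(0)L}$ using~\eqref{eq:two-point-function-monotonicity} and the floor-corrected estimate~\eqref{eq:floor-point-to-point-bounds}. Combining gives $\mu_{0,\Lambda_{L,N}}^{\pm}(\Gamma)\ge cL^{1/2}e^{-2\kappa_{\beta}R^{3/2}/K}\ge e^{-C'R^{3/2}/K}$, and fixing $K$ as a constant of order $1$ delivers the claimed $e^{-CR^{3/2}}$ bound. The main obstacle is the bookkeeping that reconciles the three competing $R^{3/2}$-scale contributions---the area cost $c_{\lambda}KR^{3/2}$, the Gaussian mid-point cost $\kappa_{\beta}R^{3/2}/K$, and the ballot prefactor $L^{1/2}\sim R^{1/4}N^{1/3}$---into a single exponent $CR^{3/2}$ with $C$ independent of $N$; no stopping-domain machinery is needed, in contrast to the upper-tail argument.
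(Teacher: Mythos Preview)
Your overall strategy---localize to a strip of width $\Theta(\sqrt{R}N^{2/3})$, remove the field at cost $e^{-O(R^{3/2})}$, then prove a Gaussian lower bound for the no-field interface---matches the paper's exactly, and your field-removal step via~\eqref{eq:keystep-global} together with the monotone direction of Proposition~\ref{prop:global-tilt1} is correct (the paper instead truncates the box to height $3RN^{1/3}$ and applies the cruder Radon--Nikodym bound~\eqref{eq:radon-nikodym-bound}, which is simpler but yields the same scaling).

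There is, however, a genuine gap in your no-field step. Your set $\Gamma$ consists of $\Lambda_{L,N}^*$-admissible contours, which in particular stay at height $y\ge -\tfrac12$. When you pass to $q_{\mathbb Z^2}$ and invoke Corollary~\ref{cor:max-height-fluctuation-not-straight} to say ``the slab restrictions cost only a constant $\varepsilon_\beta$,'' this is only true for the \emph{upper} slab constraint $y\le 3RN^{1/3}$ (where $\rho\sim RN^{1/3}$ gives a negligible $e^{-\rho^2/CL}$). For the \emph{lower} constraint $y\ge -\tfrac12$, the endpoint $\vsw$ sits exactly on the floor, so the corresponding $\rho$ would be $O(1)$ and the corollary gives nothing. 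In fact your resulting bound $\mu_{0,\Lambda_{L,N}}^{\pm}(\Gamma)\gtrsim L^{1/2}e^{-2\kappa_\beta R^{3/2}/K}$ already signals the problem: for $R$ a fixed large constant this would be $\gtrsim N^{1/3}$, which exceeds $1$. The mismatch is that you upper-bound the denominator by the floor-corrected $L^{-3/2}$ asymptotic~\eqref{eq:floor-point-to-point-bounds} while lower-bounding the numerator by the floor-free $\mathbb Z^2$ two-point functions, without paying the ballot-type cost of keeping the two half-paths above the floor.

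The paper avoids this by inserting one extra monotonicity step between field removal and the random-line analysis: since $\{\hgt_x^-\ge RN^{1/3}\}$ is decreasing, one may send the minus floor to $-\infty$ (passing to $\Lambda_{I,3H}^\downarrow=I\times\llb-\infty,3H\rrb$) and only \emph{decrease} the probability. In the floor-free domain the sole slab constraint is the harmless upper one, and Lemma~\ref{lem:Gaussian-lower-bound} goes through with the $\mathbb Z^2$ two-point function in both numerator and denominator. Your argument is easily repaired by the same device: after your field removal on $\Lambda_{L,N}$, lower-bound $\mu_{0,\Lambda_{L,N}}^{\pm}(\hgt_{L/2}^-\ge RN^{1/3})$ by the corresponding probability on $\llb 0,L\rrb\times\llb-\infty,N\rrb$ with $(\pm,0)$ boundary (monotonicity), and then run the random-line bound there.
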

In the upper bound, the constraint $R= o(N^{2/9})$ is such that the right-hand side is greater than $e^{ - cN^{1/3}}$, coming from certain enlargement couplings. In the lower bound, $R = o(N^{2/3})$ is so that the height is not macroscopic, and we do not feel the effect of the ceiling at height $N$.   

As indicated in Section \ref{sec:ideas-of-proofs}, the heart of this section is the proof of Proposition~\ref{prop:right-tail-ub}.  The proof strategy is sketched out in detail in Section~\ref{sec:right-tail-ideas-of-proof} and carried out in Section~\ref{sec:right-tail-ub}.
The proof of Proposition~\ref{prop:right-tail-lb} follows our general strategy for lower bounds and is presented in Section~\ref{sec:right-tail-lb}.

\subsection{Idea of proof of Proposition~\ref{prop:right-tail-ub}}\label{sec:right-tail-ideas-of-proof}
Recall the heuristic for the $e^{ - \Theta( R^{3/2})}$ upper bound from Section~\ref{sec:ideas-of-proofs}, where we considered the probability of $\hgt_x^+\geq RN^{1/3}$ given the \emph{stopping domain} $I_\star$---the innermost interval containing $x$ throughout which the interface stays above $RN^{1/3}/2$. However, there is no way of revealing the stopping domain without obtaining information about its interior, and a union bound over all possible stopping domains would induce a pre-factor of $N^{2/3}$.   We therefore use a delicate multi-scale analysis to localize the interface to a window of critical scaling---$\sqrt R N^{2/3}$ by $RN^{1/3}$, before using the enlargement machinery of Section~\ref{sec:enlargements} together with Gaussian upper tails on the interface fluctuations at this critical scale.

The localization step of this analysis goes as follows. Consider a mesh of columns separated by distances $N^{2/3}$ and consider a pair of consecutive columns on either side of $x$: for any fixed pair of columns, we consider the probability that the endpoints of the stopping domain are between the consecutive columns on either side.   
 By the a priori regularity estimates on the spikiness of $\mathscr I$ from Section~\ref{subsec:spikiness}, if an interval $I_\star$ is the stopping domain, then its approximation along the mesh of columns must be such that the entry and exit of the interface through those columns is below some $\mathcal H_I$ (chosen precisely for the regularity probability to sustain a union bound over the mesh of columns). In this manner, we bound the probability of any interval being the stopping domain, by the probability of some interval delimited by the $N^{2/3}$ column mesh being such that its entry and exit data are not too high, and such that the interfaces stays above $RN^{1/3}/2$ inside.   
 
The key step is then bounding the probability of a deterministic interval $I$ with entry and exit data below some $\mathcal H_I$, having its interface staying above height $RN^{1/3}/2$ throughout. The interface staying above $RN^{1/3}/2$ forces an atypically large $\Theta(RN^{1/3} |I|)$ number of sites, in the ``minus phase" of the Ising measure. Specifically, we find that the cost from the external field of placing so many sites in the ``minus phase" beats the tilt factor of Proposition~\ref{prop:local-tilt-enlargements}. This argument is complicated by the fact that the ``minus phase" is not stable under the presence of an external field; remedying this entails a local version of the comparability of $\mathcal C^-$ and $\Lambda^-$.
With the careful choice of the above parameters, the union bound over the deterministic mesh of columns only induces an $O(1)$ pre-factor, and we deduce that with probability $1-e^{- \Theta(R^{3/2})}$ the stopping domain $I_\star$ has width at most $C\sqrt R N^{1/3}$, allowing us to locally apply the Gaussian tail bound.

\subsection{Proof of Proposition~\ref{prop:right-tail-ub}}\label{sec:right-tail-ub}In this section, we prove the upper bound on the right-tail of $\hgt_x^+$.
The section is organized as follows: we begin by defining our deterministic mesh of columns, the stopping domain $I_\star$, and the rounding scheme from $I_\star$ to the fixed column mesh. Then, in Section~\ref{subsec:spikiness-right-tail}, we use Proposition~\ref{prop:spikiness-bound} to prove the regularity estimate we require on the local spikiness of $\mathscr I$, and describe our rounding scheme for $I_\star$.  In Section~\ref{subsubsec:localizing}, we bound the probability of the stopping domain having size larger than $C\sqrt R N^{1/3}$. Finally, in Section~\ref{subsubsec:localized-Gaussian-tail}, we show that if the stopping domain has width $O(\sqrt R N^{1/3})$, the right-tail probability is at most $e^{ - \Theta(R^{3/2})}$.

\medskip
Suppose without loss of generality that $x$ is in the left-half of $\partial_\south \Lambda_N$ and begin by labeling a mesh of points of separation $\pm N^{2/3}$ on either side of the point $x$, by considering the sequence of points 
\begin{align*}
x_i := x+iN^{2/3} \qquad \mbox{for}\qquad i = - ...,-1,0,1,...\,.
\end{align*}
Let $I_{ij}$ be the interval $[x_{-i}\vee 0,x_j \wedge N]$ for $i \leq \lceil x N^{-2/3}\rceil$ and $j\leq \lceil(N-x)N^{-2/3}\rceil$.  For any interval $I\subset \partial_\south \Lambda_N$, (not necessarily of the form $I_{ij}$) if $H=R N^{1/3}$, define the events
\begin{align}\label{eq:entry-exit-interval}
\mathcal E_{\leq H}^{I,-}:= \{\max \{\hgt_{\partial_\east \Lambda_{I}}^-, \hgt_{\partial_\west \Lambda_{ I}}^-\} \leq H\}\quad \mbox{and}\quad \mathcal E_{\leq H}^{I,+}:= \{\max \{\hgt_{\partial_\east \Lambda_{I}}^+, \hgt_{\partial_\west \Lambda_{ I}}^+\} \leq H\}\,,
\end{align}
where we recall that $\Lambda_I = I \times \llb 0,N\rrb$. 
The following event will be central to our argument:
\begin{align}\label{eq:upsilon-ij}
    \Upsilon_{i,j}  = \{\mathscr I :  \text{there exists an $I_\star$ such that $I_{i+1,j+1}\supset I_\star\supset I_{i,j}$ for which $\mathcal E_{\leq H}^{ I_\star,-}$ holds}\}.
\end{align}
Now let $L_0 = k_0 N^{2/3}$ where $k_0 := T_0 \sqrt R$ for $T_0$ sufficiently large, as specified later. We can bound 
\begin{align*}
\mu_{\lambda,N}^\pm (\hgt_x^+> 4RN^{1/3})\leq \mu_{\lambda,N}^\pm \Big(\hgt_x^+>4RN^{1/3} , \!\!\bigcup_{i,j:i+j\leq k_0} \Upsilon_{i,j}\Big)+\mu_{\lambda,N}^\pm \Big(\hgt_x^+>4RN^{1/3}, \!\!\bigcap_{i,j:i+j\leq k_0} \Upsilon_{i,j}^c\Big)\,.
\end{align*}
Since the interface is pinned at height zero on both endpoints of $\Lambda_N$, deterministically, $\Upsilon_{i,j}$ holds for some $i,j$. 
 Therefore  
\begin{align}\label{eq:one-point-upper-bound-split}
\mu_{\lambda,N}^\pm (\hgt_x^+> 4RN^{1/3}) & \leq   \mu_{\lambda,N}^\pm \Big(\bigcap_{i,j:i+j\leq k_0} \Upsilon_i^c, \bigcup_{i,j:i+j>k_0}  \Upsilon_{i,j}\Big) +  \mu_{\lambda,N}^\pm \Big(\hgt_x^+>4RN^{1/3} , \bigcup_{i,j:i+j\leq k_0} \Upsilon_{i,j}\Big)\,.
\end{align}

The core of the argument is dealing with the first term via a stopping domain argument.

\subsubsection{A priori regularity properties}\label{subsec:spikiness-right-tail}
We use the bounds of Section~\ref{subsec:spikiness} to establish a priori control on the regularity of the interface, and allow us to argue that 
 the entry/exit data along the mesh $\{x_i\}$ is a good proxy for the minimal/maximal behavior of the interface in corridors in $I_{i+1,j+1}\setminus I_{i,j}$. 

We begin by defining upward and downward spikiness events for an interval $I\subset \partial_\south \Lambda_N$, whereby the interface's maximal height in $\Lambda_I$ exceeds twice its entry and exit heights, or its minimal height in $\Lambda_I$ is half its entry and exit heights, respectively. Towards that, for every height $h$, define 
\begin{align}
\spiky_{I}^{\uparrow}(h) & := \{{\mathscr I}: \hgt_{x_\west}^- \leq h, \hgt_{x_\east}^-  \leq h\} \cap \{{\mathscr I}: \max_{x\in I} \hgt_{x}^+ \geq 2h\} \label{eq:up-spiky} \\ 
\spiky_I^{\downarrow}(h) &  := \{{\mathscr I}: \hgt_{x_\west}^+ \geq h, \hgt_{x_\east}^+ \geq h\} \cap \{{\mathscr I}: \min_{x\in I} \hgt_{x}^- \leq h/2\} \label{eq:down-spiky}
\end{align}

Note that there is a subtle difference between~\eqref{eq:up-spiky}--\eqref{eq:down-spiky} and the similar looking events in Proposition~\ref{prop:spikiness-bound}; e.g., in the upward spikiness event of the latter, we consider $\{\hgt_{x_\west}^+ \leq h, \hgt_{x_\east}^+  \leq h\}$ as opposed to $\{\hgt_{x_\west}^- \leq h, \hgt_{x_\east}^-  \leq h\}$ in~\eqref{eq:up-spiky}.  However it will be useful for us to work with the present definitions and it will be straightforward to  conclude from Proposition~\ref{prop:spikiness-bound} and bounds on sizes of overhangs of the interface $\sI,$ that spikiness satisfies Gaussian tail bounds. 

\begin{lemma}\label{lem:spikiness-bound}
Fix $\beta>\beta_c$ and $\lambda = \frac{c_\lambda}{N}$; there exists $C= C(\beta, c_\lambda)>0$ such that the following holds. For every $I: |I|\geq N^{2/3}$ and every $h \geq K\sqrt {|I|}$ for $K$ large (depending on $\beta,c_\lambda$), 
\begin{align*}
\mu_{\lambda,N}^{\pm} ( \spiky_I^\uparrow(h)) &  \leq C\exp(- h^2 /C|I|) + C e^{ - N^{1/3}/C}
\end{align*}
For every $I: N^{2/3}\leq |I|\leq AN^{2/3}$, for every $h\geq K\sqrt{|I|}$ for $K$ large (depending on $A, \beta, c_\lambda)$, ,
\begin{align*}
\mu_{\lambda,N}^{\pm} ( \spiky_I^\downarrow (h))  & \leq  C\exp(- h^2 /C|I|) + C e^{ - N^{1/3}/C}
\end{align*}
\end{lemma}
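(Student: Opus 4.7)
The plan is to reduce each bound of Lemma~\ref{lem:spikiness-bound} to the corresponding inequality of Proposition~\ref{prop:spikiness-bound}, absorbing the discrepancy between $\hgt^+$ and $\hgt^-$ at the boundary columns $x_\west, x_\east$ via the exponential tail on overhangs from Proposition~\ref{prop:overhang-height-tail}. The key observation is that the only structural difference between the events $\spiky_I^{\uparrow}(h), \spiky_I^{\downarrow}(h)$ of~\eqref{eq:up-spiky}--\eqref{eq:down-spiky} and the analogous events in Proposition~\ref{prop:spikiness-bound} is a swap between $\hgt^+$ and $\hgt^-$ at $x_\west, x_\east$, and these differ by exactly the overhang size there.

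For the upward bound, I would first partition $\spiky_I^{\uparrow}(h)$ into the subevent on which both $\hgt_{x_\west}^+ \leq 4h/3$ and $\hgt_{x_\east}^+ \leq 4h/3$ hold, versus its complement. On the first subevent, setting $h' := 4h/3$, one has $\hgt_{x_\west}^+\leq h'$, $\hgt_{x_\east}^+\leq h'$ and $\max_{x\in I}\hgt_x^+ \geq 2h = 3h'/2$, which is exactly the event bounded by the upward part of Proposition~\ref{prop:spikiness-bound}; this yields probability at most $C\exp(-h^2/(C|I|)) + Ce^{-N^{2/3}/C}$, and the second term is easily absorbed into $Ce^{-N^{1/3}/C}$. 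On the complementary subevent, one of $\hgt_{x_\west}^+ - \hgt_{x_\west}^- > h/3$ or $\hgt_{x_\east}^+ - \hgt_{x_\east}^- > h/3$ must hold (since $\hgt^-\leq h$ at each of these columns on $\spiky_I^{\uparrow}(h)$), and Proposition~\ref{prop:overhang-height-tail} together with a union bound controls this by $Ce^{-h/(3C)}$; since the hypothesis guarantees $h \geq K\sqrt{|I|}\geq KN^{1/3}$, this is at most $Ce^{-N^{1/3}/C'}$ once $K$ is taken large enough.

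For the downward bound I will use the same template, splitting $\spiky_I^{\downarrow}(h)$ by whether both overhangs $\hgt_{x_\west}^+ - \hgt_{x_\west}^-$ and $\hgt_{x_\east}^+ - \hgt_{x_\east}^-$ are at most $h/4$. On the small-overhang subevent, setting $h' := 3h/8$, the conditions defining $\spiky_I^{\downarrow}(h)$ combined with the small overhangs yield $\hgt_{x_\west}^- \geq 3h/4 = 2h'$ (and analogously at the east column), while $\min_{x\in I}\hgt_x^- \leq h/2 \leq 9h/16 = 3h'/2$; this reduces to the downward part of Proposition~\ref{prop:spikiness-bound} applied at height $h'$, provided the constant $K$ in Lemma~\ref{lem:spikiness-bound} is chosen to be $8/3$ times the $K$ appearing in that Proposition so that $h'\geq K'\sqrt{|I|}$ is preserved (and $|I|\leq AN^{2/3}$ is carried over from the hypothesis). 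On the large-overhang subevent, Proposition~\ref{prop:overhang-height-tail} yields probability at most $Ce^{-h/(4C)}$, again absorbed into $Ce^{-N^{1/3}/C}$ for $K$ large.

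I do not expect any real obstacle here: the whole argument is a reduction to Proposition~\ref{prop:spikiness-bound} combined with the overhang tail bound of Proposition~\ref{prop:overhang-height-tail}. The only care required is bookkeeping the multiplicative constants and choosing the threshold constants $K$ in the hypotheses of Lemma~\ref{lem:spikiness-bound} large enough so that the rescaled heights $4h/3$ and $3h/8$ still satisfy the quantitative hypotheses of Proposition~\ref{prop:spikiness-bound}, and so that the overhang cost $e^{-h/O(1)}$ can be charged against the $Ce^{-N^{1/3}/C}$ error term.
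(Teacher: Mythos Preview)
Your reduction strategy is exactly the paper's: split off an overhang event at the boundary columns and feed the remainder into Proposition~\ref{prop:spikiness-bound}. The decomposition and the bookkeeping with $h'=4h/3$ and $h'=3h/8$ are fine (the paper uses the fixed threshold $K_0N^{1/3}$ rather than $h/3$ or $h/4$, but since $h\geq KN^{1/3}$ these are equivalent up to the choice of $K$).

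There is, however, one genuine gap. Proposition~\ref{prop:overhang-height-tail} is a \emph{no-field} statement: it bounds $\mu_{\beta,\Lambda_N}^{\pm}(\hgt_x^+-\hgt_x^->r)$, and the random-line representation underlying it is only valid at $\lambda=0$. You invoke it directly for $\mu_{\lambda,N}^{\pm}$, which is not justified as written. The paper addresses this explicitly: it first records (Claim~\ref{clm:overhangs}) that combining Proposition~\ref{prop:overhang-height-tail} with the global tilt estimate of Theorem~\ref{thm:effect-of-global-tilt} gives
\[
\mu_{\lambda,N}^{\pm}(\hgt_x^+-\hgt_x^->KN^{1/3})\leq e^{C_1N^{1/3}}\mu_{0,N}^{\pm}(\hgt_x^+-\hgt_x^->KN^{1/3})+e^{-C_2N}\leq Ce^{-KN^{1/3}/C}
\]
for $K\geq K_0(\beta,c_\lambda)$, and only then runs the decomposition. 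Your argument goes through verbatim once you insert this step: since $h/3\geq (K/3)N^{1/3}$ and $K$ may be taken large in $\beta,c_\lambda$, the $e^{C_1N^{1/3}}$ tilt factor is absorbed and you still get $Ce^{-N^{1/3}/C}$ on the overhang piece. But as written, the appeal to Proposition~\ref{prop:overhang-height-tail} for the measure $\mu_{\lambda,N}^{\pm}$ is incorrect.
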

(We do not actually use the upwards spikiness estimate in this paper, but have included it for completeness.)
Before proving Lemma~\ref{lem:spikiness-bound}, let us discuss the manner in which we will apply it to the intervals $(I_{i,j})_{i,j}$. 
  For every $i$, define the height 
\begin{align}\label{eq:H-i}
\mathcal H_i :=  \max \{ i^{2/3} R^{1/3} N^{1/3}, 2 RN^{1/3}\}\,,
\end{align}
and for each $i,j$ pair, define the west and east segments of $I_{i+1,j+1} \setminus I_{i,j}$ as 
\begin{align*}
I^{\west}_{i,j} & = \llb x-(i+1)N^{2/3}, x-iN^{2/3}\rrb \times \llb 0,N\rrb\,, \quad \mbox{and} \quad  I^{\east}_{i,j} = \llb x+jN^{2/3}, x+ (j+1)N^{2/3}\rrb \times \llb 0,N\rrb\,.
\end{align*}
Using Lemma~\ref{lem:spikiness-bound}, we bound the probability that there are spiky oscillations in $\str_\west$ or $\str_\east$. Recall the definition of $\spiky_I^{\downarrow}(h)$ from~\eqref{eq:down-spiky} and for each $i,j$, let
\begin{align*}
\spiky_{i,j}^{\downarrow}& := \spiky_{I^\west_{i,j}}^\downarrow (\mathcal H_{i+j}) \cup \spiky_{I^\east_{i,j}}^{\downarrow} (\mathcal H_{i+j})\,.
\end{align*}
It follows immediately from Lemma~\ref{lem:spikiness-bound}, the fact that each of $I_{i,j}^\west$, $I_{i,j}^\east$ has width $N^{2/3}$, and the choice for $\mathcal H_{i+j}$,   
that the probability of spikiness in $I_{i+1,j+1}\setminus I_{i,j}$ is appropriately small. 

\begin{corollary}\label{cor:right-tail-spikiness}
Fix $\beta>\beta_c$ and let $\lambda = \frac{c_\lambda}{N}$. There exists $C(\beta,c_\lambda)>0$ such that for every $i,j,R$,
\begin{align}\label{eq:right-tail-spikiness}
\mu_{\lambda, N}^{\pm} (\spiky_{i,j}^{\downarrow}) \leq C\exp( - [(i+j)^{4/3} R^{2/3}\vee R^2]/C) + Ce^{ - N^{1/3}/C}
\end{align}
\end{corollary}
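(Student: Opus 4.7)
The plan is to deduce the corollary essentially as a direct consequence of Lemma~\ref{lem:spikiness-bound}, applied to each of the two intervals $I_{i,j}^\west, I_{i,j}^\east$ (each of which, by construction, is a width-$N^{2/3}$ subinterval of $\partial_\south \Lambda_N$) with height $h = \mathcal H_{i+j}$, followed by a union bound. Thus all the work is in verifying the hypotheses of the lemma and computing the Gaussian exponent in terms of $R$ and $i+j$.

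First I would dispatch the trivial regime. Observe that if $\mathcal H_{i+j}< K N^{1/3}$ for the large constant $K= K(\beta,c_\lambda,A=1)$ from Lemma~\ref{lem:spikiness-bound}, then from the definition \eqref{eq:H-i} of $\mathcal H_{i+j}$ we must have $R\le K/2$ and $(i+j)^{2/3} R^{1/3}\le K$, so that $(i+j)^{4/3}R^{2/3}\vee R^2\le K^2$; in that case the bound \eqref{eq:right-tail-spikiness} is rendered trivial by choosing $C$ at least $\exp(K^2)$. Hence we may assume $\mathcal H_{i+j}\ge K\sqrt{|I_{i,j}^\west|}=K\sqrt{|I_{i,j}^\east|}=KN^{1/3}$, which is precisely the hypothesis needed to apply the downward spikiness estimate of Lemma~\ref{lem:spikiness-bound} to each strip.

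Next I would apply the lemma on each of the two strips and union bound. Since $|I_{i,j}^\west|=|I_{i,j}^\east|=N^{2/3}$, the lemma gives, for some $C(\beta,c_\lambda)>0$,
\begin{align*}
\mu_{\lambda,N}^{\pm}\big(\spiky_{I_{i,j}^\west}^\downarrow(\mathcal H_{i+j})\big) + \mu_{\lambda,N}^{\pm}\big(\spiky_{I_{i,j}^\east}^\downarrow(\mathcal H_{i+j})\big) \le 2C\exp\!\Big(-\tfrac{\mathcal H_{i+j}^2}{C N^{2/3}}\Big) + 2C e^{-N^{1/3}/C}.
\end{align*}
A short arithmetic computation then identifies the exponent: from \eqref{eq:H-i},
\begin{align*}
\frac{\mathcal H_{i+j}^2}{N^{2/3}} = \max\!\left\{\frac{(i+j)^{4/3} R^{2/3} N^{2/3}}{N^{2/3}},\,\frac{4 R^2 N^{2/3}}{N^{2/3}}\right\} = \max\{(i+j)^{4/3} R^{2/3},\,4R^2\}\ge (i+j)^{4/3}R^{2/3}\vee R^2.
\end{align*}
Plugging this in and absorbing the factor of $2$ into $C$ yields \eqref{eq:right-tail-spikiness}.

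There is no conceptual obstacle here; the only point deserving care is the clean separation of the two regimes in the definition of $\mathcal H_{i+j}$---the "small-$i+j$" regime where the floor $2RN^{1/3}$ dominates and is responsible for the $R^2$ exponent, versus the "large-$i+j$" regime where $(i+j)^{2/3}R^{1/3}N^{1/3}$ dominates and is responsible for the $(i+j)^{4/3}R^{2/3}$ exponent. The "$\vee$" in \eqref{eq:right-tail-spikiness} is precisely designed to capture both, and the maximum structure in \eqref{eq:H-i} passes through the squaring cleanly, so no case analysis beyond the observation above is needed.
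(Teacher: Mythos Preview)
Your proposal is correct and follows exactly the approach the paper takes: the paper states that the corollary ``follows immediately from Lemma~\ref{lem:spikiness-bound}, the fact that each of $I_{i,j}^\west$, $I_{i,j}^\east$ has width $N^{2/3}$, and the choice for $\mathcal H_{i+j}$,'' and you have simply filled in those details, including the disposal of the trivial small-$\mathcal H_{i+j}$ regime and the computation $\mathcal H_{i+j}^2/N^{2/3}=\max\{(i+j)^{4/3}R^{2/3},4R^2\}$.
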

\begin{proof}[\textbf{\emph{Proof of Lemma~\ref{lem:spikiness-bound}}}]
We apply Proposition~\ref{prop:spikiness-bound} to obtain the lemma, using the exponential tails on overhangs of $\mathscr I$. However, the random-line representation, and therefore Proposition~\ref{prop:overhang-height-tail} are in the absence of an external field. From Theorem~\ref{thm:effect-of-global-tilt}, it follows that the exponential tail bound extends to overhangs of size at least $KN^{1/3}$ (for $K$ large in $\beta, \lambda$) in the presence of a field. 
\begin{claim}\label{clm:overhangs}
Fix $\beta >\beta_c$ and $\lambda = \frac{c_\lambda}{N}$. There exist $K_0(\beta,c_\lambda) , C(\beta,c_\lambda)>0$ such that for all $K\geq K_0$, 
\begin{align}\label{eq:overhang-tail-with-field}
\max_{x\in \partial_\south \Lambda_N} \mu_{\lambda,N}^\pm ( \hgt_x^+ - \hgt_x^-> K N^{1/3}) \leq C e^ { - KN^{1/3}/C} + C e^{ - N/C}\,.
\end{align} 
\end{claim}
Let $K_0$ be as above and let $\tilde h = h+K_0 N^{1/3}$. We split the event in question as follows: 
\begin{align*}
\spiky_I^\uparrow (h) &   \subset \big\{ \max_{i\in \{\east,\west\}} |\hgt_{x_i}^{+}  - \hgt_{x_i}^-|\geq K_0 N^{1/3} \big\}   \cup  \big\{\hgt_{x_\west}^+ \leq \tilde h, \hgt_{x_\east}^+ \leq \tilde h, \max_{x\in I}\hgt_x^+ \geq 3\tilde h /2\big\} 
\end{align*}
as long as $h > 3K_0 N^{1/3}$. 
By~\eqref{eq:overhang-tail-with-field}, the probability of the first term above is at most $C e^{ - K_0 N^{1/3}/C}$. 
Combining that bound with the bound of Proposition~\ref{prop:spikiness-bound} on the second term, we conclude the desired.  
In the other direction, we let $\tilde h = h- K_0 N^{1/3}$ and write, 
\begin{align*}
\spiky_I^\downarrow ( h ) \subset \big\{ \max_{i\in \{\east,\west\}} |\hgt_{x_i}^{+}  - \hgt_{x_i}^-|\geq K_0 N^{1/3}\big\} \cup \big\{\hgt_{x_\west}^+ \geq  \tilde h, \hgt_{x_\east}^+ \geq \tilde h, \min_{x\in I}\hgt_x^- \leq 3\tilde h/4\big\}  
\end{align*}
as long as $h> 3K_0 N^{1/3}$.  Eq.~\eqref{eq:overhang-tail-with-field} bounds the probability of the first term here. For the second term, if $h = KN^{1/3}$ for a large enough $K$, Proposition~\ref{prop:spikiness-bound} bounds the second term. 
\end{proof}

Having established Corollary~\ref{cor:right-tail-spikiness}, we next bound the terms on the right-hand side in \eqref{eq:one-point-upper-bound-split}.

\subsubsection{Localizing the interface to a critically sized window: first term in~\eqref{eq:one-point-upper-bound-split}}\label{subsubsec:localizing}
The proof proceeds by identifying a stopping domain.
On the intersection of $\bigcap_{i,j:i+j\leq k_0} \Upsilon_{i,j}^c$ and $\bigcup_{i,j:i+j >k_0} \Upsilon_{i,j}$, there {exists some smallest pair of indices} $i_\star,j_\star: i_\star + j_\star >k_0$ such that $\Upsilon_{i_\star,j_\star}$ holds and for every $i<i_\star$ and $j\leq j_\star$, or every $i\leq i_\star$ and $j<j_\star$, $\Upsilon_{i,j}$ did not hold. To this end, define 
\[ {\Upsilon}^{\sf{stop}}_{i,j} := \Upsilon_{i,j} \cap \bigcap_{k< i} \Upsilon_{k,j}^c\cap \bigcap_{k<j} \Upsilon_{i,k}^c\,.
\] 
(See Figure~\ref{fig:stopping-domain} for a depiction of this event.) By a union bound, we have
\begin{align}\label{eq:stopping-domain-decomposition}
\mu_{\lambda,N}^\pm \Big(\bigcap_{i,j:i+j\leq k_0} \Upsilon_{ij}^c, \bigcup_{i,j:i+j>k_0}  \Upsilon_{i,j}\Big)  \leq \sum_{i,j:i+j>k_0} \mu_{\lambda,N}^\pm \big( {\Upsilon}^{\sf stop}_{i,j}\big)\,.
\end{align}

\begin{figure}
\centering
\begin{tikzpicture}
\node at (0,0){
\includegraphics[width=.85\textwidth]{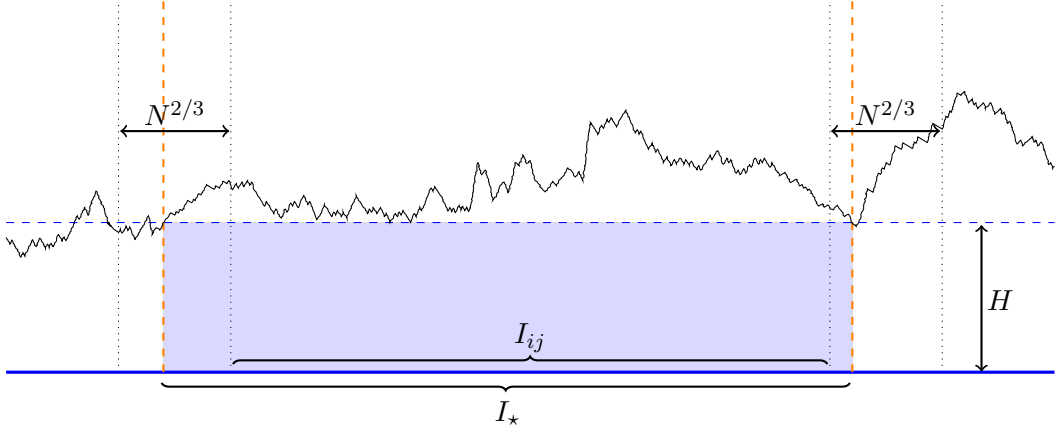}
};

\draw[<->, thick]  (-5.45,.75)-- (-4,.75) ;
\draw[<->, thick]  (5.45,.75)-- (4,.75) ;

\draw[<->, thick]  (6,-2.45)-- (6,-.5) ;

\node at (6.25, -1.5) {$H$};

\draw[decorate, decoration = {brace, amplitude = .1cm}, thick] (-3.95,-2.35)--(3.95,-2.35);
\draw[decorate, decoration = {brace, mirror, amplitude = .1cm}, thick] (-4.9,-2.6)--(4.25,-2.6);

\node at (0,-2.0) {$I_{ij}$};

\node at (-.3,-3) {$I_{\star}$};

\node at (-4.725, 1) {$N^{2/3}$};
\node at (4.725, 1) {$N^{2/3}$};


\end{tikzpicture}
\vspace{-.2cm}
\caption{The stopping domain $I_\star$ (delimited by the dashed orange lines) is the inner-most interval containing $x$ on which $\mathscr I$ stays above height $RN^{1/3}$ (shaded blue). This gets rounded to one of the deterministic approximations $I_{ij}, I_{i+1,j+1}, I_{i,j+1}$ or $I_{i+1,j}$ (intervals delimited by the dotted black lines).}\label{fig:stopping-domain}\vspace{-.2cm}
\end{figure}

If we consider each of these summands individually, we then have the key lemma.
\begin{lemma}\label{lem:bound-on-long-high-interval}
Fix $\beta>\beta_c$ and $\lambda = \frac{c_\lambda}{N}$. There exists $C(\beta,c_\lambda)>0$ and $T_0 (\beta, c_\lambda)$ such that for every $x\in \partial_\south \Lambda_N$, every $R \leq N^{2/3}$ and every $i,j: i+j>k_0 = T_0 \sqrt R$,  
\begin{align*}
\mu_{\lambda,N}^\pm \big( \Upsilon^\s_{i,j}) \leq C\exp ( - (i+j) R/C) + Ce^{ - N^{1/3}/C}\,.
\end{align*}
\end{lemma}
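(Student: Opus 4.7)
Following the heuristic in Section~\ref{sec:right-tail-ideas-of-proof}, the plan is that on $\Upsilon^\s_{i,j}$ the interface is forced to stay above $H:=RN^{1/3}$ throughout an interval of width $L\asymp (i+j)N^{2/3}$. This confines area at least $HL = R(i+j)N$ below the interface, yielding a field cost of $e^{-c_\lambda R(i+j)}$ that beats the local tilt factor from Proposition~\ref{prop:local-tilt-enlargements}.

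\textbf{Step 1 (Rounding via spikiness).} The first step is to reduce the random stopping interval $I_\star$ to one of four deterministic approximations $\tilde I \in \{I_{i,j},\, I_{i+1,j},\, I_{i,j+1},\, I_{i+1,j+1}\}$. Since $I_\star\supset I_{i,j}$ forces $\hgt^-\geq H$ at both $x_{-i}$ and $x_j$, while the minimality of $(i,j)$ forces $\hgt^-\leq H$ somewhere inside each of the flanking strips $I^\west_{i,j}, I^\east_{i,j}$, I apply Corollary~\ref{cor:right-tail-spikiness} (along with analogous estimates on $(\spiky^{\downarrow}_{i',j'})^c$ for $i'\le i, j'\le j$) and the exponential tail of overhangs to conclude that, outside an event of probability $\sum_{k\leq i+j}Ce^{-(k^{4/3}R^{2/3}\vee R^2)/C}+Ce^{-N^{1/3}/C}$ (easily absorbed into the desired bound), the boundary heights $\hgt^+_{x_{-i}}, \hgt^+_{x_j}$ lie in the window $[H,\mathcal H_{i+j}]$. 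A union bound then reduces the problem to bounding $\mu^\pm_{\lambda,N}(A_{\tilde I})$ for fixed $\tilde I$ of width $L\leq (i+j+2)N^{2/3}$, where $A_{\tilde I}$ is the event that the interface enters and exits $\tilde I$ at height in $[H,\mathcal H_{i+j}]$ while $\min_{x\in\tilde I}\hgt^-_x\geq H$.

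\textbf{Step 2 (Enlargement coupling and local tilt).} The second step converts this into an estimate on a $T$-shape enlargement $\tshape:=\tshape_{r,2\mathcal H_{i+j}}(\tilde I\times\llb 0,N\rrb)$ with $r=N^{2/3}$. Since the event that $\hgt^-\ge H$ throughout $\tilde I$ is an increasing event, I condition on the (high) entry/exit data and apply Proposition~\ref{prop:plus-enlargement} (or its decreasing analogue Proposition~\ref{prop:minus-enlargement} for the complementary direction), yielding up to an error of $Ce^{-N^{2/3}/C}$,
\[
\mu^\pm_{\lambda,N}(A_{\tilde I})\ \leq\ \mu^{(\pm,2\mathcal H_{i+j})}_{\lambda,\tshape}\bigl(\min_{x\in\tilde I}\hgt^-_x\geq H\bigr)+\text{error}.
\]
Now apply the local tilt estimate Proposition~\ref{prop:local-tilt-enlargements} to pass from $\lambda>0$ to $\lambda=0$, inside the ratio of partition functions used to express the above probability via~\eqref{eq:low-temp-interface-probability}. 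The tilt factor is $\exp[C(L/N^{2/3}+\mathcal R^{3/2})]$ with $\mathcal R=\mathcal H_{i+j}/N^{1/3}\leq C((i+j)^{2/3}R^{1/3}\vee R)$, i.e., at most $\exp[C((i+j)+(i+j)R^{1/2}+R^{3/2})]$. The cost from the area: on $A_{\tilde I}$, the minus region $\Lambda^-(\gamma)$ has $|\Lambda^-|\geq HL=R(i+j)N$; combined with the bound $Z_{-,\lambda,\Lambda^-}/Z_{-,0,\Lambda^-}\leq e^{-c_\lambda R(i+j)}$ (following from~\eqref{eq:surface-order-ld-magnetization} via a tile decomposition of $\Lambda^-$, paralleling the argument of~\cite[Section 2.3]{IoffeSchonmann98} that handles the instability of the minus phase under a positive field), this yields the bound $e^{-c_\lambda R(i+j)/2}$ net of the tilt, once $T_0$ is taken large enough that $R(i+j)\geq T_0R^{3/2}\geq 2C R^{3/2}$ and $R(i+j)\geq T_0(i+j)R^{1/2}\geq 2C(i+j)$ for $R\geq T_0^2/4$.

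\textbf{Main obstacle.} The principal difficulty is Step~2's area-cost argument. A naive application of the Radon--Nikodym bound $e^{-\lambda|\Lambda^-|}$ is insufficient because $\Lambda^-$ depends intricately on $\gamma$ through the interface geometry, and the expected magnetization inside $\Lambda^-$ is not straightforwardly $-\hat m_\star|\Lambda^-|$ once a positive external field is turned on (recall the discussion preceding Theorem~\ref{thm:comparability-whp}). Overcoming this requires decomposing $\Lambda^-$ into independent tiles on each of which $\lambda$-dependence is $O(1)$ and the surface-order large-deviation estimate~\eqref{eq:surface-order-ld-magnetization} applies cleanly. A second (minor) difficulty is verifying that the non-spikiness reduction of Step~1 really extracts the bounds on $\hgt^+$ at \emph{both} endpoints of the correct $\tilde I\in\{I_{i,j},\ldots,I_{i+1,j+1}\}$; this is handled by a careful case analysis based on which endpoint(s) of $I_\star$ lie in $I^\west_{i,j}$ versus $I^\east_{i,j}$, in each case identifying the appropriate $\tilde I$ so that the non-spikiness bound is applicable to both endpoints.
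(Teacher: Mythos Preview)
Your high-level plan matches the paper's exactly: split off the spikiness event $\spiky^\downarrow_{i,j}$ (only the two flanking strips $I^\west_{i,j},I^\east_{i,j}$ are needed, not a sum over $k\le i+j$), reduce to the deterministic event $\widetilde\Upsilon_{\tilde I}=\mathcal E^{\tilde I,+}_{\leq\mathcal H_{i+j}}\cap\{\min_{x\in\tilde I}\hgt^-_x\ge H\}$ for one of the four choices of $\tilde I$, couple to the T-shape, and then extract an area penalty that beats the tilt of Proposition~\ref{prop:local-tilt-enlargements}.

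There is, however, a concrete error in Step~2: the event $\{\min_{x\in\tilde I}\hgt^-_x\ge H\}$ is \emph{decreasing} in the spin configuration (more pluses push the interface down), not increasing. Accordingly, the conditioning you use must be $\mathcal E_{\leq\mathcal H_{i+j}}$ (the upper bound on entry/exit heights delivered by non-spikiness), and the correct enlargement is Proposition~\ref{prop:minus-enlargement}, not Proposition~\ref{prop:plus-enlargement}; this is exactly what yields your displayed inequality with error $Ce^{-N^{2/3}/C}$. Second, the area cost is not captured by the ratio $Z_{-,\lambda,\Lambda^-}/Z_{-,0,\Lambda^-}$ you write: the paper instead rewrites the probability on $\tshape$ as in~\eqref{eq:probabilities-to-part-functions}--\eqref{eq:part-function-ratio-magnetization-integral}, obtaining the exponential of $\int_0^\lambda d\lambda'\,\big[\langle M\mid\tshape^-\supset\Lambda_{ij}^H\rangle_{\pm,\lambda',\tshape}-\langle M\rangle_{+,\lambda',\tshape}\big]$, and then bounds this \emph{integrand} by $-\delta|\Lambda_{ij}^H|$ uniformly in $\lambda'$ via the tile decomposition you allude to (this is Lemma~\ref{lem:tshape-enclosing-large-area}, and crucially requires first localizing $\sI$ via the event $\Psi_\varepsilon$ so that $|\tshape^-|$ is controlled). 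Your outline has the right ingredients but the specific object you bound would not directly give what you need.
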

\begin{proof}
We split the event $\Upsilon^\s_{i,j}$ into various sub-events which we bound separately.  While the interval $I_\star$ satisfying $\Upsilon_{i,j}$ is random, we  use~\eqref{eq:right-tail-spikiness} to control the entry and exit data for one of $I_{i,j}, I_{i+1,j}, I_{i,j+1}, I_{i+1,j+1}$. 
Begin by decomposing,   
\begin{align}\label{eq:spikiness-split}
\mu_{\lambda,N}^{\pm}( \Upsilon^{\s}_{i,j}) \leq \mu_{\lambda,N}^{\pm}(\spiky^{\downarrow}_{i,j}) + \mu_{\lambda_N}^{\pm}(\Upsilon_{i,j}^{\s} \cap (\spiky^{\downarrow}_{i,j})^c)\,.
\end{align} 
By~\eqref{eq:right-tail-spikiness}, for every $R$, 
\begin{align}\label{eq:right-tail-spikiness-2}
\mu_{\lambda,N}^{\pm}(\spiky^{\downarrow}_{i,j}) &  \leq Ce^{- ((i+j)^{4/3} R^{2/3} \vee R^2)/C} + Ce^{ - N^{1/3}/C} \nonumber \\
&  \leq C e^{- (i+j)R/C} + Ce^{ - N^{1/3}/C}\,.
\end{align}
Here the second inequality is because if $(i+j)\leq R$, the middle term is exponentially small in $R^2$,  and if $(i+j)\geq R$, we can split off $(i+j)^{1/3}\geq R^{1/3}$: in both of these cases the inequality holds. 

Turning to the next term in~\eqref{eq:spikiness-split}, recall the definitions of $\mathcal E_{\leq H}^{I,\pm}$ from~\eqref{eq:entry-exit-interval}; we have 
\begin{align}\label{eq:four-upsilons}
 \Upsilon^\s_{i,j} \cap (\spiky^{\downarrow}_{i,j})^c  & \implies \widetilde \Upsilon_{i+1,j+1} \cup \widetilde \Upsilon_{i+1,j} \cup \widetilde \Upsilon_{i,j+1} \cup \widetilde\Upsilon_{i,j}\,, \qquad \mbox{where} \\ 
\widetilde \Upsilon_{i,j} & = \mathcal E_{\leq \mathcal H_{i+j}}^{I_{i,j},+} \cap \Big\{\min_{x\in I_{i,j}}\hgt_x^- \geq RN^{1/3}\Big\} \nonumber\,,
\end{align}
where for the other index pairs, e.g., $\widetilde \Upsilon_{i+1,j+1}$, we change the interval in $\mathcal E^{I_{i,j},+}_{\le \mathcal H_{i+j}}$ e.g., to $\mathcal E^{I_{i+1,j+1},+}_{\le \mathcal H_{i+j}}$, but leave the other instances of $i,j$ (in $\mathcal H_{i+j}$ and in $\min_{x\in I_{i,j}}$) unchanged. The proof of Lemma~\ref{lem:bound-on-long-high-interval} concludes with the following Lemma~\ref{lem:bound-confining-large-area}, when it is combined with~\eqref{eq:spikiness-split}--\eqref{eq:right-tail-spikiness-2}. 
\end{proof}

\begin{lemma}\label{lem:bound-confining-large-area}
Fix $\beta>\beta_c$ and $\lambda = \frac{c_\lambda}{N}$. There exists $C(\beta,c_\lambda)>0$ and $T_0 (\beta, c_\lambda)$ such that for every $x\in \partial_\south \Lambda_N$, every $R \leq N^{2/3}$ and every $i,j: i+j>k_0 = T_0 \sqrt R$,
\begin{align*}
\mu_{\lambda,N}^{\pm} (\widetilde \Upsilon_{i,j}) \leq C \exp( - (i+j) R/C) + Ce^{-N^{2/3}/C}\,.
\end{align*}
The same bound also holds for $\widetilde \Upsilon_{i+1,j}, \widetilde \Upsilon_{i,j+1}, \widetilde\Upsilon_{i+1,j+1}$. 
\end{lemma}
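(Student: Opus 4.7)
We focus on $\widetilde\Upsilon_{i,j}$; the other three variants follow by the same argument with only cosmetic changes. Set $L := (i+j)N^{2/3}$, $H := \mathcal H_{i+j}$, and $A := (i+j)RN$. On $\widetilde\Upsilon_{i,j}$, the constraint $\min_{x\in I_{i,j}}\hgt^-_x\ge RN^{1/3}$ together with $|I_{i,j}| = L$ forces every column $\{x\}\times\mathbb R$ with $x\in I_{i,j}$ to contribute at least $RN^{1/3}$ sites to $\Lambda^-(\sI)$, so that $|\Lambda^-(\sI)\cap \Lambda_{I_{i,j}}|\ge A$. The target bound $\exp(-(i+j)R/C)$ must then come from the external field penalizing this excess area, tempered near the threshold $i+j \sim T_0\sqrt R$ by a no-field Gaussian fluctuation bound.

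The main technical step is extracting the area penalty. By the interface weight formula~\eqref{eq:low-temp-interface-probability} and peeling off the field via $Z_{\eta,\lambda,\Lambda} = Z_{\eta,0,\Lambda}\,\langle e^{\lambda\sum\sigma_v}\rangle_{\eta,0,\Lambda}$ for $\eta\in\{+,-,\pm\}$,
\begin{equation*}
\mu_{\lambda,N}^{\pm}(\widetilde\Upsilon_{i,j}) \;=\; \mathbb E_{\mu_{0,N}^{\pm}}\!\biggl[\mathbf 1_{\widetilde\Upsilon_{i,j}}\,\frac{\langle e^{\lambda\sum\sigma_v}\rangle_{+,0,\Lambda^+(\sI)}\,\langle e^{\lambda\sum\sigma_v}\rangle_{-,0,\Lambda^-(\sI)}}{\langle e^{\lambda\sum\sigma_v}\rangle_{\pm,0,N}}\biggr].
\end{equation*}
Following the independent-tile strategy of Section~\ref{sec:ideas-of-proofs}, we tile $\Lambda^\pm(\sI)$ and $\Lambda_N$ by mesoscopic squares of side $\ell$ chosen with $N^{1/3}\ll \ell \ll N^{1/2}$, so that $\lambda\ell^2 = o(1)$ (the field is negligible on each tile) while the surface-order LDPs~\eqref{eq:surface-order-ld-magnetization}--\eqref{eq:surface-order-ld-component-of-infinity} provide per-tile magnetization concentration at $\pm m_\star$ with tail $e^{-\ell/C}$. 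Boundary tiles meeting $\partial\Lambda^\pm(\sI)$ are absorbed by the random-line upper bound~\eqref{eq:weight-function-ub} on $|\sI|$. Together, except on a $\mu_{0,N}^{\pm}$-event of probability at most $Ce^{-N^{2/3}/C}$, the Radon--Nikodym ratio in the display is bounded by $\exp(-2\lambda m_\star(|\Lambda^-(\sI)|- L_{\mathrm{typ}})_+)$ with $L_{\mathrm{typ}} \le C_\beta N^{4/3}$. On $\widetilde\Upsilon_{i,j}$ and for $(i+j)R \gtrsim N^{1/3}$, this produces the factor $\exp(-c\,c_\lambda(i+j)R)$.

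Close to the threshold $i+j \sim T_0\sqrt R$ the area cost alone is insufficient, and we supplement with a no-field Gaussian estimate. Applying the decreasing enlargement coupling of Proposition~\ref{prop:minus-enlargement} at $\lambda=0$ to the T-shape $\tshape_{N^{2/3},H}(\Lambda_{I_{i,j}})$ with $(\pm,H)$ boundary conditions, together with Corollary~\ref{cor:max-height-fluctuation-not-straight} for the transversal fluctuation up to height $RN^{1/3}$, yields $\mu_{0,N}^{\pm}(\widetilde\Upsilon_{i,j})\le \exp(-R^2/(C(i+j)))+CNe^{-N^{2/3}/C}$. Combining with the area penalty,
\begin{equation*}
\mu_{\lambda,N}^{\pm}(\widetilde\Upsilon_{i,j})\;\le\; \exp\!\Bigl(-c\,c_\lambda (i+j)R \,-\, \tfrac{R^2}{C(i+j)}\Bigr) + Ce^{-N^{2/3}/C},
\end{equation*}
and for $i+j\ge T_0\sqrt R$ with $T_0(\beta,c_\lambda)$ large, the exponent is bounded above by $-(i+j)R/C'$, giving the claim. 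The chief technical obstacle is the tile-coupling step: turning ``$|\Lambda^-(\sI)|\ge A$'' into a multiplicative penalty $\exp(-c\lambda A)$ when $\Lambda^\pm(\sI)$ has potentially irregular geometry with thin fingers along $\sI$. One must choose the tile scale $\ell$ so that both the field sub-criticality $\lambda\ell^2\ll 1$ and the per-tile surface-order LDP are effective simultaneously, and absorb the boundary-tile contribution via the decay~\eqref{eq:weight-function-ub} of the random-line length tail.
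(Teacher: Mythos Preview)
Your approach has a genuine gap. You work with the \emph{global} Radon--Nikodym derivative on $\Lambda_N$, so the denominator $\langle e^{\lambda\sum\sigma_v}\rangle_{\pm,0,N}$ carries the full surface-tension distortion $e^{CN^{1/3}}$ of Proposition~\ref{prop:global-tilt1}. Consequently your area penalty can only read $\exp\bigl(-c\lambda(|\Lambda^-|-L_{\mathrm{typ}})_+\bigr)$ with $L_{\mathrm{typ}}\asymp N^{4/3}$. On $\widetilde\Upsilon_{i,j}$ all you know is $|\Lambda^-|\ge (i+j)RN$, so the penalty is nontrivial only once $(i+j)R\gtrsim N^{1/3}$. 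In the application $R=o(N^{2/9})$, hence the entire range $T_0\sqrt R<i+j\lesssim N^{1/3}/R$ is not covered by the area term; your displayed exponent $-c\,c_\lambda(i+j)R$ has silently dropped the $(\cdot)_+$.

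The no-field Gaussian patch you propose does not repair this. The bound $\mu_{0,N}^{\pm}(\widetilde\Upsilon_{i,j})\le\exp(-R^2/(C(i+j)))$ is not what Corollary~\ref{cor:max-height-fluctuation-not-straight} gives: that corollary controls the probability the interface goes \emph{too high}, not the probability it \emph{stays above} a level. After the enlargement to $\tshape$ with $(\pm,\mathcal H_{i+j})$ boundary and no field, the interface starts at height $\mathcal H_{i+j}\ge 2RN^{1/3}$ on both sides, and its typical oscillation over width $(i+j)N^{2/3}$ is $\sqrt{i+j}\,N^{1/3}$; by the choice of $\mathcal H_{i+j}$ this is at most $\mathcal H_{i+j}-RN^{1/3}$, so $\{\min_x\hgt_x^-\ge RN^{1/3}\}$ has probability bounded away from zero, not exponentially small. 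Even granting your claimed bound, for $i+j\gg\sqrt R$ one has $R^2/(i+j)\ll (i+j)R$, so the combined exponent falls short of the target $-(i+j)R/C$ throughout the uncovered range.

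The paper avoids this by first applying the enlargement coupling (Proposition~\ref{prop:minus-enlargement}) \emph{with the field} to pass to $\mu_{\lambda,\tshape}^{(\pm,2h)}$, and only then comparing with and without field using the \emph{local} tilt of Proposition~\ref{prop:local-tilt-enlargements}. On the T-shape the distortion is $\exp\bigl(C(LN^{-2/3}+(\mathcal H_{i+j}N^{-1/3})^{3/2})\bigr)$, which by the design of $\mathcal H_{i+j}$ equals $\exp\bigl(C((i+j)+\max\{(i+j)\sqrt R,R^{3/2}\})\bigr)$ and is beaten by the local area cost $\exp(-c(i+j)R)$ for every $i+j>T_0\sqrt R$ once $T_0$ is large. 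The localization to $\tshape$ is the missing idea: it replaces the $N^{4/3}$ baseline by one that scales with the width of the strip, which is precisely what makes the area penalty effective across the full range of $i+j$.
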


\begin{proof}
We bound the probability of $\widetilde \Upsilon_{i,j}$, as the other proofs are similar. 
We leverage the fact that under $\widetilde \Upsilon_{i,j}$ the interface  confines an excess of minuses (costing in the external field) in a way that overwhelms the tilt on the surface tension bounded in Section~\ref{subsec:local-tilt}. 
Recall $H = RN^{1/3}$, and let 
$$\Lambda_{ij} = I_{ij} \times \llb 0,N\rrb\,, \qquad \mbox{and}\qquad \Lambda_{ij}^H := I_{ij} \times \llb 0,H\rrb\,.$$

We apply Proposition~\ref{prop:minus-enlargement} with the choices $r= N^{2/3}, h := \mathcal H_{i+j}$ and $\tshape = \tshape_{r,h}(\Lambda_{i,j})$. For an interface $\mathscr I$ in $\tshape$, let $\tshape^-$ be the set of sites below $\mathscr I$ and $\tshape^+$ is those above.  We then have  
\begin{align*}
\mu_{\lambda, N}^{\pm} (\widetilde \Upsilon_{i,j}) \leq \mu^\pm_{\lambda,N}(\Lambda^- \supset \Lambda_{ij}^H \mid \mathcal E_{\leq {h}}^{I_{ij},+}) \leq \mu_{\lambda,\tshape}^{(\pm,2 h)} (\tshape^- \supset \Lambda_{ij}^H) + Ce^{-N^{2/3}/C}\,,
\end{align*}
where by monotonicity we only increased the probability on the right-hand side by decreasing the boundary conditions to $(\pm,2h)$. 
(The application of Proposition~\ref{prop:minus-enlargement} is justified as follows: given $\mathcal E_{\leq h}^{I_{ij},+}$, the event $\Lambda^- \supset \Lambda_{ij}^H$ implies there is no vertical plus crossing of $I_{ij}\times \llb H,h\rrb$ (a decreasing event measurable w.r.t.\ $\Lambda_{ij}$), and in $\tshape$ with $(\pm,2h)$ boundary conditions, this in turn implies $\tshape^-\supset \Lambda_{ij}^H$.) The proof of Lemma~\ref{lem:bound-confining-large-area} then evidently concludes from the following lemma,  which we isolate as we will apply it again in bounding the multi-point height oscillations. 
\end{proof}

\begin{lemma}\label{lem:tshape-enclosing-large-area}
Fix $\beta>\beta_c$, and $\lambda = \frac{c_\lambda}{N}$. There exists $C(\beta,c_\lambda)>0$ and $T_0(\beta, c_\lambda)$ such that for every $x\in \partial_\south \Lambda_N$, every $R\leq N^{2/3}$, and every $i,j: i+j >T_0 \sqrt R$, if $h= \mathcal H_{i+j}$ is as in~\eqref{eq:H-i}, $H = RN^{1/3}$, $r= N^{2/3}$, and $\tshape^-$ is the set of sites below $\sI$ in $\tshape = \tshape_{r,h}(\Lambda_{i,j})$, then 
\begin{align*}
\mu_{\lambda, \tshape}^{(\pm,2h)} (\tshape^- \supset \Lambda_{ij}^H) \leq C \exp( - (i+j)R /C)\,.
\end{align*} 
\end{lemma}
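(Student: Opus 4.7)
The plan is to combine Proposition~\ref{prop:local-tilt-enlargements} on the denominator with a magnetization-cost estimate on the numerator, showing that the $(i+j)RN$ excess of minus spins forced by $\tshape^-(\gamma)\supset\Lambda_{ij}^H$ more than overcomes the local surface-tension tilt. Write $L=(i+j)N^{2/3}$, $H=RN^{1/3}$, and $h=\mathcal{H}_{i+j}=R_\star N^{1/3}$ with $R_\star:=\max\{(i+j)^{2/3}R^{1/3},2R\}$, and let $\Gamma$ denote the admissible interfaces $\gamma\subset\tshape$ with $\tshape^-(\gamma)\supset\Lambda_{ij}^H$. By~\eqref{eq:low-temp-interface-probability},
\begin{align*}
\mu_{\lambda,\tshape}^{(\pm,2h)}(\Gamma)\;=\;\frac{\sum_{\gamma\in\Gamma}\omega(\gamma)\,Z_{+,\lambda,\tshape^+}Z_{-,\lambda,\tshape^-}}{Z_{(\pm,2h),\lambda,\tshape}}.
\end{align*}
The hypotheses $(L,h,r=N^{2/3})$ of Proposition~\ref{prop:local-tilt-enlargements} are verified from $i+j\geq T_0\sqrt R$, $K\leq R\leq N^{2/3}$, and the fact that $R_\star\geq 2R\geq\sqrt{K}$ (so $r\leq K^{-1}h^2$), yielding
\begin{align*}
Z_{(\pm,2h),\lambda,\tshape}\;\geq\;Z_{(\pm,2h),0,\tshape}\,\frac{Z_{+,\lambda,\tshape}}{Z_{+,0,\tshape}}\,\exp\!\big(-C_{\beta,\lambda}\big((i+j)+R_\star^{3/2}\big)\big).
\end{align*}

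The main technical step is to show that for every $\gamma\in\Gamma$,
\begin{align*}
\frac{Z_{+,\lambda,\tshape^+}Z_{-,\lambda,\tshape^-}}{Z_{+,0,\tshape^+}Z_{-,0,\tshape^-}}\cdot\frac{Z_{+,0,\tshape}}{Z_{+,\lambda,\tshape}}\;\leq\;\exp\!\big(-c\,\lambda\,|\Lambda_{ij}^H|\big)+e^{-N^{1/6}/C'}.
\end{align*}
Recognizing the left-hand side as the ratio of the expectation of $e^{\lambda\sum\sigma}$ under the $(\pm,2h)$-measure conditioned on $\gamma$ (whose spins split as $\mu^+_{0,\tshape^+}$ on $\tshape^+$ independently of $\mu^-_{0,\tshape^-}$ on $\tshape^-$) to that under $\mu^+_{0,\tshape}$, the bound is transparent at the level of typical magnetizations: the conditioned measure contributes $m_\star(|\tshape^+|-|\tshape^-|)$ where the unconditioned one contributes $m_\star|\tshape|$, a deficit of $2m_\star|\tshape^-|\geq 2m_\star|\Lambda_{ij}^H|$. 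The hard part will be making this rigorous uniformly in the possibly irregular shape of $\tshape^-(\gamma)$: following the suggestion in Section~\ref{sec:ideas-of-proofs}, I would partition $\Lambda_{ij}^H$ into order $(i+j)RN^{1/3}$ sub-boxes $B$ of side $\ell_0=N^{1/3}$ (so that $\lambda|B|=c_\lambda N^{-1/3}=o(1)$ renders the field effectively absent on each $B$), use the domain Markov property together with FKG monotonicity~\eqref{eq:monotonicity-in-bc} to stochastically dominate each sub-box marginal below by $\mu^-_{0,B}$ and above by $\mu^+_{0,B}$, and apply the surface-order large deviations~\eqref{eq:surface-order-ld-magnetization} on each $B$ to control its magnetization within $\pm(m_\star/2)|B|$ of its target value with error $e^{-\ell_0/C}$ per box. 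A union bound over sub-boxes and telescoping in $\lambda$ then assemble the claimed inequality; alternatively one can invoke the magnetization concentration framework of~\cite{IoffeSchonmann98}.

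Combining the two displays, the factor $Z_{+,\lambda,\tshape}/Z_{+,0,\tshape}$ cancels, and the remaining sum reassembles via~\eqref{eq:low-temp-interface-probability} into $Z_{(\pm,2h),0,\tshape}\,\mu_{0,\tshape}^{(\pm,2h)}(\Gamma)\leq Z_{(\pm,2h),0,\tshape}$, yielding
\begin{align*}
\mu_{\lambda,\tshape}^{(\pm,2h)}(\Gamma)\;\leq\;\exp\!\Big(-c\,c_\lambda(i+j)R+C_{\beta,\lambda}\big((i+j)+R_\star^{3/2}\big)\Big)+e^{-N^{1/6}/C'}.
\end{align*}
Since $R_\star^{3/2}\leq\max\{(2R)^{3/2},(i+j)R^{1/2}\}$, the contribution $(i+j)R^{1/2}$ is dominated by $(i+j)R$ for $R\geq 1$, while $R^{3/2}$ is dominated by $(i+j)R$ precisely when $i+j\geq\sqrt R$; likewise the term $(i+j)$ is absorbed by $(i+j)R$ for $R\geq K$. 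Choosing $T_0=T_0(\beta,c_\lambda)$ large enough makes the exponent at most $-(i+j)R/C$, which for $i+j\geq T_0\sqrt R$ is itself smaller than $-N^{1/6}/C'$ (since $(i+j)R\geq T_0 R^{3/2}\geq T_0 K^{3/2}$, while the second term is absorbed for $R\leq N^{2/3}$), producing the target bound $C\exp(-(i+j)R/C)$.
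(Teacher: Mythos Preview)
Your overall strategy—bound the denominator via Proposition~\ref{prop:local-tilt-enlargements} and extract a magnetization deficit from the numerator—matches the paper's, but your ``main technical step'' has a genuine gap. You claim
\[
\frac{Z_{+,\lambda,\tshape^+}Z_{-,\lambda,\tshape^-}}{Z_{+,0,\tshape^+}Z_{-,0,\tshape^-}}\cdot\frac{Z_{+,0,\tshape}}{Z_{+,\lambda,\tshape}}\;\leq\;\exp\!\big(-c\,\lambda\,|\Lambda_{ij}^H|\big)+e^{-N^{1/6}/C'}
\]
uniformly in $\gamma\in\Gamma$, arguing by tiling $\Lambda_{ij}^H$ with $N^{1/3}$-boxes $B$ and sandwiching each sub-box marginal between $\mu^-_{0,B}$ and $\mu^+_{0,B}$. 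This cannot work. Passing to the integral form $\int_0^\lambda d\lambda'\sum_v[\langle\sigma_v\mid\gamma\rangle_{\lambda'}-\langle\sigma_v\rangle_{+,\lambda',\tshape}]$, the task reduces to \emph{upper}-bounding $\sum_{v\in\Lambda_{ij}^H}\langle\sigma_v\rangle_{-,\lambda',\tshape^-}$ by $-c|\Lambda_{ij}^H|$. But FKG only gives $\mu^-_{\lambda',\tshape^-}|_B\succeq\mu^-_{\lambda',B}$, a \emph{lower} bound on that magnetization; the upper sandwich $\preceq\mu^+_{\lambda',B}$ yields a positive value and is useless. To get a useful upper bound one needs a minus-circuit coupling to a small-volume minus measure, and the obstruction is that the field $\lambda'$ acts on all of $\tshape^-(\gamma)$: for $\gamma$ that climbs high (so $|\tshape^-|\sim|\tshape|\sim(i+j)N^{5/3}$), the Radon--Nikodym factor $e^{2\lambda'|\tshape^-|}$ is exponential in $(i+j)N^{2/3}$, swamping any circuit estimate. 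Thus the displayed inequality is simply false for such $\gamma$, and your additive $e^{-N^{1/6}/C'}$ has no mechanism to absorb their contribution (your concluding claim that $e^{-N^{1/6}/C'}\leq Ce^{-(i+j)R/C}$ also fails whenever $(i+j)R\gg N^{1/6}$).

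The paper avoids this by working not interface-by-interface but with the \emph{conditional} ratio $Z_{(\pm,2h),\lambda,\tshape}(\tshape^-\supset\Lambda_{ij}^H)/Z_{(\pm,2h),0,\tshape}(\tshape^-\supset\Lambda_{ij}^H)$, and then introducing a localization event $\Psi_\e=\{\max_x\hgt_x^+\geq\e(i+j)\sqrt R\,N^{1/3}\}$. One first shows $\mu_{\lambda,\tshape}^{(\pm,2h)}(\Psi_\e\mid\tshape^-\supset\Lambda_{ij}^H)\leq e^{-\e^2(i+j)R/C}$, already of the target order; on $\Psi_\e^c$ one has $|\tshape^-|\leq 3\e(i+j)^2\sqrt R\,N$, just small enough that a minus-circuit coupling with annulus radius $\rho=\eta(i+j)^2\sqrt R$ and tiles of side $H$ (not $N^{1/3}$) goes through. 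This preliminary localization of $|\tshape^-|$ is precisely the missing ingredient in your argument.
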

\begin{proof}
We begin by localizing the interface in $\tshape$ to not be too high. Let $\e>0$ be sufficiently small in $\beta$ and $c_\lambda$ (to be chosen later) and define the event 
\begin{align*}
\Psi_\e: = \{\mathscr I: \max_{x\in \partial_\south \tshape} \hgt_x^+ \geq \e (i+j)\sqrt R N^{1/3}\}\,.
\end{align*}
(Notice that for $T_0$ sufficiently large depending on $\e$, the height $\e(i+j)\sqrt R N^{1/3}$ is at least $4h$.)
By monotonicity and Corollary~\ref{cor:max-height-fluctuation-floor}, there exists $C(\beta)>0$ such that for every $\e>0$, 
\begin{align*}
\mu_{\lambda,\tshape}^{(\pm,2h)} (\Psi_\e) \leq C \exp ( - \e^2 (i+j) R/C)
\end{align*}
Assume that 
\begin{align*}
\mu_{\lambda,\tshape}^{(\pm,2h)} (\tshape^- \supset\Lambda_{ij}^H) > C \exp ( - \e^2 (i+j) R /2C )\,,
\end{align*}
as otherwise we could conclude the proof for a different $C$. Then for every fixed $\e>0$, we have 
\begin{align}\label{eq:conditional-Psi-eps-probability}
\mu_{\lambda,\tshape}^{(\pm,2h)} ( \Psi_\e \mid \tshape^- \supset\Lambda_{ij}^H) \leq C \exp ( - \e^2 (i+j) R/2C)\,.
\end{align}

With that bound in hand, let us return to the main goal, and rewrite the probability 
\begin{align}\label{eq:probabilities-to-part-functions}
\mu_{\lambda,\tshape}^{(\pm,2h)} (\tshape^- \supset \Lambda_{ij}^H) = \mu_{0,\tshape}^{(\pm,2h)} (\tshape^- \supset \Lambda_{ij}^H) \frac{Z_{(\pm,2h),\lambda,\tshape}(\tshape^- \supset \Lambda_{ij}^H) Z_{(\pm,2h),0,\tshape}}{Z_{(\pm,2h),0,\tshape}(\tshape^-\supset\Lambda_{ij}^H) Z_{(\pm,2h),\lambda\tshape}}  \,.
\end{align}
Using Proposition~\ref{prop:local-tilt-enlargements}, the ratio of partition functions on the right-hand side is at most  
\begin{align}\label{eq:part-functions-applying-local-tilt}
\frac{Z_{{(\pm,2h)}, \lambda,\tshape}(\tshape^- \supset \Lambda_{ij}^H)}{Z_{{(\pm,2h)},0,\tshape}(\tshape^- \supset \Lambda_{ij}^H)}\frac{Z_{+,0,\tshape}}{Z_{+,\lambda,\tshape}} e^{C_{\beta,\lambda}(|I_{ij}|N^{-2/3} + ({h N^{-1/3}})^{3/2})}
\end{align}
From there, by writing 
\begin{align*}
\frac{Z_{+,0,\tshape}}{Z_{+,\lambda,\tshape}} & = \exp \Big( -  \int_0^\lambda d\lambda' \sum_{v\in \tshape} \langle \sigma_v \rangle_{+,\lambda',\tshape}\Big) \qquad \mbox{and} \\
\frac{Z_{{(\pm,2h)}, \lambda,\tshape}(\tshape^- \supset \Lambda_{ij}^H)}{Z_{{(\pm,2h)},0,\tshape}(\tshape^- \supset \Lambda_{ij}^H)} & = \exp \Big(\int_0^\lambda d\lambda'\sum_{v \in \tshape} \langle \sigma_v \mid \tshape^-\supset \Lambda_{ij}^H\rangle_{\pm,\lambda',\tshape}\Big)
\end{align*}
(see also~\cite[Section 2.3.3]{Velenik}) we have
\begin{align}\label{eq:part-function-ratio-magnetization-integral}
\frac{Z_{{(\pm,2h)}, \lambda,\tshape}(\tshape^- \supset \Lambda_{ij}^H)}{Z_{{(\pm,2h)},0,\tshape}(\tshape^- \supset \Lambda_{ij}^H)}\frac{Z_{+,0,\tshape}}{Z_{+,\lambda,\tshape}} = \exp\Big(\int_0^\lambda d\lambda' \sum_{v\in \tshape} \langle \sigma_v \mid \tshape^- \supset \Lambda_{ij}^H \rangle_{\pm,\lambda',\tshape} -\langle \sigma_v \rangle _{+,\lambda',\tshape}\Big)\,.
\end{align}
Our aim is to show there exists $\bar C(\beta,c_\lambda)>0$ such that the following bound holds, expressing the above cost as exponential in $\lambda |\Lambda_{ij}^H|$, as one might expect the external field to induce an area tilt: 
\begin{align}\label{eq:wts-cost-large-area-right-tail}
\exp\Big(\int_0^\lambda d\lambda' \sum_{v\in \tshape} \langle \sigma_v \mid \tshape^- \supset \Lambda_{ij}^H \rangle_{\pm,\lambda',\tshape} -\langle \sigma_v \rangle _{+,\lambda',\tshape}\Big)\leq \bar C\exp ( - c_\lambda (i+j) R/ \bar C)
\end{align}
Let us first conclude the proof from here. Plugging this bound back in to~\eqref{eq:probabilities-to-part-functions}--\eqref{eq:part-functions-applying-local-tilt} would yield the desired, since we can take $T_0$ sufficiently large (depending on $\beta, c_\lambda$) that for all $i,j: i+j>k_0$, 
\begin{align*}
c_\lambda (i+j)R /\bar C > C_{\beta,\lambda} (|I_{ij}| N^{-2/3} + (hN^{-1/3})^{3/2}) = C_{\beta,\lambda} ((i+j) + \max\{i \sqrt R, (2R)^{3/2}\})\,.
\end{align*}

We now turn to proving~\eqref{eq:wts-cost-large-area-right-tail}. 
By the FKG inequality,  there exists a monotone coupling $\nu_{\lambda'}$ of  $\tilde \sigma \sim \mu_{\lambda',\tshape}^{\pm} (\cdot \mid \tshape^- \supset \Lambda_{ij}^H )$ and $ \sigma \sim \mu_{\lambda',\tshape}^+$ such that $\nu_{\lambda'}$-a.s.\ $\tilde \sigma \leq \sigma$. We can then express  
\begin{align}\label{eq:magnetization-integral-coupling}
\exp\Big(\int_0^\lambda d\lambda' \sum_{v\in \tshape} \langle \sigma_v \mid \tshape^- \supset \Lambda_{ij}^H \rangle_{\pm,\lambda',\tshape} -\langle \sigma_v \rangle _{+,\lambda',\tshape}\Big) \leq \exp\Big(\int_0^\lambda d\lambda' \mathbb E_{\nu_{\lambda'}} \Big[ \sum_{v\in \tshape} \tilde \sigma_v - \sigma_v\Big]\Big)\,.
\end{align}

In particular, under this coupling, revealing any interface $\mathscr I$ in $\tilde \sigma$, the configuration $\tilde \sigma$ is all-plus on $\partial_\interior \tshape^+$ (all sites frozen by $\mathscr I$ and in $\tshape^+$), all-minus on $\partial_\interior \tshape^-$, and  distributed according to $\mu_{\lambda,\tshape^+}^+$ on $\tshape^+$ and $\mu_{\lambda,\tshape^-}^-$ on $\tshape^-$. By the monotonicity of the coupling $\sigma$ is also all-plus on $\partial_\interior \tshape^+$, the two configurations agree on all of $\tshape^+$, and the configuration $\sigma$ is pointwise larger than $\tilde \sigma$ on all of $\tshape^- \cup \partial_\interior \tshape^-$. As such, we bound the sum in the exponential in~\eqref{eq:magnetization-integral-coupling} as   
\begin{align*}
\mathbb E_{\nu_{\lambda'}}\Big[ \sum_{v\in \tshape} \tilde \sigma_v - \sigma_v\Big] & \leq \mathbb E_{\nu_{\lambda'}} \Big[\Big(\sum_{v\in \tshape} \tilde \sigma_v - \sigma_v\Big) \mathbf 1_{\{\Psi_\e^c\}} \Big] \\ 
&  \leq \mu_{\lambda',\tshape}^{(\pm,2h)}( \Psi_\e^c \mid \tshape^- \supset \Lambda_{ij}^H)  \max_{\mathscr I\in \Psi_\e^c: \tshape^-(\mathscr I) \supset \Lambda_{ij}^H} \sum_{v\in \Lambda_{ij}^H} \mathbb E_{\mu_{\lambda',\tshape^-}^-}[ \sigma_v]  - \mathbb E_{\lambda',\tshape^-}^{(-,\partial_\interior \tshape^-, +)}[\sigma_v]
\end{align*}
where the $(-,\partial_\interior \tshape^-,+)$ boundary conditions are those that are $+$ on $\partial \tshape$ and $-$ on $\partial_\interior \tshape^-$, and we used the monotonicity of the coupling to drop the sum on $\Psi_\e$, as well as to drop the sum over sites in $\tshape^- \setminus \Lambda_{ij}^H$. By~\eqref{eq:conditional-Psi-eps-probability} for all $R$ large enough (depending on $\beta,c_\lambda$ through $\e$),  this is at most 
\begin{align}\label{eq:diff-in-expectations-localizing}
{\frac{1}{2}}\max_{\mathscr I\in \Psi_\e^c: \tshape^-(\mathscr I) \supset \Lambda_{ij}^H} \sum_{v\in \Lambda_{ij}^H} \mathbb E_{\mu_{\lambda',\tshape^-}^-}[ \sigma_v]  - \mathbb E_{\lambda',\tshape^-}^{(-,\partial_\interior \tshape^-, +)}[\sigma_v]\,.
\end{align}
(using non-positivity of the summands). 
Now fix any $\mathscr I \in \Psi_\e^c \cap \{\tshape^- \supset \Lambda_{ij}^H\}$ and consider these two expectations individually. We will show that since $\mathscr I \in \Psi_{\e}^c$, the tilt from the external field is not too large, and the difference above is indeed comparable to the area of $\Lambda_{ij}^H$ as one would expect.

Start with the second expectation. By monotonicity, the induced measure of $\mu_{\lambda',\tshape^-}^{(-,\partial_\interior,\tshape^-,+)}$ on $\Lambda_{ij}^H$ stochastically dominates the measure $\mu_{\lambda',\Lambda_{ij}^H}^{(\pm,H)}$. Then, by monotonicity, we have 
\begin{align*}
\sum_{v\in \Lambda_{ij}^H} \mathbb E_{\lambda',\tshape^-}^{(-,\partial_
 \tshape^-,+)} [\sigma_v] \geq \sum_{v\in \Lambda_{ij}^H} \mathbb E_{\lambda',\Lambda_{ij}^H}^{(\pm,H)} [\sigma_v] \geq \sum_{v\in \Lambda_{ij}^H} \mathbb E_{\lambda',\Lambda_{ij}^H}^{(\pm,\lceil H/2\rceil)} [\sigma_v]
\end{align*} 
and by spin-flip and reflection symmetry and $\lambda' \geq 0$, this last sum is non-negative.  

\begin{figure}
\vspace{-.2cm}
\centering
\begin{tikzpicture}
\node at (0,0){
\includegraphics[width=.85\textwidth]{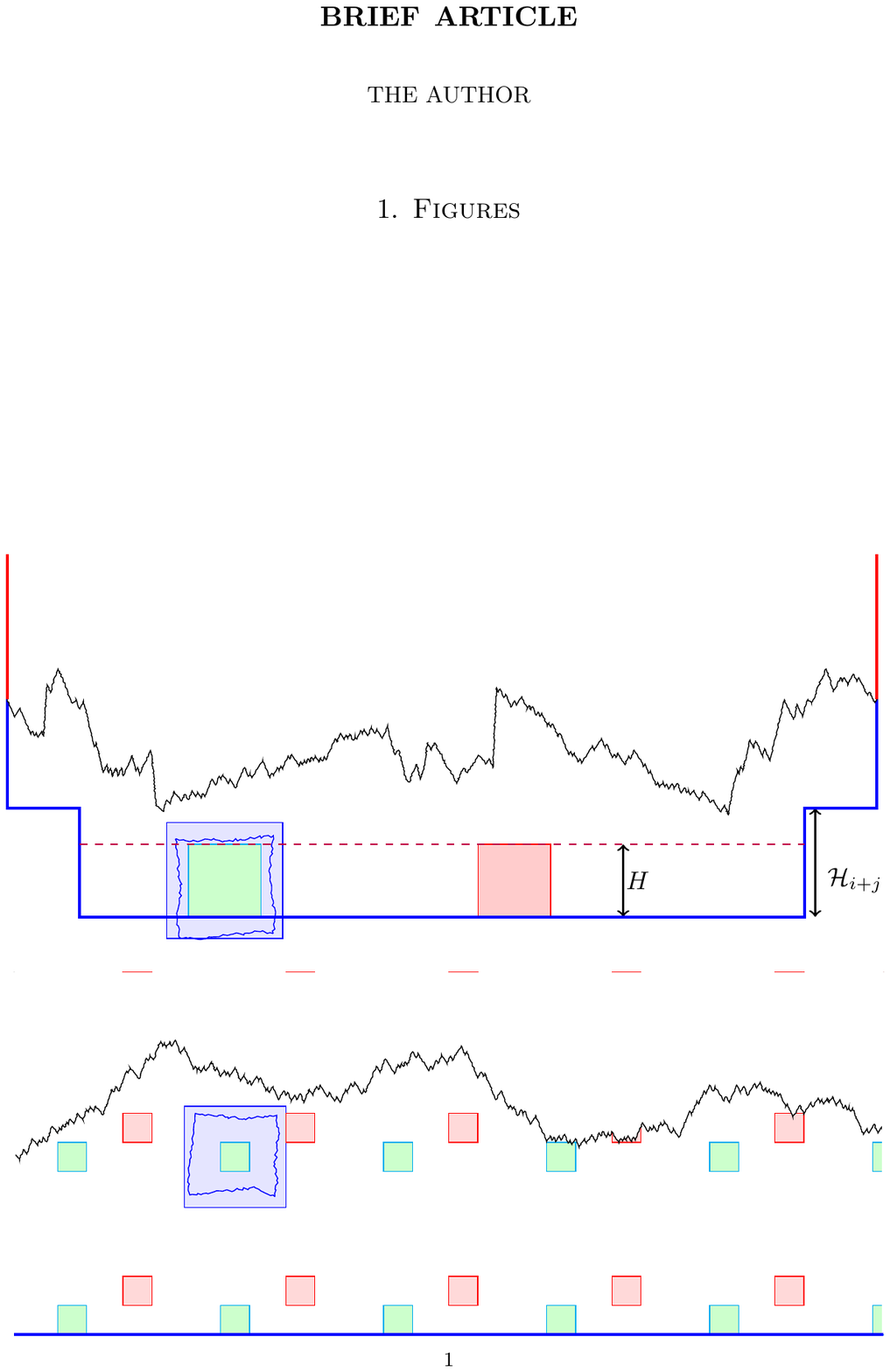}
};
\end{tikzpicture}
\vspace{-.2cm}
\caption{The event $\Upsilon_{ij}$, that $I_{ij}$ is the approximation to the stopping domain $I_\star$, requires $\mathscr I$ to confine $\Lambda_{ij}^H$ (below the dashed red). This should force an atypically negative magnetization in $\Lambda_{ij}^H$; to show this we use a tiling of $\Lambda_{ij}$, by disjoint tiles (green, red). We then couple the configuration on the tile to the infinite-volume minus measure using a minus circuit (blue) in the annulus $B_{\rho}^k \setminus \bmB_k$ (blue, shaded).}\label{fig:localizing-right-tail-tiling}\vspace{-.2cm}
\end{figure}

Turning to the expectation under the minus measure on $\tshape^-$, we coarse grain $\{v\in \Lambda_{ij}^H\}$ by tiles,  
$$\bmB_{k} : =  (x- iN^{2/3} ,0) + \llb k H, (k+1)H-1 \rrb \times \llb 0,H\rrb \qquad \mbox{for} \qquad k\leq  \frac{(i+j) N^{2/3}}{H}\,.$$
Observe that $\bmB_k$ cover $\Lambda_{ij}^H$ and each are fully contained in $\Lambda_{ij}^H \subset \tshape$. 
Consider the enlargement of $\bmB_k$ by a distance $\rho$, i.e., $B_\rho^k = \{v\in \mathbb Z^2:  d(v,\bmB_k)<\rho\}$. Let $\circuit^-(B_\rho^k\setminus\bmB_k)$ be the event that there is a circuit of minuses in the annulus $B_\rho^k \setminus \bmB_k$: see Figure~\ref{fig:localizing-right-tail-tiling}. Observe that the measure $\mu_{\lambda',B_\rho^k\cap \tshape^-}^-$ is stochastically below $\mu_{\lambda',\tshape^-}^-$ and therefore under the monotone coupling of these two, if the sample from the latter satisfies $\circuit^-(B_\rho^k\setminus \bmB_k)$ (possibly using the minus boundary sites of $\partial_\interior \tshape^-$), then so does the sample from the former. One can then expose the outer-most such minus circuit from $\partial B_\rho^k$ inwards, so that by the domain Markov property the samples from $\mu_{\lambda',B_\rho^k \cap \tshape^-}^-$ and $\mu_{\lambda',\tshape^-}^-$ would  agree on all of $\bmB_k$. Namely, for every $\delta>0$, if we upper bound
\begin{align*}
\mathbb E_{\mu_{\lambda',\tshape^-}^-} \Big[ \sum_{v\in \bmB_k} \sigma_v\Big] & \leq  -\delta |\bmB_k| + |\bmB_k| \mu_{\lambda',\tshape^-}^- \Big(\sum_{v\in \bmB_k} \sigma_v >-\delta |\bmB_k| \Big)  
\end{align*} 
the second term satisfies,  
\begin{align}\label{eq:coupling-on-tiles}
\mu_{\lambda',\tshape^-}^- \Big(\sum_{v\in \bmB_k} \sigma_x > - \delta |\bmB_k| \Big)  \leq   \mu_{\lambda',B_\rho^k \cap \tshape^-}^-\Big(\sum_{v\in \bmB_k} \sigma_x >-\delta |\bmB_k| \Big) + \mu_{\lambda',\tshape^-}^-\big(( \circuit^-(B_\rho^k))^c\big)\,.
\end{align}
We bound the two probabilities in~\eqref{eq:coupling-on-tiles}, and show that there exists a choice of  $\delta_0 >0$ such that the above is at most $\delta_0/2$.  For some $\eta$ to be chosen small depending on $\beta,c_\lambda$ later,  let
\begin{align*}
\rho: = \eta (i+j)^2 \sqrt R\,.
\end{align*}
(Note that $B_\rho^k$ does not necessarily fit in $\tshape^-$, but the minus circuit may include the minus sites of $\partial_\interior \tshape^-$.)  
Begin with the first term in~\eqref{eq:coupling-on-tiles}. By~\eqref{eq:radon-nikodym-bound}, and monotonicity, it is at most 
\begin{align*}
e^{ 2\lambda |B_\rho^k| } \mu_{0,\mathbb Z^2}^- \Big(\sum_{v\in \bmB_k} \sigma_x >- \delta |\bmB_k|\Big) \leq e^{ 4c_\lambda [ \eta^2 (i+j)^4 R + R^2 N^{2/3}]/N} \mu_{0,\mathbb Z^2}^- \Big(\sum_{v\in \bmB_k} \sigma_x >- \delta |\bmB_k|\Big)\,.
\end{align*}
By~\eqref{eq:surface-order-ld-component-of-infinity}, there exists a $\delta_0(\beta)>0$ together with a $C_0(\beta,\delta_0)>0$, such that  
\begin{align*}
\mu_{0,\mathbb Z^2}^- \Big( \sum_{v\in \bmB_k} \sigma_x > - \delta_0 |\bmB|_k\Big) \leq C_0 \exp ( - H /C_0)\,.
\end{align*}
Multiplying this by the tilt factor, we see that the first term of~\eqref{eq:coupling-on-tiles} is at most  
\begin{align*}
C_0 e^{ 4c_\lambda [ \eta^2 (i+j)^4 R + R^2 N^{2/3}]/N} e^{ - RN^{1/3}/C_0 } \leq C_0  \exp(- RN^{1/3} /2C_0 )\,.
\end{align*}
so long as $\eta$ is small depending on $c_\lambda$ and $C_0 (\beta,\delta_0)$ (using the bounds $i+j\leq N^{1/3}$ and $R\leq N^{2/3}$).  

On the other hand using $\mathscr I \in \Psi_\e$, we have $|\tshape^-|\leq 3\e (i+j)^2 \sqrt R N$, so that by~\eqref{eq:radon-nikodym-bound} together with monotonicity, for every $\delta$, the second term in~\eqref{eq:coupling-on-tiles} is at most   
\begin{align*}
e^{ 6 c_\lambda \e (i+j)^2 \sqrt R} \mu_{0,\mathbb Z^2}^- \big((\circuit^- (B_\rho^k))^c\big)\,.
\end{align*}
Using~\eqref{eq:circuit-probability-bound}, we find that this is at most 
\begin{align*}
C e^{ 6c_\lambda \e (i+j)^2 \sqrt R} e^{-\rho/C} \leq C e^{ - (i+j)^2 \sqrt R/2C}\,,
\end{align*}
as long as $\e$ is sufficiently small (depending on $\eta$ as well as on $\beta,c_\lambda$). Combining the above, we find that (for $N$ large) there exists $\delta_0 (\beta)>0$ such that as long as $\e$ was sufficiently small, uniformly over $\mathscr I\in \Psi_\e^c$, for $R$ sufficiently large, the right-hand side of~\eqref{eq:coupling-on-tiles} is at most $\delta_0/2$; in particular, we deduce that uniformly over $\lambda '\in [0,c_\lambda/N]$,
\begin{align*}
\mathbb E_{\mu_{\lambda',\tshape^-}^-} \Big[ \sum_{v\in \bmB_k} \sigma_v\Big] \leq - \frac{\delta_0}{2} |\bmB_k|\,.
\end{align*}
Summing over $k$, and using the non-negativity of the expectations under the $(-,\partial_\interior \tshape^-,+)$-measure, we find for some $\delta(\beta)>0$ and sufficiently small $\e$ (depending on $\beta,c_\lambda$), for all $\lambda' \in [0,c_\lambda/N]$,~\eqref{eq:diff-in-expectations-localizing} is at most $-\delta |\Lambda_{ij}^H|$, and therefore so is $\mathbb E_{\nu_{\lambda'}}[\sum_{v\in \tshape} \tilde \sigma_v - \sigma_v]$. Plugging this into~\eqref{eq:magnetization-integral-coupling} yields the desired~\eqref{eq:wts-cost-large-area-right-tail}, concluding the proof. 
\end{proof}

\subsubsection{Upper tail after localizing to critical scale: second term in~\eqref{eq:one-point-upper-bound-split}}\label{subsubsec:localized-Gaussian-tail}
It now suffices for us to take the $T_0$ sufficiently large from Lemma~\ref{lem:bound-confining-large-area}, and upper bound the probability of exceeding a height $4RN^{1/3}$ above $x$, together with $\bigcup_{i,j\leq k_0} \Upsilon_{i,j}$. We can bound 
\begin{align*}
\mu_{\lambda,N}^\pm \Big(\hgt_x^+>4RN^{1/3} & , \bigcup_{i,j:i+j\leq k_0} \Upsilon_{i,j}\Big)  \\
&  \leq \sum_{i,j:i+j\leq k_0} \Big( \mu_{\lambda,N}^{\pm} (\hgt_x^+>4RN^{1/3} , \mathcal E_{\leq h}^{I_{ij},+}) + \mu_{\lambda,N}^{\pm} (\spiky^{\downarrow}_{ij})\Big)\,.
\end{align*}
But in the regime where $i+j \leq k_0$, we have $h= \mathcal H_{i+j}= 2RN^{1/3}$ as long as $R$ is sufficiently large. Therefore, by the bound of~\eqref{eq:right-tail-spikiness}, the spikiness probability is at most $e^{ - R^2/C} + e^{ - N^{1/3}/C}$. For the other term in the summands, for every $i,j: i+j \leq k_0$, by Proposition~\ref{prop:minus-enlargement} with the choices $r = \frac{1}{2} |I_{ij}|$ and $h = \mathcal H_{i+j}$ and $\enlarge^\uparrow = \enlarge^\uparrow_{r,h}(\Lambda_{ij})$, this is at most 
\begin{align*}
\mu_{\lambda, \enlarge^\uparrow}^{(\pm, h)}   \big(\co_v^-( I_{ij} \times \llb h  & , 4 R N^{1/3}\rrb)\big) + CNe ^{ - r/C}  \leq \mu_{\lambda, \enlarge^\uparrow}^{(\pm, h)} \Big(\max_{x\in \partial_\south \enlarge^\uparrow} \hgt_x^+ \geq 4  R N^{1/3}\Big) + CNe ^{ - r/C}\,.
\end{align*}
By~\eqref{eq:max-height-fluctuation-with-field}, and $4RN^{1/3} - h = 2RN^{1/3}$,  we see that the first term on the right-hand side is at most 
\begin{align*}
Ce^{ -  R^2 / C (i+j)} + CNe^{ - N^{2/3}/C} \leq Ce^{- R^{3/2}/C T_0}+ Ce^{ - N^{2/3}/C}\,.
\end{align*} 
The second error term only affects the constant $C$ in the above maximum height bound as long as $R\leq N^{4/9}$.  Finally, there are at most $T_0^2 R$ many such summands, which gets absorbed into the constant for $R$ large, so that altogether, for every $T_0$ large enough,  and every $R\leq N^{2/9}$, 
\begin{align}\label{eq:after-localizing-bound}
\mu_{\lambda,N}^\pm \Big(\hgt_x^+>4RN^{1/3}  , \bigcup_{i,j:i+j\leq k_0} \Upsilon_{i,j}\Big) & \leq  C T_0^2 R( e^{ - R^{3/2}/CT_0} + e^{ - R^2/C} + e^{ - N^{1/3}/C}) \nonumber \\
& \leq C\exp(- R^{3/2}/C T_0)\,.
\end{align}

\subsubsection{Concluding the proof of Proposition~\ref{prop:right-tail-ub}}  Recall that $k_0 = T_0 \sqrt R$ for a sufficiently large constant $T_0$ (depending on $\beta,c_\lambda$) as dictated by Lemma~\ref{lem:bound-confining-large-area}. Combining~\eqref{eq:one-point-upper-bound-split} and~\eqref{eq:stopping-domain-decomposition}, and using Lemma~\ref{lem:bound-on-long-high-interval} and~\eqref{eq:after-localizing-bound} on the  terms therein, we see that for every $R\leq N^{2/9}$, 
\begin{align*}
\mu_{\lambda,N}^{\pm}(\hgt_x^+ >4RN^{1/3}) & \leq  Ce^{ - R^{3/2}/CT_0} + \sum_{i,j: i+j>k_0} \Big(Ce^{ - (i+j) R/C} + Ce^{ - N^{1/3}/C}\Big)  \leq C' e^{ - R^{3/2} /C'}\,,
\end{align*}
for some $C' >0$ depending on $\beta, c_\lambda$, concluding the proof. \qed

\subsection{Proof of Proposition~\ref{prop:right-tail-lb}}\label{sec:right-tail-lb}
Fix any $R =N^{2/3}/2$ and fix $x$ at distance at least $\sqrt R N^{2/3}$ from $\partial_{\east,\west} \Lambda_N$.  Let $I = \llb x-\sqrt RN^{2/3}, x+\sqrt R N^{2/3} \rrb$  be the interval in $\partial_\south \Lambda_N$ centered about $x$ of width $\sqrt R N^{2/3}$. Let $H = RN^{1/3}$ and let  $\Lambda_{I,3H} = I\times \llb 0,3 H\rrb$ for $3$. Notice that the measure $\mu_{\lambda,N}^\pm$ induces on $\Lambda_{I,3H}$ is stochastically below the measure $\mu_{\lambda, \Lambda_{I,3H}}^\pm$ (boundary conditions that are minus on $\partial_\south \Lambda_{I,3H}$ and plus on $\partial_{\east,\west,\north} \Lambda_{I,3H}$). By monotonicity and~\eqref{eq:radon-nikodym-bound},   
\begin{align}\label{eq:tail-lower-bound-after-tilt}
\mu_{\lambda,N}^\pm (\hgt_x^- > RN^{1/3}) \geq \mu_{\lambda,\Lambda_{I,3H}}^\pm (\hgt_x^->R N^{1/3}) \geq e^{-12  c_\lambda R^{3/2}} \mu_{0,\Lambda_{I,3H}}^\pm (\hgt_x^- >R N^{1/3}) \,.
\end{align}
To bound this latter probability, observe by monotonicity that we can send the bottom minus boundary of $\Lambda_{I,3H}$ to $-\infty$ to obtain $\Lambda_{I,3H}^\downarrow := I \times \llb -\infty, 3H\rrb$. 
We next apply the following Gaussian lower bound on $\hgt_x^-$: we expect this is in the literature, but include a proof for completeness.

\begin{lemma}\label{lem:Gaussian-lower-bound}
Fix $\beta>\beta_c$. There exist $\e_\beta, C(\beta), K_0 (\beta)>0$ such that for every $\ell$ and $K_0 \sqrt \ell \leq r\leq \ell$, 
\begin{align*}
\mu_{0,\Lambda_{\ell,\infty}^\downarrow}^{\pm} (\hgt_{(\ell/2,0)}^- > r) \geq \varepsilon_\beta \exp \big( -  r^2 / C \ell\big)\,, \qquad \mbox{where} \qquad \Lambda_{\ell,\infty}^{\downarrow} = \llb 0 ,\ell \rrb \times \llb -\infty, 3 r\rrb\,.
\end{align*} 
\end{lemma}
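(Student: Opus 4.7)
The plan is to pass to the random-line representation on the dual of $\Lambda_{\ell,\infty}^\downarrow$ and exhibit a family of admissible interfaces, of sufficient total $q$-mass, that are forced to pass through a high point at the middle column. Writing $\vsw=(-\tfrac 12,-\tfrac 12)$ and $\vse=(\ell+\tfrac 12,-\tfrac 12)$ for the two source points of $\pm$ boundary conditions on $\Lambda_{\ell,\infty}^\downarrow$, the identity~\eqref{eq:random-line-two-point-correlations} gives
\[
\mu_{0,\Lambda_{\ell,\infty}^\downarrow}^{\pm}\big(\hgt_{(\ell/2,0)}^->r\big)\;=\;\frac{1}{\langle \sigma_{\vsw}\sigma_{\vse}\rangle_{(\Lambda_{\ell,\infty}^\downarrow)^*}^*}\sum_{\gamma\in \mathcal A}q_{\Lambda_{\ell,\infty}^\downarrow}(\gamma),
\]
where $\mathcal A$ is the set of admissible interfaces $\gamma:\vsw\to \vse$ realizing the event. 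By~\eqref{eq:two-point-function-monotonicity} and~\eqref{eq:point-to-point-bounds}, the denominator is at most $K_2\ell^{-1/2}e^{-\tau_\beta(0)\ell}$; the heart of the matter is a matching lower bound on the numerator.

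For that, I would pin the interface at a deterministic midpoint. For each integer $\tilde r \in (r,r+K_0\sqrt\ell]$, with $K_0=K_0(\beta)$ to be chosen large, set $w^*_{\tilde r}=(\ell/2,\tilde r-\tfrac 12)$ and consider interfaces decomposing as $\gamma=\gamma_1\amalg \gamma_2$ with $\gamma_1:\vsw\to w^*_{\tilde r}$ and $\gamma_2:w^*_{\tilde r}\to \vse$, each in the non-backtracking class $\nbt$ of Proposition~\ref{prop:no-backtracking-domain} at both its endpoints, and each contained in $\Lambda_{\ell,\infty}^\downarrow$. Non-backtracking at $w^*_{\tilde r}$ forces $\gamma\cap(\{\ell/2\}\times \mathbb R)=\{w^*_{\tilde r}\}$, which both yields $\hgt_{(\ell/2,0)}^-=\tilde r-\tfrac 12>r$ (so $\gamma_1\amalg \gamma_2\in \mathcal A$) and ensures edge-disjointness of $\gamma_1$ and $\gamma_2$. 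By the concatenation inequality~\eqref{eq:concatenation} and the domain monotonicity~\eqref{eq:q-monotonicity-domain} applied to $\Lambda_{\ell,\infty}^\downarrow\subset \mathbb Z^2$,
\[
\sum_{\gamma_1,\gamma_2}q_{\Lambda_{\ell,\infty}^\downarrow}(\gamma_1\amalg \gamma_2)\;\geq\;\Big(\sum_{\gamma_1}q_{\mathbb Z^2}(\gamma_1)\Big)\Big(\sum_{\gamma_2}q_{\mathbb Z^2}(\gamma_2)\Big),
\]
where the inner sums range over $\gamma_i\in \nbt$ contained in $\Lambda_{\ell,\infty}^\downarrow$.

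Each inner sum would then be analyzed by discarding the two restrictions in turn. The non-backtracking restriction costs a multiplicative $\e_\beta$ by Proposition~\ref{prop:no-backtracking-domain}; the only non-trivial restriction is confinement below the ceiling at height $3r$ (the horizontal confinement is automatic from non-backtracking at $\vsw$ and $w^*_{\tilde r}$). A vertical reflection of~\eqref{eq:max-height-fluctuation-not-straight-random-line} with $\rho=3r-\tilde r\geq r$ shows the excluded fraction of $q_{\mathbb Z^2}$-mass is at most $Ce^{-r^2/C\ell}$, which by the standing hypothesis $r\geq K_0\sqrt\ell$ can be made $\leq \e_\beta/2$ for $K_0$ sufficiently large. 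Thus each inner sum is at least $(\e_\beta/2)\langle \sigma_{\vsw}\sigma_{w^*_{\tilde r}}\rangle_{\mathbb Z^2}^*$, and invoking the Gaussian lower bound~\eqref{eq:point-to-point-l-r} with horizontal span $\ell/2$ and vertical span $\tilde r\leq 2r$, the product of the two relevant two-point functions is at least $K_1^2\ell^{-1}e^{-8\kappa_\beta r^2/\ell}e^{-\tau_\beta(0)\ell}$. Summing over the $K_0\sqrt\ell$ admissible values of $\tilde r$ contributes a factor of $K_0\sqrt\ell$, and upon dividing by $K_2\ell^{-1/2}e^{-\tau_\beta(0)\ell}$ the polynomial factors in $\ell$ cancel, leaving the announced $\e_\beta\exp(-Cr^2/\ell)$ bound with $C=8\kappa_\beta$.

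The main technical subtlety is the joint control of non-backtracking with the ceiling condition: both are small-probability restrictions, and the ceiling is precisely where the standing hypothesis $r\geq K_0\sqrt\ell$ enters, since otherwise the Gaussian transversal-fluctuation bound of Corollary~\ref{cor:max-height-fluctuation-not-straight} would not ensure $\gamma_i$ stays below $3r$ with overwhelming probability. All remaining steps are routine applications of the random-line toolbox of Section~\ref{subsec:random-line-representation}.
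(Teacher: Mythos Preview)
Your proof is correct and follows essentially the same approach as the paper's: both pass to the random-line representation, decompose the interface at a pinned midpoint on the middle column, use~\eqref{eq:concatenation} and~\eqref{eq:q-monotonicity-domain} to factor and pass to $q_{\mathbb Z^2}$, then invoke Proposition~\ref{prop:no-backtracking-domain} and Corollary~\ref{cor:max-height-fluctuation-not-straight} to show the $\nbt$ and ceiling constraints each cost only a constant factor, and finally apply~\eqref{eq:point-to-point-bounds}--\eqref{eq:point-to-point-l-r}. The only cosmetic difference is the summation window for the midpoint height: you use $\tilde r\in(r,r+K_0\sqrt\ell]$ (contributing a factor $K_0\sqrt\ell$ that cancels the $\ell^{-1/2}$), whereas the paper sums over $y\in[r,2r\wedge\ell]$ (contributing $r/\sqrt\ell\geq K_0$); both choices yield the same bound.
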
 

\begin{proof}
Define a subset of interfaces $\Gamma$ which have the property that $\hgt_{(\ell/2,0)}^->r$ as follows: $\Gamma$ is the set of contours between $\vsw = (-\frac 12, -\frac 12)$ and $\vse= (\ell+\frac 12,-\frac 12)$ that are $\Lambda_{\ell,\infty}^{\downarrow*}$ admissible and intersect the column $\{\frac \ell 2 - \frac 12\}\times \mathbb R$ in a singleton in ${\sf W}:= \{(\frac \ell 2 -\frac 12, y): y\geq r\}$. Then we have 
\begin{align*}
\mu_{0,\Lambda_{\ell,\infty}}^\pm ( \hgt_{(\ell/2,0)}^- >r ) \geq \frac{1}{\langle \sigma_{\vsw} \sigma_{\vse} \rangle_{\Lambda_{\ell,\infty}}^*}\sum_{\substack{\gamma \in \Gamma}} q_{\Lambda_{\ell,\infty}}(\gamma)\,.
\end{align*}
By~\eqref{eq:q-monotonicity-domain}, along with~\eqref{eq:concatenation} (using that $|\gamma \cap (\{\frac \ell 2 - \frac 12\}\times \mathbb R)|=1$), this is at least
\begin{align*}
\frac{1}{\langle \sigma_{\vsw} \sigma_{\vse}\rangle_{\mathbb Z^2}^*} \sum_{w^*\in {\sf W}} \Big(\sum_{\substack{\gamma_\west \in \nbt: \vsw\to w^* \\ \gamma_\west \subset [0,\ell/2] \times [-\infty, 3r]}} q_{\mathbb Z^2} (\gamma_\west)\Big) \Big( \sum_{\substack{\gamma_\east \in \nbt: w^*\to \vse \\ \gamma_\east \subset [\ell/2,\ell] \times [-\infty, 3r]}} q_{\mathbb Z^2} (\gamma_\east)\Big)\,.
\end{align*}
Using Proposition~\ref{prop:no-backtracking-domain} and Corollary~\ref{cor:max-height-fluctuation-not-straight} as in~\eqref{eq:global-tilt-interior-weights-1}, as long as $r^2/\ell$ is sufficiently large (depending on $\beta$), there exists $\varepsilon_\beta >0$ such that for every half-integer $y\in [r,2r\wedge \ell]$
\begin{align*}
\sum_{\substack{\gamma_\west \in \nbt : \vsw \to (\frac \ell 2 - \frac 12,y) \\ \gamma_\west \subset [0,\ell/2] \times [-\infty,3r]}} q_{\mathbb Z^2} (\gamma_\west) \geq \varepsilon_\beta \langle \sigma_{\vsw} \sigma_{(\frac \ell 2 - \frac 12,y)} \rangle_{\mathbb Z^2}^*\,.
\end{align*}
The same bound of course also holds for $\gamma_\east: (\frac \ell 2 - \frac 12,y)\to\vse$. By~\eqref{eq:point-to-point-bounds} and~\eqref{eq:point-to-point-l-r}, since $r \leq \ell$, the probability we wish to bound is then at least 
\begin{align*}
\varepsilon_\beta^2 \sum_{\substack{w^* = (\frac \ell 2- \frac 12, y) \\ y\in \mathbb Z+\frac 12: r\leq y\leq 2 r\wedge \ell}} \frac{\langle\sigma_{\vsw} \sigma_{w^*} \rangle_{\mathbb Z^2}^* \langle \sigma_{w^*} \sigma_{\vse} \rangle_{\mathbb Z^2}^*}{\langle \sigma_{\vsw} \sigma_{\vse} \rangle_{\mathbb Z^2}^*} &   \geq \varepsilon_\beta^2 \sum_{y: r\leq y \leq 2r \wedge \ell}  \frac{K_1^2 }{K_2\sqrt \ell} e^{ - \kappa_\beta y^2 / \ell} \,,
\end{align*} 
yielding the desired bound. 
\end{proof}

We can apply the lemma to right hand side of~\eqref{eq:tail-lower-bound-after-tilt}, since $K_0 R^{1/4}N^{1/3} \leq RN^{1/3} \leq 2\sqrt R N^{2/3}$ for every $R\leq N^{2/3}$ sufficiently large. Therefore, by Lemma~\ref{lem:Gaussian-lower-bound}, there is a $C(\beta)>0$, 
\begin{align*}
\mu_{0,\Lambda_{I,3H}}^{\pm} (\hgt_x^- >RN^{1/3}) \geq \exp ( - CR^{3/2})\,,
\end{align*}
and
$\mu_{\lambda,N}^\pm (\hgt^-_x>RN^{1/3}) \geq  e^{ - 4c_\lambda R^{3/2}}  e^{ - CR^{3/2}}$
concluding the proof. \qed

\section{Tail estimates for multi-point height oscillations}\label{sec:multi-point-oscillations}
We now extend the one-point upper tail of the previous section to a large deviation tail estimate for attaining height $RN^{1/3}$ at many well-separated points, formalized as follows. 

\begin{theorem}\label{thm:multipoint-height-bounds}
Fix $\beta>\beta_c$, let $\lambda = \frac{c_\lambda}{N}$ for $c_\lambda>0$. There exists $R_0, T_0$ and $C>0$ such that for every $R\geq R_0$, the following holds. For every collection of $\mathcal N$ points $x_1,...,x_{\mathcal N}$ such that their distances are at least $T_0 \sqrt R N^{2/3}$, for every $\alpha_R$ s.t.\ $|\log \alpha_R| = o(\sqrt R)$,
\begin{align*}
\mu_{\lambda,N}^{\pm} \Big( \sum_{1 \le i\leq \mathcal N} \mathbf 1\{\hgt_{x_i}^+ > RN^{1/3}\} > \alpha_R  \mathcal N \Big) \leq  \exp ( - R^{3/2} \alpha_R \mathcal N /C ) + e^{ - N^{1/3}/C}\,.
\end{align*}
\end{theorem}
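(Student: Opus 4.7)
The plan is to generalize the stopping-domain strategy used in Proposition~\ref{prop:right-tail-ub}, replacing the single T-shape enlargement with the multi-strip enlargement of Proposition~\ref{prop:multi-strip-minus-enlargement}. Writing $M=\lceil\alpha_R\mathcal N\rceil$ and $A_i=\{\hgt_{x_i}^+>RN^{1/3}\}$, a single union bound reduces the theorem to the estimate
$$
\mu_{\lambda,N}^{\pm}\Big(\bigcap_{i\in S}A_i\Big)\le C\exp(-cMR^{3/2})+e^{-N^{1/3}/C}
$$
for every $S\subset\{1,\dots,\mathcal N\}$ with $|S|=M$, for constants $c,C$ depending on $\beta,c_\lambda$. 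The combinatorial factor $\binom{\mathcal N}{M}\le (e/\alpha_R)^M$ contributes only $M(1+|\log\alpha_R|)=o(MR^{3/2})$ to the exponent under the hypothesis $|\log\alpha_R|=o(\sqrt R)$ and for $R\ge R_0$ large, and since $M\le \mathcal N\le N^{1/3}/(T_0\sqrt R)$ the prefactor does not spoil the $e^{-N^{1/3}/C}$ remainder.

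For fixed $S$ I would import the $N^{2/3}$-mesh and approximate stopping intervals $I_{a,b}^{(i)}$ from Section~\ref{subsec:spikiness-right-tail}. For each $i\in S$, let $(\hat a_i,\hat b_i)$ be the smallest indices for which $I_{\hat a_i,\hat b_i}^{(i)}$ contains the innermost interval $I_\star^{(i)}$ on which $\hgt_x^-\ge RN^{1/3}/2$, and declare $i,j$ equivalent if their rounded intervals overlap. This partitions $S$ into groups $G_1,\dots,G_k$ of sizes $m_1,\dots,m_k$ with associated union intervals $J_1,\dots,J_k$; since consecutive $x_i$ are separated by at least $T_0\sqrt R\,N^{2/3}$, a group with $m_\ell\ge 2$ satisfies $|J_\ell|\ge c_0 T_0 m_\ell\sqrt R\,N^{2/3}$, while a singleton group has $|J_\ell|\le 2T_0\sqrt R\,N^{2/3}$ with only $O(R)$ possible shapes. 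Summing the spikiness tail estimate (Corollary~\ref{cor:right-tail-spikiness}) over all profiles contributes just the $e^{-N^{1/3}/C}$ remainder, so on its complement the conditional event is contained in $\bigcap_\ell\big(\mathcal E^{J_\ell,+}_{\le \mathcal H_\ell}\cap\{\tshape_\ell^-\supset J_\ell\times\llb 0,RN^{1/3}\rrb\}\big)$, with $\mathcal H_\ell$ determined by $|J_\ell|$ in analogy with~\eqref{eq:H-i}.

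I then apply Proposition~\ref{prop:multi-strip-minus-enlargement} with $r=N^{2/3}$ to couple the conditional law on $\bigcup_\ell\Lambda_{J_\ell}$ to a product of $(\pm,\mathcal H_\ell)$-boundary measures on T-shape enlargements $\mathfrak T=(\tshape_{r,\mathcal H_\ell}(\Lambda_{J_\ell}))_\ell$, with coupling error $CN k e^{-r/C}\le e^{-N^{1/3}/C}$. Under this product the area-confinement event factors across strips, and a strip-by-strip version of Lemma~\ref{lem:tshape-enclosing-large-area} applies: Proposition~\ref{prop:local-tilt-enlargements} yields a tilt factor bounded by $\exp\big(C_{\beta,\lambda}\sum_\ell(|J_\ell|/N^{2/3}+(\mathcal H_\ell/N^{1/3})^{3/2})\big)\le \exp(C_{\beta,\lambda} M R^{3/2})$, while the magnetization-difference estimate~\eqref{eq:wts-cost-large-area-right-tail}, applied independently on each T-shape via minus circuits around a tiling by boxes $\bmB_k$ of side $RN^{1/3}$, contributes $\exp\big(-c_\lambda R\sum_\ell(|J_\ell|/N^{2/3})/\bar C\big)$. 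Since $\sum_\ell|J_\ell|/N^{2/3}\ge c_0 T_0\sqrt R\,M$, choosing $T_0$ large in $\beta,c_\lambda$ makes the area cost dominate the tilt and produces $\exp(-cMR^{3/2})$ per profile; the $(CR)^M$ profiles are absorbed since $M\log R=o(MR^{3/2})$. The main obstacle is implementing the magnetization-difference estimate~\eqref{eq:wts-cost-large-area-right-tail} simultaneously across the $k$ T-shapes: the localization event $\Psi_\e$ must be replaced by a multi-strip analogue (``no strip has excessive height''), and the Radon--Nikodym removal of $\lambda$ in~\eqref{eq:coupling-on-tiles} must be controlled uniformly in the tile position and the enclosing T-shape via the circuit estimate~\eqref{eq:circuit-probability-bound}. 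Once this multi-strip extension is established, the remaining steps are a bookkeeping generalization of Section~\ref{sec:upper-tail}.
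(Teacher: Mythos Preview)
Your overall architecture---union bound over $M$-subsets, stopping domains rounded to an $N^{2/3}$ mesh, grouping by overlap, and multi-strip enlargement---matches the paper. But there is a genuine gap in how you handle the singleton groups, and your ``main obstacle'' is misidentified.

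\textbf{The singleton gap.} Your key inequality $\sum_\ell |J_\ell|/N^{2/3}\ge c_0T_0\sqrt R\,M$ is false as stated. For a singleton group the rounded stopping interval can have width as small as $O(N^{2/3})$; if all $M$ points are singletons with $|J_\ell|=O(N^{2/3})$ then $\sum_\ell|J_\ell|/N^{2/3}=O(M)$, not $\Theta(\sqrt R\,M)$. In that regime the area cost $\exp(-c_\lambda R\sum_\ell|J_\ell|/N^{2/3})$ is only $\exp(-cRM)$, while the tilt from Proposition~\ref{prop:local-tilt-enlargements} carries a term $\exp\big(C_{\beta,\lambda}\sum_\ell(\mathcal H_\ell/N^{1/3})^{3/2}\big)=\exp(C_{\beta,\lambda}MR^{3/2})$ coming from the entry/exit height $\mathcal H_\ell=2RN^{1/3}$. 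The tilt wins and the area argument gives nothing. The paper resolves this by splitting into three events: for singletons (event $E_2$) one abandons area entirely and instead uses that the interface must climb from entry/exit height $2RN^{1/3}$ to the target $4RN^{1/3}$ inside a strip of width at most $T_0\sqrt R\,N^{2/3}$, which by the multi-strip version of the Gaussian bound~\eqref{eq:max-height-fluctuation-with-field} costs $\exp(-R^{3/2}/CT_0)$ per singleton. The area argument is reserved for groups with at least two points (event $E_3$), where $|J_\ell|\ge T_0\sqrt R\,N^{2/3}$ is automatic.

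\textbf{Spikiness.} Your treatment ``summing the spikiness tail over all profiles contributes just $e^{-N^{1/3}/C}$'' is too quick. You need a bound on the probability that spikiness occurs simultaneously at \emph{many} of the strips (a product, not a sum), and the paper obtains this as event $E_1$ using the multi-strip \emph{increasing} enlargement Proposition~\ref{prop:multi-strip-plus-enlargement}---not just Corollary~\ref{cor:right-tail-spikiness}.

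\textbf{The non-obstacle.} What you flag as the main obstacle---extending~\eqref{eq:wts-cost-large-area-right-tail} across $k$ T-shapes---is not one. Once Proposition~\ref{prop:multi-strip-minus-enlargement} is applied, the law on $\mathfrak T$ is a genuine \emph{product} over the T-shapes, so Lemma~\ref{lem:tshape-enclosing-large-area} applies verbatim to each factor; no multi-strip $\Psi_\e$ is needed.
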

\medskip
\noindent 
\textbf{Idea of proof.} Before proceeding, we give a roadmap for the proof and describe some of the key ideas.  To avoid fractions in the notation, here and in the proof, we work with $4R$ instead of $R$; this proves the desired up to a corresponding change in the constant $C$.  Following the proof of the one-point estimate, we consider a potential collection of $M = \alpha_R \mathcal N$ points, say $(x_\ell)_{1\le \ell \leq M}$ on which the interface exceeds $4RN^{1/3}$. As before, each of these comes from an excursion on which the interface is consistently above height $RN^{1/3}$, which we call the stopping domain. We use these excursions to capture the dependencies between the events $\hgt_{x_{\ell}}^+ \geq 4RN^{1/3}$. 
More precisely, we use the stopping domains to group together the elements of $(x_\ell)_{\ell \leq M}$ according to whether they are part of the same excursion or different excursions. We then consider three events whose union contains the event $\min_{\ell\leq M} \hgt_{x_\ell}^{+}\geq 4RN^{1/3}$: namely, that a constant fraction of $(x_\ell)_{\ell\leq M}$ are such that, either,
\begin{enumerate}
\item Their stopping domains are \emph{spiky}, and the entry/exit heights of $\sI$ across a mesh of separation $N^{2/3}$ nearest to the boundary of the stopping domain are much larger than $O(RN^{1/3})$.  
\item Their stopping domains are not spiky, and they are of width $O(\sqrt R N^{2/3})$.
\item Their stopping domains are not spiky, and they are of width greater than $\sqrt R N^{2/3}$. 
\end{enumerate}
Roughly speaking, the first case consists of irregular interfaces, which are unlikely due to the locally Brownian oscillations at widths of order $N^{2/3}$: this entails a multi-point analogue of Corollary~\ref{cor:right-tail-spikiness}. The second case requires order $M$ essentially independent excursions from height $RN^{1/3}$ to $4RN^{1/3}$ in a width of $O(\sqrt RN^{2/3})$, this is unlikely due to the Gaussian upper tail behavior of the interface, even absent an external field and the monotone effect of the field on the interface. The third case entails an interface that remains above height $RN^{1/3}$ on a distance of order $M \sqrt RN^{2/3}$, which is exponentially unlikely in $MR^{3/2}$ due to the external field effect per a multi-point version of Lemma~\ref{lem:tshape-enclosing-large-area}. See Figure~\ref{fig:multi-height} for a rough depiction. We now proceed to making the above formal.

\begin{figure}
\centering
\begin{tikzpicture}
\node at (0,0){
\includegraphics[width=.95\textwidth]{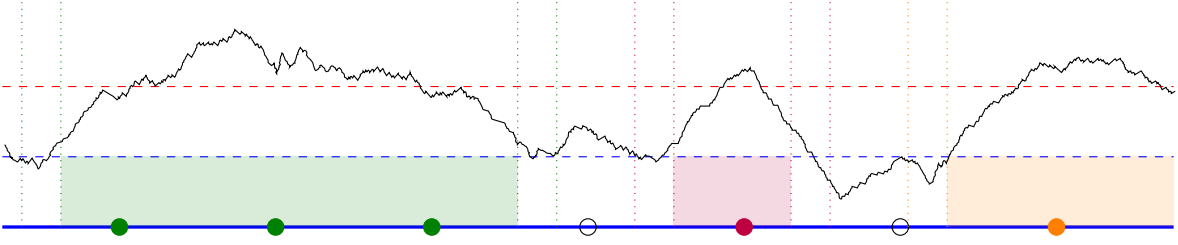}
};
\end{tikzpicture}
\caption{The depicted interface exceeds $4RN^{1/3}$ (dashed, red) above $(x_\ell)_{\ell \leq M}$ (colored nodes). The nodes are colored according to their partition element, induced by the stopping domain decomposition. For each partition element, its rounded stopping domain is delimited by the dotted columns of the matching color. The green element of the partition forces the interface to remain above height $RN^{1/3}$ (dashed, blue) for a width greater than $\sqrt RN^{2/3}$, and therefore encompass a large area (shaded, green). The purple and orange elements of the partition are singletons and require independent oscillations of $RN^{1/3}$ in a width less than $\sqrt RN^{2/3}$.}\label{fig:multi-height}
\end{figure}

\begin{proof}[\textbf{\emph{Proof of Theorem~\ref{thm:multipoint-height-bounds}}}]
Let $M = \alpha_R \mathcal N$ and union bound over the subsets of $M$ points amongst $x_1,...,x_{\mathcal N}$ 
at which the interface exceeds height $4 RN^{1/3}$. There are $\binom{\mathcal N}{M}$ many options of such subsets so we fix a subset $\chi$ of $(x_i)_{i\leq \mathcal N}$ of size $M$, and bound the probability of $\min_{x\in \chi} \hgt_x^+ > 4 RN^{1/3}$.  Abusing notation, let us label the points in $\chi$ as  $(x_\ell)_{\ell \leq M}$. We now define stopping domains about each $x_\ell$, throughout which the interface exceeds $H: =  RN^{1/3}$.   

For each $x_\ell$, we coarse-grain the interval $\llb 0,N\rrb$ into columns at separation $N^{2/3}$, to which we will round the stopping domain containing $x_\ell$, as in Section~\ref{subsec:spikiness-right-tail}. Namely, for each $\ell \leq M$, let 
\begin{align*}
I_{i,j}^\ell = [ x_\ell - iN^{2/3}\vee 0, x_\ell + jN^{2/3} \wedge N] \qquad \mbox{for $i\leq \lceil x_\ell N^{-2/3}\rceil$ and $j\leq \lceil (N-x_\ell)N^{-2/3}\rceil$}\,.
\end{align*}
Recall that for every interval $I \subset \partial_\south \Lambda_N$, we define the events
\begin{align*}
\mathcal E_{\leq H}^{I,-}  = \{\max\{\hgt_{\partial_\west I}^-, \hgt_{\partial_\east I}^-\} \leq H\}\qquad \mbox{and}\qquad \mathcal E_{\leq H}^{I,+} = \{\max\{\hgt_{\partial_\west I}^+ ,\hgt_{\partial_\east I}^+\} \leq H\}\,.
\end{align*} 
We now define the analogue of the event $\Upsilon_{i,j}$ from~\eqref{eq:upsilon-ij}: for each $\ell \leq M$, let
\begin{align*}
\Upsilon_{i,j}^{\ell} : = \{\sI : \mbox{ there exists an $I_\star^\ell$ such that $I_{i+1,j+1}^\ell\supset I_{\star}^\ell \supset I_{i,j}^\ell$ for which $\mathcal E_{\leq H}^{I_\star^\ell,-}$ holds}\}.
\end{align*} 
Since the interface is pinned at the boundary, it must be the case that for every $x_\ell,$ the event $\Upsilon_{i,j}^{\ell}$ holds for some $i,j\geq 0 $. For each $\ell$, we can thus define the inner-most such occurrence using
\begin{align*}
\Upsilon_{i,j}^{\ell,\sf{stop}}  : = \Upsilon_{i,j}^\ell \cap \bigcap_{0\leq k<i} (\Upsilon_{k,j}^{\ell})^c \cap \bigcap_{0\leq k<j} (\Upsilon_{i,k}^{\ell})^c
\end{align*} 
Evidently the union  over $i,j$ of $\Upsilon_{i,j}^\ell$ is a disjoint union of $\Upsilon_{i,j}^{\ell,\sf{stop}}$. We will control the correlations of heights at $(x_\ell)_{1\le \ell \le M}$ by partitioning them according to a stopping domain decomposition, which we define as follows: for every $x_\ell$, define its rounded stopping domain $I_{i,j}^{\ell,\sf{stop}}$ to be $I_{i,j}^{\ell}$ for the unique $(i,j)$ pair for which $\Upsilon_{i,j}^{\ell, \sf{stop}}$ holds: see Figure~\ref{fig:multi-height}. This partitions $(x_\ell)_{\ell\leq M}$  by letting  $x_{\ell}$ and $x_{\ell'}$ be in the same partition if their rounded stopping domains intersect. Notice that if $x_\ell,x_{\ell'}$ are in the same element of the partition, then $I_{i,j}^{\ell,\sf{stop}}$ contains $x_{\ell'}$ and vice versa. In particular, if the stopping domains---the intervals $I_\star^\ell$ and $I_\star^{\ell'}$ for which $\Upsilon_{i,j}^{\ell,\sf{stop}}$ and $\Upsilon_{i',j'}^{\ell',\sf{stop}}$ hold, respectively---intersect on more than one (boundary) point, then they are the same as subsets of $\partial_\south \Lambda$.

We wish to reduce this stopping domain decomposition into well-separated stopping domains to avoid boundary effects. Towards this, if we order the elements of the partition from left to right, we can define three subsets of the partition by assigning elements to a subset in an alternating manner, e.g., the first subset gets the first, fourth, etc... elements of the partition. One of these three sub-partitions necessarily contains at least $M /3$ of the points $(x_\ell)_\ell$; call this $\mathscr P$. Because the distance between $x_\ell$ and $x_{\ell'}$ exceeds $T_0 \sqrt R N^{2/3}$ for every $\ell\neq \ell'$,  the stopping domains of different elements of $\mathscr P$ are more than distance $T_0 \sqrt R N^{2/3}$ apart.  

For each element of $\mathscr P = (P_1 ,..., P_T)$, we can choose a representative, $x_{\ell_1} ,...,x_{\ell_T}$, say via some fixed enumeration over the vertices of $\partial_\south \Lambda_N$. We then associate to each element of $\mathscr P$, a rounded stopping domain via the representative $x_{\ell_s}$ and the index pair  $(i_s,j_s)_{s\leq t}$ for which $\Upsilon_{i_s,j_s}^{\ell_s,\sf{stop}}$ holds. 

For any subset of $\mathscr P$ of size $t \leq T$, and its representatives, which abusing notation we denote by $(x_{\ell_s})_{s\le t}$, and any choice of $(i_s,j_s)_{s\leq t}$, we define the following event that all of their stopping domains are spiky in the sense that the rounding approximation of the stopping interval has entry/exit data above $\mathcal H_{i_s + j_s}$, where we recall $\mathcal H_{i+j}= \max\{(i+j)^{2/3} R^{1/3} N^{1/3}, 2R N^{1/3}\}$ as in~\eqref{eq:H-i}. 
Recalling~\eqref{eq:up-spiky}--\eqref{eq:down-spiky}, we say the sequence of stopping domains is spiky if the following event holds:
\begin{align*}
\spiky_{(i_s,j_s)_{s\leq t}}^{\downarrow} = \bigcap_{s \leq t}  \spiky_{i_s,j_s}^{\downarrow} \quad \mbox{where}\quad \spiky_{i_s,j_s}^\downarrow = \spiky_{I_{i_s,j_s}^{s,\west}}^\downarrow (\mathcal H_{i_s + j_s}) \cup \spiky_{I_{i_s,j_s}^{s,\east}}^\downarrow (\mathcal H_{i_s + j_s}) 
\end{align*}
where $I_{i_s,j_s}^{s, \west}$ and $I_{i_s,j_s}^{s,\east}$ are the left and right strips constituting $I_{i_s+1,j_s+1}^{s} \setminus I_{i_s,j_s}^s$.
We can now split the event that $\mathscr I$ was such that the above procedure yielded the sub-partition $\mathscr P$ with representatives $(x_{\ell_i})_{i\leq T}$ and associated index pairs $(i_s,j_s)_{s\leq t}$, into the union of three events: 
\begin{enumerate}
    \item Event $E_{1}$: the sub-partition $\mathscr P$ has a subset of size $t$, say represented by $(x_{\ell_s})_{s\leq t}$, containing at least $M/6$ points amongst $(x_\ell)_{\ell\leq M}$ and such that $\spiky_{(i_s,j_s)_{s\leq t}}^\downarrow$ holds. 
    \item Event $E_2$: the sub-partition $\mathscr P$ contains at least $M/12$ singletons, say $(x_{\ell_1},...,x_{\ell_{M/12}})$, such that $(\spiky^\downarrow_{i_s,j_s})^c$ holds for all $s\leq M/12$ (purple and orange nodes in Figure~\ref{fig:multi-height}). 
    \item Event $E_3$: the sub-partition $\mathscr P$ has a subset of size $t$, say represented by $(x_{\ell_s})_{s\leq t}$ containing at least $M/12$ of $(x_\ell)_{\ell \leq M}$, none of whose elements are singletons, and such that $(\spiky^\downarrow_{i_s,j_s})^c$ holds for all $s\leq M/12$  (green nodes in Figure~\ref{fig:multi-height}). 
\end{enumerate}
Our aim is to show that
\begin{align}\label{eq:wts-cases}
    \mu_{\lambda,N}^{\pm} (E_i) \leq \exp ( - M R^{3/2}/C) + e^{ - N^{1/3}/C}\quad \mbox{for $i = 1,2,3$}\,.
\end{align}
Before proving~\eqref{eq:wts-cases}, let us complete the proof of the theorem assuming~\eqref{eq:wts-cases}. 
By union bounds over the choices of $M$ points $(x_\ell)_{\ell \leq M}$, we have, for $M = \alpha_R \mathcal N$, that
\begin{align*}
    \mu_{\lambda,N}^\pm \Big(\sum_{i\leq \mathcal N} \mathbf 1\{\hgt_x^+ >RN^{1/3}\} >\alpha_R \mathcal N\Big) &  \leq  \binom{\mathcal N}{M} \sum_{i=1,2,3} \mu_{\lambda,N}^{\pm} (E_i) \\
    & \leq   e^{ C  M |\log \alpha_R|} \big(e^{ - MR^{3/2}/C} + e^{ - N^{1/3}/C}\big)
\end{align*}
Since $M\leq (T_0 \sqrt R)^{-1} N^{1/3}$ and  $|\log \alpha_R|=o(\sqrt R)$, for $R$ sufficiently large (depending on $\beta, c_\lambda$), the exponentially decaying terms dominate $e^{ CM|\log \alpha_R|}$, and we deduce the desired bound, concluding the theorem for some other $C(\beta,c_\lambda)>0$. 
It remains to prove~\eqref{eq:wts-cases} for each of the events $E_1,E_2,E_3$. 

\medskip
\noindent \textbf{Proof of~\eqref{eq:wts-cases}, event $E_1$:} 
We union bound over the at most $2^{2M}$ choices of the sub-partition $\mathscr P$, and at most $2^M$ choices of the subset of $\mathscr P$, which abusing notation we call $(P_1,...,P_t)$, with representatives $(x_{\ell_s})_{s\leq t}$; it must be the case that $\sum_{i\leq t} |P_i| \geq M/6$. 

With the choice of $(P_1,...,P_t)$ fixed, we bound the probability of $\spiky_{(i_s,j_s)_{s\leq t}}^\downarrow$ via a union bound over the choices of $(i_s,j_s)_{s\leq t}$. Towards that, fix any choice of $(i_s,j_s)_{s\leq t}$ compatible with the fact that the stopping domains are separated by distances of at least $T_0 \sqrt R N^{1/3}$. We can further union bound over the $2^t$ choices of $\west,\east$ on which $\spiky_{i_s,j_s}^\downarrow$ holds for each $s\leq t$, and fix any such collection. For ease of notation, we take all $\west$; there is no differences in the argument for other choices.

We now use the multi-strip domain enlargement of Section~\ref{sec:multi-strip-domains}, to bound the probability of 
\begin{align*}
    \bigcap_{s\leq t} \spiky_{I_{i_s,j_s}^{s,\west}}^\downarrow (\mathcal H_{i_s+j_s})\,.
\end{align*}
For ease of notation, let $I_s^\west = I_{i_s,j_s}^{s,\west}$ and $I_s^\east = I_{i_s,j_s}^{s,\east}$, and let $\mathcal H_s = \mathcal H_{i_s+j_s}$. By Claim~\ref{clm:overhangs}, with probability $1-e^{ - N^{1/3}/C},$ we have $\max_x |\hgt_x^+ - \hgt_x^-|\leq K_0 N^{1/3}$ for a $K_0(\beta,c_\lambda)>0$ large, so it suffices to work on this event. On this event, as long as $R$ is sufficiently large relative to $K_0$, we have that $\mathcal E_{\geq \mathfrak h}$---defined as in~\eqref{eq:entry-exit-conditioning-event-multi-height} w.r.t.\ $\mathfrak R = (R_s)_{s\leq t} = (I_s^\west\times \llb 0,N\rrb)_{s\leq t}$ and $\mathfrak h = (h_s)_{s\leq t} =  (3\mathcal H_s/4)_{s\leq t}$---contains $\bigcap_{s\leq t}\spiky_{I_s^\west}^{\downarrow} (\mathcal H_s)$.  As such, we  would have 
\begin{align*}
\mu_{\lambda,N}^{\pm} \Big(\bigcap_{s\leq t}\spiky_{I_s^\west}^\downarrow(\mathcal H_s)\Big)  \leq \mu_{\lambda,N}^{\pm} \Big( \bigcap_{s\leq t}\spiky_{I_s^\west}^\downarrow(\mathcal H_s) \mid \mathcal E_{\geq \mathfrak h}\Big) = \mu_{\lambda,N}^{\pm} \Big( \bigcap_{s \leq t} \{\min_{x\in I_s^\west} \hgt_x^- \leq H \} \mid \mathcal E_{\geq \mathfrak h}\Big)\,.
\end{align*}

Notice that the event $\bigcap_{s\leq t} \{\min_{x\in I_s^\west} \hgt_x^- <H\}$ is an increasing event w.r.t.\ $\mathfrak R \cap \mathfrak E^\downarrow$, where we set $r= N^{2/3}$ and recall that $\mathfrak E^\downarrow = \bigcup\enlarge^\downarrow_{r,h_s}(R_s)$ as in Section~\ref{sec:multi-strip-domains}. Since $(I_s^\west)_{s\le t}$ are all a distance greater than $T_0 \sqrt R N^{1/3}$ apart, by Proposition~\ref{prop:multi-strip-plus-enlargement} with the above choice for $r$ and $\mathcal M = \e N^{2/3}$, for some sufficiently small $\varepsilon(\beta,c_\lambda)>0$, as long as $R$ is sufficiently large, since $t\le M$, the conditional probability above is at most 
\begin{align*}
\mu_{\lambda,\mathfrak E^\downarrow}^{\pm,\mathfrak h} \Big(\bigcap_{s\leq t} & \{\min_{x\in I_s^\west} \hgt_x^- < \mathcal H_s\} \Big) + CN t e^{ 8 c_\lambda (\varepsilon N^{2/3}+  t N^{1/3} + t N^{1/3})} e^{ - N^{2/3}/C}+ e^{ - \varepsilon^2 N^{1/3}/C} \\ 
&  \leq \mu_{\lambda,\mathfrak E^\downarrow}^{\pm,\mathfrak h} \Big(\bigcap_{s\leq t} \{\min_{x\in I_s^\west} \hgt_x^- < \mathcal H_s\} \Big) + e^{ - N^{1/3}/C}\,.
\end{align*}
where the constant $C$ changed between the two lines. 
The first term above is exactly 
\begin{align*}
\prod_{s \leq t} \mu_{\lambda,\enlarge^\downarrow_{r,h_s}(I_{s}^{\west})}^{\pm,h_s} \Big(\min_{x\in I_{s}^{\west}} \hgt_x^- <H\Big)\,.
\end{align*}
Recall that $(I_s^{\west})_{s\le t}$ are all of size $N^{2/3}$, and that $H\leq \mathcal H_s /2$. Therefore, using~\eqref{eq:radon-nikodym-bound} to remove the external field, together with Corollary~\ref{cor:max-height-fluctuation-floor} (spin flipped and vertically reflected), this is at most 
\begin{align*}
\prod_{s \leq t} \big[\exp ( c_\lambda \mathcal H_s / N^{1/3}) \exp ( - \mathcal H_s^2 / N^{2/3})\big]\leq \prod_{s\leq t} \exp( - [(i_s+j_s) R\vee R^2] /C) \,,
\end{align*}
for $R$ sufficiently large, where the inequality is, as before, by the choice of $\mathcal H_s$. 
Now observe that by construction, $i_s +j_s$ is at least $T_0 (|P_s|-1) \sqrt R$ where $|P_s|$ is the number of sites in the element of the partition represented by $x_{\ell_s}$. Altogether, we deduce that $\mu_{\lambda,N}^{\pm}(E_1)$ is at most 
\begin{align*}
    2^{3M} \max_{t\leq M}2^{t}   \sum_{\substack{(i_s,j_s)_{s\leq t}: \\ i_s+j_s \geq T_0 (|P_s|-1)\sqrt R}}  \Bigg[\prod_{s\leq t}\exp ( - [(i_s+j_s) R\vee R^2]/C) + e^{- N^{1/3}/C}\Bigg]\,.
\end{align*}
Using the fact that $M\leq (T_0 \sqrt R)^{-1} N^{1/3}$, and the assumption that $\sum_{s\leq t} |P_s| \geq M/6$, we obtain  
\begin{align}
    \mu_{\lambda,N}^{\pm}(E_1)\leq 2^{2M} ( R e^{ -  T_0 M R^{3/2}/C} + e^{ - N^{1/3}/C}) \leq \exp ( - M R^{3/2} /C)+ e^{ - N^{1/3}/C}\,.
\end{align}
for all $R\geq R_0$ where $R_0$ is a sufficiently large constant depending on $\beta, c_\lambda$. 

 \medskip
\noindent \textbf{Proof of~\eqref{eq:wts-cases}, event $E_2$:} 
We union bound over the at most $2^M$ choices of $M/12$ singletons, call them $(x_{\ell_s})_{s\leq M/12}$, amongst $(x_\ell)_{\ell \leq M}$  for which $E_2$ holds.  
Since their partition elements are singletons, it must be the case that $i_s,j_s \leq  T_0 \sqrt R$ for all $s\leq M/12$; thus for each $s \leq M/12$, the event $\Upsilon_{i_s,j_s}^{s,\sf{stop}}$ holds for some $i_s+j_s \leq k_0 := T_0 \sqrt R$. We further union bound over the at most $\binom{M/3}{M/12} k_0^{M/12}$ many choices for $(i_s, j_s)_{s\leq M/12}$. 

For every $(x_{\ell_s})_{s\leq M/12}$ and $(i_s,j_s)_{s\leq M/12}$, on $(\spiky_{i_s,j_s}^\downarrow)^c \cap \Upsilon_{i_s,j_s}^{s,\sf{stop}}$, we must have 
$$\{\hgt_{x_{\ell_s}}^+ \geq 4RN^{1/3}\} \cap \bigcup_{I_s\in \{I_{i_s,j_s}^{s}, I_{i_s+1,j_s}^{s}, I_{i_s+1,j_s + 1}^{s}, I_{i_s, j_s+1}^s\}} \mathcal E_{\leq \mathcal H_s}^{I_s,+}\,.$$
 There are $4^{M/12}$ such choices of $(I_s)_{s\leq M/12}$, and they are all handled in the same manner, so without loss of generality, let us bound the probability
 $$\mu_{\lambda,N}^{\pm} \bigg(\bigcap_{s\leq M/12} \hgt_{x_{\ell_s}}^+>4RN^{1/3}, \bigcap_{s\leq M/12} \mathcal E_{\leq \mathcal H_s}^{I_{i_s,j_s}^{s},+}\bigg)\,.
 $$
  For this, we can apply Proposition~\ref{prop:multi-strip-minus-enlargement} with the choices $\mathfrak R = (R_s)_{s\leq M/12} = (I_{i_s,j_s}^s)_{s \leq M/12}$, $\mathfrak h = (h_s)_{s\leq M/12} =  (\mathcal H_s)_{s\le M/12}$, and $r = N^{2/3}$. Since $\bigcap_{s\leq M/12} \hgt_{x_{\ell_s}}^+ >4RN^{1/3}$ is increasing and measurable with respect to $\mathfrak R \cap \mathfrak E^\uparrow$ (where we recall that $\mathfrak E^\uparrow = (\enlarge^\uparrow_{r,h_s}(R_s))_{s\le M/12}$), this is at most 
  \begin{align*}
\mu_{\lambda, \mathfrak E^\uparrow}^{\pm,\mathfrak h} \Big(\bigcap_{s \leq M/12} \hgt_{x_{\ell_s}}^+ > 4RN^{1/3} \Big)  + e^{- N^{2/3}/C} \leq \prod_{s \leq M/12} \mu_{\lambda, \enlarge^\uparrow_{r,\mathcal H_s} (I_{i_s,j_s}^{s})}^{\pm, \mathcal H_s} \Big(\hgt_{x_{\ell_s}}^+ >4RN^{1/3}\Big) + e^{ - N^{2/3}/C}
  \end{align*}
  Since $\mathcal H_\ell  = 2RN^{1/3}$ (using that $i+j \leq k_0$), by~\eqref{eq:max-height-fluctuation-with-field}, this is at most 
  \begin{align*}
  \prod_{s \leq M/12} \exp \big( - R^{2} /(C (i_s + j_s)) \big) + e^{ - N^{2/3}/C} \leq \exp ( - R^{3/2} M/C T_0) + e^{ - N^{2/3}/C}\,.
  \end{align*} 
  Multiplying by the factors from the union bound of $2^M \binom{M/3}{M/12} (4 T_0 \sqrt R)^{M/12}$,  
  as long as $R$ is sufficiently large (depending on $T_0$), we deduce the desired bound on $\mu_{\lambda,N}^{\pm}(E_2)$.

\medskip 
\noindent \textbf{Proof of~\eqref{eq:wts-cases}, event $E_3$:} 
We union bound over the at most $2^{2M}$ choices of partition $\mathscr P$, and at most $2^M$ choices of the subset of $\mathscr P$, which abusing notation 
we denote $(P_1,...,P_t)$ with representatives $(x_{\ell_s})_{s\leq t}$, such that $\sum_{s\leq t} |P_s|\geq M/12$, and such that none of $(P_s)_{s\leq t}$ are singletons. 

For such a collection, we union bound over the choices of $(i_s,j_s)_{s \le t}$; since $(P_s)_{s\le t}$ are not singletons, each must have $i_s + j_s \geq k_0$ . Towards that, fix any choice of $(i_s,j_s)_{s\leq t}$ compatible with $\mathscr P$, and consider the probability that  $\bigcap_{s\leq t}(\spiky_{i_s,j_s}^\downarrow)^c$ holds while $\Upsilon_{i_s,j_s}^{s,\sf{stop}}$ holds.  

As in~\eqref{eq:four-upsilons}, for each $s\leq t$, the intersection of $(\spiky_{i_s,j_s}^\downarrow)^c$ and $\Upsilon_{i_s,j_s}^{s,\sf{stop}}$ implies 
one of the events
$$\widetilde \Upsilon_{i_s,j_s}^s, \widetilde \Upsilon_{i_s+1,j_s}^s, \widetilde \Upsilon_{i_s,j_s+1}^s, \widetilde \Upsilon_{i_s+1,j_s + 1}^s\,,$$
where,
$$\widetilde \Upsilon_{i_s,j_s}^{s} := \mathcal E_{\leq \mathcal H_{i_s+j_s}}^{I_{i_s,j_s}^s,+} \cap \Big\{\min_{x\in I_{i_s,j_s}^s} \hgt_x^- \geq RN^{1/3}\Big\}\,.$$
As with~\eqref{eq:four-upsilons}, for the other index pairs, e.g., $\widetilde \Upsilon_{i_s+1,j_s+1}^s$, we change the interval $I_{i_s,j_s}^s$ to $I_{i_s+1,j_s+1}^s$ in $\mathcal E$ but leave the other instances of $i_s,j_s$ unchanged. There are $4^{t}$ choices over which of $\widetilde \Upsilon_{i_s,j_s}^s$, $\widetilde \Upsilon_{i_s+1,j_s}$, $\widetilde \Upsilon_{i_s,j_s+1}$, and $\widetilde \Upsilon_{i_s+1,j_s + 1}$ is used for each $s$, over which we union bound. Since they are all handled the same way, we take them all to be $\widetilde \Upsilon_s = \widetilde \Upsilon_{i_s,j_s}^s$ and let $I_s = I_{i_s,j_s}^s$ and $\mathcal H_s = \mathcal H_{i_s+j_s}$. 

It then remains to consider the probability of 
\begin{align*}
    \mu_{\lambda,N}^\pm \Big(\bigcap_{s\leq t} \widetilde \Upsilon_{s}\Big) \leq  \mu_{\lambda,N}^{\pm} \Big(\bigcap_{s\leq t} \{\Lambda^- \supset \Lambda_{I_s}^H\} \mid \bigcap_{s\leq t} \mathcal E_{\leq \mathcal H_s}^{I_s,+}\Big)\quad \mbox{where}\quad \Lambda_{I_s}^H = I_s \times \llb 0,RN^{1/3}\rrb\,. 
\end{align*}
Since the domains $(I_s)_{s\le t}$ are separated by $T_0 \sqrt R N^{2/3}$, and $(\{\Lambda^- \supset \Lambda_{I_s}^H\})_{s\le t}$ are decreasing events, we can apply Proposition~\ref{prop:multi-strip-minus-enlargement} with the choices $\mathfrak R =(R_s)_{s\leq t}= (I_s\times \llb 0,N\rrb)_{s\leq t}$, $\mathfrak h= (h_s)_{s\leq t} = (\mathcal H_s)_{s\leq t}$, and $r= N^{2/3}$ to bound 
\begin{align*}
    \mu_{\lambda,N}^{\pm}\Big(\bigcap_{s\leq t} \widetilde \Upsilon_{s}\Big)  \leq \mu_{\lambda,\mathfrak T}^{\pm,\mathfrak h} \Big(\bigcap_{s\leq t} \{\tshape^-_{s}\supset \Lambda_{s}^H\}\Big) + e^{ - N^{2/3}/C}\,.
\end{align*}
where $\mathfrak T = (\tshape_{r,h_s}(R_s))_{s\le t}$ and  $\tshape^-_s= \tshape^-_{r,h_s}(R_s)$ is the set of sites below the interface induced by $\mu_{\lambda,\mathfrak T}^{\pm,\mathfrak h}$ in the $s$'th element of $\mathfrak T$. Evidently, this decomposes into a product, so the first term is at most 
\begin{align*}
    \prod_{s\leq t} \mu_{\lambda,\tshape^-_s}^{(\pm,h_s)} \big(\tshape^-_s \supset \Lambda_s^H\big) \leq \prod_{s\leq t} \mu_{\lambda,\tshape^-_s}^{(\pm,2h_s)} \big(\tshape^{-}_s \supset \Lambda_s^H\big)\,.
\end{align*}
This latter quantity is exactly handled by Lemma~\ref{lem:tshape-enclosing-large-area}, since for every $s\leq t$, we have  $i_s + j_s \geq T_0 \sqrt R$ as the partition element cannot be a singleton. We deduce that for $T_0$ large enough, 
\begin{align*}
    \mu_{\lambda,N}^{\pm} \Big(\bigcap_{s\leq t} \widetilde \Upsilon_s\Big) &  \leq \prod_{s\leq t} \exp( - (i_s + j_s) R/C)+ e^{ - N^{2/3}/C}\,.
\end{align*}
Using the fact that $i_s +j_s \geq T_0 (|P_s|-1)\sqrt R \geq T_0 \sqrt R$, we altogether find  
\begin{align*}
    \mu_{\lambda,N}^{\pm}(E_3) \leq 2^{3M} \max_{t\leq M} 4^t  \sum_{\substack{(i_s,j_s)_{s\leq t}: \\ i_s+j_s \geq T_0 (|P_s|-1)\sqrt R}} \Big[ \prod_{s\leq t} \exp ( - (i_s + j_s) R/C) + e^{ - N^{2/3}/C} \Big]\,.   
\end{align*}
Using the fact that $M \leq (T_0 \sqrt R)^{-1} N^{1/3}$, and the assumption that $\sum_{s\leq t} |P_s|\geq M/12$, we deduce 
\begin{align*}
    \mu_{\lambda,N}^{\pm} ( E_3) \leq 2^{5M} \Big[\exp(  - T_0 M R^{3/2}/C) + e^{ - N^{2/3}/C}\Big]\,,
\end{align*}
for $R \geq R_0$ where $R_0$  is a sufficiently large constant depending on $\beta, c_\lambda>0$, as desired. 
\end{proof}

\section{Global behavior of the interface}\label{sec:averaged-statistics}
In this section, we prove Theorems \ref{thm:area-under-interface}--\ref{thm:max-tightness}, regarding the global behavior of $\mathscr I$. In Section~\ref{subsec:area-ub} we prove a sharp upper tail on $|\Lambda^-|$ (the area below the interface) beyond some $KN^{4/3}$. 
In Section~\ref{subsec:area-lb} we use the bound of Theorem \ref{thm:effect-of-global-tilt} to lower bound $|\Lambda^-|$, concluding the proof of Theorem~\ref{thm:area-under-interface}. Using a similar lower bound strategy, in Section~\ref{subsec:comparability}, we are also able to prove  a lower bound on the minus component of the bottom boundary, $|\mathcal C^-|$, yielding Theorem~\ref{thm:comparability-whp}. Finally, in Section~\ref{subsec:max-height-of-interface}, we prove Theorem~\ref{thm:max-tightness} showing that the maximum height of $\mathscr I$ is $\Theta(N^{1/3} (\log N)^{2/3})$. 

\subsection{Upper bound on the area contained in $\Lambda^-$}\label{subsec:area-ub}
Whereas the one-point upper tail bound of Section~\ref{sec:upper-tail} already gives the tightness for $N^{-4/3} |\Lambda^-|$, to obtain the sharp tail estimate on $|\Lambda^-|$ of Theorem~\ref{thm:area-under-interface}, we need to understand the correlations in the height oscillations. Towards this, we use the large deviations of the multi-point height oscillations, as proved in Theorem~\ref{thm:multipoint-height-bounds}. 

\begin{proof}[\textbf{\emph{Proof of Theorem~\ref{thm:area-under-interface}: upper bound}}]
First of all, we observe that for every $\mathscr I$, 
\begin{align}\label{eq:Lambda-area-height}
|\Lambda^-| \leq \sum_{x\in \partial_\south \Lambda_N} \hgt_x^+ \leq R_0 N^{4/3}+ \sum_{R\geq R_0} \sum_{x\in \partial_\south \Lambda_N}  N^{1/3} \mathbf 1\{\hgt_x^+ \geq R N^{1/3}\}\,,
\end{align}
Fix $T_0$ and  $R_0$ sufficiently large as per Theorem~\ref{thm:multipoint-height-bounds}; for each $R\geq R_0$, we bound the  number of sites typically attaining height $RN^{1/3}$. Towards this, fix any integer $R_0 \leq R \leq N^{2/9}$, let 
\begin{align*}
M_R = \alpha_R \mathcal N = \frac {\alpha_R}{T_0\sqrt R} N^{1/3}\,,
\end{align*}
for a sequence $\alpha_R$ to be chosen later, and consider the event
\begin{align*}
E_R = \bigcap_{j < T_0 \sqrt R N^{2/3}} E_R^{j} = \bigcap_{j< T_0 \sqrt R N^{2/3}} \bigg\{\sI: \sum_{k\leq \mathcal N} \mathbf 1\{\hgt_{j+k T_0 \sqrt R N^{2/3}}^+ >RN^{1/3} \}  \leq  M_R \bigg\}\,.
\end{align*}
Notice that under the event $\bigcap_{R\in \llb R_0 , N^{2/9}\rrb} E_R$, we have 
\begin{align*}
\sum_{R \in \llb R_0, N^{2/9}\rrb} \sum_{x\in \partial_\south \Lambda_N} N^{1/3} \mathbf 1\{\hgt_x^+ \geq RN^{1/3}\} \leq  \sum_{R\in \llb R_0, N^{2/9}\rrb}  T_0 \sqrt R N M_R \leq \sum_{R\in \llb R_0, N^{2/9}\rrb} \alpha_R N^{4/3}\,.
\end{align*}
From this we see that if $\sum_{R\geq R_0} \alpha_R <\infty$, there exists $C(\beta, c_\lambda)>0$ so that except with probability 
\begin{align}\label{eq:bad-probability-area-tail}
\mu_{\lambda,N}^{\pm} \Big( \bigcup_{R\in \llb R_0 ,N^{2/9}\rrb} E_R^c \Big)
\end{align}
we have $|\Lambda^-|\leq C N^{4/3}$ as desired (if $\sum_{R} \alpha_R <\infty$, then necessarily $M_{N^{2/9}} =o(1)$, so no site makes it beyond height $N^{2/9}$ under $E_{N^{2/9}}$). 
The quantity in~\eqref{eq:bad-probability-area-tail} can be bounded by a union bound as 
\begin{align*}
\sum_{R\in \llb R_0 ,N^{2/9}\rrb} \mu_{\lambda,N}^{\pm} ( E_R^c) \leq \sum_{R\in \llb R_0 ,N^{2/9}\rrb} \sum_{j< T_0 \sqrt R N^{2/3}} \mu_{\lambda,N}^{\pm} \bigg(\sum_{k\leq \mathcal N} \mathbf 1\{\hgt_{j+k T_0 \sqrt R N^{2/3}}^+ >RN^{1/3}\} >M_R\bigg)\,.
\end{align*} 
By  Theorem~\ref{thm:multipoint-height-bounds}, this latter probability is at most 
\begin{align*}
 \sum_{R \in \llb R_0, N^{2/9}\rrb} C T_0 \sqrt R N^{2/3}  \exp ( - R \alpha_R N^{1/3} /C) \,.
\end{align*}
For every $\eta>0$, choosing $\alpha_R = R^{-(1+\delta)}$ for $0<\delta< 9\eta/2$ arbitrarily small, we see that the above is at most $\exp ( - N^{\frac 13 - \eta} /C)$ for some other $C(\beta, c_\lambda)>0$, concluding the proof.  
\end{proof}

\subsection{Lower bounds on $|\Lambda^{-}|$}\label{subsec:area-lb}
We now prove a corresponding lower bound on the area below the interface,
denoted by $\Lambda^{-}$. With Theorem~\ref{thm:effect-of-global-tilt} in hand, the lower bound on $|\Lambda^-|$ comes from showing that the probability that the Ising interface with no external field confines an area of less than $\varepsilon N^{4/3}$ is exponentially decaying in $C_\varepsilon N^{1/3}$ for $C_\varepsilon \uparrow \infty$ as $\varepsilon\downarrow 0$. 

Following~\cite{Velenik}, the argument is to bound the probability of confining a small area by comparing the area in $\Lambda^-$ with the cumulative area under  $N^{1/3}$ many independent copies of ``critically sized" strips of width $N^{2/3}$ as explained in Section~\ref{sec:ideas-of-proofs}. The area confined on each strip was bounded in~\cite{Velenik} using the two-point estimates from the random line representation;  for conciseness, we choose to import this latter fact from the weak convergence of Ising interfaces in infinite strips to Brownian bridges~\cite{DH97,Hryniv98,GrIo05}. We note that if one did not want to rely on this convergence, with the refined Theorem~\ref{thm:effect-of-global-tilt} in hand, one could modify the proof of~\cite{Velenik}, replacing the bounds on maximal fluctuations of random lines with those of Proposition~\ref{prop:max-height-fluctuation-no-field} to obtain the desired.

\begin{proof}[\textbf{\emph{Proof of Theorem~\ref{thm:area-under-interface}: lower bound}}]
As we will apply Theorem~\ref{thm:effect-of-global-tilt}, we start by bounding the probability of $|\Lambda^-({\mathscr I})|<\eta N^{4/3}$ under $\mu_{0,N}^\pm$.
Observe that the event $|\Lambda^-({\mathscr I})|<\eta N^{4/3}$ is an increasing event in the spin configuration, so that one only increases its probability if, for some mesh $(x_i)_i \subset \partial_\south \Lambda$, we set all spins along $\{x_i\} \times \llb 0,\infty \rrb$ to be all-plus. One further increases the configuration by sending the minus boundary conditions along the floor down to $-\infty$, so that we are considering the Ising model on the  strips $S_{i}^{\downarrow} := \llb x_i,x_{i+1} \rrb \times \llb -\infty,N\rrb$ with $\pm$ boundary conditions. 
Let $\gamma_i$  be the interface of $S_i^\downarrow$ under the $\pm$ boundary conditions described above, and let $\Lambda^-(\gamma_i)$ be the set of sites under the interface $\gamma_i$ that are in the upper half plane. Then, there is a coupling such that $|\Lambda^-|$ is lower bounded by a sum of i.i.d.'s given by $\sum_i |\Lambda^-(\gamma_i)|$.

Let $(x_i)_i$ be a mesh with $x_0 = 0$ and $x_{i+1} - x_i = \varepsilon N^{2/3}$ for an $\varepsilon>0$ to be chosen later. We will lower bound the probability that $|\Lambda^-(\gamma_i)|$ is of size order $N$.  In particular, fix $i$ and let $$x_\west = \frac {x_{i}+x_{i+1}}2 - \varepsilon \delta N^{2/3}\,,\qquad \mbox{and}\qquad x_\east = \frac {x_{i}+x_{i+1}}2  +\varepsilon \delta N^{2/3}\,,$$  for some $\delta>0$ to be specified. Consider the probability that $\Lambda^-(\gamma_i)$ completely contains 
$$ \Delta := \llb x_\west, x_\east \rrb \times \llb 0, \sqrt \varepsilon \delta N^{1/3}\rrb\,.$$
We bound this probability by pulling it back from the convergence of the interface on the \emph{infinite} strip to a Brownian bridge. To do so, we first couple the measure induced on $\llb x_i, x_{i+1}\rrb \times \llb -\infty,N/2\rrb$ under $\mu_{0,S_i^\downarrow}^\pm$ to that under $\mu_{0,S_i}^\pm$, where $S_i = \llb x_i,x_{i+1}\rrb \times \llb - \infty,\infty\rrb$. 

\begin{claim}\label{clm:coupling-to-infinite-strip}
Let $\beta>\beta_c$. There exists $C(\beta)>0$ such that for every $i$, 
\begin{align*}
\|\mu_{0,S_i}^\pm (\sigma_{\llb x_i,x_{i+1}\rrb \times \llb -\infty,N/2\rrb}\in \cdot ) - \mu_{0,S_i^\downarrow}^{\pm}(\sigma_{\llb x_i,x_{i+1}\rrb \times \llb -\infty,N/2\rrb}\in \cdot )\|_{\textsc{tv}} \leq C e^{ - N/C}\,.
\end{align*}
\end{claim}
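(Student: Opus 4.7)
The plan is to construct a monotone coupling $(\sigma,\sigma')$ of $\sigma \sim \mu_{0,S_i}^\pm$ (restricted to $\llb -\infty,N\rrb$) and $\sigma' \sim \mu_{0,S_i^\downarrow}^\pm$ such that, with probability at least $1-Ce^{-N/C}$, the two configurations agree throughout $\llb x_i,x_{i+1}\rrb\times \llb -\infty,N/2\rrb$; this immediately yields the claimed total-variation bound. By the domain Markov property, $\mu_{0,S_i^\downarrow}^\pm$ coincides with the conditional law of $\mu_{0,S_i}^\pm$ restricted to $\llb -\infty,N\rrb$ given that all spins in the row $\llb x_i,x_{i+1}\rrb \times \{N+1\}$ equal $+1$; since this is an increasing event, FKG produces such a coupling with $\sigma\le \sigma'$ pointwise, and we let $D:=\{v: \sigma(v)<\sigma'(v)\}$ denote its disagreement set.

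The first step of the plan localizes the interface $\gamma$ of $\sigma$ below height $N/2$: since $\gamma$ has endpoints $(x_i-\tfrac12,-\tfrac12)$ and $(x_{i+1}+\tfrac12,-\tfrac12)$, reaching height $N/2$ forces $|\gamma|\geq N$, and combining the random-line representation~\eqref{eq:random-line-two-point-correlations} with the uniform length tail~\eqref{eq:weight-function-ub} and the lower bound on $\langle \sigma_{\vsw}\sigma_{\vse}\rangle^*$ from~\eqref{eq:point-to-point-bounds} bounds this probability by $Ce^{-N/C}$. Condition on the complementary event $\mathcal{A}$. The second step shows $\mathbb{P}(D\cap \llb -\infty,N/2\rrb \ne \emptyset \mid \mathcal{A})\le Ce^{-N/C}$. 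The boundary conditions seen by $\sigma$ and $\sigma'$ differ only at the row $\{N+1\}$, so by a standard disagreement-percolation argument for the monotone coupling, every $v\in D$ must be connected in $\sigma$ through minus spins to a site at height $N+1$. On $\mathcal{A}$, every site of height $\ge N/2$ lies in the plus phase of $\sigma$ above $\gamma$, so any such connection reaching $\llb -\infty,N/2\rrb$ requires a minus path of length at least $N/2$ in this plus phase. By monotonicity in boundary conditions, the probability of a fixed such path is bounded above by the analogous probability under the infinite-volume plus measure, which by~\eqref{eq:exp-decay-truncated-correlations} is $\le Ce^{-N/(2C)}$. A union bound over the $\varepsilon N^{2/3}$ possible starting sites at height $N+1$ yields the claim.

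The main subtlety is making the disagreement-percolation step rigorous: namely, identifying $D$ as a subset of the minus cluster of $\sigma$ connected to the row of boundary disagreement. This is classical for monotone couplings of low-temperature Ising measures and can be implemented either via a joint Glauber-dynamics construction or via the FK/Edwards--Sokal representation, but requires some care in the present non-compact strip geometry; once this is in hand, the remainder is a quantitative application of the exponential decay of truncated correlations in the plus phase.
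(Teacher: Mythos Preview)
Your argument is correct and follows the same skeleton as the paper: monotone coupling, localize the interface of the stochastically smaller sample below height $N/2$, and then show that in the plus phase above the interface the two configurations must agree on $\llb x_i,x_{i+1}\rrb\times\llb -\infty,N/2\rrb$. The difference is in how that last step is carried out. Rather than invoking disagreement percolation, the paper argues directly: on the event $\{\max_x\hgt_x^+<N/2\}$ (whose complement is handled by Proposition~\ref{prop:max-height-fluctuation-no-field}), the conditional law on $\llb x_i,x_{i+1}\rrb\times\llb N/2,N\rrb$ is that of $\mu_{0,\Lambda^+(\gamma_i)}^+\succeq \mu_{0,\mathbb Z^2}^+$, so by~\eqref{eq:crossing-probability-bound} a horizontal plus crossing of $\llb x_i,x_{i+1}\rrb\times\llb N/2,N\rrb$ exists in the smaller sample except with probability $Ce^{-N/C}$; revealing the uppermost such crossing and applying domain Markov forces the two samples to agree below it. This sidesteps exactly the subtlety you flag---justifying that disagreements percolate through the minus cluster of $\sigma$ in the infinite-strip geometry---and delivers the bound in a couple of lines using only monotonicity and domain Markov. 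Your route is perfectly valid once the disagreement-percolation step is made precise (e.g.\ via Edwards--Sokal or a Glauber grand coupling), and has the merit of generalizing to settings where a clean crossing/domain-Markov structure is not available.
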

\begin{proof}
Observe that $\mu_{0,S_i}^{\pm}\preceq \mu_{0,S_i^\downarrow}^\pm$ and therefore, by monotonicity, there exists a coupling of the two measures such that if $\co_h^+ (\llb x_i,x_{i+1}\rrb \times \llb N/2,N\rrb)$ occurs under the former sample, then that plus crossing is also present under the latter. Revealing the upper-most such plus crossing, by the domain Markov property, the two samples will then agree below that plus crossing, and in particular on $\llb x_{i}, x_{i+1} \rrb \times \llb -\infty, N/2\rrb$. We therefore can bound the total-variation distance by 
\begin{align*}
\mu_{0,S_i}^{\pm}(\max_{x}\hgt_x^+\geq N/2)+ \mu_{0,S_i}^{\pm} (\co_h^+(\llb x_i,x_{i+1}\rrb \times \llb N/2,N\rrb), \max_{x}\hgt_x^+ <N/2)\,.
\end{align*}
By Proposition~\ref{prop:max-height-fluctuation-no-field}, the first probability is at most $Ce^{ - N/C}$ as desired. For the second term, conditionally on any interface $\gamma_i$ having $\max_{x}\hgt_x^+ <N/2$, the distribution induced on $\llb x_i,x_{i+1}\rrb \times \llb N/2,N\rrb$ is that induced by  $\mu_{0,\Lambda^+(\gamma_i)}^+$ which stochastically dominates the measure $\mu_{0,\mathbb Z^2}^+$ on $\llb 0,N \rrb\times \llb N/2,N\rrb$. That is to say, the second term above is at most
\begin{align*}
\sup_{\gamma_i: \max_x \hgt_x^+<N/2} \mu_{0,S_i}^\pm (\co_h^+(\llb x_i,x_{i+1}\rrb \times \llb N/2,N\rrb) \mid {\gamma_i}) \leq \mu_{0,\mathbb Z^2}^+ (\co_h^+(\llb x_i,x_{i+1}\rrb \times \llb N/2,N\rrb))\,,
\end{align*}
which by~\eqref{eq:crossing-probability-bound} is at most $Ce^{-N/C}$ for some $C(\beta)>0$. 
\end{proof}

We now pull back properties of the Brownian excursion to the pre-limiting Ising interface. 
\begin{claim}\label{clm:pullback-BM-convergence}
Fix $\beta>\beta_c$. There exist $\delta, \bar p>0$, such that for every $i$ and any $\varepsilon>0$
\begin{align*}
\mu_{0,S_i}^{\pm}\big(\Delta \subset \Lambda^-(\gamma_i) \subset (\llb x_i,x_{i+1}\rrb \times \llb -\infty,N/2\rrb) \big) > \bar p\,.
\end{align*}
\end{claim}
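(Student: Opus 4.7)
The plan is to identify the target event with a positive-probability event for a standard Brownian bridge, via the diffusive invariance principle of~\cite{DH97,Hryniv98,GrIo05} for the low-temperature Ising interface on an infinite strip.

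First, I would use horizontal translation invariance of $\mu_{0,S_i}^\pm$ to reduce to the case $x_i=0$, so that $S_i=\llb 0,L\rrb\times\llb-\infty,\infty\rrb$ with $L:=\varepsilon N^{2/3}$ and the interface $\gamma_i$ enters and exits at height $0$ on the left and right columns respectively. The invariance principle then tells us that as $L\to\infty$, the rescaled height function $\tilde\gamma(t):=\sigma_\beta^{-1}L^{-1/2}\hgt^{+}_{\lfloor Lt\rfloor}(\gamma_i)$, $t\in[0,1]$, converges in distribution on $C([0,1])$ to a standard Brownian bridge $(B_t)_{t\in[0,1]}$, where $\sigma_\beta>0$ is the transverse diffusion constant. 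The same limit holds with $\hgt^{-}$ in place of $\hgt^{+}$, since by Proposition~\ref{prop:overhang-height-tail} the vertical overhang $\hgt^{+}_x-\hgt^{-}_x$ has exponential tails and is negligible under $L^{1/2}$-rescaling.

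Second, I would rewrite the target event in terms of $\tilde\gamma$: the constraint $\Delta\subset\Lambda^-(\gamma_i)$ becomes $\tilde\gamma(t)\geq \delta/\sigma_\beta$ for all $t\in[\tfrac{1-\delta}{2},\tfrac{1+\delta}{2}]$, while $\Lambda^-(\gamma_i)\subset\llb 0,L\rrb\times\llb-\infty,N/2\rrb$ becomes $\max_t\tilde\gamma(t)\leq(2\sigma_\beta\sqrt{\varepsilon})^{-1}N^{2/3}$, whose right-hand side diverges as $N\to\infty$ and is therefore asymptotically vacuous. I would then choose $\delta>0$ small enough that the Brownian-bridge event
\[
A_\delta:=\Big\{B_t\geq\delta/\sigma_\beta\text{ for all }t\in[\tfrac{1-\delta}{2},\tfrac{1+\delta}{2}]\Big\}
\]
has probability $2\bar p>0$; existence of such $\delta$ is standard, e.g.\ by conditioning on $(B_{(1-\delta)/2},B_{(1+\delta)/2})$ lying in a neighborhood of $(2\delta/\sigma_\beta,2\delta/\sigma_\beta)$ and using positivity of the pinned Brownian transition density between these times. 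Since $A_\delta$ is a continuity set of the Brownian-bridge law, weak convergence gives that the pre-limit probability converges to $\mathbb P(A_\delta)=2\bar p$, and so is at least $\bar p$ once $L\geq L_0(\delta)$.

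The main obstacle is ensuring that $(\delta,\bar p)$ can be taken uniformly in $\varepsilon$. Scale invariance of the scaling limit resolves this: the event $A_\delta$ and its probability depend only on $\delta$, not on $\varepsilon$, so a single threshold $L_0(\delta)$ suffices. In the one use of the claim---the lower bound of Theorem~\ref{thm:area-under-interface}---the parameter $\varepsilon$ is a fixed positive constant and $N\to\infty$, so the condition $L=\varepsilon N^{2/3}\geq L_0(\delta)$ is eventually met, and the constants $(\delta,\bar p)$ are genuinely independent of $\varepsilon$ in the regime of interest.
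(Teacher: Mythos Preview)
Your approach is correct and essentially the same as the paper's: both pull back a positive-probability Brownian-bridge event through the invariance principle of~\cite{GrIo05}. Two minor points where the paper is slightly more careful: (i) the convergence in~\cite{GrIo05} is stated for a specific linear interpolation $\mathcal L_N[\gamma_i]$ through break points, not for $\hgt^+_{\lfloor Lt\rfloor}$ directly, so the paper separately invokes the fact (from~\cite{GrIo05}) that this interpolation is within $(\log N)^4$ of the actual interface except with probability $o(1)$; your overhang argument via Proposition~\ref{prop:overhang-height-tail} addresses $\hgt^+$ versus $\hgt^-$ but not this interpolation gap. (ii) The paper keeps a finite upper-bound constraint in its limiting event $\Gamma_\sqcap$ (making it an explicit open set for Portmanteau), whereas you argue the upper bound $\max_t\tilde\gamma(t)\le (2\sigma_\beta\sqrt\varepsilon)^{-1}N^{2/3}$ is asymptotically vacuous---both are fine. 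Your discussion of uniformity in $\varepsilon$ is accurate and matches how the claim is actually used.
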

\begin{proof}
In~\cite[Theorem 1.2]{GrIo05}, it was shown that if we take a certain linear interpolation $\mathcal L_N[\gamma_i]$ of the interface $\gamma_i$ (informally, linearly interpolating between the \emph{break points} of $\gamma_i$) and rescale it appropriately ($\varepsilon^{-1} N^{-2/3}$ in the $x$-direction and $\varepsilon^{-1/2}N^{-1/3}$ in the vertical direction) to obtain $\phi_N:[0,1]\to \mathbb R$, there exists $\mathscr T_\beta>0$ such that $\frac{1}{\mathscr T_{\beta}}\phi_N$ converges weakly to the standard Brownian bridge in $C_0[0,1]$ (continuous functions on $[0,1]$ having $f(0)= f(1)=0$) with the uniform topology. 

Now, consider the subset of paths in $C_0[0,1]$ confined to the open subset of $[0,1]\times \mathbb R$,
\begin{align*}
\Gamma_\sqcap := ([0,1]\times (-\infty, \mathscr T_\beta^{-1}))\setminus ([-\delta,\delta]\times [-\infty,2 \mathscr T_\beta^{-1} \delta])\,;
\end{align*}
i.e., the paths that confine the strip $[-\delta,\delta]\times [-\infty,2\mathscr T_\beta^{-1}\delta]$ below them. 
Evidently $\{f\in C_0[0,1]:\{(x,f(x))\}\subset \Gamma_\sqcap\}$ is open in $C_0 [0,1]$.  As such, there exists $p(\delta,\beta)>0$ such that 
\begin{align*}
\liminf_{N\to\infty} \mu_{0,S_i}^{\pm}( {\mathscr T_{\beta}^{-1}} \phi_N \in \Gamma_\sqcap) \geq p\,,
\end{align*}
using the fact that $\Gamma_{\sqcap}$ is a positive probability event for the standard Brownian bridge. 

Next, using the fact~\cite[Eqs.~(1.10),(1.15)]{GrIo05} that the linear interpolation is at most distance $(\log N)^4$ (microscopic in the rescaling above) from the actual interface, $\gamma_i$, except with probability $o(1)$, we have that for large enough $N$, if $\bar p =p/2$, 
\[\mu_{0,S_i}^{\pm} \big( \Delta \subset \Lambda^-(\gamma_i) \subset (\llb x_i,x_{i+1}\rrb \times \llb -\infty, \varepsilon^{1/2} N^{1/3}\rrb) \big) >\bar p\,. \qedhere
\]\end{proof}

Combining Claims~\ref{clm:coupling-to-infinite-strip}--\ref{clm:pullback-BM-convergence}, we see that for all small enough $\delta>0$, there exists $\bar p>0$ such that 
\begin{align*}
\mu_{0,S_i^\downarrow}^{\pm}(|\Lambda^-(\gamma_i)|\geq \varepsilon^{3/2}\delta^2 N) \geq  \mu_{0,S_i}^{\pm}\big(\Delta \subset \Lambda^-(\gamma_i) \subset (\llb x_i,x_{i+1}\rrb \times \llb -\infty,N/2\rrb) \big) - Ce^{ - N/C} >  \bar p\,.
\end{align*}
Now, let $Z_i$ be the indicators that under $\mu_{0,S_i^\downarrow}^\pm$ the event $|\Lambda^-(\gamma_i)|\geq \varepsilon^{3/2}\delta^2 N$ holds. Recall that the law of $|\Lambda^-(\gamma)|$ under $\mu_{0,N}^{\pm}$ dominates $\sum_i |\Lambda^-(\gamma_i)|$ under $\otimes_i \mu_{0,S_i^\downarrow}^{\pm}$. Therefore, for every $r$,
\begin{align*}
\mu_{0,N}^{\pm}(|\Lambda^-(\gamma)|\leq r)\leq \otimes_i \mu_{0,S_i^\downarrow}^{\pm} \big( \sum_i \varepsilon^{3/2} \delta^2 N  Z_i \leq r\big)\,.
\end{align*}
By standard Binomial concentration, since $\mu_{0,S_i}^{\pm}(Z_i =1) \geq \bar p$, for $\eta< \eta_0= \bar p \varepsilon^{1/2}\delta^2/2$, we have that 
\begin{align*}
\otimes_i \mu_{0,S_i}^{\pm} (\sum_i \varepsilon^{3/2}\delta^2 N Z_i <  \eta N^{4/3}) & \leq \otimes_i \mu_{0,S_i}^{\pm}\Big(\sum_i^{\varepsilon^{-1}N^{1/3}} Z_i < \eta \varepsilon^{-3/2} \delta^{-2} N^{4/3}\Big)  \leq C\exp ( - \varepsilon^{-1} N^{1/3}/C)\,,
\end{align*}
for some $C(\bar p, \delta)$, i.e., $C(\beta,c_\lambda)>0$ (independent of $\varepsilon$). 
Taking $\varepsilon$ sufficiently small, $\varepsilon^{-1}/C$ above will be larger than $C_1$ from Theorem~\ref{thm:effect-of-global-tilt}, and we would find that for such $\varepsilon>0$, for every $\eta<\eta_0$, $$\mu_{0,N}^{\pm}(|\Lambda^-|<\eta N^{4/3})\leq Ce^{ - ((C\varepsilon)^{-1} - C_1)N^{1/3}} + Ce^{- N/C}$$ yielding the desired for some other $C(\beta)>0$. 
\end{proof}

\subsection{Comparability of $|\mathcal C^-|$ and $|\Lambda^-|$}\label{subsec:comparability} Following the proof strategy of the lower bound on $|\Lambda^-|$, we prove the lower bound of~\eqref{componentlowerbound}. By combining~\eqref{componentlowerbound} with the upper tail of $|\Lambda^-|$, Theorem~\ref{thm:comparability-whp} straightforwardly follows, proving Conjecture~1 of~\cite{Velenik}.

\begin{proof}[\textbf{\emph{Proof of \eqref{componentlowerbound} in Theorem~\ref{thm:comparability-whp}}}]
Fix mesh points $(x_i)_{i\leq N^{1/3}} \subset\partial_\south \Lambda$, given by $x_0 = 0$ and $x_{i+1} - x_i = N^{2/3}$ for each $i$. Since the event $|\mathcal C^-|<K^{-1} N^{4/3}$ is a decreasing event, we only increase its probability, by setting all spins along $\{x_i\}\times \llb 0,N\rrb$, and all sites above height $ T N^{1/3}$ (for a $T$ to be taken large later) to be plus. If we denote by $\Lambda_{i,h} = \llb x_i, x_{i+1} \rrb \times \llb 0,h\rrb$, we find that the sum of $|\mathcal C^-_i|$ (the minus clusters of $\partial_\south \Lambda_{i,h}$) under $\mu_{\lambda,\Lambda_{i,TN^{1/3}}}^{\pm}$ lower bounds $|\mathcal C^-|$ under $\mu_{\lambda,N}^{\pm}$.   

Notice that with the plus spins that were added, the measures induced on $\Lambda_{i,TN^{1/3}}$ are independent. Thus, by translation invariance, and standard concentration for binomials, we obtain the desired if there exist choices of $T(\beta,c_\lambda)$, $c(\beta, c_\lambda)>0$ and $K(\beta,c_\lambda)>0$ such that for any $i$,
\begin{align*}
    \mu_{\lambda,\Lambda_{i,TN^{1/3}}}^{\pm} (|\mathcal C^-_i| > K^{-1} N)  >c\,.
\end{align*}
In order to prove this, we first remove the external field using~\eqref{eq:radon-nikodym-bound}.  
Then, by Corollary~\ref{cor:max-height-fluctuation-floor} and a coupling argument as in Claim~\ref{clm:coupling-to-infinite-strip}, for every $T>0$, we can couple $\mu_{0,\Lambda_{i,T N^{1/3}}}^{\pm}$ to $\mu_{0,\Lambda_{i,\infty}}^{\pm}$ except with probability $1-e^{ - T^2 /C} - Ce^{ - N^{1/3}/C}$. Together, we deduce
\begin{align*}
    \mu_{\lambda,\Lambda_{i,TN^{1/3}}}^{\pm} (|\mathcal C^-_i| > K^{-1} N) \geq e^{ - 2 c_\lambda T} \mu_{0,\Lambda_{i,\infty}}^{\pm} (|\mathcal C^-_i| > K^{-1} N) - e^{ - T^2/C}\,,
\end{align*}
for all $T$ sufficiently large in $\beta,c_\lambda$. In fact, taking $T$ sufficiently large, it will suffice to prove for some $c(\beta),K(\beta)>0$ that $\mu_{0,\Lambda_{i,\infty}}^{\pm} (|\mathcal C^-_i|>K^{-1}N)>c$. Towards this, let 
\begin{align*}
    \Delta_{\delta} = \llb \tfrac{1}{2}(x_i + x_{i+1})  - \delta N^{2/3}, \tfrac{1}{2}(x_i + x_{i+1})  + \delta N^{2/3} \rrb \times \llb 0 ,  \delta N^{1/3}\rrb \,.
\end{align*}
By monotonicity w.r.t.\ the (doubly) infinite strip and Claim~\ref{clm:pullback-BM-convergence}, there exist $\bar p (\beta),\delta(\beta)>0$ so that  
\begin{align*}
    \mu_{0,\Lambda_{i,\infty}}^{\pm} (\Lambda^- (\gamma_i) \supset \Delta_{2\delta}) >\bar p\,.
\end{align*}
Under this event, the measure induced on $\Delta_{2\delta}$ is stochastically dominated by that induced by $\mu_{0,\mathbb Z^2}^-$, so that it suffices for us to now consider the intersection of the minus component of the x-axis with the box $\Delta_{2\delta}$ under $\mu_{0,\mathbb Z^2}^-$.  Towards this, define the crossing event $\circuit_{\sqcap}^- (\Delta_{2\delta} \setminus\Delta_{\delta})$ 
that there exists a minus half-circuit in the half-annulus $\Delta_{2\delta} \setminus \Delta_\delta$. Moreover, tile $\Delta_{\delta}$ by $2N^{1/3}$ many disjoint $\delta N^{1/3}\times \delta N^{1/3}$ squares $(\bmB_{\ell})_{\ell \leq  2N^{1/3}}$. Let $\Gamma_{i,\eta}$ be the event that in the $\ell$'th tile, 
\begin{align*}
    \sum_{v\in \bmB_{\ell}} \mathbf 1\{v\in \mathcal C_{\partial \Delta_{2\delta}}^-\} > \eta N^{2/3}\,.
\end{align*}
Note that these are all decreasing events measurable w.r.t.\ the configuration on $\Delta_{2\delta}$. Moreover, on the event $\circuit_{\sqcap}^- (\Delta_{2\delta} \setminus\Delta_{\delta})$, a connection from some $v\in \Delta_\delta$ to $\partial \Delta_{2\delta}$ implies $v\in \mathcal C^-_i$. Thus, we have 
\begin{align*}
    \mu_{0,\Lambda_{i,\infty}}^{\pm} (|\mathcal C^-_i| > \eta N) \geq \bar p \cdot \Big(1- \mu_{0,\mathbb Z^2}^- \Big(\big(\circuit_{\sqcap}^-(\Delta_{2\delta} \setminus\Delta_\delta)\big)^c \cup \bigcup_{i \leq 2N^{1/3}} \Gamma_{i,\eta}^c\Big)\Big)\,.
\end{align*}
We bound the probability on the right-hand side by a union bound. By~\eqref{eq:circuit-probability-bound}, the probability of the half-circuit not existing is at most $N \exp( - \delta N^{1/3}/C)$. By the surface order large deviations,~\eqref{eq:surface-order-ld-component-of-infinity}, if $\eta>0$ is sufficiently small (depending on $\delta$), the probability of $\Gamma_{i,\eta}^c$, is at most $\exp ( -  N^{1/3}/C)$. Taking $\delta$ sufficiently small in $\beta,c_\lambda$ first, then $\eta$ sufficiently small subsequently, the probability above is at least some $c(\beta,c_\lambda)>0$, from which the desired follows by taking $T$ sufficiently large.  
\end{proof}

\begin{proof}[\textbf{\emph{Proof of \eqref{comparability1} in Theorem~\ref{thm:comparability-whp}}}]
By definition, $\mathcal C^- \subset \Lambda^-$, so that deterministically, $|\mathcal C^-|\leq |\Lambda^-|$. On the other hand, for every $K>0$,  
\begin{align*}
    \mu_{\lambda,N}^{\pm} (|\mathcal C^-|<  K^{-2} |\Lambda^-|) \leq \mu_{\lambda,N}^{\pm} (|\mathcal C^-|<K^{-1} N^{4/3}) + \mu_{\lambda,N}^{\pm} (|\Lambda^-| >K N^{4/3})\,.
\end{align*}
There exists $K(\beta, c_\lambda)>0$ such that the first term on the right-hand side is at most $\exp ( - N^{1/3}/C)$ by~\eqref{comparability1}, and the second term is at most $\exp( - N^{1/3 - o(1)})$ by~\eqref{eq:area-tail-bounds} of Theorem~\ref{thm:area-under-interface}. 
\end{proof}

\subsection{Maximum height of the interface}\label{subsec:max-height-of-interface}

We next use the tail estimates of Section~\ref{sec:upper-tail} to prove Theorem~\ref{thm:max-tightness}, and show that
the maximum height of the interface is of size $\Theta(N^{1/3}(\log N)^{2/3})$. 

\medskip
\noindent \textbf{Proof of lower bound.} We begin with the lower bound, which, similar to the proof of the lower bound in Theorem~\ref{thm:area-under-interface}, relies on monotonicity to compare with a no-field model at the correct scale of interface fluctuations, and then apply the Gaussian lower bound of Lemma~\ref{lem:Gaussian-lower-bound}. It suffices for us to show that there exists $\eta_0>0$ such that for every $0<\eta<\eta_0$ and every $c(\beta,c_\lambda)\in (0,\frac 13)$, 
\begin{align}\label{eq:wts-lower-bound-max}
\mu_{\lambda,N}^{\pm}\big(\max_x \hgt_x^-< N^{1/3}(\eta \log N)^{2/3}\big)\leq & \exp\big(-cN^{\frac 13 - c}\big)\,.
\end{align}
To bound the above, fix $R$ to be chosen later (polylogarithmic in $N$) and consider the mesh,
\begin{align*}
x_i = i RN^{2/3} \qquad \mbox{for}\qquad i= 1,...,N^{1/3}/R\,.
\end{align*}
along with the rectangles given by
$$\Lambda_i^R =  \llb x_{i}-\sqrt R N^{2/3},x_{i}+\sqrt R N^{2/3}\rrb\times \llb0,3 RN^{1/3}\rrb\,,$$
with boundary conditions that are $+$ on $\partial_{\east, \north, \west} \Lambda_i^R$ and $-$ on $\partial_\south \Lambda_i^R$.  By monotonicity, the measure $\mu_{\lambda,\Lambda_i^R}^{\pm}$ stochastically dominates the measure induced by $\mu_{\lambda, N}^{\pm}$ on $\Lambda_i^R$. Thus $\max \hgt_{x}^-$ under $\mu_{\lambda, N}^{\pm}$, which is larger than $\max_{x\in \bigcup \partial_\south \Lambda_i^R} \hgt_x^-$, stochastically dominates the maximum over $i$ of  $N^{1/3}/R$ i.i.d. random variables $M_i$ distributed as  $\max_{x\in \partial_\south \Lambda_i^R}\hgt_x^-$ under $\mu_{\lambda,\Lambda_i^R}^{\pm}$. As such,  for any choice of $R$, 
\begin{align*}
\mu_{\lambda,N}^{\pm}\big(\max_{x\in \partial_\south \Lambda_N} \hgt_x^- < R N^{1/3} \big) \leq \mathbb P \Big( \mbox{Binom} \big(N^{1/3}/{R}, \mu_{\lambda,\Lambda_i^R}^{\pm}(M_i >  R N^{1/3} \big) = 0\Big)\,.
\end{align*}
Let us now bound the success probability in the binomial above.  First of all, by~\eqref{eq:radon-nikodym-bound}, 
\begin{align*}
\mu_{\lambda, \Lambda_i^R}^{\pm}(M_i > RN^{1/3}) \geq e^{-12 c_\lambda R^{3/2}} \mu_{0,\Lambda_i^R}^{\pm}(M_i > RN^{1/3})\,.
\end{align*}
If we denote by $ \Lambda_{i,R}^\downarrow$, the box $ \llb x_i  - \sqrt R N^{2/3}, x_i + \sqrt R N^{2/3} \rrb \times \llb -\infty ,3RN^{1/3}\rrb$, we see that  
\begin{align*}
\mu_{0,\Lambda_i^R}^{\pm}\big(M_i > RN^{1/3}\big) \geq \mu_{0, \Lambda_{i,R}^\downarrow}^{\pm}\big(M_i > RN^{1/3}\big) \geq \mu_{0, \Lambda_{i,R}^\downarrow}^{\pm} (\hgt_{x_i}^{-} >RN^{1/3})\,.
\end{align*}
By Lemma~\ref{lem:Gaussian-lower-bound}, as long as $R$ is sufficiently large and $R \leq N^{2/9}$ we see that for some $C(\beta)>0$,  
\begin{align*}
\mu_{0, \Lambda_{i,R}^\downarrow}^{\pm} \big(\hgt_{x_i}^{-} > RN^{1/3} \big) \geq e^{- C R^{3/2}} \qquad \mbox{so that} \qquad \mu_{\lambda, \Lambda_i^R}^{\pm} \big(M_i >RN^{1/3}\big) \geq  e^{-(12 c_\lambda + C)R^{3/2}}\,,
\end{align*}   
for all $R = o(N^{2/9})$. 
If we then take 
\begin{align*}
R = (\eta \log N)^{2/3}\,, \qquad \mbox{for}\qquad \eta <\eta_0 : = \frac{1}{6}(12c_\lambda + C)^{-1}\,,
\end{align*}
then for every $i$, the probability that $M_i >RN^{1/3}$ will be at least $\Omega(N^{-\frac 13 + c})$ for some $c(\beta,c_\lambda)>0$, and we recall that there are $N^{1/3}/R= \Omega(N^{1/3}/(\log N)^{2/3})$ many such attempts. As such, by standard concentration for binomial random variables, we will obtain 
\begin{align*}
\mu_{\lambda,N}^{\pm} \Big(\max_{x\in \partial_\south \Lambda_N} \hgt_x^- < N^{1/3} (\eta \log N)^{2/3}\Big) \leq C\exp (- N^{\frac 13}/(C(\log N)^{2/3}))\,,
\end{align*}
for some $C(\beta, c_\lambda)>0$. This yields the desired sufficiently large $N$. 

\medskip\smallskip
\noindent \textbf{Proof of upper bound.}
To upper bound $\max_x \hgt^+_x$, we use the upper tail of Proposition~\ref{prop:right-tail-ub} above any $x\in\partial_\south \Lambda_N$ and then union bound over the $N$ such choices.  Namely, for every $K>0$, 
\begin{align*}
\mu_{\lambda,N}^\pm (\max_x \hgt^+_x>KN^{1/3}(\log N)^{2/3}) & \leq N \cdot \max_{x\in \partial_\south \Lambda_N} \mu_{\lambda,N}^\pm(\hgt^+_x>KN^{1/3}(\log N)^{2/3})\,.
\end{align*}
Since the upper bound in Proposition~\ref{prop:right-tail-ub} holds uniformly over all $x\in \partial_\south \Lambda_N$ for all $R\leq N^{2/9}$,  
\begin{align}\label{eq:max-fluctuation-upper-bound}
\mu_{\lambda,N}^\pm (\max_x \hgt^+_x>KN^{1/3}(\log N)^{2/3}) \leq C N \exp\big(-K^{3/2} (\log N)/C\big)\,,
\end{align}
from which it follows that if  $K>K_0 = C^{2/3}$, the above is at most $N^{1-(K^{3/2}/C)}=o(1)$ as desired. 
\qed

\section{Proofs of domain enlargement couplings}\label{sec:proofs-domain-enlargements}
In this section, we prove the decreasing and increasing enlargement couplings used throughout the paper. Observe that the single-strip domain enlargements, namely, Propositions~\ref{prop:minus-enlargement}--\ref{prop:plus-enlargement} are instances of the more involved Propositions~\ref{prop:multi-strip-minus-enlargement}--\ref{prop:multi-strip-plus-enlargement}; we thus prove the latter two. 

Recall from Section~\ref{subsec:enlargements}, that $\mathfrak R = (R_1, ..., R_k)$ with $R_i = \llb x_\west^{(i)}, x_\east^{(i)}\rrb \times \llb 0,N\rrb$ as in~\eqref{eq:R-def}. Fixing $r$ and the sequence $\mathfrak h = (h_1 ,..., h_k)$ we recall the definitions $\mathcal E_{\leq \mathfrak h}$ and $\mathcal E_{\geq \mathfrak h}$ from~\eqref{eq:entry-exit-conditioning-event}, as well as the enlargements $ \enlarge_r(R_i)$, $\enlarge^\uparrow _{r,h}(R_i)$, $ \enlarge^\downarrow_{r,h}(R_i)$ and $ \tshape_{r,h}(R_i)$ from~\eqref{eq:enlargement}--\eqref{eq:tshape-enlargement}.  From those we deduce the unions of the T-shaped enlargements, denoted $\mathfrak T$ and the decreasing and increasing enlargements $\mathfrak E^{\uparrow}$ and $\mathfrak E^{\downarrow}$ as in~\eqref{eq:multistrip-enlarge-minus}--\eqref{eq:multistrip-enlarge-plus}. 
For ease of notation, additionally let
\begin{align*}
 \mathfrak R^{\uparrow} = \mathfrak R \cap \mathfrak E^\uparrow\qquad \mbox{and}\qquad \mathfrak R^\downarrow  = \mathfrak R \cap \mathfrak E^\downarrow\,,
\end{align*}
where these intersections are taken element by element.

\begin{proof}[\textbf{\emph{Proof of Proposition~\ref{prop:multi-strip-minus-enlargement}}}]
We prove the inequality using a resampling procedure to couple the boundary conditions induced on $\mathfrak R$ from interfaces in $\mathcal E_{\leq \mathfrak h}$ with those induced by the enlargement. For ease of notation, fixing $\mathfrak h$ let $\cE:=\mathcal E_{\leq \mathfrak h}$, 
Consider $\mathfrak E  = \bigcup_i \enlarge_i$ where $\enlarge_i = \enlarge_{r}(R_i)$, each with ``boundary conditions" denoted $(+, \mathsf E_i, -,h_i)$, which are all-plus on $\partial_{\north,\east,\west} \mathsf E_i$,  minus on $\partial_\south \enlarge_i$ and additionally by minus along the sites in $\partial R_i$ at and below height $h_i$ (i.e., on $\{x_\west^{(i)}\}\times \llb 0,h_i\rrb$, $\{x_\east^{(i)}\}\times \llb 0,h_i\rrb$; see the middle diagram in Figure~\ref{fig:enlargement-proof}). Since the strips $(R_i)_{i\leq k}$ are separated by distances of at least $2r+2$, there is a configuration on $\Lambda \setminus \mathfrak R$ such that, together with the minus spins on $\partial R_i$ at and below height $h_i$, it induces the boundary conditions $(+,\enlarge_i, -,h_i)$ for each $i\leq k$: denote these boundary conditions by $(+, \mathfrak E, -, \mathfrak h)$. Then, by monotonicity in boundary conditions,  
\begin{align*}
\mu_{\lambda,\mathfrak T}^{(\pm,\mathfrak h)}(\sigma_{\mathfrak R} \in \cdot ) \preceq \mu_{\lambda,\fE}^{(+,\fE,-, \fh)} (\sigma_\fR \in \cdot) \qquad \text{and}\qquad  \mu_{\lambda, \fE^\uparrow}^{(\pm,\fh)}(\sigma_{\fR^\uparrow}\in \cdot) \preceq \mu_{\lambda,\fE}^{(+,\fE,- , h)} (\sigma_{\fR^\uparrow} \in \cdot) \,.
\end{align*}
As such, it suffices for us to show that for every decreasing $A\subset \{\pm 1\}^\fR$ (any decreasing event in $\{\pm 1\}^{\fR^{\uparrow}}$ is also a decreasing event on $\{\pm 1\}^\fR$),
\begin{align}\label{eq:WTS-decreasing-enlargement}
\mu_{\lambda, N}^{\pm}(\sigma_\fR \in A \mid \mathcal E) \leq \mu_{\lambda, \fE}^{(+,\fE,- , \fh)} (\sigma_\fR \in A) + CN k e^{ - r/C}\,.
\end{align}

By the domain Markov property, to obtain the distribution induced by $\mu_{\lambda, N}^{\pm}(\cdot \mid \mathcal E)$ on $\mathfrak R$, we wish to understand the law of the configuration induced on $\Lambda \setminus \fR$, then view that as a boundary condition on $\fR$. To that end, observe that 
\begin{align*}
\mu_{\lambda,N}^{\pm}(\sigma_\fR \in \cdot \mid \mathcal E) = \mathbf E_{\mu_{\lambda,N}^{\pm}(\eta_{\Lambda \setminus \fR} \in \cdot \mid \mathcal E)} \big[ \mu_{\lambda,R}^{\eta_{\Lambda \setminus \fR}}( \sigma_\fR \in\cdot \mid \mathcal E_{\eta_{\Lambda \setminus \fR}}) \big]\,,
\end{align*}
where for each fixed $\eta_{\Lambda\setminus \fR}$, the set $\mathcal E_{\eta_{\Lambda\setminus \fR}}$ is the set of configurations on $\fR$ such that concatenated with $\eta_{\Lambda\setminus \fR}$, the full configuration is in $\mathcal E$.  
Now notice that for every $\eta_{\Lambda\setminus \fR}$, the event $\mathcal E_{\eta_{\Lambda\setminus \fR}}$ is an increasing event (more minuses in $\fR$ can only increase $\hgt_{x_\west}^+$ and $\hgt_{x_\east}^+$), and therefore by the FKG inequality, we have for every decreasing event $A \subset \{\pm 1\}^\fR$, 
\begin{align*}
\mu_{\lambda, N}^{\pm}(\sigma_\fR\in A \mid \mathcal E) \leq \mathbf E_{\mu_{\lambda,N}^{\pm}(\eta_{\Lambda \setminus \fR}\in \cdot \mid \mathcal E)} \big[ \mu_{\lambda,\fR} ^{\eta_{\Lambda\setminus \fR}} (\sigma_\fR \in A )\big] = \mathbf E_{\mu_{\lambda,N}^{\pm}(\eta_{\partial \fR}\in \cdot \mid \mathcal E)} \big[ \mu_{\lambda,\fR} ^{\eta_{\partial \fR}} (\sigma_\fR \in A )\big]\,.
\end{align*}
We now wish to understand the induced measure over boundary configurations $\eta_{\partial \fR}$ and couple these to the measure induced by $(+,\mathfrak E, -,  \mathfrak h)$. Fix \emph{any} $\sI\in \mathcal E$. Conditionally on $\sI$, the distribution induced on $\Lambda$ is then given by a combination of the Ising distribution on $\Lambda^+(\sI)$ with its plus-boundary conditions (i.e., on $\partial \Lambda$ and on those sites adjacent to $\sI$ frozen to be plus), and the Ising distribution on $\Lambda^-(\sI)$ with minus-boundary conditions (sites adjacent to $\sI$ frozen minus). 

\begin{figure}
\centering
\includegraphics[width=.95\textwidth]{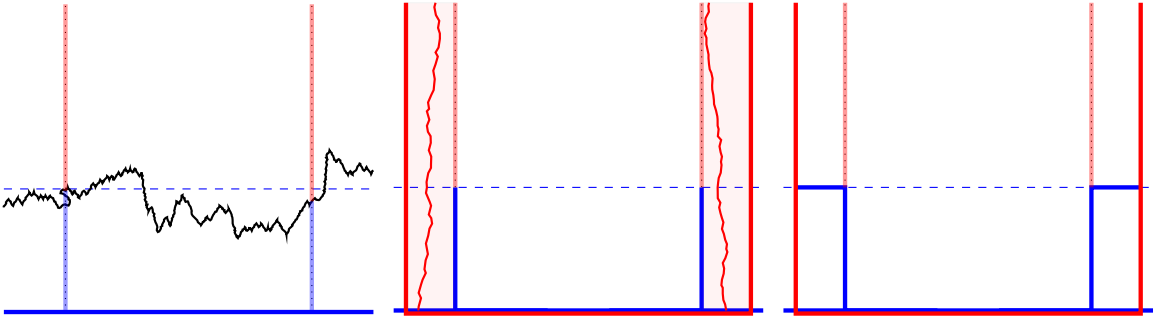}
\caption{Conditionally on an $\mathscr I\in \mathcal E_{\leq \mathfrak h}$, the distribution induced on the east/west boundaries of a strip $R_i$ is mostly plus above $h_i$ and mostly minus below (left). We can couple this induced distribution to the boundary conditions induced by the enlargement $\enlarge_i$ with plus boundary, above height $h_i$ and with minuses up to height $h_i$, in the presence of two vertical crossings in $\enlarge_i \setminus R_i$ (middle). These boundary conditions can be further decreased to the $(\pm,h_i)$ boundary conditions on $\tshape_i$ (right).}\label{fig:enlargement-proof}\vspace{-.2cm}
\end{figure}

Consider first the random configuration induced on the intervals
$$\partial^\uparrow_{\west} R_i:= \partial_\west R_i \times \llb h_i,N\rrb \qquad \text{and} \qquad \partial^\uparrow_\east R: =\partial_\east R_i \times \llb h_i,N\rrb\,,$$ 
collected in the sets $\partial^\uparrow_\west \fR = \bigcup_i \partial_\west^\uparrow R_i$ and $\partial_\east^\uparrow \fR = \bigcup_i \partial_\east^\uparrow R_i$. Define $\partial_\west^\downarrow \fR$ and $\partial_\east^\downarrow \fR$ analogously, with $\llb h_i,N\rrb$ replaced by $\llb 0,h_i\rrb$ above. Since $\partial_\west^\uparrow \fR \cup \partial_\east^\uparrow \fR \subset \Lambda^+$, we have that for every $\sI \in \mathcal E$, 
\begin{align*}
\mu_{\lambda,N}^{\pm} (\sigma_{\partial^\uparrow_\west \fR\cup \partial^\uparrow_\east \fR}\in \cdot \mid \sI) = \mu_{\lambda,\Lambda^+}^{+} (\sigma_{\partial^\uparrow_\west \fR \cup \partial^\uparrow_\east \fR}\in \cdot) \succeq \mu_{\lambda,N}^+  (\sigma_{\partial^\uparrow_\west \fR \cup \partial^\uparrow_\east \fR}\in \cdot)\,.
\end{align*}
Now consider the remainder of $\bigcup_i \partial_\west R_i \cup \partial_\east R_i$, i.e., $\partial_\west^\downarrow \fR \cup \partial_\east^\downarrow \fR$. On these stretches, we can of course only decrease the configuration by setting all sites on $\partial_\west^\downarrow \fR \cup \partial_\east^\downarrow \fR$ to be minus. As such, if we denote by $\partial^\uparrow \fR:= \partial^\uparrow_\west \fR \cup \partial^\uparrow_\east \fR$ and let $\eta_{\partial^\uparrow \fR}$ be a randomly sampled configuration from $\mu_{\lambda,N}^{+}$ on $\partial^\uparrow \fR$, we see that for every $\sI\in \mathcal E$, the measure $\mu_{\lambda,N}^{\pm}(\sigma_{\partial \fR} \in \cdot \mid \sI)$ stochastically dominates the measure on $\partial \fR$ induced by taking $\eta_{\partial^\uparrow \fR}$ on $\partial^\uparrow \fR$, and minus on $\partial^\downarrow_\west \fR \cup \partial^\downarrow_\east \fR$ (as well as $+$ on $\partial_\north \fR$ and $-$ on $\partial_\south \fR$ as induced by $\pm$ on $\Lambda_N$).  
As such, we have that 
\begin{align*}
\mu_{\lambda,N}^{\pm}(\sigma_\fR \in A \mid \mathcal E) & \leq \sup_{\sI \in \mathcal E} \mathbf E_{\mu_{\lambda,N}^{\pm}(\eta_{\uparrow \fR}\in \cdot \mid \sI)} [\mu_{\lambda,\fR}^{(\eta_{\partial^\uparrow \fR}, -,\leq \fh)} (\sigma_\fR\in A)] \leq \mathbf E_{\mu_{\lambda,N}^+(\eta_{\partial^\uparrow \fR}\in \cdot)} [\mu_{\lambda,\fR}^{(\eta_{\partial^\uparrow \fR}, -,\leq \fh)} (\sigma_\fR\in A)]
\end{align*}
where the $(\eta_{\partial^\uparrow \fR}, - ,\leq \fh)$ boundary conditions are plus on $\partial_\north \fR$, distributed according to $\eta_{\partial^\uparrow \fR}$ on ${\partial^\uparrow \fR}$, and are all-minus elsewhere (on $\partial_\south \fR \cup \partial_\west^\downarrow \fR \cup \partial_\east^\downarrow \fR$).
We now wish to couple the induced random boundary conditions $\eta_{\partial^{\uparrow} \fR}$ from $\mu_{\lambda,N}^+$ with the induced random boundary conditions from the collection of enlargements $\fE$ with plus-boundary conditions. See  Figure~\ref{fig:enlargement-proof} for a depiction at a specified strip $R_i$.
Now let 
\begin{align*}
\co_{v}^+ := \bigcap_{i\leq k} \co_{v,i}^+: = \bigcap_{i\leq k} \co_v^+(\llb x_\west^{(i)} - r,x_\west^{(i)} \rrb \times \llb 0,N\rrb) \cap \co_{v}^+ (\llb x_\east^{(i)}, x_\east^{(i)} + r\rrb \times \llb 0,N\rrb )
\end{align*}
where the latter events were defined in~\eqref{eq:crossing-events}.
Since the measure $\mu_{\lambda,\fE}^+$ stochastically dominates the measure $\mu_{\lambda,N}^+$, there is a coupling of the two distributions such that for each $i$, if $\co_{v,i}^+$ holds under the latter, then all sites of that crossing are also plus under the former, and revealing the outer-most such vertical crossings attaining $\co_{v,i}^+$, by the domain Markov property, the two configurations agree everywhere on $\partial^\uparrow_\west R_i \cup \partial^\uparrow_\east R_i$. In particular, since the enlargements $\enlarge_{r,h_i}(R_i)$ are disjoint, this revealing can be done simultaneously for all $i$, and if $\co_{v}^+$ holds under the sample from $\mu_{\lambda ,N}^+$, then the two measures will be coupled to agree on $\partial_\west^\uparrow \fR\cup \partial_\east^\uparrow \fR$ under $\mu_{\lambda ,\fE}^+$. 

Therefore, by monotonicity, for every decreasing event $A_{\partial^\uparrow \fR}\in \{\pm 1\}^{\partial^\uparrow \fR}$, 
\begin{align*}
\mu_{\lambda,N}^{+}(\eta_{\partial^\uparrow \fR}\in A_{\partial^\uparrow \fR}) & \leq  \mu_{\lambda, \fE}^+(\eta_{\partial^\uparrow \fR}\in A_{\partial^\uparrow \fR}) + \mu_{\lambda,N}^+\big((\co_v^+)^c\big) \\
& \leq  \mu_{\lambda, \fE}^+(\eta_{\partial^\uparrow \fR}\in A_{\partial^\uparrow \fR}) + \sum_{i\leq k} \mu_{0,\mathbb Z^2}^+\big( (\co_{v,i}^+)^c\big)\,.
\end{align*} 
By~\eqref{eq:crossing-probability-bound}, each of the summands in the second term is at most $C N e^{ - r /C}$ for some $C(\beta)>0$. 
Combining the above, we deduce that for every decreasing event $A$ on $\fR$, 
\begin{align*}
\mu_{\lambda,N}^{\pm}(\sigma_\fR\in A \mid \mathcal E) & \leq \mathbf E_{\mu_{\lambda, \fE}^{+}(\eta_{\partial^\uparrow \fR}\in \cdot)}\big[\mu_{\lambda,\fR}^{(\eta_{\partial^\uparrow \fR}, -,\leq \fh)} (\sigma_\fR \in A)\big] + C N k  e^{ - r/C}\,.
\end{align*}  
Finally, observe that if instead of the induced configurations on $\partial^\uparrow \fR$ being drawn from an independent sample of $\mu_{\lambda,\fE}^+(\eta_{\partial^\uparrow \fR}\in \cdot)$, they are drawn from the measure $\mu_{\lambda,\fE}^{(+,\fE,-,\fh)}$, this only decreases them (recall $(+,\fE,-,\fh)$ is $+$ on $\partial_{\east,\west} \fE$, $\pm$ on $\partial \Lambda_N$, and additionally minus on the sites along $\partial^\downarrow \fR$). But by the domain Markov property, this would correspond exactly to the distribution $\mu_{\lambda, \mathfrak E}^{(+,\mathfrak E, - \fh)}$ and therefore, by monotonicity, we have the desired inequality, 
\[\mathbf E_{\mu_{\lambda, \fE}^{+}(\eta_{\partial^\uparrow \fR}\in \cdot)}\big[\mu_{\lambda,\fR}^{(\eta_{\partial^\uparrow \fR}, -,\leq \fh)} (\sigma_\fR \in A)\big]  \leq \mu_{\lambda,\fE}^{(+,\fE,-,\fh)}(\sigma_\fR \in A)\,. \qedhere
\]
\end{proof}

\begin{proof}[\textbf{\emph{Proof of Proposition~\ref{prop:multi-strip-plus-enlargement}}}]
Fix $\fh$ and let $\mathcal E = \mathcal E_{\geq \fh}$. We may assume that $\max_{i\leq k} h_i\leq \mathcal M$, as otherwise the third term on the right-hand side of the proposition would be greater than one, since $\max_{x}\hgt_x^+ >\mathcal M$ would be implied by $\mathcal E_{\geq \fh}$. By the domain Markov property, we can express 
\begin{align*}
\mu_{\lambda,N}^{\pm} (\sigma_\fR \in \cdot \mid \mathcal E) = \mathbf E_{\mu_{\lambda, N}^{\pm}(\eta_{\Lambda\setminus \fR} \in \cdot \mid \mathcal E)} \big[\mu_{\lambda, \fR}^{\eta_{\Lambda\setminus \fR}} (\sigma_\fR \in \cdot \mid \mathcal E_{\eta_{\Lambda\setminus \fR}})\big]\,,
\end{align*}
where $\mathcal E_{\eta_{\Lambda\setminus \fR}}$ is the set of configurations on $\fR$ such that concatenated with $\eta_{\Lambda\setminus \fR}$, the full configuration is in $\cE$. For every $\eta_{\Lambda\setminus \fR}$, the event $\mathcal E_{\eta_{\Lambda\setminus \fR}}$ is a decreasing event;  thus for every increasing $A\in \{\pm1\}^\fR$,  
\begin{align*}
\mu_{\lambda, N}^{\pm}(\sigma_\fR \in A \mid \mathcal E) \leq \mathbf E_{\mu_{\lambda, N}^{\pm}(\eta_{\Lambda\setminus \fR} \in \cdot \mid \mathcal E)} \big[\mu_{\lambda,\fR}^{\eta_{\Lambda\setminus \fR}} (\sigma_\fR \in A)\big] = \mathbf E_{\mu_{\lambda, N}^{\pm}(\eta_{\partial \fR} \in \cdot \mid \mathcal E)} \big[\mu_{\lambda,\fR}^{\eta_{\partial \fR}} (\sigma_\fR \in A)\big]\,.
\end{align*}
We can split the above expectation up into interfaces which are in  $\Psi_{\mathcal M} = \{\mathscr I : \max_{x} \hgt_x^+ >\mathcal M\}$, and interfaces not in $\Psi_{\mathcal M}$, so that the above is at most 
\begin{align}\label{eq:splitting-according-to-Psi-enlargement}
\mu_{\lambda,N}^{\pm}(\Psi_{\mathcal M} \mid \mathcal E )+ \mathbf E_{\mu_{\lambda,N}^{\pm}(\eta_{\partial \fR}\in \cdot \mid \mathcal E, \Psi_{\mathcal M}^c)}\big[\mu_{\lambda,\fR}^{\eta_{\partial \fR}}(\sigma_{\fR} \in A)\big] \!\leq \!\frac{\mu_{\lambda,N}^{\pm} (\Psi_{\mathcal M})}{\mu_{\lambda,N}^{\pm}(\mathcal E)} + \mathbf E_{\mu_{\lambda,N}^{\pm}(\eta_{\partial \fR}\in \cdot \mid \mathcal E, \Psi_{\mathcal M}^c)}\big[\mu_{\lambda,\fR}^{\eta_{\partial \fR}}(\sigma_{\fR} \in A)\big]
\end{align}
Consider now the second term, for which we need to understand the boundary conditions induced on $\partial \mathfrak R$ by the conditional distribution under $\mathcal E\cap \Psi_{\mathcal M}^c$. Fix \emph{any} $\sI\in \mathcal E \cap \Psi_{\mathcal M}^c$. Conditionally on $\sI$, the distribution induced on $\Lambda$ is the product of the Ising distribution on $\Lambda^-(\sI)$ with $-\,$-b.c.\ and the distribution on $\Lambda^+(\sI)$ with $+\,$-b.c. Consider first the random configuration induced on 
$$
\partial^\downarrow_\west R_i := \{x_\west^{(i)}\} \times \llb 0,h_i\rrb \qquad \text{and}\qquad \partial^\downarrow_\east R_i: =  \{x_\east^{(i)}\} \times \llb 0,h_i \rrb\,,
$$
whose unions over $i$ are collected in the sets $\partial_\west^\downarrow \fR$ and $\partial_\east^\downarrow \fR$, respectively. 
Since $\partial^\downarrow \fR:= \partial^\downarrow_\west \fR \cup \partial^\downarrow_\east \fR$ is a subset of $\Lambda^-$, we have uniformly over  $\sI\in \mathcal E\cap \Psi_{\mathcal M}^c$, that 
\begin{align*}
\mu_{\lambda, N}^{\pm}(\eta_{\partial^\downarrow \fR} \in \cdot\mid \sI) = \mu_{\lambda, \Lambda^-}^-(\eta_{\partial^\downarrow \fR}\in \cdot) \preceq \mu_{\lambda, \fE ' \cup \Lambda_{N,\mathcal M}}^-(\eta_{\partial^\downarrow \fR}\in\cdot)\,,
\end{align*}
where in this case, we let 
\begin{align*}
\fE':= \bigcup_i \enlarge '_r(R_i) : = \llb x_\west^{(i)} -r, x_\east^{(i)} +r\rrb \times \llb 0, h+r\rrb\,,
\end{align*}
and used the fact that $\Lambda^- \subset \fE' \cup \Lambda_{N,\mathcal M}$ necessarily. Now we wish to couple the distribution induced by this latter measure on $\eta_{\partial^\downarrow \fR}$ with that induced by $\mu_{\lambda, \fE' }^-$. By monotonicity, there is a coupling so that these two measures agree on $\fR^\downarrow=\mathfrak R \cap \mathfrak E^\downarrow$ under the event 
\begin{align*}
\circuit^-_{\sqcap} & : = \bigcap_{i\leq k} \circuit^-_{\sqcap,i}\,, \qquad \mbox{where} \\
\circuit^-_{\sqcap,i} & :  = \co_v^-(\llb x_\west^{(i)} - r,x_\west^{(i)} \rrb \times \llb 0,h_i+r\rrb)\cap \co_h^-( \llb x_\west^{(i)} - r,x_\east^{(i)}+r\rrb \times \llb h_i,h_i+r\rrb ) \\
& \qquad \cap \co_v^-(\llb x_\east^{(i)} - r,x_\east^{(i)} \rrb \times \llb 0,h_i +r\rrb)\,,
\end{align*}
 in the configuration sampled from $\mu_{\lambda, \fE' \cup \Lambda_{N,\mathcal M}}^-$. That is to say, for every increasing $A_{\partial^\downarrow \fR}\in \{\pm 1\}^{\partial^\downarrow \fR}$,
\begin{align*}
\mu_{\lambda, \fE' \cup \Lambda_{N,\mathcal M}}^- (\eta_{\partial^\downarrow \fR} \in A_{\partial^\downarrow \fR} ) & \leq \mu_{\lambda, \fE'}^-(\eta_{\partial^\downarrow \fR} \in A_{\partial^\downarrow \fR}) + \mu_{\lambda, \fE' \cup \Lambda_{N,\mathcal M}}^- \big((\circuit_\sqcap^-)^c\big) \\ 
& \leq  \mu_{\lambda, \fE'}^-(\eta_{\partial^\downarrow \fR}\in A_{\partial^\downarrow \fR}) +  \sum_{i\leq k} e^{2 c_\lambda |\fE' \cup \Lambda_{N,\mathcal M}|/N}\mu_{0, \mathbb Z^2}^- \big((\circuit_{\sqcap,i}^-)^c\big)\,,
\end{align*}
where the final inequality used \eqref{eq:radon-nikodym-bound} along with $\mu_{0, \fE' \cup \Lambda_{N,\mathcal M}}^- \preceq \mu_{0, \mathbb Z^2}^-$,  and the fact that $(\circuit_{\sqcap,i}^-)^c$ are increasing events.
Observe that the area confined in $\fE' \cup \Lambda_{N,\mathcal M}$ is at most 
$$N\mathcal M + r \sum_{i\leq k} |x_\west^{(i)} - x_\east^{(i)}|+ 2k r^2\,.$$ 
As such, we have using~\eqref{eq:crossing-probability-bound}--\eqref{eq:circuit-probability-bound} that for increasing $A_{\partial^\downarrow \fR}\in \{\pm 1\}^{\partial^\downarrow \fR}$, 
\begin{align*}
\mu_{\lambda, \fE' \cup \Lambda_{N,\mathcal M}}^{-}(\eta_{\partial^\downarrow \fR} \in A_{\partial^\downarrow \fR}) \leq \mu_{\lambda, \fE'}^-(\eta_{\partial^\downarrow \fR}\in A_{\partial^\downarrow \fR} ) + 3C N k e^{ 4c_\lambda (\mathcal M+\frac{r \sum_i |x_\west^{(i)} - x_\east^{(i)}|+ k r^2}N)}  e^{- r/C}  \,.
\end{align*}
At this point the proof is almost complete, we simply have to combine the above ingredients together. 
Towards this, we observe using the above that for every increasing event $A\in \{\pm 1\}^{\fR^\downarrow}$, 
\begin{align}\label{eq:R-downarrow-event-probability}
\mathbf E_{\mu_{\lambda,N}^{\pm}(\eta_{\partial \fR}  \in \cdot \mid \mathcal E, \Psi_{\mathcal M}^c)}\big[ & \mu_{\lambda,\fR}^{\eta_{\partial \fR}}  (\sigma_{\fR^\downarrow} \in A)\big]  \\
 & \leq \sup_{\mathscr I \in \mathcal E\cap \Psi_{\mathcal M}^c} \mathbf E_{\mu_{\lambda,N}^{\pm}(\eta_{\partial \fR} \in \cdot \mid \mathscr I)} \big[ \mu_{\lambda,\fR}^{\eta_{\partial \fR}} (\sigma_{\fR^\downarrow} \in A)\big] \nonumber \\
& \leq \mathbf E_{\mu_{\lambda, \fE'}^- (\eta_{\partial^\downarrow \fR} \in \cdot)} \big[\mu_{\lambda, \fR}^{(\eta_{\partial^\downarrow \fR},+,>\fh)} (\sigma_{\fR^\downarrow} \in A)\big] +  C N k  e^{ 4c_\lambda (\mathcal M+\frac{r\sum_i |x_\west^{(i)} - x_\east^{(i)}|+ k r^2}N)}  e^{- r/C}\,, \nonumber
\end{align}
where the boundary conditions $(\eta_{\partial^\downarrow \fR},+,>\fh)$ denote those which are distributed according to $\eta_{\partial^\downarrow \fR}$ on ${\partial^\downarrow \fR}$,  are all-plus on $\partial^\uparrow \fR$, and are $\pm$ on $\partial \Lambda_N$. Evidently, the law of $\eta_{\partial^\downarrow \fR}$ is only increased by adding pluses  above heights $h_i$ in each strip $\enlarge'_r (R_i)$ to the measure $\mu_{\lambda,\fE'}^-$ from which $\eta_{\partial^\downarrow \fR}$ is drawn  (yielding the measure $\mu_{\lambda,\fE^\downarrow}^{(\pm,\fh)}$). By the domain Markov property and monotonicity, we have  
\begin{align*}
\mathbf E_{\mu_{\lambda, \fE'}^- (\eta_{\partial^\downarrow \fR} \in \cdot)} \big[\mu_{\lambda, \fE'}^{(\eta_{\partial^\downarrow \fR},+,>\fh)} (\sigma_{\fR^\downarrow} \in A)\big]  \leq  \mu_{\lambda,\fE^\downarrow}^{(\pm,\fh)} (\sigma_{\fR^\downarrow} \in A)\,.
\end{align*}
Plugging this into~\eqref{eq:R-downarrow-event-probability} and combining with~\eqref{eq:splitting-according-to-Psi-enlargement}, we obtain the desired. 
\end{proof}
\vspace{-.2cm}
\bibliographystyle{abbrv}
\bibliography{references}

\end{document}